%% file: main.tex
\documentclass[11 pt]{article}
\usepackage[margin=1in]{geometry}
\input{Compilation_Files/preamble}

\makeatletter
\newcommand{\threevee}{\mathbin{\mathpalette\threevee@{}}}
\newcommand{\threevee@}[2]{
  \sbox0{$\m@th#1\vee$}
  \ooalign{
    \hfil$\m@th#1\vee$\hfil\cr
    \hfil
      \rule[-\dp0]{\dimexpr\wd0/16\relax}
                   {\dimexpr\ht0+\dp0\relax}
    \hfil\cr
  }
}
\makeatother

\title{New bounds for off-diagonal hypergraph Ramsey numbers}

\author{
David Conlon\thanks{School of Mathematics, California Institute of Technology, Pasadena, CA. Email: dconlon@caltech.edu. Research supported by NSF Award DMS-2348859.} \and
Xiaoyu He\thanks{School of Mathematics, Georgia Institute of Technology, Atlanta, GA 30332. Email: xhe399@gatech.edu.}
\and Jiaxi Nie\thanks{School of Mathematics, Georgia Institute of Technology, Atlanta, GA 30332. Email: jnie47@gatech.edu.} \and Logan Post\thanks{School of Mathematics, Georgia Institute of Technology, Atlanta, GA 30332. Email: lpost3@gatech.edu.}}

\begin{document}

\maketitle
\begin{abstract}
We study a conjecture of Conlon, Fox, Gunby, He, Mubayi, Suk, Verstra\"ete, and Yu, which characterizes the $3$-graphs $H$ for which the off-diagonal Ramsey number $r(H,n)$ is polynomial. We prove the conjecture for $3$-graphs with at most five tight components. Additionally, we show that the full conjecture is equivalent to its specialization to linear $3$-graphs. We prove quasipolynomial upper bounds for a class of `forward-doubleable' $3$-graphs, which includes the Fano plane. Finally, we give an exponential lower bound for `non-forward-laminar' $3$-graphs and use this to completely classify the Ramsey growth of a natural family of 3-graphs called chains.

\end{abstract}
\section{Introduction}

Given a 3-uniform hypergraph (3-graph) $H$, the off-diagonal Ramsey number $r(H,n)$ is the minimum $N$ such that any $N$-vertex $3$-graph contains a copy of $H$ or an independent set of size $n$. In this paper, we study the possible growth rates of $r(H,n)$. 

Unlike for ordinary graphs, where $r(G,n)$ is polynomial for every graph $G$, the growth rate of $r(H,n)$ can vary dramatically depending on $H$. It is therefore a natural problem to determine the type of growth rates for $r(H,n)$ for various fixed $3$-graphs $H$. So far we already know examples with Ramsey growth rate polynomial, quasipolynomial, or exponential in $n$~\cite{erdoshajnal,erdos1952combinatorial,quasipoly}. Here as $n\rightarrow\infty$, polynomial growth means $n^{\Theta(1)}$, quasipolynomial means $n^{(\log n)^{\Theta(1)}}$, and exponential means $2^{n^{\Theta(1)}}$. Whether there exists a 3-graph $H$ whose Ramsey growth rate lies between the three categories is an open problem raised in~\cite{quasipoly}. This problem is also closely related to the polynomial-to-exponential transition problem of Erd\H{o}s and Hajnal \cite{erdoshajnal} in 1972, resolved in the affirmative by Ascoli, He, and Yu \cite{polyexp}. 

Our main goal is to make progress towards the central \cref{conj:iteratedtripartite} below of Conlon, Fox, Gunby, He, Mubayi, Suk, Verstra\"ete, and Yu. To our knowledge, it was first posed as a question by Conlon \cite{Conlonquestion} in 2015.
\begin{conjecture}[{\cite[2025]{CFGHMSVY}}]\label{conj:iteratedtripartite}
For a 3-graph $H$, there exists a constant $c$ depending only on $H$ such that $r(H, n) \leq n^c$ for all $n$ if and only if $H$ is a subgraph of an iterated blow-up of an edge.
\end{conjecture}

To make sense of this, we first define iterated blow-ups informally. A \textit{blow-up of an edge} is a complete tripartite 3-graph, whereas an \textit{iterated blow-up of an edge} is formed recursively by forming a blow-up of an edge, then placing another iterated blow-up of an edge inside one or more of the parts. We say that a 3-graph $H$ is \textit{iterated tripartite} if it is a subgraph of an iterated blow-up of an edge.

The easier direction of \cref{conj:iteratedtripartite} is a classical result of Erd\H{o}s and Hajnal~\cite{erdoshajnal} (also see \cref{cor:setramseyblow-up}): any iterated blow-up of an edge has polynomial off-diagonal Ramsey number. Thus, the substance of \cref{conj:iteratedtripartite} is that every graph which is \textit{not} iterated tripartite (NIT) has superpolynomial Ramsey number. 

Recent results provide evidence for this conjecture. First, Conlon et al.~\cite{linearlowerbound} disproved the folklore conjecture that every linear $3$-graph has polynomial off-diagonal Ramsey number, which is necessary for \cref{conj:iteratedtripartite} since there exist NIT linear $3$-graphs, for example the Fano plane (Equation \ref{eq:fano}, Figure \ref{fig:fano}). More directly, Conlon et al. \cite{CFGHMSVY} provide evidence for the conjecture by proving it for any graph $H$ with at most two tight components.

\begin{definition}A $3$-graph $H$ is \textit{tightly connected} if for any two edges $e,f\in E(H)$, there is a sequence of edge $e=e_0,e_1,\ldots, e_\ell=f$ with $|e_{i-1}\cap e_i|=2$ for all $i\in [\ell]$. A \textit{tight component} of a 3-graph $H$ is an inclusion-maximal tightly connected subgraph, viewed as a subset of $E(H)$.
\end{definition}

\begin{theorem}[{\cite[Thm.\ 1.2]{CFGHMSVY}}]\label{thm:tightlyconnected}
If a 3-graph $H$ is tightly connected and is not iterated tripartite, then $r(H,K_n^{(3)})\geq 2^{\Omega(n^{2/3})}$.
\end{theorem}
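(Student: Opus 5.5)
The plan is to build, for each $n$, a $3$-graph $G$ on $N=2^{cn^{2/3}}$ vertices that has no independent set of size $n$, and whose \emph{tightly connected} subgraphs are all iterated tripartite. Since $H$ is tightly connected and not iterated tripartite, any copy of $H$ in $G$ would lie inside a single tight component of $G$. So it is enough to control what tight components of $G$ look like and, separately, to bound the independence number of $G$.

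\textbf{Construction.} Take a rooted tree $T$ of depth $d$ in which every internal node has $b$ children, and let the vertex set be the $N=b^d$ leaves. For distinct leaves $x,y$, write $x\wedge y$ for the node at which their root paths split. For each internal node $v$, independently choose a uniformly random ordered partition of its children into three classes, labelled $1,2,3$, and give each child a label. A triple $\{x,y,z\}$ is an edge of $G$ if and only if
\begin{itemize}
\item[(a)] the three leaves split at one common node $v=x\wedge y=y\wedge z=x\wedge z$, and
\item[(b)] they pass through children of $v$ carrying three distinct labels.
\end{itemize}
(If the independence count below fails for this version, I would strengthen (b) by refreshing the labelling randomly along the depth, for example by colouring pairs of children rather than single children. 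The structural step is unaffected by such a change as long as each edge is still ``rainbow at a single node''.)

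\textbf{Structural step.} Suppose $e,f$ are edges with $|e\cap f|=2$, say $e\cap f=\{x,y\}$. Then both edges split at the node $x\wedge y$. Following a tight walk, every edge of a tight component of $G$ splits at one and the same node $v$. Condition (b) then gives a homomorphism from this component into a single edge: send each vertex to the label of the child of $v$ through which it passes. So every tight component of $G$ is tripartite, and in particular iterated tripartite. Hence $G$ contains no copy of $H$.

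\textbf{Independence step, the main obstacle.} We must show that with positive probability every $n$-set $S$ contains an edge. I would proceed as follows.
\begin{enumerate}
\item Given $S$, take the subtree of $T$ spanned by $S$. Each node of that subtree where $S$ branches into $k\ge 3$ children must see at most two of the three labels among those children, or else $S$ contains a rainbow triple. For a fixed set of $k$ children, this happens with probability $3\cdot(2/3)^{k}$.
\item A counting argument on the spanned subtree, using $\sum_v (\text{branching}-1)=n-1$, shows that either some node has branching $k\gtrsim n^{1/3}$, or the depth forces a long path of binary splittings.
\item Take a union bound over the at most $\binom{N}{n}\le 2^{n\log N}=2^{cn^{5/3}}$ choices of $S$. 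This must be weighed against a failure probability of the form $(2/3)^{\Omega(\text{total branching at high-degree nodes})}$.
\end{enumerate}
The difficulty is exactly the case where $S$ branches in a nearly binary way, which no single-node labelling can catch. This is why I expect to need the refined, depth-refreshed colouring from the construction. I would choose $b\approx n^{1/3}$ and $d\approx n^{1/3}$, so that $\log N\approx n^{1/3}\log n$. I would then check that in every branching profile, the number of independent labelling events along the spanned subtree is $\Omega(n^{2/3}\cdot n^{1/3})$, which beats the union bound. Balancing these two exponents is what should produce the $n^{2/3}$ in $2^{\Omega(n^{2/3})}$.
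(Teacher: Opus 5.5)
The construction cannot work. Your structural step is correct, but the graph it certifies is far too rigid: it avoids \emph{every} non-iterated-tripartite $3$-graph, not just the tightly connected ones, and that already forces a polynomially large independent set. Send each leaf to the sequence of labels along its root path, followed by an injective suffix. This embeds $G$ into some $(K_3^{(3)})^D$. Concretely, keeping at every node only the children that carry the two most frequent labels there gives an independent set of size at least $\lceil 2b/3\rceil^d\ge N^{\log_3 2}$.

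The ``nearly binary'' case you flag is not a technicality. By (a), any set of leaves whose spanned subtree is binary is independent whatever the labels are. Such sets have size $2^d$, so with $d\approx n^{1/3}$ you get $\alpha(G)\ge 2^{n^{1/3}}\gg n$.

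The probabilistic count cannot be rescued either. Whether an $n$-set $S$ is independent depends only on the labels of the fewer than $2n$ children that $S$ meets at its branching nodes. Hence $\PP[S\text{ independent}]\ge 3^{-2n}$, and $\binom Nn 3^{-2n}>1$ once $N>9n$, so no union bound succeeds. Moreover, $b\approx d\approx n^{1/3}$ gives only $\log N\approx n^{1/3}\log n$, not the $cn^{2/3}$ your union-bound count assumes.

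Your proposed repair keeps edges ``rainbow at a single node'', so binary sets still force $2^d<n$. Suppose the rule at a node $v$ depends only on which children of $v$ are used, as with colourings of pairs of children. Then $G$ is an iterated blow-up of node gadgets $G_v$ on $b$ vertices, with $\alpha(G)\ge(\min_v\alpha(G_v))^d$ against $N=b^d$. Iterating therefore gains nothing over a single gadget. A single gadget is an $H$-free $3$-graph on $b$ vertices with small independence number, i.e.\ the original problem. With labels on pairs of children, there is also no vertex labelling left to define the homomorphism in your structural step.

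The paper does not reprove this theorem; the $n^{2/3}$ exponent is quoted from \cite{CFGHMSVY}. The closest argument here is \cref{thm:forwardlaminar}. It applies because a tightly connected NIT $3$-graph is not forward laminar, and it gives only $\exp(\Omega(n^{1/5}))$. It does, however, show the two ingredients you are missing.

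It works on the binary tree $\{0,1\}^d$, where your condition (a) never holds. Its edges are \emph{forward} triples $x<y<z$ with two distinct branch points, $\delta(xy)>\delta(yz)$. By \cref{claim:forwardtriples}, every set of $n>5d$ leaves contains $\Omega(n^3/d)$ of them, so binary sets are no longer free.

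To keep tight components tripartite despite the two branch levels, a uniformly random $\phi$ on \emph{pairs of vertices} records the missing level: $\phi(xy)=\delta(yz)$ and $\phi(xz)=\phi(yz)=\delta(xy)$. Then any two vertices of an edge determine the forward triple $V_{v0w0}<V_{v0w1}<V_{v1}$ containing its tight component. The same $\phi$ makes $\Omega(n^2/d)$ pair-disjoint forward triples in each $n$-set into edges independently, each with probability $d^{-3}$. This is what beats the union bound over $\binom Nn$ sets.

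Any working version of your plan needs both features: edges that are not confined to a single node of the tree, and randomness indexed by pairs of vertices rather than by tree nodes.
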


\begin{theorem}[{\cite[Thm.\ 1.3]{CFGHMSVY}}]\label{thm:2tightcomponents}
If a 3-graph $H$ has at most two tight components and is not iterated tripartite, then $r(H,K_n^{(3)})\geq 2^{\Omega(\log^2 n)}$.
\end{theorem}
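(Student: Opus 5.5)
The statement to prove is \cref{thm:2tightcomponents}: if $H$ has at most two tight components and is not iterated tripartite, then $r(H,K_n^{(3)})\ge 2^{\Omega(\log^2 n)}$. The plan is to reduce to a structural dichotomy and build a different lower-bound construction on each branch.

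\textbf{Step 1: the tightly connected case.} If $H$ is tightly connected, \cref{thm:tightlyconnected} gives the much stronger bound $2^{\Omega(n^{2/3})}$, so we are done. Hence assume $H$ has exactly two tight components $H_1$ and $H_2$.

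\textbf{Step 2: one component is already NIT.} If either $H_i$ is itself not iterated tripartite, then $H_i\subseteq H$ and \cref{thm:tightlyconnected} applied to $H_i$ gives $r(H,n)\ge r(H_i,n)\ge 2^{\Omega(n^{2/3})}$. So we may assume both $H_1$ and $H_2$ are iterated tripartite while $H$ is not.

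\textbf{Step 3: the core case, both components iterated tripartite.}

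\emph{Structural lemma.} For a tightly connected iterated tripartite graph, the first-level tripartition is essentially forced: tight paths propagate which part each vertex lies in. So each $H_i$ comes with a canonical recursive structure, namely a laminar family of vertex sets with a rooted tree of tripartitions.

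\emph{What NIT means for $H$.} The failure of $H$ to be iterated tripartite must then come from incompatibility between the two structures on the shared vertex set $V(H_1)\cap V(H_2)$. I would classify this incompatibility: the shared vertices are placed in a way that no common refinement tree realizes both embeddings simultaneously.

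\emph{Construction.} Against such an $H$, I would use a stepping-up/product construction on $N=2^{c\log^2 n}$ vertices, with vertices identified with binary strings of length about $\log^2 n$ read as $\log n$ blocks of length $\log n$. An edge is defined through the "first differing block" and "first differing bit" structure, in the spirit of the Erd\H{o}s--Hajnal stepping-up lemma. This should give a quasipolynomial-size iterated-tripartite-like 3-graph in which every tight component embeds only in a restricted ("monotone") way. An $H$-free argument would then show that any copy of $H_1$ and of $H_2$ must align its tripartition tree with the block tree, forcing compatibility on the overlap and hence a contradiction with NIT. Independence numbers are bounded by a random-coloring or induction-on-blocks argument giving $\alpha<n$.

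\textbf{Main obstacle.} The hardest step is the structural lemma plus the embedding rigidity: proving that a tight component's copy in the construction must respect the hierarchy. I would first try to prove that a tight path's three vertices determine the level at which they split, and that this level is constant along tight walks. That would make each component's embedding rigid and let the incompatibility on shared vertices be detected directly.
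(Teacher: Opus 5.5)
Your Steps 1 and 2 are valid reductions. Your scale also matches the paper's: about $\log n$ blocks, each with $n^{\Theta(1)}$ values, so $N=2^{\Theta(\log^2 n)}$, just like the $d$-fold iterated blow-up $[r]^d$ with $d\approx\log r$. But Step 3 has a genuine gap. The construction is never specified, and the rigidity you set out to prove is not the one that is needed.

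If edges are triples whose vertices first differ in the same block, then any two vertices of an edge already determine its level. So ``the level is constant along tight walks'' holds automatically and gives nothing. Worse, suppose, as your description suggests, that whether $uvw$ is an edge depends only on block values and bit patterns at the splitting level. Then $G$ is an iterated blow-up $R^d$ of a fixed base $R$, and $\alpha(R^d)=\alpha(R)^d$. With $d=\Theta(\log n)$ levels and $\alpha(G)<n$, this forces $\alpha(R)=O(1)$. For large $n$, Ramsey's theorem then puts a copy of $H$ inside $R$, i.e.\ inside a single level of $G$. No ``alignment of tripartition trees'' can exclude that. A rule that only sees $\delta$-patterns cannot stop a tight component from using many different triples of block values at one level, because a fixed pair $uv$ has many partners $w$ with different block values but the same $\delta$-pattern.

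The missing idea is a sparsification indexed by pairs of vertices of $G$ itself. The paper takes the complete $3$-graph on $[r]$ as base and a uniformly random $\phi:\binom{V}{2}\to[r]$. It keeps an $\ell$-level triple $uvw$ only if $\phi(uv)=w_\ell$, $\phi(uw)=v_\ell$ and $\phi(vw)=u_\ell$. Now a pair determines the level-$\ell$ value of every third vertex it forms an edge with. So if $\ell$ is the minimal level of an edge in a copy of $H$, partitioning $V(H)$ by the $\ell$-th coordinate gives a nontrivial \emph{tight} quotient: each tight component either collapses onto a single triple of classes or lies inside one class (\cref{lem:G(R)tightminor}).

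With at most two components, that quotient has at most two edges and is therefore tripartite. Hence $H$ is iterated tripartite, either via \cref{lem:quotientpreservesNIT} for minimal $H$, or, after your Step 2, by combining the top-level coloring directly with the components' own tripartitions. No classification of ``incompatibilities on the overlap'' is needed.

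The independence bound survives the sparsification. Each level triple with distinct coordinates is kept with probability $r^{-3}$, independently over a pair-disjoint family, and $\alpha(R^d)=2^d$. With $d=\lceil\log r/\log 3\rceil$ this gives $\alpha<r^6$ on $r^d=\exp(\Omega(\log^2 r))$ vertices (\cref{lem:G(r)indset}, \cref{cor:noquotient}). The stepping-up construction of \cref{sec:explower} likewise gets its rigidity from such a random pair labeling, not from the $\delta$-pattern. Without this pair-determined-coordinate device, or a substitute for it, your Step 3 does not go through.
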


Note that the lower bounds for tightly connected 3-graphs are exponential, while those for 3-graphs with two tight components are quasipolynomial. The existing proof of Theorem~\ref{thm:2tightcomponents} is fairly technical, and our methods yield a substantially simpler proof. 

\subsection{Results and Outline}

The results stated in the introduction are the most interesting special cases of significantly more general results. The main technical results of this paper are Lemmas~\ref{lem:quotient} and \ref{lem:quotientonegraph} and Theorems \ref{thm:fwddoubling}, \ref{thm:fwddoublingvsBn}, and \ref{thm:forwardlaminar}. These rely on some technical definitions, so for the remainder of the introduction we present primarily special cases which exemplify the results. The three Sections \ref{sec:quasilower}, \ref{sec:upperbounds}, and \ref{sec:explower} are roughly disjoint in scope, so we partition the outline accordingly.

\subsubsection{Quasipolynomial lower bounds and reduction to linear}\label{sec:intro_quasilower}

The results of this section are proved in \cref{sec:quasilower}. Our main contribution to \cref{conj:iteratedtripartite} is the following reduction. 

\begin{theorem}\label{thm:reductiontolinear}If there is a nonempty family $\cH$ of NIT $3$-graphs with at most $t$ tight components satisfying $r(\cH,n)=n^{O(1)}$, then there is a family $\cL$ of linear NIT $3$-graphs with at most $t$ edges satisfying $r(\cL,n)=n^{O(1)}$.
\end{theorem}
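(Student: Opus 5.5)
The plan is a quotient construction: collapse each tight component of a graph in $\cH$ to a single edge, and transfer the polynomial Ramsey bound from $\cH$ to the resulting family of small linear graphs. Concretely, for $H\in\cH$ with tight components $C_1,\dots,C_s$, $s\le t$, I would form quotient $3$-graphs $H/\!\sim$ by choosing a $3$-coloring of the vertex set of each component, and replacing each $C_i$ by a single edge whose three vertices are the three color classes. The colorings must be consistent on shared vertices, so vertices are identified across components. The natural colorings come from \cref{thm:tightlyconnected}: a tightly connected component that is not iterated tripartite already has exponential Ramsey number. So one may assume each $C_i$ is iterated tripartite, and in particular its top-level blow-up structure gives a canonical partition into three parts. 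The set $\cL$ will consist of all quotients of this kind that are still NIT, after passing to a linear subgraph. Each such quotient has at most $s\le t$ edges.

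The first step is to show that every $H\in\cH$ has at least one NIT quotient that is linear. If all quotients were iterated tripartite, I would lift the iterated blow-up structure back: replace each quotient edge by the iterated blow-up containing $C_i$, recursing inside parts. This should show $H$ itself is iterated tripartite, a contradiction. Linearity must then be arranged. Two quotient edges sharing two vertices would have to lie in a common tight component, so after merging such pairs (and re-quotienting) the quotient becomes linear. Alternatively, I would pass to a minimal NIT subgraph of the quotient and argue it is linear. This step requires care with how parts of different components overlap, and it is where the quotient lemma (\cref{lem:quotient}) does the work.

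The second step transfers the Ramsey bound. Given $N=n^{C}$ and a $3$-graph $G$ on $N$ vertices with no copy of any $L\in\cL$ and no independent set of size $n$, I want to build a $G'$ on polynomially many vertices with no member of $\cH$ and no large independent set. The construction is a blow-up/substitution: take $G$ and place inside each vertex a set of size $m$. Put an edge on a triple across three blobs $\{x,y,z\}$ exactly when $xyz\in E(G)$. Within blobs, recursively place a polynomial-size construction avoiding tightly connected iterated tripartite pieces, or use the Erd\H{o}s--Hajnal polynomial structure from \cref{cor:setramseyblow-up}. With this construction, a copy of $H$ in $G'$ has each tight component either mapped into a single blob or spread across blobs respecting a tripartition. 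Contracting blobs then yields a quotient of $H$ in $G$, and that quotient (or a linear NIT subgraph of it) is in $\cL$. Independent sets multiply in size polynomially, so $r(\cL,\cdot)$ is polynomial iff $r(\cH,\cdot)$ is. The contrapositive direction needed here reads: $r(\cH,n)=n^{O(1)}$ forces any $\cL$-free $G$ to have polynomial independence, since otherwise the blow-up gives a superpolynomial construction for $\cH$.

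I expect the main obstacle to be the case where a tight component embeds into $G'$ partly inside one blob and partly across blobs. The contraction then does not produce a clean single edge, so one must show this still produces a quotient whose NIT-ness is inherited. The first approach I would try is to define quotients generally enough, allowing a component to collapse to an edge or to a vertex, that every such embedding produces one. I would then prove the lifting argument of the first step for this broader class.
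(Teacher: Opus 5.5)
The gap is in your transfer step (Step 2), and it is exactly where the content of \cref{lem:quotient} lies.

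\textbf{The plain blow-up does not work.} In your $G'$, choosing one vertex from each blob spans a copy of $G$. So $G'$ is $\cH$-free only if $G$ already is, and $\cL$-freeness gives no such thing: $K_4^{(3)}$ is NIT but contains no linear NIT subgraph. The error is the claim that a component spread across blobs ``respects a tripartition''. Two edges $uvw,uvw'$ of one component can land in blob triples $xyz$ and $xyz'$ with $z\neq z'$. So a component projects to an arbitrary tightly connected homomorphic image in $G$, not to a single edge. Conversely, the obstacle you single out does not occur. No triple meeting a blob in exactly two vertices is an edge, so two edges sharing a pair are both inside one blob or both across blobs (in an iterated blow-up, both at level $\delta(u,v)$). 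Projecting at the minimum level of an edge of $H$ therefore already yields a proper quotient.

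\textbf{What is missing is the sparsification $\cG_d(R)$.} Each pair $uv$ gets a uniformly random label $\phi(uv)\in V(R)$. An $\ell$-level triple is kept only if its projection is an edge of $R$ and each pair's label equals the $\ell$-th coordinate of the third vertex. Then $uvw,uvw'\in E$ forces $w_\ell=\phi(uv)=w'_\ell$, so the minimum-level projection of any copy of $H$ is a nontrivial tight quotient lying in $R$ (\cref{lem:G(R)tightminor}). Each triple survives with probability $r^{-3}$. This loss is absorbed by taking depth $d=\lceil\log r/\log n\rceil$, so that $|V|=r^d\ge\exp(\log^2 r/\log n)$ while $\alpha<24n^{2d}r^3d\log r<r^6$ (\cref{lem:G(r)indset}). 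Nothing in your plan plays either role.

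\textbf{Your route to linearity is also unsound.} The transfer does not need \emph{some} good quotient of $H$. It needs \emph{every} nontrivial tight quotient that an embedding can force to be NIT. That is \cref{lem:quotientpreservesNIT}, and it needs $H$ to be minimal NIT; your lifting argument, which must colour inside the parts, silently needs this too. Such a quotient need not be linear, and neither fallback works. $K_4^{(3)-}$ is minimal NIT and non-linear, and its linear subgraphs are single edges. It also arises as a tight quotient of the linear graph $F^-$, via the partition $\{1,2,3\},\{4\},\{5,6\},\{7\}$. So a copy of $F^-$ can project to a graph containing no member of $\cL$, and ``that quotient (or a linear NIT subgraph of it) is in $\cL$'' fails.

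``Merging and re-quotienting'' is therefore not a combinatorial clean-up. A further quotient only arises from a further application of the construction, so the transfer must be iterated along chains of quotients. The paper does this with $\cF_0=\cH_t$, $\cF_i'$ the minimal NIT subgraphs of members of $\cF_i$, and $\cF_{i+1}$ their nontrivial tight quotients. It invokes \cref{lem:quotient} once per round and uses that a tight quotient of a non-linear graph has fewer vertices. The $F^-\to K_4^{(3)-}$ example is exactly where that bookkeeping needs care: linear members recur as their own identity quotients but also have non-linear quotients.

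Finally, \cref{thm:tightlyconnected} and canonical top-level partitions are unnecessary. The partition is dictated by the embedding, and graphs with no nontrivial tight quotient never embed in $\cG_d(R)$ at all.
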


This is restated and proven in \cref{sec:reductiontolinear} using the quotient blow-up machinery of \cref{sec:quasilower}, and we define the set Ramsey numbers formally in Equation \ref{eq:familyramsey}. Given a 3-graph $R$ with relatively small independence number avoiding $\cL$, we produce a ``sparsified iterated blow-up'' $\cG_d(R)$ of $R$ that also has relatively small independence number and avoids $\cH$. We use this construction to obtain quantitative superpolynomial lower bounds for 3-graphs $H$ with at most five tight components.

The authors of \cite{linearlowerbound} mentioned (and disproved) a folklore conjecture that linear hypergraphs always have polynomial off-diagonal Ramsey number. In fact \cref{thm:reductiontolinear} shows that linear hypergraphs are the hardest case and that \cref{conj:iteratedtripartite} reduces to a partial opposite of this folklore belief.

In \cref{prop:5edgetripartite} we check that the Fano plane minus an edge is the smallest NIT linear hypergraph, given by $V(F^-)=[7]$ and $E(F^-)=\{123,145,167,247,346,357\}$. Thus, from \cref{thm:reductiontolinear} it follows that \cref{conj:iteratedtripartite} is true for $3$-graphs with at most five tight components. Below, we catalogue the state-of-the-art lower bounds for NIT $3$-graphs with a small number of tight components, adding new bounds for three, four and five.

\begin{theorem}\label{thm:4tightcomponents}Let $H_t$ be a non-iterated tripartite $3$-graph with at most $t$ tight components. Then 
\begin{align}
r(H_1,n)&\geq \exp(\Omega(n^{2/3}))\tag{{\cite[Thm.\ 2.1]{CFGHMSVY}}}\\
r(H_2,n)&\geq \exp(\Omega(\log^2n))\tag{{\cite[Thm.\ 3.1]{CFGHMSVY}}}\\
r(H_4,n)&\geq \exp(\Omega(\log^2n/\log\log n))\\
r(H_5,n)&\geq \exp(\Omega(\log^{3/2}n/(\log\log n)^{1/2})).
\end{align}
\end{theorem}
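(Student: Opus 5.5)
The first two bounds are quoted from \cite{CFGHMSVY}, so only the bounds for $H_4$ and $H_5$ need proof. (The $H_3$ case is covered by $H_4$, since at most three tight components is at most four.) The plan is to make the reduction behind \cref{thm:reductiontolinear} quantitative.

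Suppose $H$ is NIT with at most $t$ tight components. Contract each tight component of $H$ to a single edge, via the quotient blow-up machinery of \cref{sec:quasilower}. This yields a linear NIT $3$-graph with at most $t$ edges, and $H$ embeds in a suitable sparsified iterated blow-up $\cG_d(R)$ only if that linear quotient embeds in $R$. Hence, given a base $3$-graph $R$ on $N_0$ vertices with independence number at most $n_0$ that avoids every linear NIT $3$-graph with at most $t$ edges, the construction $\cG_d(R)$ produces a $3$-graph with the following properties:
\begin{itemize}
\item it avoids $H$;
\item it has roughly $N_0^{d}$ vertices, or $N_0^{\Theta(d)}$;
\item its independence number is at most about $n_0^{O(1)}\cdot f(d)$, for some loss $f(d)$ coming from the sparsification.
\end{itemize}
Optimizing $d$ against $n$ then gives the stated quasipolynomial bounds.

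\textbf{Case $t\le 4$.} By \cref{prop:5edgetripartite}, every linear $3$-graph with at most five edges is iterated tripartite. So for $t\le 4$ no linear NIT graph needs to be avoided, and $R$ can be taken to be a complete $3$-graph $K^{(3)}_{m}$ of constant size (independence number $2$). The lower bound then comes purely from the iteration. Each level multiplies the vertex count by a constant factor $m$, while independence grows only additively, by about $O(\log)$ per level. If the sparsification forces $m\approx \log n$ or a $\log\log n$ loss per level, then with $d\approx \log n/\log\log n$ levels we get $N=m^d$ vertices and independence number at most $n$. This gives $N=\exp(\Omega(\log^2 n/\log\log n))$. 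It matches the $H_2$ bound up to the $\log\log n$ factor, as expected.

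\textbf{Case $t=5$.} Here $F^-$, a linear NIT graph with six edges, is still excluded, so $R$ could be complete. But five tight components contracting to five edges again avoid $F^-$. The difficulty is rather that the quotient lemma loses more when there are five components: presumably the sparsification has to be random with density $p$, costing independence $\approx p^{-1}\log$ per level. Balancing $p$ against $d$ should give $\log N\approx d\log m$ with $n\approx (\log n)^{1/2}$-type losses, yielding $\exp(\log^{3/2}n/(\log\log n)^{1/2})$.

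\textbf{Main obstacle.} The hard part will be the independence-number estimate for $\cG_d(R)$ in Lemma~\ref{lem:quotient}/\ref{lem:quotientonegraph}. One must show that an independent set in the iterated construction restricts, at each level, to an independent set of $R$ or of the sparse random part, via a union bound over levels. One must also track the exact per-level loss, since that loss determines the exponents $2$ versus $3/2$. I would first do the computation for $t=4$, where the base is trivial, and then redo it with random sparsification for $t=5$.
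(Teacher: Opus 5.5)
\textbf{There is a genuine gap, and it sits in your choice of base graph.}

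\cref{lem:G(R)tightminor} only says that if $H\subseteq\cG_d(R)$, then $R$ contains \emph{some} nontrivial tight quotient of $H$. Tight quotients are not linear in general: two distinct tight components may collapse onto the edges $P_1P_2P_3$ and $P_1P_2P_4$.

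Your use of \cref{prop:5edgetripartite} actually runs backwards. For a minimal NIT $H$, \cref{lem:quotientpreservesNIT} makes every nontrivial tight quotient NIT, so \cref{prop:5edgetripartite} shows these quotients are necessarily \emph{non}-linear.

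Here is a concrete instance. Take the $3$-graph with tight components $\{1yz,1'yz,1'xz\}$, $\{1xw\}$ and $\{1z'w',1''z'w',1''zw',1''zw\}$. It has only constant top-level colorings, so it is NIT. The partition $\{1,1',1''\},\{x,y\},\{z,z'\},\{w,w'\}$ is a tight quotient onto $K_4^{(3)}$ minus an edge. Any complete $R=K_m^{(3)}$ with $m\ge 4$ contains this quotient, so nothing in your argument prevents $\cG_d(R)$ from containing $H$.

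A complete base graph is admissible only when $H$ has no nontrivial tight quotient at all. That is the $\cH_2$ situation of \cref{cor:noquotient}, and there it yields $\exp(\Omega(\log^2 n))$ with no $\log\log n$ loss. So your account of where the $\log\log n$ comes from cannot be right. Likewise, the density-$p$ sparsification in your $t=5$ paragraph has no counterpart in the construction.

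\textbf{The missing idea is a recursion on the base graph.}

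In $\cG_d(R)$ the independence number is at most $24n^{2d}r^3d\log r$. It grows multiplicatively in $d$, not like $n_0^{O(1)}f(d)$. One therefore takes $d=\lceil\log r/\log n\rceil$ and obtains \cref{lem:quotient}: if $r(\cF,n)>r$ for the family $\cF$ of nontrivial tight quotients, then $r(\cH,r)\ge\exp(\frac1{36}\log^2r/\log n)$. All the gain comes from a base Ramsey graph $R$ for $\cF$ with $\log r\gg\log n$.

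For $t\le4$: the quotients of subgraph-minimal members of $\cH_4$ are NIT with at most four edges. By \cref{prop:4edgetripartite} each contains a tightly connected NIT subgraph. So $R$ can be an $\cH_1$-free graph coming from the $\exp(\Omega(n^{2/3}))$ bound. This gives $\log n=O(\log\log r)$, and hence $\exp(\Omega(\log^2r/\log\log r))$.

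For $t=5$: the quotients are NIT with at most five edges, and non-linear by \cref{prop:5edgetripartite}. Two edges sharing a pair lie in the same tight component, so each quotient has at most four tight components and lies in $\cH_4$. Using the $\cH_4$ bound as the base gives $\log n=O((\log r\log\log r)^{1/2})$, which produces the exponent $3/2$.
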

In particular, we give a new proof for the bound on $r(H_2,n)$ which is substantially simpler than the proof given in \cite{CFGHMSVY}. This is deduced in \cref{sec:fivetightcomponents}. Finally, we prove the following general reduction.

\begin{theorem}\label{thm:singlelinear}If there is an NIT 3-graph $H$ satisfying $r(H,n)=n^{O(1)}$, then there is a linear NIT $3$-graph $L$ with $|E(L)|\leq 7|E(H)|$ satisfying $r(L,n)=n^{O(1)}$. 
\end{theorem}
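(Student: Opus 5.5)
We prove the contrapositive form of the statement by building, from the NIT 3-graph $H$ with $r(H,n)=n^{O(1)}$, a linear NIT 3-graph $L$ with $|E(L)|\le 7|E(H)|$ and $r(L,n)\le r(H,n)^{O(1)}$. The natural route goes through \cref{thm:reductiontolinear}. Take $\cH=\{H\}$, and let $t$ be the number of tight components of $H$, so $t\le |E(H)|$. \cref{thm:reductiontolinear} then gives a family $\cL$ of linear NIT 3-graphs, each with at most $t$ edges, satisfying $r(\cL,n)=n^{O(1)}$. The trouble is that $\cL$ is a family, and polynomiality of $r(\cL,n)$ does not obviously yield a single member with polynomial Ramsey number. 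The plan is therefore to combine the members of $\cL$ into one linear NIT 3-graph $L$ and to control its size.

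\textbf{Step 1 (understand $\cL$).} From the quotient blow-up construction $\cG_d(R)$, I expect $\cL$ to consist of linear NIT 3-graphs obtained by collapsing each tight component of $H$ to a single edge. Each such graph is a quotient of $H$ in which every tight component becomes one edge, and the members of $\cL$ correspond to the different ways of identifying vertices across components. In particular $\cL$ is a finite family, all of whose members are linear NIT 3-graphs with at most $|E(H)|$ edges.

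\textbf{Step 2 (glue into one graph).} Let $L$ be a linear 3-graph that contains every member of $\cL$ as a subgraph, or at least one that forces some member whenever it appears. One option is to take a single member $L_0\in\cL$ chosen to be minimal under the homomorphism/subgraph order and show $r(L_0,n)\le r(\cL,n)^{O(1)}$. I would try to prove this by a product or blow-up argument. Suppose a 3-graph $R$ on $N$ vertices with independence number below $n$ avoids $L_0$. Taking a suitable iterated sparse product of $R$ should then give a graph avoiding all of $\cL$ with controlled independence number, using the fact that $L_0$ is NIT and that every other member of $\cL$ maps to something containing $L_0$. The edge bound $7|E(H)|$ suggests a different and more direct construction. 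Replace each edge of $H$ by a copy of a fixed linear gadget with 7 edges (the Fano plane, which is linear and NIT with exactly 7 edges), or subdivide each tight pair into a bounded linear configuration. One then checks two things: that $L$ is linear and NIT, and that any 3-graph containing no copy of $L$ can be turned, via a polynomial-size quotient blow-up, into one containing no copy of $H$.

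\textbf{Step 3 (Ramsey transfer).} The aim is to show $r(L,n)\le r(H,n^{C})^{C}$. Given an $L$-free $R$ with small independence number, we would apply Lemma~\ref{lem:quotient} (or \ref{lem:quotientonegraph}) to produce an $H$-free graph with polynomially related parameters, in the same way as in the proof of \cref{thm:reductiontolinear}.

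\textbf{Main obstacle.} I expect the key difficulty to be the passage from a family to a single graph. We must guarantee that the chosen $L$ is embedded in every relevant quotient, or is unavoidable, while keeping $L$ linear and NIT with at most $7|E(H)|$ edges. My first attempt would be the gadget construction using the Fano plane, with NIT-ness of $L$ verified by noting that it contains a Fano plane. The embedding property would come from the local structure of the gadget.
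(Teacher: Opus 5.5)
\textbf{There is a genuine gap: your Ramsey transfer runs in the wrong direction.} To get $r(L,n)=n^{O(1)}$ from $r(H,n)=n^{O(1)}$, you need $L$ to be dominated by $H$. The available tool is \cref{cor:setramseyblow-up}: if $L$ lies in an iterated blow-up of $H$, then $r(L,n)\le r(H,n)^{C}$.

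The quotient machinery you use in Steps 1 and 3 only moves the other way. By \cref{lem:quotientpreservesNIT}, $H$ lies in an iterated blow-up of each of its quotients, so quotients are at least as hard as $H$. Correspondingly, \cref{lem:quotient} transfers superpolynomial lower bounds from the family of tight quotients up to $H$. To run it with the single graph $L$ in place of that family, every nontrivial tight quotient of $H$ would have to contain $L$. That is impossible for a gadget graph with $7|E(H)|$ edges, since tight quotients have at most $t\le |E(H)|$ edges.

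The first option in Step 2 is also unsupported. Nothing makes every member of $\cL$ ``map to something containing $L_0$'', and polynomiality of $r(\cL,n)$ does not pass to a single member. This is exactly why the single-graph analogue, \cref{lem:quotientonegraph}, needs an extra robustness hypothesis and is not used for this theorem.

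\textbf{The gadget idea is right, but the Fano plane is the wrong gadget,} for two reasons. First, any two points of the Fano plane are collinear. If $x,y,z$ are points of the gadget for $xyz$, then two edges $xyz,xyw$ of a non-linear $H$ produce two gadget edges through the pair $xy$, so $L$ is not linear. Second, the Fano plane is itself NIT, so $L$ generally cannot lie in an iterated blow-up of $H$, and the transfer is lost. For example, the Fano plane has no non-constant $4$-coloring in which every line is rainbow or monochromatic, so it lies in no iterated blow-up of $K_4^{(3)}$.

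\textbf{The missing idea} is a linear gadget with three properties:
\begin{enumerate}[(a)]
\item it is itself iterated tripartite;
\item no edge contains two of its attachment vertices $x,y,z$;
\item its only non-constant top-level colorings are rainbow on $\{x,y,z\}$.
\end{enumerate}
The paper uses the split Fano plane: point $1$ is split into $x,y,z$, giving edges $x23,y45,z67,247,346,256,357$. Gluing two attachment vertices yields $F^-$ plus the edge $z67$, which forces any top-level coloring that is not rainbow on $\{x,y,z\}$ to be constant.

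Now take $H$ min NIT and replace every edge by such a gadget. This gives a linear $L$ with $7|E(H)|$ edges. Any top-level coloring of $L$ restricts to one of $H$, hence is constant, so $L$ is NIT. Finally, blow up each vertex $x$ of $H$ into $x$ together with its gadget neighbours, placing the edges $xx'_ex''_e$ inside each part. This shows $L$ lies in an iterated blow-up of $H$, and \cref{cor:setramseyblow-up} finishes the proof.
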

Equivalently, this means that if \cref{conj:iteratedtripartite} is true for linear $3$-graphs with at most $7t$ edges, then it is true for all $3$-graphs with at most $t$ edges. This is proven in \cref{sec:reductiontolinear}.

\subsubsection{Quasipolynomial upper bounds} \label{sec:intro_quasiupper}

Previously, it was shown in \cite{quasipoly} that there exists a $6$-vertex, $5$-edge 3-graph with strictly quasipolynomial Ramsey number. This lies in contrast with the result of Ascoli, He, and Yu \cite{polyexp} that there is a polynomial-to-exponential transition for off-diagonal hypergraph Ramsey numbers of order-size pairs. In \cref{sec:upperbounds}, we prove quasipolynomial upper bounds for a new class of hypergraphs including the Fano plane. Let $F$ be the \textit{Fano plane} with vertex set $\{1,\ldots,7\}$ and edge set
\begin{equation}\label{eq:fano}
E(F)=\{123,145,167,247,346,357,256\}
\end{equation}
Note that $E(F)$ is edge-transitive, so the \textit{Fano plane minus an edge} $F^-$ is uniquely determined up to isomorphism. We prove the following quasipolynomial upper bound on this hypergraph as a special case of \cref{cor:tridoubling}.
\begin{theorem}\label{thm:fanominus} The Fano plane minus an edge satisfies $r(F^-,n)=\exp(O(\log^2n))$.
\end{theorem}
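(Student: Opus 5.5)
The plan is to prove a recursive inequality of the form $r(F^-,n)\le n^{C}\, r(F^-,\lceil n/2\rceil)$. Iterating it $\log_2 n$ times gives $r(F^-,n)\le n^{O(\log n)}=\exp(O(\log^2 n))$. The recursion is driven by the structure of $F^-$. Vertex $1$ lies in the three edges $123,145,167$, so its link is the perfect matching $\{23,45,67\}$ on $\{2,\dots,7\}$. The remaining edges $T=\{247,346,357\}$ each use exactly one vertex from each of the pairs $P_1=\{2,3\}$, $P_2=\{4,5\}$, $P_3=\{6,7\}$. So a copy of $F^-$ is exactly a vertex $v$ together with three pairs in the link of $v$, spanning a copy of $T$ with the transversal pattern above. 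Since $T$ is a tripartite, and in particular iterated tripartite, $3$-graph, Erdős–Hajnal (\cref{cor:setramseyblow-up}) shows it is unavoidable in a polynomially sized set with no independent set of size $n$.

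Take a $3$-graph $G$ on $N=n^{C}r(F^-,n/2)$ vertices with no $F^-$ and no independent set of size $n$. The first step is a greedy ``forward'' procedure. Fix an ordering, repeatedly pick a vertex $v$, and look at a maximum matching $M_v$ in the link of $v$ inside the current vertex set $X$. If $M_v$ is small, say $|M_v|\le |X|/4$, then $X\setminus V(M_v)$ has at least $|X|/2$ vertices and contains no link edge of $v$. We pass to it and record $v$. After $\log n$-fold polynomial losses this yields either a long sequence of recorded vertices or a set in which the second case occurs. For a long sequence, the recorded vertices $v_1,v_2,\dots$ have the property that no later vertex pair lies in the link of an earlier one. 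Ramsey's theorem applied to the recorded sequence, together with the hypothesis on $G$ restricted to the last set, should then produce an independent set of size $n$, which handles the ``doubling'' half $n/2\to n$.

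In the second case, $v$ has a link matching $M$ of size $m\ge |X|/4$. Define an auxiliary $3$-graph on the pairs of $M$: for three pairs, record which of the $8$ transversal triples are edges of $G$. This is a bounded colouring of the triples of pairs, and $F^-$-freeness forbids the colour pattern realizing $T$. By a polynomial Erdős–Hajnal-type argument for this coloured tripartite pattern, we want to extract $m'\ge m^{1/C}$ pairs on which some choice of one endpoint per pair spans no edge of $G$, or on which the pattern is otherwise degenerate. In the degenerate cases, choosing endpoints cleverly gives a set of size $\ge r(F^-,n/2)$ inducing few edges, where induction applies. Combining this with a further half-size independent set produced from the structure yields an independent set of size $n$.

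I expect the main obstacle to be this second case: making the extraction lose only a polynomial factor and land in a situation where the independent set genuinely doubles, rather than merely survives. The first thing I would try is a dependent-random-choice/supersaturation count. The absence of $T$-patterns among the pairs of $M$ forces, by a polynomial Ramsey bound for $T$ applied to the set of ``left'' endpoints and to the set of ``right'' endpoints separately, an independent set $I_1$ of size $n/2$ among one side. Then restricting to the partners of $I_1$ and reapplying induction gives a disjoint $I_2$ of size $n/2$ with no cross edges. The last point is exactly what the transversal structure of $T$ (each edge meeting each pair once) should guarantee.
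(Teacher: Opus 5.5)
Your target recursion $r(F^-,n)\le n^{C}\,r(F^-,\lceil n/2\rceil)$ has the right shape; it is exactly what the paper's argument yields. The gap is that your proposal never produces the structure that lets independent sets double.

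In the first case, the threshold $|M_v|\le |X|/4$ costs a factor $2$ per recorded vertex. Doubling needs about $n/2$ recorded vertices, so the loss is $2^{n/2}$ rather than $n^{O(1)}$. (The recorded vertices, together with any independent set in the final set, are already independent, so no Ramsey step is needed there; the problem is purely the count.) Lowering the threshold to $|X|/(4n)$ fixes this. But then the second case, a single vertex $v$ whose link contains a matching of size $|X|/(4n)$, occurs in every $3$-graph with $\alpha<n$ (run the greedy for $n$ steps). So the first step extracts nothing, and everything rests on the second case, which you leave open.

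Your concrete suggestion for the second case fails. A copy of $T$ among the left endpoints $\{a_i\}$ uses six different pairs of $M$ and is not forbidden. $F^-$-freeness only excludes $T$ sitting transversally on three pairs of a single vertex's link. So the polynomial Ramsey bound for $T$ gives no $I_1$ beyond what $\alpha(G)<n$ already gives. Nothing prevents edges $a_ia_jb_k$ or $a_ib_jb_k$ between $I_1$ and an independent set among its partners.

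In fact, label the two endpoints of each pair by $0,1$. The constraint imposed by $v$ and $M$ is then exactly that, for any three pairs, at most two of the four even and at most two of the four odd transversal triples are edges. (The forbidden patterns are the neighbourhoods $N_{Q_3}(s)$ of vertices $s$ of the cube $\{0,1\}^3$.) This is only a density cap, and you give no mechanism turning it into an independent transversal or into two large sets with no crossing edges.

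The missing idea is to view $F^-$ as one doubling of an iterated tripartite $3$-graph (\cref{lem:fanominusdoubleable}). In your labels, let $H$ have edges $b45,b46,b57,b67,247$. It is iterated tripartite, with top-level classes $\{b\},\{5,6\},\{2,4,7\}$ and the edge $247$ inside the last class. Doubling $(b,2)$ splits $b$ into $1,3$ and adds $123$, giving a supergraph of $F-256$.

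Place $H$ inside some $(K_3^{(3)})^d$, where the two doubled vertices are similar. Since $r((K_3^{(3)})^d,n)=n^{O(1)}$, supersaturation (\cref{lem:doublingblow-up}) gives a blow-up in which the parts $U_b,U_2$ of $b$ and $2$ have size $N/n^{O(1)}$ and all other parts are single vertices. An edge with two vertices in $U_b$ and one in $U_2$ completes the doubling and hence $F^-$. By similarity, so does an edge with one vertex in $U_b$ and two in $U_2$. So no edge meets both $U_b$ and $U_2$ (\cref{lem:similarpartitioning}).

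This bipartite hole is exactly what your recursion needs. Independent sets of size $n/2$ in $U_b$ and in $U_2$ combine into one of size $n$, so $r(F^-,n)\le n^{O(1)}\,r(F^-,\lceil n/2\rceil)$. Iterating gives $\exp(O(\log^2 n))$, as in \cref{thm:similardoubling} and \cref{cor:tridoubling}.
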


In addition, we introduce a new embedding tool called \textit{quasi-dense pairs}, roughly analogous to lower-regular pairs in the graph regularity method, which can be used to embed a broad class of $3$-graphs into $3$-graphs with small independence number, including the Fano plane itself. The following is a special case of \cref{thm:fwddoubling}.

\begin{theorem}\label{thm:fanoplane}
The Fano plane satisfies $r(F,n)=\exp(\log^{\Theta(1)}n)$.
\end{theorem}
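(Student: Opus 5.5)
The statement has two halves: a quasipolynomial lower bound $r(F,n)\ge \exp(\Omega(\log^{c}n))$ for some $c>1$, and an upper bound $r(F,n)\le\exp(\log^{C}n)$. I will handle them separately.

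\textbf{Lower bound.} The Fano plane is linear and has seven edges. Since it is linear, every tight component is a single edge, so $F$ has seven tight components. Deleting edges preserves linearity, and $F^-$ is NIT. Hence any host avoiding NIT $3$-graphs with at most five tight components already avoids $F^-$, and therefore avoids $F\supseteq F^-$. Alternatively, I would invoke \cref{thm:4tightcomponents} for $H_5=F^-$, which has six edges but the same classification once one notes that $F^-$ is the minimal NIT linear graph. I would check the exact count against \cref{prop:5edgetripartite}.

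If that bookkeeping does not fit, the cleanest route is the general quotient blow-up construction $\cG_d(R)$ of \cref{sec:quasilower}, applied directly to $F$. Take a base $R$ that avoids linear NIT graphs with at most seven edges and has independence number noticeably smaller than $|R|$; a random $3$-graph of appropriate density suffices as a base, since with seven edges one only needs to destroy a bounded family. Then iterate the sparsified blow-up. Iterating this $d\approx\log n/\log\log n$ times should give $r(F,n)\ge\exp(\Omega(\log^{c}n))$ for some $c>1$. In either case the key point is superpolynomiality, which follows because $F$ is NIT and has a bounded number of tight components.

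\textbf{Upper bound.} I would show that $F$ is ``forward-doubleable'' and apply the embedding theorem \cref{thm:fwddoubling}. Order the vertices $1,\dots,7$ so that each new vertex closes edges whose other pairs lie among earlier vertices. The strategy is then to find, in any $3$-graph $G$ with $\alpha(G)<n$ on $N=\exp(\log^{C}n)$ vertices, a nested sequence of quasi-dense pairs. Each step passes to subsets losing only a factor $n^{-O(1)}$ or $2^{-O(\log^{2} n)}$ in size, which is affordable a bounded number of times.

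The concrete steps are as follows. First, use small independence number to find a vertex $v$ whose link graph is dense on a large set, via an Erd\H{o}s--Hajnal style greedy step. Second, inside that link, use a dependent-random-choice argument to obtain a pair $(A,B)$ in which most pairs extend to many edges. Third, embed vertices one at a time, maintaining candidate sets of size at least $N\exp(-O(\log^{C'}n))$.

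The main obstacle is closing the final edge $256$, which makes the structure non-tripartite. I would handle it by the ``doubling'' step: duplicate a vertex's candidate set and use independence number less than $n$ on the common neighbourhood to force an edge among three candidate sets of size at least $n$. Quasi-density must guarantee that these candidate sets remain polylogarithmically large in the exponent, and this is where the quasipolynomial, rather than polynomial, loss arises.
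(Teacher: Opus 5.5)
\textbf{The lower bound.} You are trying to prove something the paper does not prove and explicitly leaves open. Its concluding remarks ask whether $r(F,n)$ is polynomial or quasipolynomial. The $\Theta(1)$ in the exponent allows exponent $1$: \cref{thm:fwddoubling} applies to a single edge, for which $r=n$. So the only lower bound the statement needs is the trivial $r(F,n)\ge n$, witnessed by the empty graph on $n-1$ vertices.

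Your claim that superpolynomiality ``follows because $F$ is NIT and has a bounded number of tight components'' is exactly the open content of \cref{conj:iteratedtripartite} beyond five components. Both of your concrete routes fail:
\begin{enumerate}
\item $F^-$ is linear with six edges, so it has six tight components and is not covered by \cref{thm:fivetightcomponents}. Every proper subgraph of $F^-$ is iterated tripartite by \cref{prop:5edgetripartite}, so a host excluding all NIT graphs with at most five tight components need not exclude $F^-$.
\item The quotient route is circular for linear graphs. The partition into singletons is a nontrivial tight quotient, so $F$ belongs to its own quotient family. Hence \cref{lem:quotient} needs an $F$-free base $R$ with $\alpha(R)<n$, which is already a lower bound on $r(F,n)$. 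A random base has only $r=n^{O(1)}$ vertices, and the lemma then returns $\exp(\log^2 r/(36\log n))=r^{O(1)}$. The sparsified blow-up $\cG_d(R)$ keeps $\log|V|/\log\alpha$ essentially fixed, so no amount of iteration gives superpolynomial growth.
\end{enumerate}

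\textbf{The upper bound.} You name the right tool: the statement is a special case of \cref{thm:fwddoubling}. However, you never verify its hypothesis. A vertex ordering in which each new vertex closes edges is not a forward-doubling sequence. What is needed is \cref{lem:fanofwddoubleable}: take blocks $\{a\}<\{b\}<\{c\}$, double $(a,b)$, then $(a_1,c)$ and $(a_2,c)$, then $(b,c)$, and delete four of the eight edges $a_{1i}a_{2j}b_k$ to obtain $F$.

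Your sketch of re-proving the embedding also breaks at its crucial step. The condition $\alpha(G)<n$ forces an edge inside every $n$-set, but says nothing about an edge of a prescribed type across given candidate sets. A doubling $H[u,w]$ needs many edges of type $e(U,U,W)$, oriented the right way and at every scale. The paper obtains these in three stages:
\begin{enumerate}
\item quasi-dense sets built by iterative bipartitioning (\cref{lem:discretequasidense});
\item the orientation trick of \cref{lem:fwdqdpairexists}, which upgrades them to forward-quasi-dense pairs;
\item the counting lemma \cref{lem:qdcounting}.
\end{enumerate}
Dense links and dependent random choice do not substitute for this.
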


Define the complete bipartite graph $B_{n,n}$ as the 3-graph whose vertex set consists of two parts of size $n$, and which contains all edges intersecting both parts. Our techniques for \cref{thm:fanominus} and \cref{thm:fanoplane} apply more directly to Ramsey numbers against $B_{n,n}$, for which we can prove polynomial bounds such as the following special case of \cref{thm:fwddoublingvsBn}. 

\begin{theorem}\label{thm:fanovsBn}The Fano plane satisfies $r(F,B_{n,n})=n^{O(1)}$.
\end{theorem}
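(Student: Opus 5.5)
We prove the bound by induction on $n$, using the doubling structure of the Fano plane; the target is $r(F,B_{n,n})\le n^{C}$ for an absolute constant $C$. Let $G$ be a $3$-graph on $N=n^C$ vertices containing no copy of $B_{n,n}$ as an independent set. In other words, there are no disjoint sets $X,Y$ of size $n$ such that every triple meeting both $X$ and $Y$ is a non-edge. We want to find a copy of $F$ in $G$.

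\textbf{Step 1: reformulate the hypothesis as density across pairs.} The hypothesis says that for any disjoint $X,Y$ with $|X|=|Y|=n$, some edge of $G$ meets both sides. This is a Zarankiewicz-type condition. Applied to subsets of $X$ and $Y$, it yields many edges crossing any pair of large sets. Concretely, given disjoint $A,B$ with $|A|,|B|\ge n^{c}$, at least a $1/\mathrm{poly}(n)$ fraction of the triples of type $A\times\binom{B}{2}$ or $\binom{A}{2}\times B$ are edges. This is the ``quasi-dense pair'' notion mentioned in the introduction: a pair $(A,B)$ in which every pair of $n$-subsets spans a crossing edge, and which is inherited by polynomially large subsets.

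\textbf{Step 2: build $F$ greedily in a forward order.} Order the vertices $1,\dots,7$ of $F$ so that each vertex, when it is added, closes edges only with previously chosen pairs. For instance, embed $1,2,3$ as an edge, then $4,5$ with $145$, and so on. Each new vertex $v$ must lie in the common link of a few already-embedded pairs. The ``forward-doubleable'' property should guarantee that at every stage we can maintain candidate sets $V_i$ for the not-yet-embedded vertices. These sets remain polynomially large, and the relevant pairs among them remain quasi-dense.

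\textbf{Step 3: shrink while keeping quasi-density (the main obstacle).} At each stage we pass to subsets using a supersaturation and dependent-random-choice style argument. We choose a vertex $x$ for the current vertex of $F$. We then restrict each later candidate set to the neighbourhood of $x$ together with the relevant embedded partner vertices. Density in Step 1 ensures that a typical choice of $x$ leaves sets of size at least $|V_i|/\mathrm{poly}(n)$.

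The key point, and the main difficulty, is that quasi-density is only guaranteed above the threshold $n$. Since each step loses a polynomial factor, starting from $N=n^{C}$ leaves sets of size at least $n^{C-O(1)}\ge n$ after seven steps. What requires care is showing that the restricted pairs are again quasi-dense. This needs an averaging argument over the choice of $x$. The doubling structure of $F$, namely that each vertex's link pairs lie between two sets that were already quasi-dense, makes the average work: a bad $x$ for a fixed pair of $n$-subsets would itself produce an independent $B_{n,n}$.

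The final edge $256$ closes a cycle and is the hardest constraint. We handle it by reserving $V_2,V_5,V_6$ as a pair-dense triple, and by choosing $5$ and $6$ last within sets forced into the link of an already-embedded vertex $2$. Finally, we take $C$ large enough to absorb all polynomial losses.
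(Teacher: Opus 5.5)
There is a genuine gap, and its core is the \emph{orientation} of crossing edges. Having no independent $B_{n,n}$ only tells you that any disjoint $n$-sets $X,Y$ span an edge of type $XXY$ \emph{or} of type $XYY$. Which type occurs can change every time you pass to subsets. Your Step 1 inherits this ``or'' and never resolves it. The property you call quasi-density (every two disjoint $n$-sets span a crossing edge) is just the hypothesis restated, so it adds no information. Every edge you must create, however, is oriented. In the forward-doubling description of $F$ with blocks $A<B<C$ (double $(a,b)$, then $(a_1,c)$ and $(a_2,c)$, then $(b,c)$, and delete four edges), each new edge has two vertices in the earlier block and one in the later block. An edge of the opposite type gives the wrong doubling.

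The missing idea is the paper's forward-hitting trick. Take disjoint $A,B$ of size $n^{C^2}$. Either every $X\subseteq A$, $Y\subseteq B$ of size $n^C$ span an $XXY$ edge, or some such $X,Y$ have $e(X,X,Y)=0$. In the second case, every $W\subseteq Y$, $U\subseteq X$ of size $n$ span a $WWU$ edge, since otherwise $e(U,U,W)=e(W,W,U)=0$ gives an independent $B_{n,n}$. Iterating this inside the later set yields $U_1,U_2,U_3$ with all pairs $(U_i,U_j)$, $i<j$, forward $m$-hitting for a single $m\geq n$. Hitting then gives $e(X,X,Y)>|X|^2|Y|/m^3$ for all $X\subseteq U_i$, $Y\subseteq U_j$ of size at least $m$. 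This is the robust oriented density your argument needs but never obtains.

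The second problem is the vertex-by-vertex greedy embedding itself. Placing $7$ in the common link of the pairs $16$, $24$, $35$, or finding $5,6$ with $256$ an edge after $2$ is fixed, needs codegree or single-vertex-link information that the hypothesis does not provide. Your claim that a bad choice of $x$ ``would itself produce an independent $B_{n,n}$'' is unsupported: a vertex whose link is sparse on the later candidate sets yields no independent bipartite structure. Your ordering is also inconsistent, embedding $5$ early via $145$ and then choosing it last.

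The paper never fixes vertices one at a time. It counts $(U_i)_i$-copies by induction on forward doublings. For a copy $\sigma$ of $H-u-w$, the extension sets $U(\sigma)\subseteq U_i$ and $W(\sigma)\subseteq U_j$ are large for many $\sigma$ by the inductive count. Each new edge $xx'y$ is then found between these two \emph{large} sets using the hitting density, and convexity gives $m^{k|V(H[u,w])|-O(1)}$ copies of $H[u,w]$. In particular, the last Fano edge is $b_1b_2c$, obtained as an oriented edge between the candidate sets for $b$ and $c$, not inside the link of a fixed vertex.
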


This result was announced without proof by Samotij and Mubayi \cite{Conlonquestion} in 2015, but to our knowledge never published.

It is remarkable that using the same methods we obtained different upper bounds for $r(F,n)$ and $r(F,B_{n,n})$. A similar dichotomy was proved to exist in \cite{quasipoly}, wherein it was shown that the bicone $H_2$ satisfies $r(H_2,n)=\exp(\Theta(\log^2n))$ yet $r(H_2,B_{n,n})=n^{O(1)}$. In fact, as another corollary of \cref{thm:fwddoublingvsBn}, we improve Theorem 1.3 in \cite{quasipoly}.

\begin{theorem}\label{thm:iteratedbicone}
For all $t\geq 1$, the iterated bicone $H_t$ defined in \cite[Defn.\ 2.2]{quasipoly} satisfies $r(H_t,B_{n,n})=n^{O(1)}$.
\end{theorem}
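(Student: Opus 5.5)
The plan is to obtain \cref{thm:iteratedbicone} as a corollary of \cref{thm:fwddoublingvsBn}. That means checking that every iterated bicone $H_t$ lies in the class of `forward-doubleable' $3$-graphs to which \cref{thm:fwddoublingvsBn} applies. We will proceed by induction on $t$, following the recursive definition of $H_t$ in \cite[Defn.\ 2.2]{quasipoly}.

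\textbf{Base case.} For $t=1$ the bicone $H_1$ consists of a triangle together with two apex vertices joined to each of its edges. We exhibit an explicit forward-doubling ordering of its vertices: first the two apices, then the triangle vertices. We then verify the doubling condition directly, namely that each later vertex's link into the earlier vertices is a union of pieces that can be duplicated. The consistency check is the earlier statement that the bicone satisfies $r(H_2,B_{n,n})=n^{O(1)}$, which we recover.

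\textbf{Inductive step.} Here $H_{t+1}$ is built from $H_t$ by the same coning or doubling operation. We show that forward-doubleability is preserved under this operation by appending the new vertices at the end of the order. The new cone vertices only create edges whose pairs come from earlier vertices, so the forward condition reduces to that of $H_t$ plus a doubling of a single pair-link. The constants in \cref{thm:fwddoublingvsBn} depend only on the fixed graph $H_t$, so polynomial dependence on $n$ is preserved for each fixed $t$.

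\textbf{Main obstacle.} The hardest step is the inductive verification. The precise definition of forward-doubleable may require the doubling structure to be compatible across all levels simultaneously, not just one new layer at a time. If appending does not work directly, the first alternative is to embed $H_t$ as a subgraph of a larger, more symmetric forward-doubleable graph. A natural candidate is an iterated doubling of a single edge along a fixed order, and monotonicity of $r(\cdot,B_{n,n})$ under taking subgraphs would then finish the proof.
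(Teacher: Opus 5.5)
Your top-level reduction is the paper's: for each fixed $t$, once $H_t$ is known to be forward-doubleable, \cref{thm:fwddoublingvsBn} gives $r(H_t,B_{n,n})=n^{O(1)}$. The gap is that forward-doubleability of $H_t$, which is the whole content of the proof, is never established, and your sketch of it uses the wrong objects.

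The iterated bicone is not a triangle with two apices. $H_1$ is a single edge $xyz$ with bipartition $\{x,z\},\{y\}$. $H_{t+1}=\mathrm{bicone}(H_t;X,Y)$ adds three new vertices $x,y,z$ with edges $xyz$, $zxv$ for every $v\in Y$, and $zyw$ for every $w\in X$; its bipartition is $X\cup\{x,z\}$, $Y\cup\{y\}$.

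Likewise, forward-doubleability is not a condition on a vertex ordering and links into earlier vertices. It is generated by the operation $H[u,v]$ on a blocked graph, with $u$ in an earlier block than $v$. This operation deletes the edges containing both $u$ and $v$, clones $u$ into a new vertex $u'$ of the same block (copying every remaining edge at $u$), and adds the single edge $uu'v$.

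With the correct definition, your inductive step of appending the cone vertices after $H_t$ fails. Every edge created by a doubling step contains the newly created vertex. So the edges $zxv$ with $v$ already present cannot be produced by creating $z$ and $x$ afterwards: at most one can arise as the special edge $uu'v$, and the rest would have to be copied from an older vertex already joined to all of $Y$. Your remark that the new edges have their pairs among earlier vertices is also false, since $zxv$ contains the new pair $zx$.

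The missing idea is to reverse the order (cone first, then $H_t$ grown inside it) and to strengthen the induction hypothesis to: $H_t$ is forward-doubleable with exactly two blocks $X<Y$, namely its bicone bipartition. The step then goes as follows.
\begin{enumerate}
\item Start from $\{x\}<\{y\}$ and double $(x,y)$ to create $z$ and the edge $xyz$. For $t=0$ this already gives $H_1$.
\item Duplicate $x$ into $a$ and $y$ into $b$ (vertex duplication is available via an auxiliary last block), then delete $zab$. This leaves the edges $zxy$, $zya$, $zxb$.
\item Since $a<b$ now share no edge, run the forward-doubling sequence that builds $H_t$ from $\{a\}<\{b\}$.
\end{enumerate}
Every vertex descended from $a$ inherits the link pair $zy$, and every vertex descended from $b$ inherits $zx$. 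So the result contains $\mathrm{bicone}(H_t;X,Y)$ with blocks $X\cup\{x,z\}<Y\cup\{y\}$, which is exactly the strengthened hypothesis for $t+1$. Your fallback of embedding $H_t$ in an unspecified ``iterated doubling of a single edge'' does not replace this argument.
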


\subsubsection{Exponential lower bounds and the chain} \label{sec:intro_explower} 

Towards understanding which hypergraphs have exponential Ramsey number, we provide a construction which gives an exponential lower bound to what we call \textit{non-forward-laminar} 3-graphs, defined precisely in \cref{sec:explower}. As one illustrative example, we show that off-diagonal Ramsey numbers are exponential for ``chains,'' see \cref{fig:chain}. 

\begin{theorem}\label{thm:girth} For all $t\geq 3$, there is a $3$-graph $H=\rm{CH}_{t}$ such that the union of any $t-1$ tight components of $H$ is iterated tripartite, but $r(H,n)= \exp(\Omega(n^{1/5}))$.
\end{theorem}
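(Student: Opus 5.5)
We need to build a chain $\mathrm{CH}_t$ with two properties: the union of any $t-1$ of its tight components is iterated tripartite, yet $H$ is non-forward-laminar. The exponential bound should then come from the general lower bound for non-forward-laminar 3-graphs, \cref{thm:forwardlaminar}, applied to $H=\mathrm{CH}_t$. We do not construct a Ramsey coloring by hand; all the work is combinatorial verification about $\mathrm{CH}_t$.

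\textbf{Construction.} We take $\mathrm{CH}_t$ to be a cyclic chain of $t$ tight components $C_1,\dots,C_t$, each a small tightly connected iterated tripartite piece, say a single edge or a short tight path. Consecutive components $C_i$ and $C_{i+1}$ are glued along a pair or a vertex, indices taken mod $t$. The gluing is arranged so that each component forces a direction: given how the earlier components were placed in a tripartite or iterated structure, the component constrains the relative position (part membership or nesting level) of its two attachment points. Going once around the cycle, these constraints should compose to something inconsistent, e.g.\ a vertex forced strictly below itself in the nesting order. This is the "girth" phenomenon behind the theorem's label.

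\textbf{Step 1: $t-1$ components are iterated tripartite.} Deleting any one component $C_j$ cuts the cycle into a path $C_{j+1},\dots,C_{j-1}$. We embed it greedily into an iterated blow-up of an edge, processing components in path order. Each new component is placed in a fresh level nested inside the part containing its attachment vertices. A path has no closing constraint, so this succeeds.

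\textbf{Step 2: $H$ is not forward-laminar.} Using the precise definition in \cref{sec:explower}, we show that any ordering or laminar family witnessing forward-laminarity would propagate the forced direction around the cycle and return to $C_1$ with a contradiction. The same argument shows $H$ itself is not iterated tripartite, as it must be for the statement to hold.

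\textbf{Step 3: the bound.} Invoking \cref{thm:forwardlaminar} gives $r(H,n)\ge \exp(\Omega(n^{1/5}))$. The exponent $1/5$ should be exactly what that theorem yields, possibly depending on the number of vertices or edges of $H$ but uniformly in $t$.

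\textbf{Main obstacle.} The hard step is designing the gadgets so that Steps 1 and 2 hold simultaneously. Non-forward-laminarity must be a genuinely global cyclic obstruction that no proper sub-union of components sees. If \cref{thm:forwardlaminar} requires a stronger local configuration than a bare cycle, I would try gluing consecutive components along pairs, so that the "forward" orientation of each shared pair is well defined and flips consistently around the cycle.
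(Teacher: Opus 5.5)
\textbf{Verdict: genuine gap.} Your outline has the same architecture as the paper's proof: apply \cref{thm:forwardlaminar}, check that removing one tight component leaves an iterated tripartite graph, and check that $H$ is not forward laminar. But it omits the two things that actually constitute the proof.

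First, the $3$-graph is never specified. The statement is about the explicit chain $\mathrm{CH}_t$, with $V=\{a_i,b_i\}$ and $E=\{a_ib_ia_{i+1},a_ib_ib_{i+1}\}$ (indices mod $t$). You instead leave the gadgets open and call their design ``the hard step''. Second, Step~2 is only asserted: ``propagating the forced direction around the cycle'' is precisely what must be proved.

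One of your candidate designs also provably fails. If every tight component is a single edge, then no two edges share a pair, since such edges would lie in one tight component. So $H$ is linear. Every linear $3$-graph is forward laminar: order $V(H)=[k]$ and take $\{\{1\},\dots,\{k\},[2],\dots,[k]\}$, under which every triple of singletons is forward. Hence \cref{thm:forwardlaminar} yields nothing there. For the same reason, consecutive components can never share a pair that lies in an edge of each.

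The missing idea is how consecutive components of the chain interact. Your fallback of gluing along pairs points the right way: $C_i$ and $C_{i+1}$ share $\{a_{i+1},b_{i+1}\}$, which is the codegree-$2$ pair of $C_{i+1}$ but which $C_i$ places inside a single part.
\begin{itemize}
\item The component $C_i=\{a_ib_ia_{i+1},a_ib_ib_{i+1}\}$ has the forced tripartition $\{a_i\},\{b_i\},\{a_{i+1},b_{i+1}\}$. So any witness $\cF$ must contain a forward triple of disjoint sets $A_i\ni a_i$, $B_i\ni b_i$, $S_{i+1}\supseteq\{a_{i+1},b_{i+1}\}$.
\item Since $C_{i+1}$ separates $a_{i+1}$ from $b_{i+1}$, laminarity gives $A_{i+1},B_{i+1}\subsetneq S_{i+1}$.
\item Since $S_{i+1}$ misses $a_i\in S_i$, either $S_{i+1}\subsetneq S_i$ or $S_{i+1}\cap S_i=\emptyset$.
\item In the disjoint case, $S_{i+1}$ cannot sit between $A_i$ and $B_i$. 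It also cannot come first, because then $\Delta(S_{i+1},\cdot)\supsetneq S_i\supseteq\Delta(A_i,B_i)$ violates forwardness. So it comes after $S_i$.
\item Either way, $S_{i+1}$ appears strictly later than $S_i$ in the depth-first preorder of $T_{\cF}$. This is impossible around a cycle of length $t$.
\end{itemize}
The paper's proof of part (I) of \cref{lem:chainclassification} propagates a similar positional constraint around the cycle, relative to $\Delta(B_0,S_1)$ for an index with $A_0<B_0<S_1$.

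Step~1 then becomes concrete. Deleting $C_{t-1}$ leaves a path. Take the top-level coloring $a_0\mapsto 1$, $b_0\mapsto 2$, everything else $\mapsto 3$: the edges of $C_0$ are rainbow and all other edges lie inside part $3$. Recurse inside part $3$, and cyclic symmetry covers every component.

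Finally, Step~3 needs no hedging. \cref{thm:forwardlaminar} gives $\exp(\Omega(n^{1/5}))$ for every non-forward-laminar $H$, via a single construction that does not depend on $H$.
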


Our witness for \cref{thm:girth} was suggested as an open problem in \cite{CFGHMSVY}. For $t\geq 2$, we denote the \textbf{chain} of length $t$ by $\rm{CH}_t$, a $3$-graph with $V(\rm{CH}_t)=\{a_i,b_i\mid 0\leq i\leq t-1\}$ and
\[E(\rm{CH}_t)=\{a_ib_ia_{i+1},a_ib_ib_{i+1}\mid 0\leq i\leq t-1\},\]
where $i+1$ is taken modulo $t$. Note that $\rm{CH}_2=K_4^{(3)}$, for which better bounds are known (\cite{ConlonFoxSudakov2010}). For $t\ge 3$, $\rm{CH}_t$ has $t$ tight components and is not iterated tripartite, but removing any one component yields an iterated tripartite 3-graph. Furthermore, combined with techniques from other sections, we can prove the following characterization theorem for subgraphs of chains.

\begin{figure}
\centering
\input{tikz_diagrams/temp}
\caption{The chain $\rm{CH}_5$. There are five tight components, each with two edges.}
\label{fig:chain}
\end{figure}

\begin{theorem}\label{thm:chainclassification}
For $t\geq 3$, let $\rm{CH}_t$ be the chain of length $t$. Let $\rm{CH}_t^-$ be obtained by deleting one edge, and let $\rm{CH}_t^{--}$ be obtained by deleting at least two edges. Then
\begin{align}
r(\rm{CH}_t,n)&=\exp(\Omega(n^{1/5})),\label{eq:chainthm}\\
r(\rm{CH}_t^-,n)&=\exp(\Theta(\log^2n)),\label{eq:chain-thm}\\
r(\rm{CH}_t^{--},n)
&=n^{O(1)}.\label{eq:chain--thm}
\end{align}
\end{theorem}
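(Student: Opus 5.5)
The plan is to prove the three estimates separately. Each one is obtained by specializing a general result announced in the outline to the chain. Throughout I use the structure of $\rm{CH}_t$: it has $t$ tight components $T_i=\{a_ib_ia_{i+1},a_ib_ib_{i+1}\}$. Consecutive components meet only in the pair $\{a_{i+1},b_{i+1}\}$, and the components are arranged cyclically. This cyclic arrangement is what prevents an iterated tripartite structure: a blow-up would have to cut the cycle somewhere.

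For \eqref{eq:chainthm} I will verify that $\rm{CH}_t$ is non-forward-laminar. Then \cref{thm:forwardlaminar} gives $\exp(\Omega(n^{1/5}))$ directly; this is also the content of \cref{thm:girth}. Concretely, I will show that any ordering of the vertices that attempts to place the pairs $\{a_i,b_i\}$ in a laminar, forward-directed way must go around the cycle. Each component $T_i$ "points" from $\{a_i,b_i\}$ to the next pair, so following the cycle forces a violation. I expect this combinatorial check to be routine once the definition is unwound, and to reduce to the cyclic structure.

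For \eqref{eq:chain-thm}, the lower bound goes as follows. Deleting one edge, say $a_0b_0b_1$, still leaves a NIT graph. To see this, note the following. In any iterated tripartite embedding, for each edge $a_ib_ia_{i+1}$ and $a_ib_ib_{i+1}$, the pair $a_i,b_i$ must be separated at the top level from the third vertex, or lie in a common part. The remaining single edge $a_0b_0a_1$ still propagates the constraint around the cycle, so a finite case analysis shows $\rm{CH}_t^-$ is NIT. The tight components of $\rm{CH}_t^-$ are $t-1$ two-edge components plus a single edge. I then apply the quotient-blow-up construction of \cref{lem:quotient}/\cref{lem:quotientonegraph}, collapsing the two-edge components. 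This reduces to a NIT 3-graph with essentially two "essential" components, which yields $\exp(\Omega(\log^2 n))$ in the same way as the $H_2$ bound of \cref{thm:4tightcomponents}.

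For the upper bound $\exp(O(\log^2 n))$ I plan to show that $\rm{CH}_t^-$ is forward-doubleable. \cref{thm:fwddoubling} (in the form of \cref{cor:tridoubling}) then gives the quasipolynomial bound with exponent $2$. The idea is that cutting the cycle at the deleted edge lets us order the pairs $\{a_1,b_1\},\dots,\{a_{t-1},b_{t-1}\},\{a_0,b_0\}$ linearly. Each $T_i$ is then a forward edge pair, and the single remaining edge $a_0b_0a_1$ is the one "doubling" step. Matching the exponent $2$ exactly is the step I expect to be the main obstacle, since the general theorem may give $\log^{C}n$. If it does, I would embed by hand: pick a vertex pair of large codegree-type structure via quasi-dense pairs, then embed the path $T_1,\dots,T_{t-1}$ greedily inside a polynomial-size structure.

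For \eqref{eq:chain--thm}, I will show that deleting two edges makes the graph iterated tripartite. The two deletions split the cycle into two paths of components. There are two cases: the deleted edges lie in the same component $T_i$ (that component disappears), or in different components. In either case the graph is an acyclic chain of components with at most single-edge links. Such a path-like graph embeds in an iterated blow-up, by placing $\{a_i,b_i\}$ and the tail of the path recursively inside one part. The Erd\H{o}s–Hajnal bound \cref{cor:setramseyblow-up} then gives $n^{O(1)}$. Deleting further edges preserves the subgraph property.
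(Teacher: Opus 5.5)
Your overall plan matches the paper's: non-forward-laminarity of $\mathrm{CH}_t$ plus \cref{thm:forwardlaminar}, a one-doubling description of $\mathrm{CH}_t^-$, and iterated tripartiteness of $\mathrm{CH}_t^{--}$. However, the lower bound in \eqref{eq:chain-thm} has a genuine gap.

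You describe the quotient step as collapsing the two-edge components to reach ``a NIT 3-graph with essentially two essential components'', and then rerunning the $\cH_2$ argument. No such quotient exists. In a tight quotient of $\mathrm{CH}_t-a_0b_0b_1$, tightness of each intact component $T_i=\{a_ib_ia_{i+1},a_ib_ib_{i+1}\}$, $1\le i\le t-1$, forces $[a_{i+1}]=[b_{i+1}]$. Properness of any edge $a_{i+1}b_{i+1}x$ then forces $[x]=[a_{i+1}]$. Starting from $[a_2]=[b_2]$ and walking forward, the vertices $a_2,b_2,\dots,a_{t-1},b_{t-1},a_0,b_0$ all land in one class $P$. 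Then $a_0b_0a_1$ forces $a_1\in P$, and $a_1b_1a_2$ forces $b_1\in P$.

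So $\mathrm{CH}_t^-$ admits no nontrivial tight quotient, and the bound is exactly \cref{cor:noquotient} (that is, \cref{lem:quotient} with $\cF=\emptyset$, blowing up $K_r$). This is the statement you must prove, and it matters quantitatively. Suppose $\mathrm{CH}_t^-$ had a nontrivial NIT tight quotient $F$ with only $r(F,n)\ge\exp(\Omega(\log^2 n))$. Then \cref{lem:quotient} would return $\exp(\Omega(\log^2 r/\log n))=\exp(\Omega(\log^{3/2} r))$. That does not match the upper bound, so $\Theta(\log^2 n)$ would not follow. \cref{lem:quotientonegraph} is not usable either, since the maximum tight quotient here is a single vertex. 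Your separate NIT case analysis is unnecessary.

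There are three smaller points.

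\emph{Upper bound in \eqref{eq:chain-thm}.} You need neither forward-doubleability nor \cref{thm:fwddoubling}, whose exponent is unspecified. Your identification of $a_0b_0a_1$ as the doubling edge is correct. Deleting $b_0$ from $\mathrm{CH}_t-a_0b_0b_1$ leaves an iterated tripartite linear chain. Doubling $(a_0,a_1)$ there, i.e.\ duplicating $a_0$ into $b_0$ and adding $a_0b_0a_1$, recovers $\mathrm{CH}_t-a_0b_0b_1$. Then \cref{cor:tridoubling} gives exponent $2$ directly; this is the paper's route up to symmetry. The obstacle you anticipate, and the hand-embedding fallback, disappear.

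\emph{The case \eqref{eq:chain--thm}.} When the two deleted edges lie in different components, the result is not an acyclic chain. The two sub-chains share both endpoints $a_0$ and $a_j$, and a single-path recursion from $\{a_0,b_0\}$ breaks on the closing edge $a_{t-1}b_{t-1}a_0$. Instead, color the two sub-chains separately; each has its two ends in different top-level classes. Permute colors so the shared endpoints agree. Then check that each top-level class carries edges of only one sub-chain, so the recursion continues inside classes.

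\emph{The case \eqref{eq:chainthm}.} This still needs the actual argument. Write $\{A_j,B_j,S_{j+1}\}$ for the forward triple of $T_j$, with $a_j\in A_j$, $b_j\in B_j$ and $a_{j+1},b_{j+1}\in S_{j+1}$. Laminarity gives $A_j,B_j\subseteq S_j$. Forwardness then forces either $S_{j+1}\subsetneq S_j$, or $S_{j+1}$ disjoint from and to the right of $S_j$. Inductively, every $S_j$ with $j\ge 1$ is either strictly inside $S_0$ or disjoint from it and to its right. Hence the cycle cannot close up at $S_0$.
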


As another example of the forward-laminar family, we can prove exponential lower bounds for the following type of hypergraphs.

\begin{theorem}\label{thm:spanningcomponents} If $H$ is an NIT $3$-graph such that every tight component spans $V(H)$, then $r(H,n)=\exp(\Omega(n^{1/5})$.
\end{theorem}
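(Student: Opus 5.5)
The plan is to derive \cref{thm:spanningcomponents} as a special case of the exponential lower bound for non-forward-laminar $3$-graphs, \cref{thm:forwardlaminar}. So the real work is combinatorial: if $H$ is NIT and every tight component spans $V(H)$, then $H$ is not forward-laminar. Once that holds, the bound $r(H,n)=\exp(\Omega(n^{1/5}))$ follows from the construction of \cref{sec:explower} with no further quantitative work.

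The forward-laminar notion is defined precisely only in \cref{sec:explower}. I expect it to say, roughly, that $H$ admits a vertex ordering, or a recursive decomposition, in which the tight components, or the vertex sets they ``link'', form a laminar family compatible with an iterated-tripartite-like structure built forward. The argument then goes by contrapositive. Suppose $H$ is forward-laminar with every tight component spanning $V(H)$. The sets attached to the components must be nested or disjoint. Since each of them is all of $V(H)$, the laminar structure collapses: at the top level every component sees the full vertex set. So the forward-laminar witness has to split the components in a way that each component, on its own, respects a single common partition of $V(H)$.

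The key step is then to show that this common partition is an iterated tripartite structure. I would build it by induction on $|V(H)|$. At the top level, a spanning tight component $C$ that is compatible with the decomposition must have all its edges crossing a tripartition $V_1\cup V_2\cup V_3$, or else lie inside a single part. Because $C$ spans $V(H)$, the second option cannot leave a nonempty remainder outside that part. So every component is tripartite across the same top-level partition, apart from edges living entirely inside one $V_i$. Restricting to $H[V_i]$ gives a smaller hypergraph, and we recurse on it. If all the pieces are iterated tripartite, then so is $H$, which contradicts the NIT assumption.

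The main obstacle I expect is this recursion step. Restricted to $V_i$, tight components need not still span $V_i$, since a spanning component may only have a few edges inside $V_i$. So a naive induction on the hypothesis ``every component spans'' breaks down. My first attempt would be to strengthen the inductive statement: prove directly that forward-laminar implies iterated tripartite whenever, at each stage of the forward decomposition, the components being ordered all share the same support. The spanning hypothesis guarantees this at the top level, and the definition of forward-laminar should pass it down to the pieces. If that fails, the fallback is to exhibit an explicit violation of forward-laminarity: take two spanning components $C,C'$ whose induced tripartitions disagree, and check that the resulting crossing pattern is exactly the forbidden configuration in the definition.
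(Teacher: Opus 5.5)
\paragraph{The gap: the common-tripartition step.} Your outer reduction matches the paper's: \cref{thm:spanningcomponents} is \cref{thm:forwardlaminar} plus the fact that a forward laminar $3$-graph whose tight components all span $V(H)$ is tripartite (\cref{lem:forwardlaminarspanning}). But that fact is the whole content, and your proposal asserts its key step rather than proving it.

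In the actual definition, the sets attached to a tight component $H'$ are not its support. They are the parts of a forward triple $S_1<S_2<S_3$ with $E(H')\subseteq E(K^{(3)}_{S_1,S_2,S_3})$. Since $H'$ spans, these three sets partition $V(H)$. So nothing ``collapses'', and your claim that all components respect one common tripartition is exactly what needs proof.

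Laminarity alone does not give it. Take a second spanning component with triple $C_1<C_2<C_3$. The configuration $S_1=C_1\cup C_2$, $C_3=S_2\cup S_3$ is laminar. This is not hypothetical. With $|C_1|=|C_2|=|S_2|=|S_3|=3$ one can build two spanning tightly connected $3$-graphs, with no pair covered by both, rainbow on $(C_1\cup C_2,S_2,S_3)$ and on $(C_1,C_2,S_2\cup S_3)$ respectively. A spanning tightly connected $3$-graph has only one strong $3$-coloring up to relabeling, so their union is not tripartite. When every tight component spans, iterated tripartite already forces tripartite. So this union is a laminar NIT $3$-graph all of whose tight components span. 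Hence the forward condition $\Delta(S_1,S_2)\subseteq\Delta(S_2,S_3)$, which your sketch never uses, is indispensable.

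\paragraph{The missing argument.} With the forward condition, the step is short and needs no recursion. Take two spanning components with triples $(S_i)$ and $(C_j)$. Laminarity leaves two options:
\begin{itemize}
\item the triples coincide; or
\item a part of one triple is the union of two parts of the other, and its complement is then the third part of the other triple.
\end{itemize}
In the second case, these two complementary sets are the children of the root $V$ of $T_{\cS}$. The tree order then forces either $S_1=C_1\cup C_2$ and $C_3=S_2\cup S_3$, or the mirror image $C_1=S_1\cup S_2$ and $S_3=C_2\cup C_3$. In the first case $\Delta(S_1,S_2)=V$ while $\Delta(S_2,S_3)\subseteq C_3\subsetneq V$, so $S_1<S_2<S_3$ is not forward. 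The mirror case fails for $C_1<C_2<C_3$ in the same way.

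Hence all tight components share one tripartition, every edge of $H$ crosses it, and $H$ is tripartite, contradicting NIT. Your planned induction on $H[V_i]$, and the worry that components stop spanning there, come from the guessed definition. No edge of a component lies inside a part of its own triple, so there is nothing to recurse on. Your fallback of comparing two spanning components with disagreeing tripartitions is the right idea. What is missing is identifying the offending configuration and the forward condition it violates.
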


Finally, we have a somewhat surprising dichotomy by combining our results. Define the half-bipartite 3-graph $S_{n,n}$ as a $3$-graph with two parts $V_0,V_1$ of order $n$, including every edge with two ends in $V_0$ and one end in $V_1$. We prove the following in \cref{sec:twobicone}.
\begin{theorem}\label{thm:SnvsKn}There is an $8$-vertex, $8$-edge $3$-graph $H$ with both 
\[r(H,S_{n,n})=n^{O(1)}\quad \text{and} \quad r(H,n)=\exp(\Omega(n^{1/5})).\]
\end{theorem}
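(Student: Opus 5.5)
We need to exhibit an $8$-vertex, $8$-edge 3-graph $H$ that has polynomial Ramsey number against $S_{n,n}$ but exponential Ramsey number against $K_n^{(3)}$. The plan is to pick $H$ so that the two halves of the statement come from the two main machines of the paper. The exponential lower bound will come from the non-forward-laminar construction of \cref{sec:explower}, in the same way as \cref{thm:girth} and \cref{thm:spanningcomponents}. The polynomial upper bound will come from a doubling/embedding argument of the type behind \cref{thm:fwddoublingvsBn}, adapted to the half-bipartite host.

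The natural candidate is a ``two-bicone'' graph, suggested by the section name. Take a core pair $x,y$ and two bicones glued along it. Concretely, use vertices $x,y,a_1,b_1,c_1,a_2,b_2,c_2$, with two bicone-like blocks of four edges each, all sharing the pair $xy$ or closely related pairs. Arrange the blocks so that every tight component spans $V(H)$, or at least so that $H$ is non-forward-laminar. I would first try to make all tight components spanning, because then the exponential bound $r(H,n)=\exp(\Omega(n^{1/5}))$ follows directly from \cref{thm:spanningcomponents}. I would also verify NIT by hand: in any iterated tripartite structure, the top-level partition must split each spanning tight component, and one checks this forces a contradiction with the two interleaved bicones. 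If spanning components cannot be achieved with $8$ edges, I would instead verify the non-forward-laminar condition directly and invoke \cref{thm:forwardlaminar}.

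For the upper bound $r(H,S_{n,n})=n^{O(1)}$, let $G$ be an $N$-vertex $H$-free 3-graph with $N=n^{C}$. The first step is to find a quasi-dense pair in $G$, or else an $S_{n,n}$ in the complement. The key point is that the complement of $S_{n,n}$ only forbids edges of type $(2,1)$ between $V_0$ and $V_1$. So the failure of density in one direction immediately yields an $S_{n,n}$ independent set, and there is no need to recurse inside $V_0$ as one would against $K_n$. This is exactly why the quasipolynomial loss disappears.

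The second step is the embedding. Given a quasi-dense pair $(X,Y)$, embed $x,y$ greedily. Then embed the two bicones, each of which only needs $(2,1)$-type edges relative to the pair, using the forward-doubling property in the sense of \cref{thm:fwddoublingvsBn}. I would check that $H$ admits a forward-doubling ordering compatible with half-bipartite density.

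The main obstacle is choosing $H$ so that both properties hold at once. Non-forward-laminarity pushes towards tangled tight components, whereas doubleability against $S_{n,n}$ requires an ordering in which each new vertex's links are $(2,1)$-type. I would try the two-bicone candidate first and verify both conditions directly, adjusting the edge set if the ordering check fails.
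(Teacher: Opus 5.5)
There is a genuine gap. The statement is existential, so exhibiting $H$ and verifying its two properties is the entire proof. Your proposal never fixes an edge set, and both heuristics you give for choosing $H$ lead to dead ends.

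\emph{The ``all tight components spanning'' route cannot work.} A tightly connected $3$-graph with $k$ edges spans at most $k+2$ vertices, so each spanning component of an $8$-vertex graph needs at least six edges. With $8$ edges, $H$ would therefore be tightly connected and, being NIT, non-tripartite. But every tightly connected subgraph of a doubleable $3$-graph is tripartite. In $H[u,v]$ the pairs $uu'$, $uv$, $u'v$ lie only in the new edge $uu'v$, which is therefore its own tight component. Any other tightly connected subgraph meets $\{u,u'\}$ in at most one vertex per edge, and maps under $u'\mapsto u$ onto a tightly connected subgraph of $H$ whose tripartition pulls back. So no doubling argument can give the $S_{n,n}$ bound for such an $H$. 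You need several tight components and a direct proof of non-forward-laminarity in order to use \cref{thm:forwardlaminar}.

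\emph{The upper bound cannot go through forward doubling.} Your remark that no recursion is needed against $S_{n,n}$ is correct. But you then ask for a forward-doubling ordering, and that is fatal: a forward-doubleable $H$ has $r(H,n)=\exp(\log^{O(1)}n)$ by \cref{thm:fwddoubling}, contradicting the lower bound you want. What is needed is plain doubleability, handled by \cref{thm:doublingvsSnn}. In a host with no independent $S_{n,n}$, any two disjoint $n$-sets $A,B$ span an $AAB$ edge in whichever order you pick. So supersaturation and \cref{lem:doublingblow-up} embed doublings in either direction, with no quasi-dense pairs and no greedy embedding of a core pair.

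The missing idea is to exploit exactly this gap between doubleable and forward-doubleable. You want two doublings in opposite directions across a bipartition that a spanning tight component forces into every witnessing laminar family. The paper takes a single bicone $H=\mathrm{bicone}(H';X,Y)$ with $X=\{a,c\}$, $Y=\{b,d,e\}$ and $E(H')=\{abc,cde\}$, so $E(H)=\{xyz,zxb,zxd,zxe,zya,zyc,abc,cde\}$.
\begin{itemize}
\item It is doubleable: start from the iterated bicone $H_2=H-d-e$, duplicate $b$ into $b,d$, delete $adc$, and double $(d,c)$.
\item It is not forward laminar. The six-edge component spans $V(H)$ with parts $\{z\}$, $X'=X\cup\{x\}$, $Y'=Y\cup\{y\}$, which must therefore be members of the family. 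Let $S$ be the lowest common ancestor of $X'$ and $Y'$. If $Y'$ lies in the left subtree of $S$, the triple covering $abc$ has $S_a,S_c\subseteq X'$ on the right and $S_b$ on the left. Its first two sets then have $\Delta=S\not\subseteq\Delta(S_a,S_c)$, so it is not forward. If $X'$ is on the left, the edge $cde$ fails in the same way.
\end{itemize}
\cref{thm:doublingvsSnn} and \cref{thm:forwardlaminar} then give the two bounds.
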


\section{Quasipolynomial lower bounds and reduction to linear}\label{sec:quasilower}

The main results in the section use \textit{set Ramsey numbers}. For a family $\cF$ of $3$-graphs, we define
\begin{equation}\label{eq:familyramsey}
r(\cF,n) \coloneqq \min\bigg(N\ \bigg|\  \begin{tabular}{l}
Any $N$-vertex $3$-graph $G$ with no independent\\
set of size $n$ contains a member of $\cF$
\end{tabular}\bigg).
\end{equation}
By definition, if $\cF=\emptyset$ then $r(\cF,n)=+\infty$ for all $n\geq 3$, since the complete graph $K_r^{(3)}$ contains no member of $\cF$ for all $r\in \N$. Also by definition, $r(\cF,n)=r(\cF^\minn,n)$ where $\cF^\minn$ is the set of subgraph-minimal elements of $F$. This is because whenever $G$ contains $F\in \cF$ as a subgraph, it also contains a minimal $F'\in \cF^\minn$.

In this section, we prove the following result and use it to deduce Theorems \ref{thm:2tightcomponents}, \ref{thm:4tightcomponents}, and \ref{thm:reductiontolinear}. 

\begin{lemma}\label{lem:quotient}
Let $\cH$ be a family of nontrivial 3-graphs and $\cF$ be the family of nontrivial tight quotients of members of $\cH$. If $r\geq 0$ is sufficiently large and $r(\cF,n)> r$, then
\[
r(\cH,r)\geq \exp\bigg(\frac{1}{36}\frac{\log^2r}{\log n}\bigg).
\]
\end{lemma}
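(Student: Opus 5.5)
The plan is to build a lower-bound construction by iterating a single $\cF$-free 3-graph in a lexicographic, stepping-up fashion, and to show that any copy of a member of $\cH$ inside it projects, at some level, onto a nontrivial tight quotient, which is impossible. Since $r(\cF,n)>r$, fix a 3-graph $R$ on $r$ vertices with no independent set of size $n$ and containing no member of $\cF$. For a depth $d\ge 1$ to be chosen later, let $\cG_d(R)$ have vertex set $V(R)^d$, the sequences of length $d$. For three distinct sequences $x,y,z$, let $i$ be the first coordinate at which they are not all equal. Declare $xyz$ an edge exactly when $x_i,y_i,z_i$ are pairwise distinct and $x_iy_iz_i\in E(R)$. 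This is an iterated blow-up of $R$ along the rooted $r$-ary tree of depth $d$. (If the paper's ``sparsified'' version turns out to be needed, I would instead take a subtree; but I expect the full product to suffice for the stated bound.)

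\textbf{Step 1 (independence number).} Let $S$ be independent in $\cG_d(R)$, and view $S$ as a set of leaves of the tree. At each internal node, the set of first-differing values realized by the children of that node that meet $S$ must be independent in $R$. Indeed, choosing one element of $S$ below each of three such children gives three sequences whose first differing coordinate is this node, with three distinct values. Hence each node has at most $n-1$ children meeting $S$, and by induction $|S|\le (n-1)^d<n^d$. We choose $d=\lfloor \log r/(c\log n)\rfloor$ for a suitable constant $c$, so that $\alpha(\cG_d(R))<r$, while $|V(\cG_d(R))|=r^d\ge \exp(\log^2 r/(c'\log n))$. The factor $\frac1{36}$ and the hypothesis that $r$ is sufficiently large should absorb the floors and constants here.

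\textbf{Step 2 (no copy of $\cH$).} Suppose $\phi$ embeds some $H\in\cH$ into $\cG_d(R)$. We take the longest common prefix of $\phi(V(H))$ and let $i$ be the next coordinate, so the image splits into at least two classes by value at coordinate $i$. The key observation concerns an edge $e$ of $H$ whose three vertices lie in three distinct classes, and a tight neighbour $f$ sharing two of its vertices. If $f$ had two vertices in one class, the first differing coordinate of $f$ would still be $i$, but with only two distinct values there, so $f$ would not be an edge. Hence $f$ also meets three distinct classes, and by tight connectivity every tight component is either entirely ``split'' at level $i$ or entirely contained inside single classes.

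The split components then map, via coordinate $i$, onto edges of $R$. The resulting image is the quotient of $H$ by the class partition restricted to those components, which is (by what I expect the definition to be) a tight quotient of $H$. If at least one component is split, this quotient is nontrivial and lies in $R$, contradicting $\cF$-freeness. If no component is split, every edge lies inside a single class, and we recurse into a class containing an edge, which carries a smaller nontrivial subgraph of $H$. The obstacle is exactly here: the restriction of $H$ to one class need not belong to $\cH$. So I would run the argument for the closure of $\cH$ under taking such restrictions, or more cleanly choose the level $i$ as the \emph{deepest} level at which some edge is split.

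\textbf{Main obstacle.} The hardest point is making Step 2 match the precise definition of ``tight quotient'' and ``nontrivial''. In particular, when components are split at different levels, one has to check that the quotient obtained at a single level is genuinely one of the allowed quotients of a member of $\cH$. The first thing I would try is an induction on the tree depth, where at each node I keep only those tight components of $H$ that are split at that node. I would then verify that the union over the levels of these partial quotients always produces, at some level, a nontrivial quotient in $\cF$.
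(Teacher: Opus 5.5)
\textbf{There is a genuine gap in Step 2, and it comes from the construction itself: the unsparsified iterated blow-up $R^d$ does not in general avoid $\cH$.}

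Your level-$i$ argument is correct as far as it goes. Each tight component of $H$ is either split at coordinate $i$ or lies inside one class. However, a split component need not map onto a \emph{single} edge of $R$. Two edges $uvw,uvw'$ of the same component can have $w_i\neq w'_i$ whenever $u_iv_iw_i$ and $u_iv_iw'_i$ are both edges of $R$. So you only obtain a nontrivial \emph{proper} quotient of $H$ inside $R$. But $\cF$ consists of \emph{tight} quotients, where edges $uvw,uvw'$ sharing a pair must have $[w]=[w']$.

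This is fatal, not a matter of definitions. $R^d$ contains $R$ itself, for instance on the vertices $(x,1,\ldots,1)$, and $R$ may contain members of $\cH$. Concretely, $K_4^{(3)}$ has no nontrivial tight quotient. So for $\cH=\{K_4^{(3)}\}$ we get $\cF=\emptyset$, and $R=K_r^{(3)}$ is admissible, yet $R^d\supseteq K_4^{(3)}$. Passing to a subtree does not help either, since it is still a blow-up of pieces of $R$.

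\textbf{The missing idea is the paper's random pair-labelling sparsification.} Choose a uniformly random $\phi:\binom{V}{2}\to V(R)$, and keep an $\ell$-level triple $uvw$ of $R^d$ only if $\phi(uv)=w_\ell$, $\phi(uw)=v_\ell$ and $\phi(vw)=u_\ell$. Now a pair determines the level-$\ell$ coordinate of any vertex completing it to an edge, which is exactly the tightness condition.

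Take $\ell$ to be the \emph{minimum} level of an edge of $H$. Your suggestion of the deepest level would break properness for edges at shallower levels, whose coordinates at the deeper level are unconstrained. With the minimum level, partition all of $V(H)$ by the $\ell$-th coordinate. This directly gives a nontrivial tight quotient of the whole of $H$ inside $R$, so there is no need to restrict to classes or to close $\cH$ under restrictions.

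\textbf{The price is that Step 1 must be redone.} Your deterministic bound $\alpha\le(n-1)^d$ no longer applies and must be replaced by a probabilistic one:
\begin{itemize}
\item any $m>2n^d$ vertices span at least $m^3/8n^{2d}$ edges of $R^d$;
\item hence they contain $m^2/24n^{2d}$ pairwise pair-disjoint such triples;
\item each of these survives independently with probability $r^{-3}$;
\item a union bound over the $\binom{r^d}{m}$ sets gives $\alpha(\cG_d(R))<24n^{2d}r^3d\log r$.
\end{itemize}
With $d=\lceil\log r/\log n\rceil$, the paper bounds this by $r^6$, which witnesses $r(\cH,r^6)\ge r^d$. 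Substituting $r\mapsto r^{1/6}$ is where the constant $\frac{1}{36}$ comes from.
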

In particular, if $r(\cF,n)$ is superpolynomial then $\log r\gg \log n$ so $\frac{\log^2r}{\log n}\gg \log r$, and thus $r(\cH,n)$ is also superpolynomial. We also have the following

\begin{corollary}\label{cor:notquotients} If $\cH$ is a family of $3$-graphs such that none admits a chain of $t\geq 1$ nontrivial tight quotients, then
\[
r(\cH,n)\geq \exp(\Omega(\log^{1+1/t}n))
\]
\end{corollary}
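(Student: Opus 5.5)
We prove the corollary by induction on $t$, applying \cref{lem:quotient} repeatedly. For $j\ge 0$ let $\cH_0=\cH$, and let $\cH_{j+1}$ be the family of nontrivial tight quotients of members of $\cH_j$. The hypothesis that no member of $\cH$ admits a chain of $t$ nontrivial tight quotients means exactly that $\cH_t=\emptyset$. The definition of $r(\cdot,\cdot)$ therefore gives $r(\cH_t,n)=+\infty$ for all $n\ge 3$. Starting from this trivial bound, we climb back from $\cH_t$ to $\cH_0$. At each level we choose $r$ as large as possible subject to $r<r(\cH_{j+1},n_{j+1})$, and then \cref{lem:quotient} gives a lower bound for $r(\cH_j,\cdot)$.

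Concretely, we show by downward induction on $j$ that
\[
r(\cH_j,n)\ \ge\ \exp\!\big(c_j \log^{1+1/(t-j)} n\big)
\]
for all large $n$, with constants $c_j>0$. (At $j=t$ we read this as $+\infty$.)

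\textbf{Base step $j=t-1$.} Since $r(\cH_t,n)=\infty$, \cref{lem:quotient} applies with any $n$ and any large $r$. Taking $n=3$ gives $r(\cH_{t-1},r)\ge \exp(\log^2 r/(36\log 3))$, which is the claim with exponent $1+1/1=2$.

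\textbf{Inductive step.} Suppose the bound holds for $\cH_{j+1}$ with exponent $1+1/s$, where $s=t-j-1$. Given a target $N$, pick $n$ so that $r(\cH_{j+1},n)>N$. By the induction hypothesis it suffices that $c_{j+1}\log^{1+1/s}n>\log N$, so we may take
\[
\log n\approx \big(\log N/c_{j+1}\big)^{s/(s+1)}.
\]
\cref{lem:quotient} then gives
\[
r(\cH_j,N)\ \ge\ \exp\!\Big(\frac{\log^2 N}{36\log n}\Big)=\exp\!\big(\Omega(\log^{2-s/(s+1)}N)\big)=\exp\!\big(\Omega(\log^{1+1/(s+1)}N)\big),
\]
as required. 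At $j=0$ we get exponent $1+1/t$, which is the corollary.

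\textbf{Points to check.} First, the nontriviality conditions of \cref{lem:quotient}. If $\cH_j$ contains trivial graphs (e.g.\ no edges), the statement may degenerate, so we either discard them or note that the hypothesis implicitly assumes the members are nontrivial. Second, "a chain of $t$ nontrivial tight quotients" must match the iterated family $\cH_t$; here we use that a quotient of a quotient is counted in the chain. Third, the requirement that "$r$ is sufficiently large" must hold uniformly. This is fine because the $N$ we consider tend to infinity, and at the base level $n$ is fixed. Finally, there is the rounding of $n$ to an integer, where we need $n\ge 3$ and monotonicity of $r(\cH_{j+1},\cdot)$. The main obstacle is this exponent bookkeeping in the inductive step, balancing $\log n$ against $\log N$; everything else is routine.
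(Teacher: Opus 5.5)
Your proof is correct and is essentially the paper's argument. The paper inducts on $t$: the base case comes from \cref{lem:quotient} with $\cF=\emptyset$ and $n=3$, and the inductive step passes to the family of nontrivial tight quotients (which admits no chain of length $t-1$), inverts the inductive bound to get $\log n=\Theta(\log^{1-1/t}r)$, and applies \cref{lem:quotient} again; your downward induction along $\cH_0,\ldots,\cH_t=\emptyset$ is the same argument unrolled. On your first checkpoint, only the ``implicit nontriviality'' reading works, since a family containing a one-vertex graph has $r(\cH,n)\le 1$ and discarding such members is not an option; the paper tacitly makes the same assumption.
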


More simply, this gives the following. 

\begin{corollary}\label{cor:noquotient} If $\cH$ admits no nontrivial tight quotient, then $r(\cH,n)\geq \exp(\Omega(\log^2 n))$.
\end{corollary}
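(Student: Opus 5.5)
Corollary~\ref{cor:noquotient} is the case $t=1$ of Corollary~\ref{cor:notquotients}, but I would prove it directly from Lemma~\ref{lem:quotient}. Saying that $\cH$ admits no nontrivial tight quotient means exactly that the family $\cF$ of nontrivial tight quotients of members of $\cH$ is empty. By the convention fixed right after Equation~\ref{eq:familyramsey}, $r(\emptyset,n)=+\infty$ for all $n\ge 3$: every complete 3-graph $K_N^{(3)}$ has no independent set of size $3$ and contains no member of $\cF$. So the hypothesis $r(\cF,n)>r$ of Lemma~\ref{lem:quotient} holds for every $r$ and every $n\ge 3$. The only content is then to choose the two parameters well.

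Given a large target $n$, the lemma bounds $r(\cH,r)$ with $r$ in the role of the independent-set size and the free parameter $n$ in the denominator. I rename that free parameter $m$, set $r=n$, and take $m=3$, which is the smallest value allowed by the convention. This gives
\[
r(\cH,n)\ \ge\ \exp\Big(\frac{1}{36}\frac{\log^2 n}{\log 3}\Big)=\exp(\Omega(\log^2 n))
\]
for all sufficiently large $n$, since the lemma needs $r$ large. Small $n$ only affect the implicit constant. I should also check that each member of $\cH$ is nontrivial, as the lemma assumes; if $\cH$ contains a trivial (edgeless) graph, the statement is presumably meant to exclude it.

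There is one subtlety, and it is the only real obstacle. Lemma~\ref{lem:quotient} has hypotheses that tie $r$ and $n$ together, for example that $r$ is large relative to $n$. Since $m=3$ is a constant, $r=n\to\infty$ certainly satisfies this. If instead the lemma required $n$ to grow, I would take $m$ to be a fixed large constant. Either way the bound remains $\exp(c\log^2 n)$ with $c=1/(36\log m)>0$. I would therefore confirm in the proof of Lemma~\ref{lem:quotient} that any fixed $m\ge 3$ is admissible.
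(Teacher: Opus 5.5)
Your proposal is correct and matches the paper's proof: the paper also takes $\cF=\emptyset$, notes $r(\cF,3)>r$ for every $r$ (so the construction blows up $R=K_r^{(3)}$), and applies Lemma~\ref{lem:quotient} with $n=3$ to get $r(\cH,r)\ge\exp\big(\tfrac{1}{36}\log^2 r/\log 3\big)$. The concern in your last paragraph is unnecessary, since the lemma only asks that $r$ be sufficiently large, and this holds trivially for fixed $n=3$ (there $d=\lceil \log r/\log 3\rceil$, so $24\cdot 9^{d}r^3d\log r<r^6$ for large $r$).
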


To obtain the reduction to linear hypergraphs (\cref{thm:reductiontolinear}) we require further lemmas which we defer to \cref{sec:reductiontolinear}.

\subsubsection{Section organization.}

First, in \cref{sec:quotientdefinitions} we define \textit{tight quotients} and all necessary terms to understand the results above. Next, in \cref{sec:proofofquotientlemma} we prove the key quotient lemma (\ref{lem:quotient}). In \cref{sec:fivetightcomponents} we use the quotient lemma and further properties of the quotient operation to prove \cref{thm:fivetightcomponents}, which strengthens \ref{thm:4tightcomponents}. Next, in \cref{sec:single3-graphreduction} we prove the standalone \cref{lem:quotientonegraph}, which gives an analogous result to \cref{lem:quotient} for individual $3$-graph Ramsey numbers instead of sets. This lemma requires an additional `robustness' condition so it is not used to deduce any headline results. Finally, in \cref{sec:reductiontolinear}, we prove \cref{thm:reductiontolinear} using the quotient machinery and we prove \cref{thm:singlelinear} using a straightforward construction.

\subsection{Quotient definitions}\label{sec:quotientdefinitions}

We shall define a quotient operation which takes an input $3$-graph $H$ and identifies sets of vertices such that some structure of $H$ is preserved and also such that all tight components are contracted to a single edge (or vertex). Then, we'll show that superpolynomiality is preserved under this reduction. To that end, consider the following.

\begin{definition}\label{defn:quotient} Let $H$ be a $3$-graph and $\cP=\{P_1,\ldots, P_m\}$ be a partition of $V(H)$. Viewing $\cP$ as a set of equivalence classes, we define the \textbf{inclusion map} $[\cdot]_\cP$ where $[v]_\cP= P_i$ for the unique $P_i\ni v$. When $\cP$ is clear we simply write $[v]$. Define the \textbf{quotient hypergraph} $H/\cP$ as the hypergraph with vertex set $\cP$ and edge set
\[
E(H/\cP)=\big\{[u][v][w]\ \big|\ uvw\in E(H),\ |\{[u],[v],[w]\}|\geq 2\}
\]
Note that $H/\cP$ may have edges of size both $2$ and $3$.
\begin{enumerate}[(I)]
\item\label{itm:properquotient} A quotient $H/\cP$ is \textbf{proper} if it is a $3$-graph, i.e.\ for all edges $uvw\in E(H)$, either $[u]=[v]=[w]$ or $[u],[v],[w]$ are all distinct. 
\item \label{itm:tightquotient} A proper quotient $H/\cP$ is \textbf{tight} if for each tight component $H'\subseteq E(H)$ and edges $uvw, u'v'w' \in H'$, we have $\{[u],[v],[w]\}=\{[u'],[v'],[w']\}$.
\end{enumerate}
As $\cP=\{V(H)\}$ is always a tight quotient of $H$, we say $H/\cP$ is \textbf{nontrivial} if $|\cP|\geq 2$. More generally, a 3-graph $H$ is nontrivial if $|H|\geq 2$. A \textbf{chain} of tight/proper quotients of $H$ of length $t$ is a sequence of $3$-graphs $H=H_0,H_1,\ldots, H_t$ such that for all $i\in [t]$, $H_i$ is a tight/proper quotient of $H_{i-1}$, and all are nontrivial.
\begin{observation}
Note that \ref{itm:tightquotient} is equivalent to the condition that for any edges $uvw,uvw'\in E(H)$ with a common pair, we have $[w]=[w']$. If this simpler condition holds, then the set equality propagates through tight paths in any tight component.
\end{observation}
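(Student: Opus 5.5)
We need to prove the Observation: for a proper quotient $H/\cP$, condition (II) is equivalent to the condition that for any edges $uvw, uvw'\in E(H)$ sharing the pair $uv$, we have $[w]=[w']$.

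\emph{Forward direction.} Suppose (II) holds and $uvw, uvw'\in E(H)$. These two edges share a pair, so they lie in the same tight component. Hence (II) gives $\{[u],[v],[w]\}=\{[u],[v],[w']\}$. Since the quotient is proper, either all three classes $[u],[v],[w]$ coincide or they are pairwise distinct.

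In the first case, $[u]=[v]$. Properness applied to $uvw'$ then forces $[w']=[u]=[w]$.

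In the second case, $[u]\neq[v]$, so properness applied to $uvw'$ forces $[w']\notin\{[u],[v]\}$. Now $[w']$ lies in the set $\{[u],[v],[w]\}$ but is not $[u]$ or $[v]$, so $[w']=[w]$. This gives the simpler condition.

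\emph{Reverse direction.} Assume the simpler condition holds. It suffices to show that $e\mapsto\{[x]:x\in e\}$ takes the same value on any two edges $e,f$ with $|e\cap f|=2$. This value is then constant along tight paths. By the definition of tight connectivity, any two edges of a tight component are joined by such a path, so the value is constant on each tight component.

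Write $e=uvw$ and $f=uvw'$. The hypothesis gives $[w]=[w']$, so $\{[u],[v],[w]\}=\{[u],[v],[w']\}$. By induction along the path $e=e_0,\dots,e_\ell=f$, the set equality holds for every pair of edges in the same tight component, which is exactly (II).

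The only subtlety is the forward direction: set equality alone does not force $[w]=[w']$. For example, $[w']=[u]$ would be consistent with set equality if $[w]=[u]$. Properness is what rules out this degenerate identification, so I expect that case analysis to be the main, though mild, obstacle.
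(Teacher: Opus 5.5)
Your proof is correct and follows the paper's reasoning: the reverse direction is exactly the propagation of the set equality along tight paths that the paper sketches, and your case analysis using properness supplies the forward direction, which the paper leaves implicit. One small slip is in your closing remark: your example does not show that set equality alone is insufficient, since $[w]=[u]$ and $[w']=[u]$ already give $[w]=[w']$; the right example is $[w]=[u]$ and $[w']=[v]$ with $[u]\neq[v]$, which is exactly the situation properness rules out.
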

\end{definition}

Observe that if $F$ is a tight quotient of a 3-graph $H$ with $k$ tight components, then by \ref{itm:tightquotient} $F$ has at most $k$ edges. In particular, if $H$ is not linear then $|E(F)|<|E(H)|$. \cref{lem:quotientpreservesNIT} will exemplify the idea that a quotient of $H$ is like an inverse of an iterated blow-up, and preserves some structure.

We observe that there are many 3-graphs $H$ which do not admit any nontrivial tight quotient. For example, if $H$ is tightly connected and spanning and $H/\cP$ is a tight quotient of $H$, then $\cP$ is either a proper tripartition of $H$, or else $\cP$ is trivial. Thus, if $H$ is not tripartite, then $H$ has only the trivial tight quotient.

\subsection{Proof of Lemma \ref{lem:quotient}}\label{sec:proofofquotientlemma}

The lower bound construction uses iterated blow-ups. For a 3-graph $R$ on vertex set $[r]$, denote by $R^d$ the \textit{$d$-times iterated blow-up} of $R$. We put $R^1=R$ and $R^d$ is obtained from $R^{d-1}$ by replacing vertex with a copy of $R$. Equivalently, $R^d$ is obtained from $R$ by replacing each vertex by $r^{d-1}$ copies of itself, then inducing a copy of $R^{d-1}$ on each blown-up set. Formally, we let $V(R^d)=[r]^d$ and define $E(R^d)$ in the following way:

For distinct tuples $u=(u_1,\ldots, u_d)$ and $v=(v_1,\ldots, v_d)$, define $\delta(u,v)$ as the smallest index $i\in [d]$ such that $u_i\neq v_i$. We say that a triple $\{u,v,w\}$ is $\ell$-level if $\delta(u,v)= \delta(v,w)=\delta(u,w)=\ell$. Now for all $u,v,w\in V(R^d)$ we put $uvw\in E(R^d)$ if and only if there exists $\ell\in [d]$ such that $uvw$ is $\ell$-level and $u_\ell v_\ell w_\ell \in E(R)$. 

Now we see that a graph $H$ is \textit{iterated tripartite} if there is some $d$ such that $H$ is isomorphic to a subgraph of $(K_3^{(3)})^d$. We call the isomorphism $\chi:V(H)\to \{1,2,3\}^d$ a \textit{proper iterated coloring}.

\begin{construction}
For a $3$-graph $R$ on vertex set $[r]$, define $\cG_d(R)$ as a \textit{random sparsification} of the iterated blow-up $R^d$ in the following way. Let $V=V(\cG_d(R))=[r]^d$ and let $\phi:\binom V2\to [r]$ be uniformly random, chosen independently for each pair. For all $u,v,w\in V$, we include $uvw\in E(\cG_d(R))$ if and only if
\begin{enumerate}[(i)]
\item $uvw\in E(R^d)$, that is, there exists $\ell\in[d]$ such $uvw$ is $\ell$-level and $u_\ell v_\ell w_\ell \in E(R)$.
\item\label{itm:phivalue} With $\ell$ as above, we have $\phi(uv)=w_\ell$, $\phi(uw)=v_\ell$, and $\phi(vw)=u_\ell$.
\end{enumerate}
\end{construction}

First, we describe how this construction reduces the problem from $H$ to its tight quotients.
\smallskip

\begin{lemma}\label{lem:G(R)tightminor} Let $H$ and $R$ be $3$-graphs. If $H\subseteq \cG_d(R)$, then $R$ contains subgraph isomorphic to a nontrivial tight quotient of $H$.
\end{lemma}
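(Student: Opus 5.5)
The plan is to read the quotient partition directly off the coordinates of the embedding, at the first coordinate where the embedded vertices disagree. The argument is deterministic: it works for every choice of $\phi$, and only property \ref{itm:phivalue} of the construction is used.

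\textbf{Setup.} Fix an embedding of $H$ into $\cG_d(R)$ and identify each vertex of $H$ with its image, a tuple in $[r]^d$. We may assume $H$ has at least two vertices; otherwise there is nothing to prove. Let $\ell$ be the smallest index such that not all vertices of $H$ agree in coordinate $\ell$. Then all vertices of $H$ share their first $\ell-1$ coordinates, so $\delta(u,v)\geq \ell$ for all distinct $u,v\in V(H)$. Define the partition $\cP$ of $V(H)$ by $u\sim v$ if and only if $u_\ell=v_\ell$. The map $[u]\mapsto u_\ell$ is then an injection from $\cP$ into $[r]=V(R)$. By the choice of $\ell$, we have $|\cP|\geq 2$, so the quotient will be nontrivial.

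\textbf{Properness and the image lies in $R$.} Let $uvw\in E(H)$. By (i) of the construction, $uvw$ is $\ell'$-level for some $\ell'$ with $u_{\ell'}v_{\ell'}w_{\ell'}\in E(R)$, and $\ell'\geq \ell$ by the setup. There are two cases.
\begin{itemize}
\item If $\ell'>\ell$, then $u,v,w$ agree in coordinate $\ell$, so $[u]=[v]=[w]$ and the edge contributes nothing to $H/\cP$.
\item If $\ell'=\ell$, then $u_\ell,v_\ell,w_\ell$ are pairwise distinct, so $[u],[v],[w]$ are distinct. Moreover $u_\ell v_\ell w_\ell\in E(R)$.
\end{itemize}
Hence $H/\cP$ is a proper quotient. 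Under the injection $[u]\mapsto u_\ell$, every edge of $H/\cP$ is sent to an edge of $R$, so $R$ contains a copy of $H/\cP$.

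\textbf{Tightness.} This is the main step, and it is where the random labelling $\phi$ is used. By the Observation after Definition~\ref{defn:quotient}, it suffices to show: whenever $uvw,uvw'\in E(H)$, we have $[w]=[w']$.
\begin{itemize}
\item If $[u]=[v]$, then properness of the quotient forces $[w]=[u]$ and $[w']=[u]$, so $[w]=[w']$.
\item If $[u]\neq[v]$, then by the case analysis above both edges are $\ell$-level. Condition \ref{itm:phivalue} then gives $\phi(uv)=w_\ell$ and $\phi(uv)=w'_\ell$. Hence $w_\ell=w'_\ell$, that is, $[w]=[w']$.
\end{itemize}
In both cases $[w]=[w']$, so $H/\cP$ is a nontrivial tight quotient of $H$ contained in $R$, as required.
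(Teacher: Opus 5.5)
Your proof is correct and is essentially the paper's argument: you partition $V(H)$ by the $\ell$-th coordinate, obtain properness from levelness, and obtain tightness from $\phi(uv)$ determining $w_\ell$ for the shared pair. The only difference is your choice of $\ell$ as the first coordinate where the embedded vertices disagree, rather than the paper's minimum level of an edge of $H$. Your choice still gives $|\cP|\ge 2$, which is all nontriviality requires, though the resulting quotient may be edgeless, whereas the paper's choice guarantees an edge. Also, for a one-vertex $H$ the statement is false rather than vacuous, so the lemma implicitly assumes $H$ is nontrivial.
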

\begin{proof}
Let $\ell$ be the minimum level of any edge $e^*\in E(H)$, and define the partition $\cP=\{P_x\mid x\in V(R)\}$ by letting $P_x=\{v\in V(H)\mid v_\ell=x\}$, so $[u]=[v]$ if and only if $u_\ell=v_\ell$. By minimality of $\ell$, every edge $uvw\in E(H)$ is either $\ell$-level in which case $u_\ell v_\ell w_\ell\in E(R)$, or else $uvw$ is $\ell'>\ell$ level, in which case $u_\ell=v_\ell=w_\ell$; thus $H/\cP$ is a proper quotient of $H$. 

If $uvw,uvw'\in E(H)$ are two edges with a common pair $uv$, then since each is level, they are at the same level $\ell'=\delta(u,v)$. If $\ell'>\ell$ then $w_\ell=u_\ell=v_\ell=w_{\ell}'$, and if $\ell'=\ell$ then by \ref{itm:phivalue} we have $w_\ell=\phi(uv)=w_\ell'$;  
thus $F=H/\cP$ is a tight quotient of $H$. Since $e^*$ is $\ell$-level, we conclude that $F$ is nontrivial and the embedding $v\mapsto v_\ell$ yields an isomorphic copy of $F$ in $R$.
\end{proof}

Next, we check that $\cG_d(R)$ avoids large independent sets and thus is a good Ramsey construction.

\smallskip

\begin{lemma}\label{lem:G(r)indset} If $|R|=r$ and $\alpha(R)<n$, then with positive probability $\alpha(\cG_d(R))< 24n^{2d}r^3d\log r$.
\end{lemma}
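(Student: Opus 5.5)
The plan is a first-moment (union bound) argument over all vertex sets of size
\[
m \coloneqq \big\lceil 24n^{2d}r^3d\log r\big\rceil .
\]
For each such set $S$ we show that $S$ contains a large \emph{linear} family of edges of $R^d$, meaning triples that pairwise share no pair. The probability that none of them survives the sparsification is then tiny, because distinct pairs receive independent values of $\phi$. Since $|V(\cG_d(R))|=r^d$, there are at most $r^{dm}=\exp(md\log r)$ choices of $S$. So it suffices to show that every fixed $S$ of size $m$ is independent in $\cG_d(R)$ with probability less than $\exp(-md\log r)$.

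\textbf{Step 1: $\alpha(R^d)\le (n-1)^d<n^d$.} We prove this by induction on $d$. Let $I$ be independent in $R^d$. Any three vertices of $I$ with distinct first coordinates $x,y,z$ form a $1$-level triple, so $xyz\notin E(R)$. Hence the set of first coordinates of $I$ is independent in $R$ and has size at most $n-1$. Each fiber $\{v\in I: v_1=x\}$ is an independent set in a copy of $R^{d-1}$, so by induction it has size at most $(n-1)^{d-1}$.

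\textbf{Step 2: a large linear family inside $S$.} We build edges $e_1,\dots,e_K$ of $R^d$ inside $S$ greedily, requiring that no $e_j$ contains a pair already covered by $e_1,\dots,e_{j-1}$. Suppose $k$ edges have been chosen. The covered pairs form a graph on $S$ with at most $3k$ edges. By Tur\'an's theorem this graph has an independent set $T\subseteq S$ with
\[
|T|\ge \frac{m^2}{6k+m}.
\]
If $|T|\ge n^d$, then Step 1 gives an edge of $R^d$ inside $T$, and this edge uses no covered pair. So the process continues as long as $m^2/(6k+m)\ge n^d$. It therefore produces
\[
K\ \ge\ \frac{m^2}{12n^d}\ \ge\ \frac{m^2}{24n^{2d}}\ \ge\ m\,r^3 d\log r
\]
edges, using $m\ge 2n^d$, $n\ge1$ and the choice of $m$. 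Note that we have an extra factor of $n^d$ to spare.

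\textbf{Step 3: probability bound.} Each $e_j=uvw$ is an $\ell$-level edge of $R^d$. It lies in $\cG_d(R)$ exactly when the three values $\phi(uv),\phi(uw),\phi(vw)$ take prescribed values, which happens with probability $r^{-3}$. The $e_j$ use pairwise disjoint sets of pairs, so these events are mutually independent. Hence
\[
\Pr[S\text{ independent}]\le (1-r^{-3})^K\le \exp(-K/r^3)\le \exp(-md\log r).
\]
To make this strict, and so beat the union bound, we use a little slack in the constants, for instance the spare factor $n^d$ from Step 2 or the factor $2$ lost in bounding $K$. The union bound over the at most $r^{dm}$ sets then gives positive probability that no $m$-set is independent, so $\alpha(\cG_d(R))<m$.

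The main obstacle is Step 2. The naive alternative, counting all edges of $R^d$ in $S$ by supersaturation and then greedily discarding those that share a pair, fails because codegrees in $R^d$ can be as large as $r^{d-1}$. The Tur\'an-based greedy sidesteps codegrees entirely: it only needs Step 1 applied to the current independent set of the covered-pair graph. The remaining points to check are the constant bookkeeping and that $r$ is large enough for $\log r>0$.
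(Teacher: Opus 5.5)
Your proof is correct, but Step 2 takes a different route from the paper's. The paper first uses a hypergraph Tur\'an bound (sampling with alterations) to show that any $S$ with $|S|=m>2n^d$ spans at least $m^3/(8n^{2d})$ edges of $R^d$. It then extracts a pair-disjoint subfamily greedily, using only the fact that each pair of $S$ lies in at most $m$ triples inside $S$, which gives $m^2/(24n^{2d})$ triples. Your greedy instead applies graph Tur\'an (Caro--Wei) to the graph of already-covered pairs and feeds the resulting independent set back into $\alpha(R^d)<n^d$. This needs no hypergraph Tur\'an input and gives $m^2/(12n^d)$ pair-disjoint triples, larger by a factor of about $2n^d$, though that extra strength is not needed for the stated bound. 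You also prove $\alpha(R^d)\le(n-1)^d$ explicitly, where the paper only asserts $\alpha(R^d)=\alpha(R)^d$. Step 3 and the union bound match the paper.

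Your closing remark needs one correction: the ``naive alternative'' does not fail, and it is exactly the paper's argument. The codegree that matters is the codegree inside $S$, which is at most $m-2$ however large codegrees are in $R^d$. Since the edge count in $S$ is cubic in $m$, dividing by $3m$ still leaves quadratically many pair-disjoint triples.
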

\begin{proof}
First, observe that $\alpha(R^d)=\alpha(R)^d<n^d$. Let $S\subseteq V(R^d)$ have size $m>2n^d$. By Tur\'an bounds for complete hypergraphs (or sampling with alterations), $S$ must contain at least $m^3/8n^{2d}$ edges of $R^d$. Next, since codegrees are at most $m$ we greedily pick out a set $\cS\subseteq E(R^d)$ of $m^2/24n^{2d}$ pair-disjoint triples in $S$; i.e. $\cS$ forms a linear hypergraph. Each triple in $\cS$ is included as an edge in $\cG_d(R)$ with probability $r^{-3}$ depending only on $\phi$, so they are mutually independent. Thus, taking a union bound over all choices of $S$, we have
\[
\PP[\alpha(\cG_d(R))\geq m]\leq \binom{r^d}m(1-r^{-3})^{m^2/24a^{2d}}< \exp(m d\log r-m^2/24a^{2d}r^3)\leq 1
\]
as long as $m\geq  24a^{2d}r^3d\log r$
\end{proof}

Together, these prove \cref{lem:quotient}.

\begin{proof}[Proof of \cref{lem:quotient}]
Let $R$ be an $r$-vertex $3$-graph with no member of $\cF$ and no independent set of size $n$. Let $d=\ceil{\frac{\log r}{\log n}}$, and let $G=\cG_d(R)$. Observe that
\[
|V(G)|=r^d \geq \exp\Big(\frac{\log r}{\log n}\log r\Big).
\]
By the contrapositive of \cref{lem:G(R)tightminor}, $G$ contains no member of $\cH$. By \cref{lem:G(r)indset}, $G$ contains no independent set of size
\[
24n^{2d}r^3d\log r = \exp\Big((1+o(1))\big(2d\log n+3\log r\big)\Big)\leq \exp\Big((1+o(1))\big(2\log r+3\log r\big)\Big)< r^6,
\]
when $r$ is sufficiently large. Thus, $G$ is a witness for $r(\cH,r^6)\geq \exp\big(\frac{\log^2r}{\log n}\big)$. Plugging in $r^{1/6}$ gives the result.
\end{proof}

From this, we can quickly deduce Corollaries \ref{cor:noquotient} and \ref{cor:notquotients} by plugging in the empty family. Note that \ref{cor:noquotient} is the $t=1$ case of \ref{cor:notquotients}.

\begin{proof}[Proof of \cref{cor:noquotient}]
Let $\cH$ be a family admitting no tight quotient. Then letting $\cF=\emptyset$, we have $r(\cF,3)>r$ for all $r$ (that is, our machinery blows up the graph $R=K_r$). Then for $r$ sufficiently large, \cref{lem:quotient} implies
\[
r(\cH,r)\geq \exp\Big(\frac{1}{36}\frac{\log^2 r}{\log 3}\Big) = \exp(\Omega(\log^2 r)).\qedhere
\]
\end{proof}

\begin{proof}[Proof of \cref{cor:notquotients}]
We proceed by induction on $t$, where $t=1$ is handled above. For the inductive case, let $\cH$ be a family admitting no chain of length $t$, and let $\cF$ be the set of nontrivial tight quotients of $\cH$. Then by definition $\cF$ admits no chain of length $t-1$. By induction, $r(\cF,n)\geq r= \exp(\Theta(\log^{1+1/(t-1)}n))$, and inverting this, $\log n=\Theta(\log^{1-1/t}r)$. Then by \cref{lem:quotient},
\[
r(\cH,r)\geq \exp\bigg(\Omega\bigg(\frac{\log^2r}{\log n}\bigg)\bigg) = \exp(\Omega(\log^{1+1/t}r)),
\]
as desired.
\end{proof}

\subsection{At most five tight components}\label{sec:fivetightcomponents}

The following lemma exemplifies how the quotient operation preserves the NIT structure of a minimal $3$-graph. 

\begin{lemma}\label{lem:quotientpreservesNIT}
If $H$ is minimal NIT and $F$ is a nontrivial proper quotient of $H$, then $H$ is contained in an iterated blow-up of $F$. In particular, $F$ is NIT.
\end{lemma}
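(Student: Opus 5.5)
The plan is to embed $H$ directly into $F^{d+1}$ for a suitable $d$. The first coordinate records the part of $\cP$ containing a vertex, and the remaining $d$ coordinates record a proper iterated coloring of the subgraph induced inside that part. The ``in particular'' clause then follows from monotonicity of iterated blow-ups.

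\textbf{Step 1: the induced pieces are iterated tripartite.} First I check that $F$ has at least one edge. Suppose every edge of $H$ lay inside a single part. Since $|\cP|\geq 2$, $H$ would then be the disjoint union of the induced graphs $H[P]$, $P\in\cP$. Each of these is a proper subgraph of $H$ once we use the convention that minimality allows deleting vertices as well as edges, and isolated vertices are irrelevant. A disjoint union of iterated tripartite graphs is iterated tripartite, because we can place them in distinct parts of a blow-up. So $H$ would be iterated tripartite, a contradiction. I expect this degenerate edgeless-quotient case, and fixing the precise meaning of ``minimal'', to be the only delicate point. Hence some edge crosses parts, so $F$ contains an edge $xyz$ and each $H[P]$ misses that edge. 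Thus each $H[P]$ is a proper subgraph of $H$, and by minimality it is iterated tripartite. So there is a proper iterated coloring $\chi_P:V(H[P])\to\{1,2,3\}^{d_P}$. Let $d=\max_P d_P$, and pad each $\chi_P$ to length $d$ by appending constant coordinates. This preserves injectivity and keeps every edge level.

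\textbf{Step 2: the embedding.} Identify $\{1,2,3\}$ with the edge $\{x,y,z\}$ of $F$. Then $(K_3^{(3)})^d$ is the subgraph of $F^d$ induced on $\{x,y,z\}^d$, since an $\ell$-level triple with $\ell$-th coordinates $x,y,z$ is an edge of $F^d$. Define $\psi:V(H)\to V(F)^{d+1}=V(F^{d+1})$ by $\psi(v)=([v],\chi_{[v]}(v))$. This map is injective, since vertices in different parts differ in the first coordinate and $\chi_P$ is injective within a part. Now take an edge $uvw\in E(H)$. Because $\cP$ is proper, either $[u],[v],[w]$ are distinct or all equal. In the first case the triple $\psi(u)\psi(v)\psi(w)$ is $1$-level and $[u][v][w]\in E(F)$, so it is an edge of $F^{d+1}$. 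In the second case all three images share their first coordinate, and the remaining coordinates form an $\ell$-level edge of $(K_3^{(3)})^d\subseteq F^d$. Hence the images form an $(\ell+1)$-level edge of $F^{d+1}$. So $H\subseteq F^{d+1}$.

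\textbf{Step 3: the ``in particular'' clause.} Suppose $F$ were iterated tripartite, say $F\subseteq (K_3^{(3)})^e$. Iterated blow-ups are monotone under subgraphs, and $(R^e)^{d+1}\cong R^{e(d+1)}$ by concatenating coordinates. Therefore $H\subseteq F^{d+1}\subseteq (K_3^{(3)})^{e(d+1)}$, so $H$ would be iterated tripartite, a contradiction. Hence $F$ is NIT.
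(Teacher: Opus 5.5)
Your proof is correct and is essentially the paper's argument: the map $v\mapsto([v],\chi(v))$ into the $(\ell+1)$-fold iterated blow-up of $F$, with crossing edges landing at level $1$ and edges inside a part landing one level deeper on the edge $xyz$. Your Step 1 makes explicit something the paper assumes without proof: that $F$ has an edge, so each $H[P]$ is a proper subgraph, and this needs minimality to rule out isolated vertices. Your route to the final clause via $(R^e)^{d+1}\cong R^{e(d+1)}$ is equivalent to the paper's direct concatenated coloring $v\mapsto(\chi_1([v]),\chi(v))$.
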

\begin{proof}
Let $xyz$ be an edge of $F=H/\cP$. By minimality of $H$, for each $P\in \cP$ the induced $3$-graph $H[P]$ is iterated tripartite; let $\chi:V(H)\to \{x,y,z\}^{\ell}$ induce a proper iterated coloring on each part $P\in \cP$. Then let $\chi':V(H)\to V(F)^{\ell+1}$ be defined $\chi'(v)=([v],\chi(v))$. Then every edge $uvw\in E(H)$ either has $[u][v][w]\in E(F)$ in which case $uvw$ is $1$-level with respect to $\chi'$, or else $[u]=[v]=[w]$ in which case there is some $d\in [\ell]$ such that $uvw$ is $d$-level with respect to $\chi$ and $d+1$-level with respect to $\chi'$, and $\chi'_{d+1}(u)\chi'_{d+1}(v)\chi'_{d+1}(w)=xyz\in E(F)$.

The corollary follows from the fact that iterated-iterated-blow-ups are still iterated blow-ups. More explicitly, if $F$ has proper iterated coloring $\chi_1:\cP\to \{x,y,z\}^{\ell_1}$, then the coloring $\chi_2:V(H)\to \{x,y,z\}^{\ell_1+\ell}$ given by $\chi_2(v)=(\chi_1([v]),\chi(v))$ is proper iterated coloring of $H$.
\end{proof}

Using \cref{lem:quotient}, we can prove lower bounds for all 3-graphs with at most five tight components. In particular, we provide a new proof of \cref{thm:2tightcomponents} and complete the following strengthening of \cref{thm:4tightcomponents}. 

\begin{theorem}\label{thm:fivetightcomponents} Let $\cH_t$ be the family of NIT $3$-graphs with at most $t$ tight components. We have
\begin{align}
r(\cH_1,n)&\geq \exp(\Omega(n^{2/3}))\tag{{\cite[Thm.\ 2.1]{CFGHMSVY}}}\\
r(\cH_2,n)&\geq \exp(\Omega(\log^2n))\tag{{\cite[Thm.\ 3.1]{CFGHMSVY}}}\\
r(\cH_4,n)&\geq \exp(\Omega(\log^2n/\log\log n))\\
r(\cH_5,n)&\geq \exp(\Omega(\log^{3/2}n/(\log\log n)^{1/2})).
\end{align}
\end{theorem}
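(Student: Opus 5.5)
The plan is to run the proof of \cref{lem:quotient} with a modified embedding argument, so that the NIT condition passes from $H$ to its quotient without any minimality hypothesis. The first two bounds are quoted from \cite{CFGHMSVY}; the $\cH_2$ bound will also come out of the argument below.

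\textbf{Step 1 (structural lemma).} I claim: if $R$ contains no NIT $3$-graph with at most $t$ edges, then $\cG_d(R)$ contains no member of $\cH_t$.

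I want to avoid applying \cref{lem:quotientpreservesNIT} to a minimal NIT subgraph $H'\subseteq H$. The reason is that $H'$ may have more tight components than $H$, so a tight quotient of $H'$ need not have few edges. Instead I argue directly, by induction on $|V(H)|$.

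Suppose $H\in\cH_t$ embeds in $\cG_d(R)$. As in \cref{lem:G(R)tightminor}, let $\ell$ be the minimal level of an edge and $\cP$ the level-$\ell$ partition. Then $F=H/\cP$ is a nontrivial tight quotient, with a copy in $R$ via $v\mapsto v_\ell$.

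Tightness forces each tight component of $H$ either to map onto a single edge of $F$ or to lie inside a single part. Two consequences follow:
\begin{itemize}
\item $|E(F)|\le t$, so $F$ is iterated tripartite by hypothesis.
\item Each $H[P]$ is a union of tight components of $H$, so it has at most $t$ tight components.
\end{itemize}
Each $H[P]$ has fewer vertices and sits inside a sub-blow-up with fixed first $\ell$ coordinates, which is a copy of $\cG_{d-\ell}(R)$. By induction $H[P]$ is iterated tripartite; the base cases are $H[P]$ edgeless or of size at most $2$.

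The colouring concatenation in the proof of \cref{lem:quotientpreservesNIT} then gives a proper iterated colouring $v\mapsto(\chi_F([v]),\chi_P(v))$ of $H$, after padding the colourings of the different parts to a common depth. So $H$ is iterated tripartite, a contradiction.

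\textbf{Step 2 (the quantitative recursion).} Combining Step 1 with \cref{lem:G(r)indset} and the parameter choice $d=\lceil \log r/\log n\rceil$ exactly as in the proof of \cref{lem:quotient}, I get the following. Let $\cE_t$ be the family of NIT $3$-graphs with at most $t$ edges. If $r(\cE_t,n)>r$, then
\[
r(\cH_t,r)\ge \exp\bigl(\tfrac1{36}\log^2 r/\log n\bigr).
\]

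\textbf{Step 3 (small cases).} I will check by hand that every NIT $3$-graph has at least $3$ edges, with $K_4^{(3)-}$ being NIT. This gives $\cE_2=\emptyset$, so we may take $R=K_r^{(3)}$ and $n=3$, which yields $r(\cH_2,n)\ge\exp(\Omega(\log^2 n))$.

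Next I check that every NIT graph with at most $3$ edges is tightly connected. After deleting isolated vertices, a $3$-graph whose tight components are single edges and which has at most $3$ edges can be coloured greedily. Hence $\cE_3\subseteq\cH_1$ and $r(\cE_3,n)\ge\exp(\Omega(n^{2/3}))$.

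Now feed the bounds through Step 2:
\begin{itemize}
\item For $t=4$: take $\log r\asymp n^{2/3}$, so $\log n\asymp\log\log r$. This gives $r(\cH_4,r)\ge \exp(\Omega(\log^2 r/\log\log r))$.
\item For $t=5$: I use that every NIT graph with at most $4$ edges has at most $4$ tight components, so $\cE_4\subseteq\cH_4$. Then take $\log r\asymp\log^2 n/\log\log n$, so $\log n\asymp(\log r\log\log r)^{1/2}$. Substituting gives $\exp(\Omega(\log^{3/2}r/(\log\log r)^{1/2}))$.
\end{itemize}

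\textbf{Expected obstacle.} The main difficulty is the induction in Step 1: verifying that tightness really makes each $H[P]$ a union of whole tight components, and that the sub-blow-ups are again of the form $\cG_{d'}(R)$. The small-case checks in Step 3 are a finite verification.
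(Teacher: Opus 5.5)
Your Steps 1 and 2 are sound and use essentially the paper's mechanism: the level-$\ell$ tight quotient of \cref{lem:G(R)tightminor} combined with the colouring concatenation of \cref{lem:quotientpreservesNIT}. Your observation that each $H[P]$ is a union of whole tight components is exactly what justifies that concatenation for subgraph-minimal members of $\cH_t$, which the paper uses implicitly.

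One small inaccuracy: a part $\{v : v_\ell = x\}$ may contain vertices with different earlier coordinates, so $H[P]$ need not lie in a single copy of $\cG_{d-\ell}(R)$. You do not need this, though, since $H[P]\subseteq H\subseteq \cG_d(R)$ and induction on $|V(H)|$ with $d$ fixed suffices.

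The genuine gap is in Step 3, where the indices are shifted by one. Step 2 at level $t$ requires a lower bound on $r(\mathcal{E}_t,n)$, because the quotient produced in Step 1 can have as many edges as $H$ has tight components. You use this correctly for $t=2$. But for $t=4$ you feed in the bound on $r(\mathcal{E}_3,n)$, and for $t=5$ you use $\mathcal{E}_4\subseteq\cH_4$ where you need information about $\mathcal{E}_5$. As written, the argument proves the $\log^2 n/\log\log n$ bound only for $\cH_3$.

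What is actually needed are the two small-graph facts the paper proves as \cref{prop:4edgetripartite} and \cref{prop:5edgetripartite}.
\begin{itemize}
\item Every NIT $3$-graph with at most four edges has an NIT tight component, so $r(\mathcal{E}_4,n)\ge r(\cH_1,n)$. Note that ``tightly connected'' is false here, e.g.\ for $K_4^{(3)-}$ plus a disjoint edge.
\item Every linear $3$-graph with at most five edges is iterated tripartite. Hence any NIT graph with at most five edges has two edges sharing a pair, so it has at most four tight components, giving $\mathcal{E}_5\subseteq\cH_4$.
\end{itemize}
Neither fact is automatic. Both require reducing to minimum degree two by deleting degree-one vertices, bounding the number of vertices, and checking the remaining configurations: the non-linear ones on five and six vertices, and the seven-vertex linear graph $\{123,145,167,246,357\}$.

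Relatedly, your three-edge check only treats graphs whose tight components are single edges. The case of a two-edge tight component plus one further edge also has to be covered; the same leaf-deletion argument handles it.
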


We do not re-prove the bound for $\cH_1$; the proof of this is relatively simple in \cite{CFGHMSVY}. Note that actually \cref{thm:forwardlaminar} implies $r(\cH_1,n)\geq \exp(\Omega(n^{1/5}))$ which suffices for our recursion, making the arguments self-contained. In each of the following, we use that $r(\cH,n)=r(\cH^\minn,n)$ where $\cH^\minn$ is the subfamily of subgraph-minimal elements of $\cH$. This follows from the definition of the set Ramsey numbers in Equation \eqref{eq:familyramsey}. For the next part, we use the following

\begin{observation}If $F$ is a $3$-graph with at most $2$ edges then $F$ is tripartite.
\end{observation}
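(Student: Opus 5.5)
The plan is to exhibit an explicit tripartition. Being tripartite means being a subgraph of a single blow-up of an edge, i.e.\ of $(K_3^{(3)})^1$. So it suffices to find a map $\chi:V(F)\to\{1,2,3\}$ under which every edge of $F$ receives three distinct colors. Vertices lying in no edge can be colored arbitrarily, so only the vertices covered by the at most two edges matter. The argument is a case analysis on $|E(F)|$ and, when there are two edges, on the size of their intersection.

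If $F$ has no edges, any coloring works. If $F$ has a single edge $abc$, set $\chi(a)=1$, $\chi(b)=2$, $\chi(c)=3$.

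Now let $E(F)=\{e,f\}$ with $e\neq f$, so $|e\cap f|\in\{0,1,2\}$.
\begin{enumerate}[(a)]
\item If $|e\cap f|=0$, color each edge independently as in the one-edge case; the colorings do not interact.
\item If $|e\cap f|=1$, write $e=abc$ and $f=ade$. Set $\chi(a)=1$, $\chi(b)=\chi(d)=2$ and $\chi(c)=\chi(e)=3$. Both edges are then rainbow.
\item If $|e\cap f|=2$, write $e=abc$ and $f=abd$. Set $\chi(a)=1$, $\chi(b)=2$ and $\chi(c)=\chi(d)=3$. Both edges are then rainbow.
\end{enumerate}
In every case $\chi$ is a proper coloring in the required sense. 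Hence $F$ embeds into the complete tripartite $3$-graph with parts $\chi^{-1}(1)$, $\chi^{-1}(2)$ and $\chi^{-1}(3)$, so $F$ is tripartite, and in particular iterated tripartite.

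There is no real obstacle here. The only point needing care is checking that the shared vertices force no conflict. This holds because two distinct triples share at most two vertices, so the shared vertices can always receive distinct colors. The remaining vertex of each edge then takes the single leftover color, or one of the leftover colors when only one vertex is shared.
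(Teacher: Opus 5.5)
Your proof is correct, and it is exactly the routine check the paper leaves implicit (it states this as an observation without proof): two distinct triples share at most two vertices, so a $3$-coloring making both edges rainbow always exists. There are only two cosmetic slips. Under the paper's convention $R^1=R$, the graph $(K_3^{(3)})^1$ is a single edge, not its blow-up; you mean the complete tripartite $3$-graph. Also, in case (b) you use $e$ both as an edge and as a vertex name.
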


\begin{proof}[Proof of \cref{thm:fivetightcomponents} for $\cH_2$]
We apply \cref{cor:noquotient} to the family $\cH_2^\minn$. Any nontrivial tight quotient $F$ of some 3-graph $H\in \cH_2^\minn$ will have at most two edges by definition, and by \cref{lem:quotientpreservesNIT} also $F$ is NIT. This contradicts the observation above, so $\cH_2^\minn$ admits no such $F$. Then \cref{cor:noquotient} implies
\[
r(\cH_2,n)= r(\cH_2^\minn,n)\geq \exp(\Omega(\log^2n)).\qedhere
\]
\end{proof}

\begin{proposition}\label{prop:4edgetripartite}
If $F$ is an NIT $3$-graph with at most four edges, then some tight component of $F$ is NIT.
\end{proposition}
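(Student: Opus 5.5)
We argue the contrapositive: if $F$ has at most four edges and every tight component of $F$ is iterated tripartite, then $F$ itself is iterated tripartite. The tool is the recursive description of iterated blow-ups of an edge. A $3$-graph $F$ is iterated tripartite whenever there is a partition $V(F)=X_1\cup X_2\cup X_3$ such that every edge either meets all three parts or lies inside one part, at least one edge is of the first kind, and each $F[X_i]$ is iterated tripartite. In that situation we nest proper iterated colorings of the $F[X_i]$ inside the top-level colouring, exactly as in the proof of \cref{lem:quotientpreservesNIT}. Since some edge crosses, each $F[X_i]$ has strictly fewer edges. Moreover, every tight component of $F[X_i]$ is contained in a tight component of $F$, hence is iterated tripartite. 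So we may induct on $|E(F)|$, with the proposition proved simultaneously for all $3$-graphs with at most $4$ edges. The base case is $|E(F)|\le 2$, which is the observation above.

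For the inductive step it suffices to find one \emph{good} tripartition: each edge is rainbow or monochromatic, and at least one edge is rainbow. If $F$ is tightly connected there is nothing to prove, so assume $F$ has $k\ge 2$ tight components $C_1,\dots,C_k$. The construction is as follows.
\begin{itemize}
\item Choose one component $C_1$ together with a proper tripartite colouring of its vertex set $V(C_1)$. This exists because $C_1$ has at most $3$ edges and is iterated tripartite, and tightly connected iterated tripartite graphs are tripartite, since their top-level partition must already separate every edge.
\item Put all vertices outside $V(C_1)$ into a single part.
\end{itemize}
Every edge of $C_1$ is then rainbow. An edge $f$ outside $C_1$ shares at most one vertex with each edge of $C_1$, since sharing a pair would put it in $C_1$. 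However, $f$ can still meet $V(C_1)$ in two or three vertices taken from different edges of $C_1$, and then $f$ may be neither rainbow nor monochromatic. When this happens we repair it in one of two ways:
\begin{itemize}
\item recolour within the freedom of the colouring of $C_1$ (for instance, permute colours on sub-blocks, or choose which class the outside vertices join) so that $f$ becomes rainbow or monochromatic;
\item or instead start from a different component $C_j$.
\end{itemize}

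The main obstacle is this case analysis. With at most four edges and at least two components, the possibilities for how the components meet are very limited: the sizes are $(3,1)$, $(2,2)$, $(2,1,1)$ or $(1,1,1,1)$, and two distinct components share at most single vertices per pair of edges. For each configuration we check that some starting component and colouring give a good partition. First we try to take $C_1$ to be an edge $e$ that meets the union of the remaining edges in at most one vertex per edge. Colouring $e$ rainbow and putting everything else into the part of the shared vertex makes every other edge monochromatic, unless some edge meets $e$ in two vertices.

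The genuinely delicate configurations are those in which every edge meets two others in single vertices, forming a cyclic pattern reminiscent of $F^-$. There we exhibit the partition by hand, using the fact that four edges on such a cycle can always be 3-coloured with each edge rainbow. Such a colouring is good with every edge crossing, and then the parts contain no edges at all. This finite check, which may be organised by the linear intersection graph of the components, is the step I expect to be the longest.
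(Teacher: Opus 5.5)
\textbf{There is a genuine gap: the case analysis that carries all the content is missing, and the heuristics you propose for it fail.}

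Your framework is sound. If $F$ has a tripartition in which every edge is rainbow or monochromatic and some edge is rainbow, then each part has fewer edges. The tight components of each part lie inside tight components of $F$, and nesting colourings as in \cref{lem:quotientpreservesNIT} finishes the induction. But this only restates the goal: the whole content of the proposition is producing such a tripartition, and your proposal stops at ``for each configuration we check''.

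The two heuristics you give for that check do not work as stated.
\begin{itemize}
\item The pendant-edge step is valid only if $e$ meets $\bigcup_{f\neq e} f$ in at most one vertex \emph{in total}. Under your wording (at most one vertex per other edge, which holds for every edge of a linear graph), there can be several shared vertices. Then ``the part of the shared vertex'' is undefined, and the other edges need not become monochromatic.
\item The claim that the leftover ``cyclic'' configurations can be coloured with every edge rainbow is false. Take $F=\{123,145,167,246\}$. It is linear, and every edge meets the union of the others in at least two vertices, so your pendant step never applies. The edges $123,145,246$ force $1,2,4$ to receive three distinct colours, and then $167$ and $246$ leave no colour for $6$. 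Hence every good tripartition of $F$, for example $\{1\},\{2,4,6\},\{3,5,7\}$, must contain a monochromatic edge. Your outline does not account for this case.
\end{itemize}

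The idea you are missing is the one that makes the paper's check short: delete a \emph{vertex} of degree one together with its edge. By minimality the remainder is iterated tripartite. The deleted vertex can be coloured last, by giving it the third colour at the level where its two neighbours first differ. So a minimal counterexample has minimum degree at least $2$, and hence at most $4\cdot 3/2=6$ vertices. This gives three cases:
\begin{itemize}
\item In the linear case this forces the $2$-regular configuration $\{123,345,156,246\}$, which is tripartite; this is the four-edge case of \cref{prop:5edgetripartite}.
\item Otherwise $F$ contains $123,124$. The degree condition then forces either $E(F)=\{123,124,56x,56y\}$, or two further edges through $5$. In the latter case $F$ is either tightly connected or $E(F)=\{123,124,125,345\}$.
\item On at most four vertices $F$ is tightly connected.
\end{itemize}
Each remaining configuration has an explicit good tripartition. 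Your example above has three vertices of degree one, so it disappears at once under this reduction, whereas your pendant-edge reduction does not touch it.
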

\begin{proof}[Proof of \cref{thm:fivetightcomponents} for $\cH_4$]
We apply \cref{lem:quotient} to the family $\cH_4^\minn$. Let $\cF$ be the family of nontrivial tight quotients of members of $\cH_4^\minn$. For each $F\in \cF$, we know that $F$ has at most four edges. By \cref{lem:quotientpreservesNIT} also $F$ is NIT. By \cref{prop:4edgetripartite}, $F$ must contain a member of $\cH_1$ as a subgraph. Thus, 
\[r(\cF,n)\geq r(\cH_1,n)\geq r= \exp(\Omega(n^{2/3})).
\]
Inverting this, $\log n=O(\log\log r)$, so by \cref{lem:quotient},
\[
r(\cH_4,r)=r(\cH_4^\minn,r)\geq \exp\bigg(\Omega\bigg(\frac{\log^2r}{\log n}\bigg)\bigg)=\exp(\log^2r/\log\log r),
\]
as desired.
\end{proof}

\begin{proposition}\label{prop:5edgetripartite}
If $F$ is a linear $3$-graph with at most five edges then $F$ is iterated tripartite
\end{proposition}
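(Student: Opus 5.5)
The plan is to take a minimal counterexample and use degree counting to pin down its shape. Let $F$ be a linear NIT $3$-graph with at most five edges, chosen subgraph-minimal. Then every proper subgraph of $F$ is iterated tripartite. In the language of the iterated blow-up $(K_3^{(3)})^d$, this means every proper subgraph admits a proper iterated coloring.

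\emph{Step 1: no vertex of degree at most $1$.} A vertex of degree $0$ can be deleted, so it cannot occur in a minimal counterexample. Suppose instead $v$ has degree $1$, lying only in the edge $vab$.
\begin{itemize}
\item By minimality, $F-v$ has a proper iterated coloring $\chi$ into $\{1,2,3\}^d$.
\item Let $\ell=\delta(\chi(a),\chi(b))$, the first level at which $a$ and $b$ differ.
\item Define $\chi(v)$ to agree with $\chi(a)$ (equivalently $\chi(b)$) in coordinates $<\ell$. In coordinate $\ell$ give it the third colour, distinct from $\chi(a)_\ell$ and $\chi(b)_\ell$. Choose the remaining coordinates arbitrarily, but so that $\chi(v)$ is a new tuple, increasing $d$ by one if necessary.
\end{itemize}
Then $vab$ is $\ell$-level with three distinct colours at level $\ell$, and no other edge contains $v$. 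So $\chi$ is a proper iterated coloring of $F$, a contradiction. Hence $\delta(F)\ge 2$.

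\emph{Step 2: counting.} Let $e=|E(F)|$ and $m=|V(F)|$. Minimum degree at least $2$ gives $3e\ge 2m$. Linearity gives $3e\le\binom m2$. A vertex of degree at least $2$ lies in two edges meeting only at it, so $m\ge 5$.
\begin{itemize}
\item If $m=5$, linearity forces $e\le 3$, while the degree condition forces $e\ge 4$, which is impossible.
\item If $m=6$, then $e\in\{4,5\}$.
\item If $m=7$, then $e=5$.
\item If $m\ge 8$, then $e\ge 6$, which is excluded.
\end{itemize}
So only a handful of linear configurations remain. In each I expect either to find a further reduction or to exhibit an explicit coloring.

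\emph{Step 3: the remaining configurations.} For $m=6$, $e=4$, every vertex has degree exactly $2$ and $F$ is the Pasch configuration $\{123,145,246,356\}$. This is tripartite via the parts $\{1,6\},\{2,5\},\{3,4\}$, a contradiction.

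For $e=5$ (with $m=6$ or $7$), I would list the linear $5$-edge configurations with minimum degree at least $2$ up to isomorphism. For $m=7$ the degree sequence is $(3,2,2,2,2,2,2)$; for $m=6$ it is $(3,3,3,2,2,2)$ or $(4,3,2,2,2,2)$, noting that a vertex of degree $4$ would need $8$ other vertices, so that case is in fact excluded. For each surviving configuration I would write down a partition into three parts such that every edge lies inside one part or meets all three, with each part inducing a tripartite graph. Where that fails, I would try a two-level coloring: first split off the structure at top level, then colour inside one part.

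A useful tactic is to delete an edge $e_0$ and use the coloring of $F-e_0$ given by minimality, then try to adjust it so that $e_0$ also becomes level. Because $F^-$ (six edges) is the smallest NIT linear graph, every $5$-edge case should succeed; the Fano-type configurations on $7$ points with five lines are where I would check most carefully.

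The main obstacle is this final finite enumeration for $e=5$. It requires correctly listing all non-isomorphic linear configurations with minimum degree at least $2$, and producing an explicit iterated coloring for each. Everything before it is a short reduction.
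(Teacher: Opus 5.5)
Your reduction (a subgraph-minimal counterexample, minimum degree at least $2$ by colouring a degree-one vertex last, then degree counting) is exactly the paper's route, and your four-edge case is correct: the Pasch configuration with parts $\{1,6\},\{2,5\},\{3,4\}$. The gap is that the proposal stops at the substance of the proposition. The five-edge case is never carried out. The reason you give for expecting it to succeed, namely that $F^-$ is the smallest NIT linear $3$-graph, is circular, since that fact is exactly what this proposition establishes.

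There is also a slip in your case list. The argument you used to rule out a vertex of degree $4$ applies equally to degree $3$. In a linear $3$-graph, a vertex of degree $k$ lies on $k$ edges meeting pairwise only at it, so it has $2k$ distinct neighbours and $m\ge 2k+1$. With $m=6$ every degree is at most $2$, so $e\le 4$. Hence the case $m=6$, $e=5$ is empty, including both degree sequences you list, and only $m=7$, $e=5$ remains.

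That case is forced and short to finish, as in the paper. The degree sequence is $(3,2,\dots,2)$, so if vertex $1$ has degree $3$, its edges are $123,145,167$. The other two edges avoid $1$. Since each of $2,\dots,7$ has degree $2$, these two edges partition $\{2,\dots,7\}$. Linearity lets each take only one vertex from each of $\{2,3\},\{4,5\},\{6,7\}$, so up to relabelling they are $246$ and $357$. Now take the top-level tripartition $\{1\},\{2,4,6\},\{3,5,7\}$. The three edges through $1$ are rainbow, and $246$ and $357$ each lie inside one part, where a single edge is tripartite. Explicitly, use $1\mapsto(1,1)$, then $2,4,6\mapsto(2,1),(2,2),(2,3)$, and $3,5,7\mapsto(3,1),(3,2),(3,3)$. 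With the degree-$3$ correction and this final case, your argument is complete.
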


\begin{proof}[Proof of \cref{thm:fivetightcomponents} for $\cH_5$.] We apply \cref{lem:quotient} to the family $\cH_5^\minn$. Let $\cF$ be the family of nontrivial tight quotients of members of $\cH_5^\minn$. For each $F\in \cF$, we know that $F$ has at most 5 edges. By \cref{lem:quotientpreservesNIT} also $F$ is NIT, and by the contrapositive of \cref{prop:5edgetripartite}, $F$ is not linear. Thus $F\in \cH_4$, so we have
\[
r(\cF,n)\geq r(\cH_4,n)\geq r\coloneqq \exp(\Omega(\log^2n/\log\log n)).
\]
Inverting this, $\log n=O((\log r\log\log r)^{1/2})$, so by \cref{lem:quotient},
\[
r(\cH_5,r^6)=r(\cH_5^\minn,r^6)\geq \exp\big(\Omega\big(\log^{3/2}r/(\log\log r)^{1/2}\big)\big),
\]
as desired.
\end{proof}

It remains to prove the propositions about small graphs. We begin with the five-edge case.

\begin{proof}[Proof of \cref{prop:5edgetripartite}]
A minimal counterexample $F$ has minimum degree at least $2$, or else we may greedily color any leaves or isolated vertices last. Thus if $|V(F)|=n$ then we have the following:
\begin{itemize}
\item[(i)] By double counting edges using the minimum degree, $|E(F)|\cdot 3\geq 2n$ so $n\leq \floor{3|E(F)|/2}$.
\item[(ii)] Since $F$ is linear every vertex has $|N(v)|=2\mathrm{deg}(v)\leq n-1$, and $F$ has maximum degree at least $ \ceil{3|E(F)|/n}$. Thus $n\geq 2\ceil{3|E(F)|/n}+1$.
\end{itemize}
Suppose $F$ has four edges. Then $n\leq \floor{12/2}=6$, and $n\geq 2\ceil{12/n}+1$, which implies $n\geq 6$. If $V(F)=[6]$ then $F$ is 2-regular so up to relabeling  we have edges $123,345\in E(F)$. Then since vertex $6$ has degree 2, up to permuting $\{1,2\}$ and $\{4,5\}$ we have $561,246\in E(F)$. This graph $F$ is tripartite by coloring $1,4\mapsto 1$,$2,5\mapsto 2$, $3,6\mapsto 3$.

Suppose that $E(F)=5$. By (i) $n\leq \floor{15/2}=7$ and by (ii) $n\geq 2\ceil{15/7}+1=7$. Thus $n=7$ and $F$ has maximum degree $3$. Then up to relabeling we have $123,145,167\in E(F)$. Because $F$ is linear, up to permuting $\{2,3\}$ and the other pairs we have final two edges $246,357\in E(F)$. This is iterated tripartite by the coloring $1\mapsto (1,1),2\mapsto (2,1),4\mapsto (2,2),6\mapsto (2,3),3\mapsto (3,1),5\mapsto (3,2),7\mapsto (3,3)$.
\end{proof}

\begin{proof}[Proof of \cref{prop:4edgetripartite}] Suppose $F$ is a minimal counterexample. As above, we may assume $F$ has minimum degree at least $2$ so $F$ has at most $4\cdot 3\cdot \frac 12=6$ vertices. From \cref{prop:5edgetripartite} we may assume $F$ is not linear, so there are two edges $123,124$. If $F$ has six vertices then the two remaining must be contained in both edges $56x,56y$ for $x,y\in [4]$. This is iterated tripartite by the coloring $1\mapsto (1,1),2\mapsto (1,2),3,4\mapsto (1,3),5\mapsto(2,1), 6\mapsto (3,1)$.

If $F$ has five vertices then both remaining edges must contain vertex $5$. Suppose $5xy\in E(F)$ for $x\in \{1,2\}$ and $y\in \{3,4\}$; in this case any fourth edge containing $5$ will cause $F$ to be tightly connected, and thus tripartite. Thus we must have $E(F)=\{123,124,125,345\}$ and then $F$ is iterated tripartite by coloring $1\mapsto (1,1),2\mapsto (2,1),3\mapsto(3,1),4\mapsto (3,2),5\mapsto (3,3)$.

Finally if $F$ has at most four vertices then $F=K_4^{(3)}$, which is tightly connected but not tripartite, a contradiction.
\end{proof}

\subsection{A single 3-graph reduction}\label{sec:single3-graphreduction}

In general, we are not able to prove a result analogous to \cref{lem:quotient} for an individual 3-graph, bounding $r(H,n)$ using one of its tight quotients. However, we are able to do this with an additional mild assumption. We have the following:

\begin{lemma}\label{lem:maxtightquotient} For every $3$-graph $H$ there exists a ``maximum'' tight quotient $F^*$ such that every tight quotient $F$ of $H$ is isomorphic to a proper quotient of $F^*$, and vice versa.
\end{lemma}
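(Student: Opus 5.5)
The plan is to take $F^*=H/\cP^*$, where $\cP^*$ is the \emph{finest} partition giving a tight quotient. We then show that every tight quotient is a quotient of $F^*$ by a coarsening. The conditions \ref{itm:properquotient} and \ref{itm:tightquotient} are not obviously closed under common refinement, since properness asks that distinct classes stay distinct on an edge. So rather than intersecting partitions, I would build $\cP^*$ directly as the smallest equivalence relation forced by tightness together with collapse.

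\textbf{Constructing $\cP^*$.} Define an equivalence relation $\sim$ on $V(H)$ as the closure of the following rule: whenever $uvw,uvw'\in E(H)$ share a pair, put $w\sim w'$. Take the transitive closure, and then iterate a "collapse" step: if some edge $uvw$ has exactly two of its vertices equivalent, merge all three. Repeat until stable. This terminates since $V(H)$ is finite, and the resulting $\cP^*$ is by construction proper and satisfies the condition of the Observation after \cref{defn:quotient}, so $H/\cP^*$ is tight.

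\textbf{Minimality.} The key claim is that every tight partition $\cP$ is coarser than $\cP^*$. Argue by induction along the generation steps that each identification made in building $\sim$ is also made in $\cP$. A tightness step $w\sim w'$ is forced in $\cP$ by the Observation. A collapse step is forced by properness of $\cP$: if two vertices of an edge already share a $\cP$-class, all three do.

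\textbf{Quotient relation.} Given minimality, $\cP$ induces a partition $\cQ$ of $\cP^*$ with $H/\cP\cong F^*/\cQ$. This holds because quotients compose: $[[v]_{\cP^*}]_\cQ=[v]_\cP$, and edges collapse to a single class exactly when they do in $H/\cP$. Properness of $F^*/\cQ$ follows from properness of $H/\cP$: an edge of $F^*$ comes from an edge of $H$ with distinct $\cP^*$-classes, whose $\cP$-classes are either all equal or all distinct. For the converse, if $\cQ$ is a proper quotient of $F^*$, pull it back to a partition $\cP$ of $V(H)$. Edges inside a $\cP^*$-class stay inside one class, and other edges map through $F^*$ properly, so $\cP$ is proper. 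Tightness transfers because $\cP$ coarsens $\cP^*$, and tightness is preserved under proper coarsening: if $uvw,uvw'$ share a pair, then $[w]_{\cP^*}=[w']_{\cP^*}$, hence $[w]_\cP=[w']_\cP$.

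\textbf{Main obstacle.} I expect the hardest part to be justifying that the forced-closure process is exactly right: that the collapse rule is the only extra forcing, and that the finished $\cP^*$ is simultaneously proper and tight. This must hold even though collapsing may create new shared pairs, since vertices merge but pairs in $H$ are unchanged. I will check that the tightness condition refers to pairs of $H$ and not classes, so further merges never break it.
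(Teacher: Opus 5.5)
Your argument is correct, but it reaches $\cP^*$ from below, while the paper reaches it from above.

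\textbf{The paper's route.} It takes $\cP^*$ to be the common refinement of all partitions giving tight quotients. This family is nonempty, since $\{V(H)\}$ belongs to it. The paper then checks closure directly. If two vertices of an edge $uvw$ share a $\cP^*$-class, they share a $\cP$-class for every tight $\cP$. By properness of each such $\cP$, all three share a $\cP$-class, hence all three share a $\cP^*$-class. Likewise, $uvw,uvw'\in E(H)$ forces $[w]_\cP=[w']_\cP$ for every tight $\cP$, hence $[w]_*=[w']_*$.

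\textbf{Your stated worry.} Your concern that properness is ``not obviously closed under common refinement'' is unfounded. An edge rainbow in some $\cP$ stays rainbow in the refinement, and an edge monochromatic in every $\cP$ stays monochromatic. It is coarsening that can destroy properness, which is why the converse direction needs $\cQ$ to be proper.

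\textbf{Comparison.} Your closure construction produces the same $\cP^*$. Yours is the least equivalence relation closed under the tightness and collapse rules; the paper's is the intersection of all such relations. Your minimality induction is the same fact seen from the other side.

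The paper's route is shorter and makes your ``main obstacle'' disappear: no termination or correctness of the forcing process needs to be argued, because closedness of the refinement is a two-line check.

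Your route gives an explicit procedure for computing $\cP^*$. This is exactly the forcing argument the paper runs ad hoc in part (II) of \cref{lem:chainclassification}. Your route also checks, via properness of $\cP^*$, that the edge sets of $H/\cP$ and $F^*/\cQ$ coincide. This makes explicit a step the paper only asserts when it says a coarsening of $\cP^*$ yields a proper quotient of $F^*$.
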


For a 3-graph $F^*$, say that a subgraph $F\subseteq F^*$ is \textit{quotient-robust} if no nontrivial quotient $F^*/\cP$ reduces $F$ to a single point. For example, any spanning subgraph is quotient-robust. In this section, we prove the following lemma. 

\begin{lemma}\label{lem:quotientonegraph}
Let $H$ be a minimal NIT $3$-graph and $F^*$ be the maximal tight quotient of $H$. Let $F$ be a quotient-robust minimal NIT subgraph of $F^*$. There exists $c>0$ such that for sufficiently large $r$, if $r(F,n)\geq r$, then
\[
r(H,r)\geq \exp\bigg(c\frac{\log^2r}{\log n}\bigg).
\]
\end{lemma}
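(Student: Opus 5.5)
We follow the proof of \cref{lem:quotient}, blowing up a good Ramsey graph for $F$ with the sparsification $\cG_d(\cdot)$. The new difficulty is that we only know $R$ avoids the single graph $F$, not every nontrivial tight quotient of $H$. We must therefore show that any copy of $H$ in $\cG_d(R)$ forces a copy of $F$ in $R$.

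Let $R$ be an $r$-vertex $3$-graph with $F\not\subseteq R$ and $\alpha(R)<n$. Set $d=\lceil \log r/\log n\rceil$ and $G=\cG_d(R)$. By \cref{lem:G(r)indset}, with positive probability $\alpha(G)<r^{6}$ for large $r$, while $|V(G)|=r^d\ge \exp(\log^2 r/\log n)$. So it suffices to show $H\not\subseteq G$; the constant $c$ then comes from rescaling $r\mapsto r^{1/6}$ exactly as before.

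Suppose $H\subseteq G$. Let $\ell$ be the minimal level of an edge of $H$, and let $\cP$ be the partition by the $\ell$-th coordinate. By the proof of \cref{lem:G(R)tightminor}, $H/\cP$ is a nontrivial tight quotient of $H$, and $v\mapsto v_\ell$ embeds it into $R$. By \cref{lem:maxtightquotient}, $H/\cP$ is isomorphic to a proper quotient $F^*/\cQ$ of the maximum tight quotient $F^*$. Here $\cQ$ is nontrivial, since $H/\cP$ has an edge.

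We want to pass from $F^*/\cQ\subseteq R$ to $F\subseteq R$. The images of $F$'s edges under $F^*\to F^*/\cQ$ form a subgraph $F/\cQ$ of $F^*/\cQ$. Quotient-robustness says $F$ is not collapsed to a point. Because $\cQ$ is proper, each edge of $F$ either survives as an edge or is collapsed inside a part. So $F/\cQ$ is a nontrivial proper quotient of the minimal NIT graph $F$, and by \cref{lem:quotientpreservesNIT} it is NIT. This does not yet produce $F$ itself, only a nontrivial proper quotient of $F$.

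To close the gap I would iterate, which is the main obstacle. First I would try to argue that $\cQ$ must be trivial on $V(F)$, i.e.\ that $F$ embeds injectively. If $\cQ$ merges vertices of $F$, then $F/\cQ$ is a strictly smaller NIT graph sitting in $R$. To rule this out, I would choose $R$ to avoid every nontrivial proper quotient of $F$, not just $F$. These finitely many quotients are all NIT, and each contains $F$ in an iterated blow-up by \cref{lem:quotientpreservesNIT}. Hence a graph containing a quotient $F/\cQ$ contains $F$ in a polynomially larger iterated blow-up.

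More concretely, I would replace $R$ by a bounded iterated blow-up $R^{k}$, with $k$ depending only on $F$, sparsified as before. Since iterated blow-ups of $R$ containing $F/\cQ$ contain $F$, this means working with $R'$ obtained from the hypothesis $r(F,n)\ge r$ by taking the Ramsey graph avoiding the family of all nontrivial proper quotients of $F$. The key claim is that $r(\{F/\cQ\},n)\ge r^{\Omega(1)}$ whenever $r(F,n)\ge r$. I would prove it by contraposition: if every $N$-vertex graph with $\alpha<n$ contains some $F/\cQ$, then applying this inside the parts repeatedly, in the style of the Erd\H{o}s--Hajnal iterated argument, finds $F$ in graphs of size $N^{O(1)}$ with $\alpha<n^{O(1)}$.

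Establishing this polynomial equivalence between $F$ and its quotient family is the step I expect to be hardest. Quotient-robustness and the minimality of $H$ and $F$ are used precisely to keep the collapsed parts iterated-tripartite, so they can be re-embedded. Once the claim holds, the $\cG_d$ computation gives $r(H,r)\ge\exp(c\log^2 r/\log n)$.
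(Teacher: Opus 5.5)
Your plan is the paper's proof. The paper applies \cref{lem:quotient} to $H$, whose nontrivial tight quotients are the nontrivial proper quotients of $F^*$ by \cref{lem:maxtightquotient}. It then uses quotient-robustness exactly as you do to get $r(\cF^*,n)\ge r(\cF,n)$, where $\cF$ is the family of nontrivial proper quotients of $F$, and your ``key claim'' $r(\cF,n)\ge r(F,n)^{c}$ is precisely \cref{lem:quotientspreserveramsey}. That claim is not the hard step you expect: by \cref{lem:quotientpreservesNIT} each $F/\cQ$ has an iterated blow-up containing $F$, and \cref{cor:setramseyblow-up} (the Fox--He blow-up bound, with $n$ unchanged rather than $n^{O(1)}$) gives $r(F,n)\le r(\cF,n)^{C}$ directly, so the $R^k$ detour is unnecessary.
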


We note that in many natural cases, if $H$ is min NIT then $F^*$ is min NIT as well. Moreover, we do not require $F^*$ to be min NIT, just that it has a ``large'' min NIT subgraph. First, we show that ``maximum'' tight quotients exist.

\begin{proof}[Proof of \cref{lem:maxtightquotient}]
Let $\cP^*$ be the common refinement of all partitions of tight quotients of $\cH$, and denote $[v]_{\cP^*}$ by $[v]_*$. Then for all $u,v\in V(H)$, we have $[u]_*=[v]_*$ if and only if $[u]_\cP=[v]_\cP$ for every tight quotient $H/\cP$. Now we check that $F^*=H/\cP^*$ is a tight quotient. Suppose we have $uvw\in E(H)$ with $[u]_*=[v]_*$. Then for every tight quotient $H/\cP$ we have $[u]=[v]$ and thus $[u]=[v]=[w]$ so we conclude $[v]_*=[w]_*$. Thus $F^*$ is proper. Let $uvw$ and $uvw'$ be two edges with common pair $uv$. For every tight quotient $H/\cP$ we have $[w]=[w']$ so we have $[w]_*=[w']_*$ and therefore $F^*$ is tight. Finally, we observe that for every tight quotient $F=H/\cP$, the partition $\cP$ is a coarsening of $\cP^*$ so $F$ is (isomorphic to) a proper quotient of $F^*$. Conversely, for any proper quotient $F=(H/\cP^*)/\cQ$, $F$ is also a tight quotient of $H$ by tightness of $F^*$. 
\end{proof}

Suppose $H$ is minimal NIT, $F^*$ is the maximum tight quotient of $H$, and $F$ is a minimal NIT subgraph of $F^*$. If $\cF$ is the family of nontrivial tight quotients of $H$ (equivalently, the proper quotients of $F^*$), then by applying \cref{lem:quotient} as a black box it will suffice to relate $r(\cF,n)$ to $r(F^*,n)$ by a polynomial factor. 

To relate the Ramsey number of a minimal NIT 3-graph to its set of proper quotients, we use a generalization of a blow-up result of Fox and He \cite[Prop.\ 6.2]{FoxHe2021} for set Ramsey numbers.

\begin{proposition}Let $\cF$ be a set of $3$-graphs, let $F$ be a $3$-graph on $t$ vertices and let $v\in V(F)$. Let $F'$ be the iterated blow-up of $F$ obtained by replacing $v$ with $t$ copies of itself which induce a copy of $F$,
\[
r(\cF\cup \{F'\},n)\leq 2^t\cdot r(\cF\cup \{F\},n)^t.
\]
\end{proposition}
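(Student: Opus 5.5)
We plan to follow the standard Erd\H{o}s--Hajnal / Fox--He blow-up argument, now carried out for families. Set $N=2^t\cdot r(\cF\cup\{F\},n)^t$ and let $G$ be an $N$-vertex $3$-graph with no independent set of size $n$; we must find a member of $\cF\cup\{F'\}$ in $G$. We may assume $G$ contains no member of $\cF$, since otherwise we are done. Write $R=r(\cF\cup\{F\},n)$. Then every vertex subset of size at least $R$ contains a copy of $F$, because it has no independent set of size $n$ and no member of $\cF$.

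The plan is to embed $F-v$ together with a large set of candidates for $v$. Order $V(F)\setminus\{v\}=\{x_1,\dots,x_{t-1}\}$. We greedily pick images $y_1,\dots,y_{t-1}$ while maintaining a candidate set $C_i\subseteq V(G)$. The invariant is that every $z\in C_i$ completes $y_1,\dots,y_i$ correctly, meaning that for each edge of $F$ of the form $x_jx_kv$ with $j,k\le i$ we have $y_jy_kz\in E(G)$. Alongside this, the $y$'s must themselves span the edges of $F[\{x_1,\dots,x_i\}]$.

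A cleaner route, and the one we commit to, is counting copies. Since every $R$-subset contains a copy of $F$, an averaging (supersaturation) argument shows that $G$ contains at least $\binom{N}{t}/\binom{R}{t}$ copies of $F$, i.e.\ at least roughly $(N/R)^t$ labelled copies. Group these copies by the images of $V(F)\setminus\{v\}$. There are at most $N^{t-1}$ possible images, so some image $(y_1,\dots,y_{t-1})$ extends to at least $(N/R)^t/N^{t-1}=N/R^t\cdot(\text{const})$ choices of the image of $v$. With $N=2^tR^t$, and after tracking the constant from $\binom{N}{t}/\binom{R}{t}\ge (N/R)^t$-type bounds (roughly a $t^t$ loss, which we hope the $2^t$ together with the freedom $R\ge t$ absorbs), this candidate set $C$ has size at least $R$. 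Then $C$ contains a copy of $F$, as it has no independent $n$-set and no member of $\cF$.

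Combining these, take the fixed images $y_1,\dots,y_{t-1}$ together with the copy of $F$ inside $C$, the latter replacing $v$. Each edge of $F'$ between the blown-up copies and the $x_i$ involves a single copy vertex $z\in C$ and two $y$'s, and such edges are present precisely because each $z\in C$ extends $(y_i)$ to a copy of $F$. Edges inside the blow-up are given by the copy of $F$ in $C$, and edges among the $y$'s hold since they come from a copy of $F$. One check is needed: $C$ should avoid the $y_j$, which removes at most $t-1$ vertices and is harmless. This yields $F'$.

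We expect the main obstacle to be the counting constant, namely ensuring that $|C|\ge R$ with only the factor $2^t$. If the supersaturation bound is too lossy, we would instead run the greedy/dependent-choice version. In that version we repeatedly pick a copy of $F$ inside the current candidate set and pigeonhole on it, so that each step shrinks the set by a factor of at most about $2R$ (through a sequential $2$-colouring of a pair's link). After $t$ steps this gives precisely $N\ge (2R)^t$.
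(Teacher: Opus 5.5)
Your overall plan is the right one: choose an injective image $(y_1,\dots,y_{t-1})$ of $V(F)\setminus\{v\}$ whose extension set $C$ has at least $R$ elements, find a copy of $F$ inside $C$, and glue. Here $C$ is the set of all $z$ such that $y_1,\dots,y_{t-1},z$ form a labelled copy of $F$ with $z$ playing $v$. Your check of the edges of $F'$ in the gluing step is correct.

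The gap is the step that is supposed to give $|C|\ge R$. Your averaging gives at least $(N/R)^t$ labelled copies spread over at most $N^{t-1}$ tuples, hence only $|C|\ge N/R^t$. For $N=2^tR^t$ this is $2^t$, not $R$. The shortfall is a whole factor of $R$, not a $t^t$-type constant, so no bookkeeping of constants (or use of $R\ge t$) can absorb it. The bound $\binom{N}{t}/\binom{R}{t}$ is itself too weak by a factor of about $R$, already when $F$ is a single edge and $\cF=\emptyset$. This route therefore only gives roughly $R^{t+1}$. Your greedy fallback does not say what is pigeonholed or why each step costs only a factor $2R$, so it cannot be checked as written.

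What is missing is an alteration step, which is exactly where the factor $2^t$ comes from. Suppose for contradiction that every injective tuple has at most $R-1$ extensions, so $G$ has at most $(R-1)N^{t-1}$ labelled copies of $F$. Keep each vertex independently with probability $p=2R/N$, then delete one vertex from each labelled copy of $F$ that survives. The expected number of remaining vertices is at least
\[
pN-(R-1)N^{t-1}p^t=2R-\frac{(R-1)(2R)^t}{N}=R+1
\]
when $N=(2R)^t$. So some outcome is a set of at least $R+1$ vertices spanning no copy of $F$, no member of $\cF$, and no independent $n$-set, contradicting $R=r(\cF\cup\{F\},n)$.

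Equivalently, replace your count by the Tur\'an-type bound of at least $\frac{(t-1)^{t-1}}{t^t}\cdot\frac{N^t}{R^{t-1}}$ many $t$-sets spanning $F$. This comes from sampling with alterations, as in the remark after the supersaturation proposition. Since $2^t(t-1)^{t-1}\ge t^t$ for $t\ge 2$, averaging over tuples then gives $|C|\ge R$. With this step supplied, the rest of your argument goes through.
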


\noindent The proof is identical to the one in \cite{FoxHe2021}. By iterating this for each 3-graph in a finite set, we obtain the following.
\begin{corollary}\label{cor:setramseyblow-up}
If $\cF=\{F_1,\ldots, F_m\}$ and $\cF'=\{F_1',\ldots, F_m'\}$ where $F_i'$ is an iterated blow-up of $F_i$ for each $i$, then there exists a constant $C$ such that for $n\geq 2$,
\[
r(\cF',n)\leq r(\cF,n)^C.
\]
\end{corollary}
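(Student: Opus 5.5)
We will deduce \cref{cor:setramseyblow-up} by applying the preceding proposition repeatedly, once for each vertex that gets blown up, and tracking the exponent. First we reduce to single-vertex blow-ups. Any iterated blow-up $F_i'$ of $F_i$ is a subgraph of some $F_i^{(d_i)}$, the $d_i$-times iterated blow-up in which every vertex is replaced by a copy of $F_i$. Since the set Ramsey number is monotone under replacing a member by a supergraph, it suffices to bound $r(\{F_1^{(d_1)},\dots,F_m^{(d_m)}\},n)$. Indeed, if $G$ contains a supergraph of $F_i'$ then it contains $F_i'$, so
\[
r(\cF',n)\le r(\{F_1^{(d_1)},\ldots,F_m^{(d_m)}\},n).
\]

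Next, observe that $F_i^{(d)}$ can be reached from $F_i$ by a finite sequence of single-vertex substitutions. Each step replaces one vertex $v$ of the current graph $F$ by a copy of the original $F_i$, so it is not literally ``$t$ copies of $F$.'' We therefore first apply a variant: replacing $v$ by $t'$ copies inducing a copy of any fixed $3$-graph $K$ on $t'$ vertices, where $K$ is a subgraph of $F$. Replacing $v$ by $|V(F)|$ copies inducing $F$ itself contains, as a subgraph, the graph in which $v$ is replaced by a copy of $F_i\subseteq F$. This holds because $F_i$ embeds in the current $F$ (the current graph is itself a blow-up of $F_i$, so it contains $F_i$), together with monotonicity. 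So each single step is dominated by the proposition as stated. The step that needs the most care is checking that the substituted copy's vertices inherit the correct links to the rest of $F$; taking the copies of $v$ as clones of $v$ ensures this.

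Then we iterate. Order all the single-vertex substitutions for $F_1,\ldots,F_m$ into one list of length $N$, and apply the proposition to one family member at a time while keeping the others fixed, which the proposition allows since $\cF$ in its statement is arbitrary. At step $j$ the current graph has $t_j$ vertices, and we obtain $R_j\le 2^{t_j}R_{j-1}^{t_j}$. Here $t_j$ and $N$ depend only on the $F_i$ and the $d_i$, not on $n$. Set $T=\max_j t_j$. Since $R_0=r(\cF,n)\ge 2$ for $n\ge 2$ (trivially, as $r\ge n$), we have $2^{t_j}R_{j-1}^{t_j}\le R_{j-1}^{2t_j}$. Induction then gives $R_N\le r(\cF,n)^{(2T)^N}$, so we may take $C=(2T)^N$. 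If $r(\cF,n)=\infty$ the bound is trivial. The only delicate point is making sure the base $r(\cF,n)\ge 2$ absorbs the $2^{t}$ factors, and the hypothesis $n\ge 2$ guarantees this.
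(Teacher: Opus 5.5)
Your proposal is correct and takes the paper's route. The paper only says the corollary follows by iterating the blow-up proposition over the graphs in the family; your reduction to full iterated blow-ups, your use of monotonicity to dominate each single-vertex substitution by the proposition's self-substitution, and your absorption of the $2^{t}$ factors supply the bookkeeping it omits.

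One small fix: $r(\cF,n)\ge n$ is not automatic, since it fails if some $F_i$ is edgeless on fewer than $n$ vertices. The bound you actually use, $r(\cF,n)\ge 2$ for $n\ge 2$, still holds whenever every $F_i$ has at least two vertices, because the one-vertex graph is then a witness. The degenerate case where some $F_i$ has at most one vertex is trivial.
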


Now that we know set Ramsey numbers are preserved under iterated blow-ups, we can reduce the entire family of nontrivial proper quotients to a single graph. 

\begin{lemma}\label{lem:quotientspreserveramsey}
Let $F$ be a minimal NIT 3-graph, and let $\cF$ be the collection of its nontrivial proper quotients. Then there exists a constant $c>0$ such that
$$
r(F,n)^c\le r(\cF,n).
$$
\end{lemma}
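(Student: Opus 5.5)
The plan is to show that $F$ is contained in an iterated blow-up of every member of $\cF$, and then invoke \cref{cor:setramseyblow-up}.

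\emph{Step 1: each quotient blows back up to $F$.} Let $F_1,\ldots,F_m$ enumerate the nontrivial proper quotients of $F$ up to isomorphism; there are finitely many because $F$ has finitely many partitions. Since $F$ is minimal NIT and each $F_i$ is a nontrivial proper quotient, \cref{lem:quotientpreservesNIT} gives an iterated blow-up $F_i'$ of $F_i$ that contains $F$ as a subgraph.

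\emph{Step 2: a common blow-up.} Apply \cref{cor:setramseyblow-up} to $\cF=\{F_1,\ldots,F_m\}$ and $\cF'=\{F_1',\ldots,F_m'\}$. This yields a constant $C$ with $r(\cF',n)\le r(\cF,n)^C$ for $n\ge 2$.

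\emph{Step 3: compare with $F$.} Every member of $\cF'$ contains $F$, so any $3$-graph containing a member of $\cF'$ contains $F$. Hence $r(F,n)\le r(\cF',n)\le r(\cF,n)^C$, and taking $c=1/C$ gives the claim.

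\emph{Edge cases.} The set Ramsey number is a minimum over $N$, so it is monotone under this kind of containment between families. If $\cF$ is empty, then $r(\cF,n)=+\infty$ and the statement is trivial. Small $n$ can be absorbed into the constant $c$.

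\emph{Main obstacle.} The only delicate point is that \cref{lem:quotientpreservesNIT} really produces an \emph{iterated blow-up} in the sense required by \cref{cor:setramseyblow-up}. In its proof, $H$ embeds into the structure obtained from $F$ by placing an iterated tripartite coloring inside each part. That structure is a subgraph of an iterated blow-up of $F$, because $F$ contains an edge $xyz$, so $K_3^{(3)}\subseteq F$, and iterated blow-ups of $K_3^{(3)}$ sit inside iterated blow-ups of $F$. Concretely, I would take $F_i'=F_i^{\ell+1}$ for suitable $\ell$, which is an iterated blow-up in the vertex-by-vertex sense used in the Proposition preceding \cref{cor:setramseyblow-up}. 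Subgraph containment then suffices, by the monotonicity noted above.
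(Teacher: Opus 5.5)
Your proposal is correct and matches the paper's proof. \cref{lem:quotientpreservesNIT} gives iterated blow-ups $F_i'\supseteq F$ of each member $F_i$ of $\cF$, and then \cref{cor:setramseyblow-up} together with monotonicity gives $r(F,n)\le r(\cF',n)\le r(\cF,n)^C$. Your extra check, that $F_i'$ can be taken to be $F_i^{\ell+1}$ and is therefore an iterated blow-up in the vertex-by-vertex sense of the preceding Proposition, correctly reads the proof of \cref{lem:quotientpreservesNIT}; the paper leaves this step implicit.
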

\begin{proof}
Let $F$ be a minimal NIT 3-graph and $\cF=\{F_1,\ldots, F_m\}$. By \cref{lem:quotientpreservesNIT}, we have a set of iterated blow-ups $\cF'=\{F_1',\ldots, F_m'\}$ such that $F_i'$ contains $F$ as a subgraph for all $i$. Then by \cref{cor:setramseyblow-up} and monotonicity of Ramsey numbers,
\[
r(F,n)^c\leq r(\cF',n)^c\leq r(\cF,n)
\]
for some constant $c>0$.
\end{proof}

With this, the proof of \cref{lem:quotientonegraph} is a calculation.
\begin{proof}[Proof of \cref{lem:quotientonegraph}]
Let $H$ be minimal NIT, $F^*$ be its maximum tight quotient, and $F$ be a quotient-robust minimal NIT subgraph of $F^*$. Let $\cF^*$ be the family of tight quotients of $H$, which by \cref{lem:maxtightquotient} is the family of nontrivial proper quotients of $F^*$. Let $\cF$ be the family of nontrivial proper quotients of $F$. Suppose that $r(F,n)\geq r$.

Note that property \ref{itm:properquotient} is preserved under taking subgraphs, so every proper quotient $F^*/\cP\in\cF^*$ of $F^*$ contains, as a subgraph, the proper quotient $F/\cP$ of $F$. Moreover, by robustness each nontrivial quotient in $\cF^*$ passes to a nontrivial quotient in $\cF$. Thus by \cref{lem:quotientspreserveramsey} we have 
\[r(\cF^*,n)\geq r(\cF,n)\geq r(F,n)^c\geq r^c,
\]
for some $c\in (0,1]$. Applying \cref{lem:quotient} to $H$ we have
\[
r(H,r)\geq r(H,r^c)\geq \exp\Big(\frac{c^2}{36}\frac{\log^2r}{\log n}\Big),
\]
when $r$ is sufficiently large.
\end{proof}

\subsection{Reduction to linear 3-graphs}\label{sec:reductiontolinear}

In this section, we prove Theorems~\ref{thm:reductiontolinear} and \ref{thm:singlelinear}. We restate \cref{thm:reductiontolinear} in an equivalent way.

\begin{numtheorem}{\ref{thm:reductiontolinear}} Let $\cH_t$ be the family of NIT $3$-graphs with at most $t$ tight components and $\cL_t$ be the family of linear NIT $3$-graphs with at most $t$ edges. If $r(\cL_t,n)\geq n^{\omega(1)}$ for infinitely many $n$ then $r(\cH_t,n)\geq n^{\omega(1)}$ for infinitely many $n$.
\end{numtheorem}

\begin{proof} Let $\cF_0=\cH_t$. Then for $i\geq 0$ let $\cF_i'$ be the set of min NIT subgraphs of elements of $\cF_i$, and let $\cF_{i+1}$ be the set of nontrivial tight quotients of $\cF_i'$. By applying \cref{lem:quotientpreservesNIT} recursively, every $F\in \cF_i$ is NIT for all $i$. Moreover, for all $F\in \cF_{i+1}$, we have $F=F'/\cP$ for some $F'\in \cF_i'$. Thus either $F=F'$ is linear or else $F$ is strictly smaller than $F'$

Since every member of $\cF_1$ has at most $3t$ vertices, it holds recursively that every member of $\cF_{1+i}$ is linear or has at most $3t-i$ vertices. Thus every member of $\cF_{3t}$ is linear, so $\cF_{3t}\subseteq \cL_t$. In particular, $r(\cF_{3t},n)\geq r(\cL_t,n)$.

Finally, by \cref{lem:quotient}, if $r(\cF_{i+1},n)=n^{\omega(1)}$, then $r(\cF_i,n)\geq r(\cF_i',n)=n^{\omega(1)}$. Applying this bound recursively, we conclude that $r(\cH_t,n)=n^{\omega(1)}$.
\end{proof}

Next, we prove \cref{thm:singlelinear}. We require the following lemma, which reduces from a single 3-graph $H$ to a linear 3-graph which is not much larger.

\begin{lemma}\label{lem:gadgetreplacement} For every NIT 3-graph $H$, there is a linear NIT $3$-graph $L$ such that $|E(L)|\leq 7|E(H)|$ and $L$ is contained in an iterated blow-up of $H$. 
\end{lemma}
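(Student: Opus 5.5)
The plan is to build $L$ from $H$ by \emph{gadget replacement}. Keep the vertex set $V(H)$ as a set of ``designated'' vertices. For each edge $uvw\in E(H)$, replace the edge by a fresh linear gadget $G_{uvw}$ with at most $7$ edges. The gadget contains $u,v,w$, and all its other vertices are new and private to it. Every vertex of $G_{uvw}$ is declared a copy of one of $u,v,w$, and every gadget edge meets all three copy-classes. Then $G_{uvw}$ sits inside the blow-up of the single edge $uvw$. Placing each gadget inside the corresponding blown-up classes shows that $L=\bigcup_{uvw}G_{uvw}$ is contained in a blow-up of $H$. If the gadget itself needs two levels, take an iterated blow-up instead. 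The edge count $|E(L)|\le 7|E(H)|$ is immediate.

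For linearity, I will require that no gadget edge contains two designated vertices. Then any pair lying in an edge of $L$ contains a private vertex of some gadget, so it lies in only that gadget. Hence $L$ is linear provided each gadget is linear.

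For the NIT property, argue by contrapositive. Suppose $\chi:V(L)\to\{1,2,3\}^d$ is a proper iterated coloring of $L$. The goal is to show that $\chi|_{V(H)}$ is a proper iterated coloring of $H$, i.e.\ that for each $uvw\in E(H)$ the triple $\chi(u)\chi(v)\chi(w)$ is $\ell$-level with three distinct colors at level $\ell$. This reduces everything to a \emph{forcing property} of a single gadget. In every proper iterated coloring of $G_{uvw}$, the designated triple $u,v,w$ must first separate at a common level $\ell=\delta(u,v)=\delta(v,w)=\delta(u,w)$, with pairwise distinct coordinates there. So in any iterated tripartite realization, the gadget behaves exactly like the edge $uvw$, while containing no pair of designated vertices in an edge.

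The main obstacle is constructing this gadget and verifying the forcing property. My first attempt is to take new copies $u',v',w'$ and further copies as needed, with edges of the form $uv'w''$, $u'vw''$, $u''v'w$, and so on. These are chosen so that designated vertices are pairwise ``linked'' through private vertices and the configuration is rigid. Natural candidates are linear configurations closely related to $F^-$, since $F^-$ is the smallest linear NIT 3-graph by \cref{prop:5edgetripartite} and is therefore the tightest linear iterated tripartite structure. The check is a case analysis on the first level where the colors of the gadget's vertices split. Each gadget edge must be level and rainbow at its splitting level, and linearity propagates these constraints through the shared private vertices. The target conclusion is that if two designated vertices agreed at level $\ell$ while the third differed, some gadget edge would be forced to be non-level, a contradiction. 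Searching over small linear configurations with at most $7$ edges, and checking each candidate by this case analysis, is where I expect the real work to lie.
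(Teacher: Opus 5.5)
Your architecture is the same as the paper's. You replace each edge of $H$ by a private linear gadget with at most $7$ edges, no edge of which contains two designated vertices. Linearity then comes from the private vertices, containment from an iterated blow-up of $H$, and NIT from a forcing property of the gadget. Your NIT step, restricting a proper iterated colouring of $L$ to $V(H)$, is sound. It is slightly cleaner than the paper's, which first reduces to minimal NIT $H$ and uses Lemma~\ref{lem:quotientpreservesNIT} to know that every top-level colouring of $H$ is constant.

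The genuine gap is that you never construct the gadget or prove that it forces $u,v,w$. That is the whole content of the lemma; the rest is bookkeeping. Your stated target is also incomplete. At the first level where the gadget splits, you must rule out all three designated vertices agreeing, not only two agreeing and the third differing. In the all-agree case they separate only later, inside a smaller sub-gadget. The right target is: \emph{every non-constant top-level colouring of the gadget (each edge monochromatic or rainbow) is rainbow on $\{u,v,w\}$}. Given a proper iterated colouring, its first coordinate that is non-constant on the gadget is such a colouring, so this target yields exactly your forcing property.

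The missing idea is the split Fano plane. Split vertex $1$ of the Fano plane into three designated vertices $x,y,z$, giving the edges $x23,y45,z67,247,346,256,357$. This is linear, has $7$ edges, and no edge meets two of $x,y,z$. It is iterated tripartite with top-level classes $\{x,2,3\},\{y,4,5\},\{z,6,7\}$. However, $x23,y45,z67$ are monochromatic in that colouring, so the gadget is not of your single-level shape. Instead $L$ lies in the two-level blow-up of $H$ whose part for $x$ carries the star of edges $xx'_ex''_e$, the case you allowed for.

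For forcing, take a top-level colouring $\chi$ with $\chi(x)=\chi(y)$. Identifying $x$ with $y$ and deleting $z$ leaves the Fano plane minus the line $167$, a copy of $F^-$. All top-level colourings of $F^-$ are constant; this follows from a short case check, or from minimality of $F^-$ via Proposition~\ref{prop:5edgetripartite} and Lemma~\ref{lem:quotientpreservesNIT}. The edge $z67$ then forces $\chi(z)$ to agree as well, so $\chi$ is constant.

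Incidentally, your first-attempt shape, with every gadget edge meeting all three copy-classes, cannot work within $7$ edges. With colours in $\mathbb{Z}_3$, a triple is rainbow or monochromatic iff its colours sum to $0$, so top-level colourings form a vector space. Forcing forces this space to have dimension $2$, hence at least $|V|-2$ edges. Combined with linearity and the three uncovered designated pairs, this leaves only a $7$-cell partial Latin square of order $3$. That square completes to a full Latin square, whose solution space is already $3$-dimensional, too large for forcing.
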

Before the proof, we introduce a new term. Let $F$ be a connected $3$-graph. A \textit{top-level} 3-coloring of $F$ is a coloring $\chi:V(F)\to \{1,2,3\}$ such that each edge of $F$ is rainbow or monochromatic. Observe that if $F$ is iterated tripartite with minimal depth coloring $(\chi,\ldots, \chi_t):V(F)\to \{1,2,3\}^t$, then $\chi_1$ is a non-constant top-level coloring. A top-level coloring $\chi$ can be identified with a tripartition $\cP=\{\chi^{-1}(1),\chi^{-1}(2),\chi^{-1}(3)\}$ of $F$ such that $F/\cP$ is trivial if $\chi$ is constant, and otherwise $F/\cP\cong K_3^{(3)}$.

\begin{figure}
    \centering
\input{tikz_diagrams/splitfano}
\caption{The split Fano plane $F^{\protect\threevee}$. This is iterated tripartite, where up to permutation, the unique non-constant top-level 3-coloring is determined by making the vertical edges $x23,y45,z67$ monochromatic. In the context of iterated $3$-colorings, this gadget behaves as if there were an edge $xyz$.}
\label{fig:splitfano}
\end{figure}

\begin{proof}
It suffices to prove this when $H$ is min NIT. By \cref{lem:quotientpreservesNIT}, the only tripartite proper quotient of $H$ is trivial, so any top-level 3-coloring of $H$ is constant. We construct $L$ by using $H$ as a template, then replacing each edge $e$ by a 9-vertex, 7-edge gadget which has the same color-forcing properties as $e$. Define the \textit{split Fano plane} $F^{\threevee}$ (see Figure \ref{fig:splitfano}) as the 3-graph obtained from the Fano plane (Equation \ref{eq:fano}) by splitting vertex 1 into three pieces, as follows:
\begin{align*}
V(F^{\threevee})&=\{x,y,z,2,3,4,5,6,7\}\\
E(F^{\threevee})&=\{x23,y45,z67,247,346,256,357\}.
\end{align*}
Now, we obtain $L$ by replacing each edge $xyz\in H$ with a copy of $F^{\threevee}$. Formally,
\begin{align*}
V(L)&=V(H)\cup \{e_2,\ldots, e_7\mid e\in E(H)\}\\
E(L)&=\bigcup_{e\in E}F^{\threevee}_e,
\end{align*}
where for each edge $e=xyz\in E(H)$, $F^{\threevee}_e$ is a copy of $F^{\threevee}$ on vertex set $\{x,y,z,e_2,\ldots, e_7\}$. The exact order of labeling $x,y,z$ is irrelevant by symmetry; we denote $N_{F^{\threevee}_e}(x)=\{x'_e,x''_e\}$, and similarly for $y,z$. The top-level coloring given by $\chi(x,x'_e,x''_e)=1$ and $\chi(y,y'_e,y''_e)=2$ and $\chi(z,z'_e,z''_e)=3$ shows that $F^{\threevee}_e$ is iterated tripartite. Obtain the graph $H^+$ from $H$ by blowing up each $x\in H$ into the set $X=\{x\}\cup \{x'_e,x''_e\mid e\ni x\}$, then placing the edges $\{xx'_ex''_e\mid e\ni x\}$ in each part. Now $H^+$ contains $L$ and since each part is tripartite, $H^+$ is contained in an iterated blow-up of $H$. Also, $L$ is linear because $F^{\threevee}$ is, and any two gadgets only intersect in the vertices $x,y,z$. Now we show the following, which is the key gadget property.
\begin{claim}\label{claim:splitfano} The 3-graph $F^{\threevee}$ is iterated tripartite and every non-constant top-level coloring of $F^{\threevee}$ is rainbow on $\{x,y,z\}$.
\end{claim}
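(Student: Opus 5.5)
The claim has two parts: iterated tripartiteness of $F^{\threevee}$, and the rigidity statement about non-constant top-level colorings.

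For the first part I will exhibit an explicit proper iterated coloring of depth two. At the top level, take $\chi(x)=\chi(2)=\chi(3)=1$, $\chi(y)=\chi(4)=\chi(5)=2$, and $\chi(z)=\chi(6)=\chi(7)=3$. The edges $x23,y45,z67$ are then monochromatic. The remaining edges $247,346,256,357$ each take one vertex from each of $\{2,3\},\{4,5\},\{6,7\}$, so they are rainbow. Inside each part the only edge is a single triple, and a single edge is tripartite. Giving its three vertices distinct second coordinates therefore completes a proper coloring into $\{1,2,3\}^2$.

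For the rigidity part, let $\chi$ be any top-level coloring, meaning every edge is rainbow or monochromatic. I will first show that $\chi$ restricted to $\{2,\dots,7\}$ has the form above or is constant. The four edges $247,346,256,357$ on the six vertices $\{2,\dots,7\}$ form the "Pasch configuration" $K_{2,2,2}$ restricted to the four transversal triples. Two vertices in the same pair, such as $2,3$, are in no common edge, while any two vertices from different pairs lie in exactly one common edge.

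The case analysis runs as follows. Suppose two vertices from different pairs share a color, say $\chi(2)=\chi(4)$. Then edge $247$ is monochromatic, so $\chi(7)=\chi(2)$. Next, $256$ contains $2$ and $\chi(5)$ or $\chi(6)$ is constrained. I will propagate through the four edges and the pair-edges $x23,y45,z67$:
\begin{itemize}
\item Any two vertices from different pairs with equal color force their common edge to be monochromatic.
\item This spreads the equal coloring around the Pasch configuration and forces all of $2,\dots,7$ to share one color.
\item The edges $x23$ etc.\ then force $x,y,z$ to have that color as well, so $\chi$ is constant.
\end{itemize}
If instead no two vertices from different pairs share a color, each of the four edges is rainbow. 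With only three colors this forces $\chi(2)=\chi(3)$, $\chi(4)=\chi(5)$ and $\chi(6)=\chi(7)$, with the three pairs receiving distinct colors. Then $x23$ contains the equal pair $2,3$, so it cannot be rainbow and must be monochromatic, giving $\chi(x)=\chi(2)$; likewise $\chi(y)=\chi(4)$ and $\chi(z)=\chi(6)$. Hence $x,y,z$ are rainbow.

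The main obstacle is the propagation step in the first case. A naive local argument might stall, so I will check it carefully by exploiting that each cross pair lies in a unique edge, handling the cases by symmetry of the Fano plane.
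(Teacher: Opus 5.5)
Your depth-two coloring and your second case are correct. In that case no cross pair shares a color, which forces $\chi(2)=\chi(3)$, $\chi(4)=\chi(5)$, $\chi(6)=\chi(7)$ with three distinct colors, and hence $x,y,z$ rainbow. The first case, however, is asserted rather than proved, and the mechanism you give for it does not work on its own.

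Once $\chi(2)=\chi(4)=\alpha$ forces $247$ to be monochromatic, each remaining edge $256$, $346$, $357$ contains exactly one $\alpha$-colored vertex. Every equal-colored cross pair you know lies inside $247$. So the rule in your first bullet never fires again, which is exactly the stall you anticipate.

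The missing step uses the rainbow alternative. The three remaining edges meet $\{3,5,6\}$ in the pairs $36$, $56$, $35$, which form a triangle.
\begin{itemize}
\item If some vertex of $\{3,5,6\}$ has color $\alpha$, both edges through it contain two $\alpha$-vertices, so both are monochromatic. This colors the rest of $\{3,5,6\}$ with $\alpha$.
\item If no vertex of $\{3,5,6\}$ has color $\alpha$, all three remaining edges are rainbow. Then the two colors other than $\alpha$ would properly color a triangle, which is impossible.
\end{itemize}
Any two of the edges $247,346,256,357$ meet in exactly one vertex. So this argument works verbatim for whichever edge is first forced monochromatic, and no symmetry reduction is needed. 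With this inserted, your proof is complete.

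Your route also differs from the paper's. The paper's argument runs as follows:
\begin{itemize}
\item It assumes $\chi(x)=\chi(y)$, identifies $x$ with $y$, and deletes $z$ to obtain $F^-$.
\item It uses that $F^-$ is minimal NIT, so by \cref{lem:quotientpreservesNIT} its only top-level colorings are constant.
\item It finishes with the edge $z67$.
\end{itemize}
That is shorter given the machinery. But it rests on $F^-$ being minimal NIT, which needs \cref{prop:5edgetripartite} together with the fact that $F^-$ is NIT. The paper takes that fact for granted rather than checking it.

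Your direct analysis of the Pasch configuration is self-contained and yields more:
\begin{itemize}
\item It shows the non-constant top-level coloring is unique up to permuting colors, as asserted in the caption of Figure~\ref{fig:splitfano}.
\item Rerunning it with $123,145$ in place of $x23,y45$ shows directly that $F^-$ admits only constant top-level colorings. In other words, it proves that $F^-$ is NIT, the very fact the paper's route relies on.
\end{itemize}
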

\begin{subproof} The first part is shown above. By symmetry we suppose $\chi$ is a top-level coloring $\chi$ where $\chi(x)=\chi(y)$. By gluing $x,y$ together we see that $F^{\threevee}/\{x,y\}-z\cong F^-$, the Fano plane minus an edge. Since $\chi$ induces a top-level coloring of $F^-$ and $F^-$ is min NIT, $\chi$ must be constant apart from $\chi(z)$. However, the edge $z67$ implies that $\chi$ is constant.
\end{subproof}
Finally, we show that $L$ is NIT. If $\chi$ is a top-level $3$-coloring of $L$, then for every $e=xyz\in E(H)$, by \cref{claim:splitfano} either $\chi$ is constant on the gadget $F^{\threevee}_e$ or else $\chi$ properly colors $e$. Thus $\chi$ restricts to a top-level 3-coloring of $H$, which must be constant. Thus $\chi$ is also constant. We conclude that $L$ has no non-constant top-level $3$-coloring, so $L$ is NIT.
\end{proof}

Using this Lemma, we restate and prove the contrapositive of \cref{thm:singlelinear}.
\begin{numtheorem}{\ref{thm:singlelinear}} If $H$ has at most $k$ edges, $H$ is NIT, and $r(H,n)=n^{O(1)}$, then there exists an NIT linear $3$-graph $L$ with at most $7k$ edges such that $r(L,n)=n^{O(1))}$.
\end{numtheorem}
\begin{proof}Let $L$ be obtained from \cref{lem:gadgetreplacement}, i.e. $L$ is obtained by replacing each edge of $H$ with a copy of $F^{\threevee}$. Since $L$ is contained in an iterated blow-up of $H$, \cref{cor:setramseyblow-up} implies $r(L,n)\leq r(H,n)^C=n^{O(1)}$ for some constant $C$, as desired.
\end{proof}

\section{Quasipolynomial upper bounds}\label{sec:upperbounds}

In this section, we will prove the following and use them to deduce the results of Section \ref{sec:intro_quasiupper}. Recall that $B_{n,n}$ is the two-sided complete bipartite $3$-graph and $S_{n,n}$ is the one-sided version. We note that \cref{thm:doublingvsSnn} is essentially the same as \cite[Prop. 6.3]{gridramsey}, though slightly stronger.

\begin{theorem}\label{thm:fwddoubling}
If $H$ is forward-doubleable, then $r(H,K_n)=\exp(\log^{\Theta(1)}n)$.
\end{theorem}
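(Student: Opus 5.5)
The proof has two halves: a quasipolynomial upper bound $r(H,K_n)\le \exp(O(\log^{C}n))$, which carries the real content, and a superpolynomial lower bound $r(H,K_n)\ge\exp(\Omega(\log^{1+c}n))$, which should follow from the quotient machinery of \cref{sec:quasilower}. Since forward-doubleability is only defined below, I state the plan in terms of the properties I expect the definition to supply. For the upper bound, I would first prove the polynomial bound of \cref{thm:fwddoublingvsBn}, or rather a \emph{relative} version of it. The relative statement I would aim for: if $G$ contains no copy of $H$ and $\alpha(G)<n$, then every $S\subseteq V(G)$ with $|S|=s$ contains disjoint sets $X,Y\subseteq S$ with $|X|,|Y|\ge s\,n^{-C}$ such that no edge of $G$ meets both $X$ and $Y$. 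That is, the complement of $G[S]$ contains a $B_{m,m}$ with $m$ only a polynomial-in-$n$ factor smaller than $s$.

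To find such a pair, I would use the quasi-dense pairs announced in the introduction. Start from $S$; since $\alpha(G)<n$, Tur\'an-type counting (as in the proof of \cref{lem:G(r)indset}) shows that every subset of size much larger than $n$ spans many edges. Then process the vertices of $H$ in the ``forward'' order given by the definition, repeatedly passing to large subsets on which every surviving pair is quasi-dense, which loses only a factor $n^{O(1)}$ each time. The ``doubleable'' condition should be exactly what is needed at the inductive step. Either every candidate pair $(X,Y)$ with many crossing edges can be split into two quasi-dense pairs, so the embedding of $H$ proceeds one more vertex, or some split has no crossing edges and yields the desired bipartite hole. If the embedding runs to completion we find $H$, a contradiction. \textbf{This embedding step is the main obstacle:} quasi-density has to be defined strongly enough to survive $|V(H)|$ rounds of refinement, yet weakly enough that its failure forces an empty bipartite pattern and not merely a sparse one. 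I would first try to define quasi-density by requiring that every pair of subsets of polynomial relative size spans at least one crossing edge.

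Given the relative statement, the upper bound follows by recursion. The sets $X$ and $Y$ span no crossing edges, so independent sets $I_X\subseteq X$ and $I_Y\subseteq Y$ combine into an independent set $I_X\cup I_Y$. Hence $f(s)\ge 2f(s\,n^{-C})$, where $f(s)$ denotes the minimum independence number over $s$-vertex subsets. Iterating $k=\log N/(C\log n)$ times gives $\alpha(G)\ge 2^{k}$. So $\alpha(G)<n$ forces $\log N\le C\log^2 n$, that is, $r(H,n)\le\exp(O(\log^{2}n))$. This matches the bound claimed in \cref{thm:fanominus}. If the splitting step only produces pieces of size $s\,n^{-C}$ after several rounds, the exponent degrades to some other constant, which still gives $\exp(\log^{O(1)}n)$.

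For the lower bound, I would check that forward-doubleable graphs are NIT and use the quotient machinery. If forward-doubleability rules out nontrivial tight quotients, or bounds the length of chains of them, then \cref{cor:notquotients} gives $r(H,n)\ge\exp(\Omega(\log^{1+1/t}n))$ directly. Otherwise, I would pass to a minimal NIT subgraph and apply \cref{lem:quotient} recursively, as in the proof of \cref{thm:reductiontolinear}. Each step down the chain preserves NIT by \cref{lem:quotientpreservesNIT}, and the chain terminates, so some superpolynomial bound $\exp(\log^{1+c}n)$ survives. Together the two halves give $r(H,K_n)=\exp(\log^{\Theta(1)}n)$.
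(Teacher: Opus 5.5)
\textbf{There are gaps in both halves.} The upper bound's core step is unproven, and the mechanism you sketch fails for forward doublings. The lower-bound half aims at a false statement.

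\textbf{Upper bound.} Your upper bound rests entirely on the relative statement: an empty bipartite pair $X,Y$ with $|X|,|Y|\ge s\,n^{-C}$ inside every $s$-set. Nothing in your sketch produces it.

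In this paper, $H$-freeness yields an \emph{empty} pair only in the setting of \cref{lem:similarpartitioning}, and that argument uses similarity essentially. In a blow-up of $H$ with large classes $U_u,U_w$, a $U_uU_uU_w$ edge completes $H[u,w]$. A $U_uU_wU_w$ edge completes $H[w,u]$, which is a copy of $H[u,w]$ there only because $u,w$ are similar. For a forward doubling only the first type is excluded. So a failed embedding gives a one-sided hole $e(X,X,Y)=0$ with $XYY$ edges unconstrained, whereas your recursion $f(s)\ge 2f(s\,n^{-C})$ needs two-sided holes.

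The other source of holes, $\alpha(G)<n$, gives only \emph{sparse} cuts, and with quasipolynomial losses (the $\eps_{i+1}=\eps_i^{10\log_2 n}$ of \cref{lem:discretequasidense}), not empty cuts with loss $n^{-C}$. Nor is your statement a relative form of \cref{thm:fwddoublingvsBn}. That theorem gives holes of size only $s^{1/C}$, and $f(s)\ge 2f(s^{1/C})$ yields just $\alpha(G)\ge(\log N)^{\Omega(1)}$, an exponential bound. Your statement would also give $\exp(O(\log^2 n))$ for every forward-doubleable $H$, including the Fano plane, which is considerably stronger than what the paper's argument yields.

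The missing idea is the asymmetry trick of \cref{lem:fwdqdpairexists}:
\begin{enumerate}[(i)]
\item Use $\alpha(G)<n$ once, via sparse-cut bipartitioning and a random transversal, to get a set $U$ that is quasi-dense across a whole range of scales. Quasi-density must be quantitative, $e(A,B)\ge 2^{-\beta d}|U|^3$; a single guaranteed crossing edge cannot be pushed through several doublings.
\item Split $U$ into $A,B$. If $(A,B)$ is not forward-quasi-dense, there are large $X\subseteq A$, $Y\subseteq B$ with very few $XXY$ edges.
\item Since $X\cup Y$ stays quasi-dense (\cref{prop:quasidensescope}), any large $W_1\subseteq X$, $W_2\subseteq Y$ span many crossing edges, almost all of type $W_2W_2W_1$. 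Hence $(Y,X)$ is forward-quasi-dense.
\item Iterating gives $U_1,\dots,U_t$ with all pairs forward-quasi-dense (\cref{lem:forwarddensesequence}). Copies of $H$ are then counted along its forward-doubling sequence (\cref{lem:qdcounting}), giving $\exp(\log^{C}n)$ with $C$ depending on $H$.
\end{enumerate}

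\textbf{Lower bound.} You should drop this half. In $\exp(\log^{\Theta(1)}n)$ the lower bound is only the trivial $r(H,n)\ge n=\exp(\log n)$, and the paper proves only the upper bound.

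A superpolynomial lower bound fails in general. Forward-doubling $(a,b)$ in the independent graph with blocks $\{a\}<\{b\}$ produces a single edge, so $K_3^{(3)}$ is forward-doubleable, yet $r(K_3^{(3)},n)=n$. In particular, forward-doubleable graphs need not be NIT.

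Even for NIT examples, the quotient route gives nothing for linear $H$ such as the Fano plane. The partition into singletons makes a linear $H$ a nontrivial tight quotient of itself, so \cref{lem:quotient} only relates $r(H,\cdot)$ to itself. The paper in fact leaves open whether $r(F,n)$ is polynomial.
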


\begin{theorem}\label{thm:fwddoublingvsBn}
If $H$ is forward-doubleable, then $r(H,B_{n,n})=n^{O(1)}$. 
\end{theorem}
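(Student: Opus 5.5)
The plan is to embed $H$ greedily along the vertex ordering that witnesses forward-doubleability, with \emph{quasi-dense pairs} doing the work that lower-regular pairs do in the graph regularity method. Let $G$ be a $3$-graph on $N=n^{C}$ vertices, $C$ a large constant depending on $H$, and suppose $G$ contains no independent copy of $B_{n,n}$. Then every two disjoint $n$-sets $A,B\subseteq V(G)$ span an edge meeting both, which is a robust density property at all scales above $n$. This is the main advantage over the $K_n$ setting: it passes to arbitrary disjoint subsets, so we lose only polynomial factors and never need to iterate in a way that forces quasipolynomial size.

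First I will fix precisely what I use. Call disjoint sets $(X,Y)$ \emph{$\varepsilon$-quasi-dense} if any $X'\subseteq X$, $Y'\subseteq Y$ with $|X'|\ge\varepsilon|X|$ and $|Y'|\ge \varepsilon|Y|$ span an edge meeting both. When $|X|,|Y|\ge n^{C'}$, the no-$B_{n,n}$ hypothesis makes every pair quasi-dense with $\varepsilon=n/\min(|X|,|Y|)$. I will prove a counting/supersaturation lemma: in such a pair one finds many pairs $(x,y)\in X\times Y$, or many vertices $x\in X$, whose link restricted to the pair is itself large and again quasi-dense. The proof will repeatedly apply the defining property to disjoint $n$-sets and use pigeonhole to produce codegree-type structures (a vertex $x$ together with sets $Y_x,Z_x$ of size $|Y|/n^{O(1)}$ with $xyz\in E(G)$ for all relevant pairs).

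Second comes the embedding. I take forward-doubleability to mean an ordering $v_1,\dots,v_k$ of $V(H)$ in which each new vertex attaches to earlier ones through a structure that can be ``doubled'', i.e.\ reproduced on two disjoint candidate sets in the forward direction. I will keep, for every unembedded vertex, a candidate set of size at least $N/n^{O(i)}$ after $i$ steps. The candidate sets will be arranged so that every pair that must later host an edge of $H$ is quasi-dense. When $v_i$ is embedded, the lemma above lets us choose its image so that each relevant later candidate set shrinks only by a polynomial factor and the required edges through $v_i$ are guaranteed. Since $k=|V(H)|$ is constant, all candidate sets stay of size at least $N/n^{O(k)}$, which exceeds $n$ once $C$ is large. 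The final vertices are then placed by one more application of the $B_{n,n}$-freeness to two candidate sets of size at least $n$.

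The main obstacle will be the doubling step. When a vertex's edges point into two future candidate sets simultaneously, we need a single image whose links into both sets are large and still quasi-dense with respect to each other. Averaging alone gives largeness but not quasi-density of the links. My first attempt is a dependent-random-choice style argument: sample a few vertices, intersect their links, and apply quasi-density to the sampled set versus a fixed $n$-set to show the intersection inherits the property. Forward-doubleability should be exactly the condition that makes this inheritance possible, which is why the Fano plane qualifies.
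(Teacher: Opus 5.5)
There is a genuine gap, and it sits exactly where ``forward'' has to be used. Your quasi-density, like the $B_{n,n}$-free hypothesis it comes from, is symmetric. It produces an edge meeting both $X'$ and $Y'$, but does not say which side contains two of its vertices. A doubling $H[u,w]$ needs an edge $uu'w$ with two vertices in the candidate set of $u$ and one in that of $w$; an edge of the other type builds $H[w,u]$ instead. Neither your link-inheritance step nor your final step (``one more application of the $B_{n,n}$-freeness to two candidate sets'') controls this orientation. So, as described, nothing in your argument separates forward-doubleable from merely doubleable $3$-graphs, and no such argument can succeed. The two-sided bicone of \cref{thm:SnvsKn} is doubleable but not forward laminar. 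One can also check that the random construction of \cref{sec:explower} has, with positive probability, no independent copy of $B_{m,m}$ for $m$ polynomial in $d$: any $d+1$ vertices of $A$ together with any $d+1$ of $B$ span a forward triple meeting both sides. Hence for that graph $r(H,B_{n,n})$ is exponential. Contrast \cref{thm:doublingvsSnn}, where $S_{n,n}$-freeness supplies the orientation for free, so doubleability suffices there. Relatedly, you have misread the definition. Forward-doubleability is not a vertex ordering; it is an ordered block partition $A_1<\dots<A_t$ in which every doubling $(u,w)$ has $u$ in an earlier block than $w$. Its role is to make every doubling edge point the same way between blocks, not to make links inherit density.

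The missing idea is a directional density notion together with a way to produce it. Call $(A,B)$ forward $m$-hitting if every $X\subseteq A$, $Y\subseteq B$ of size $m$ span an $XXY$ edge. Take two disjoint sets of size $n^{C^2}$. Either they form a forward $n^{C}$-hitting pair, or some $X\subseteq A$, $Y\subseteq B$ of size $n^{C}$ span no $XXY$ edge. In the second case $(Y,X)$ is forward $n$-hitting, because $n$-sets $U\subseteq X$, $W\subseteq Y$ with no $WWU$ edge would form an independent $B_{n,n}$ (\cref{prop:hitting}). Iterating inside the last set gives $U_1,\dots,U_t$ with every $(U_i,U_j)$, $i<j$, forward hitting (\cref{lem:forward-hitting-seq}), and block $A_i$ of $H$ is mapped into $U_i$.

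From there no greedy embedding or quasi-density of links is needed. Your ``codegree structures'' are also unavailable in general if you mean a vertex whose link contains all of $Y_x\times Z_x$ with $|Y_x|,|Z_x|\ge |Y|/n^{O(1)}$. A random $3$-graph of density $1/2$ on $n^{C}$ vertices has no independent $B_{n,n}$, yet no vertex has such a link. Instead, one counts by induction along the doubling sequence. For each block-respecting copy $\sigma$ of $H-u-w$, take extension sets $U(\sigma)\subseteq U_i$ and $W(\sigma)\subseteq U_j$. Copies of $H[u,w]$ over $\sigma$ then correspond to the edges counted by $e(U(\sigma),U(\sigma),W(\sigma))$. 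Forward hitting gives $e(X,X,Y)>|X|^2|Y|/n^3$ whenever $|X|,|Y|\ge n$ (\cref{prop:hittingdensity}). Convexity over $\sigma$ then yields polynomially many copies (\cref{lem:countingagainstBnn}).
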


\begin{theorem}\label{thm:doublingvsSnn} If $H$ is doubleable, then $r(H,S_{n,n})=n^{O(1)}$.
\end{theorem}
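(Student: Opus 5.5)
We would prove \cref{thm:doublingvsSnn} by induction along the recursive construction of a doubleable $3$-graph. To make the induction go through, the statement we actually carry is a \emph{hereditary, polynomially robust} one: there is an exponent $C_H$ such that every $3$-graph $G$ in which every vertex set $W$ with $|W|\ge n^{C_H}$ induces a copy of $H$. It suffices to show this holds whenever $G$ has no independent copy of $S_{n,n}$. Concretely, the hypothesis on $G$ says that for all disjoint $A,B\subseteq V(G)$ with $|A|=|B|=n$ there is an edge with two vertices in $A$ and one in $B$. This hypothesis is inherited by every induced subgraph, so it is the right form for induction. The base case, a single vertex or a single edge, is trivial once $n^{C}\ge 2n$.

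For the inductive step we use the \emph{link-graph reformulation} of the hypothesis. Fix a large set $W$, split it into $A\cup B$, and for $b\in B$ let $L_b$ be the link graph of $b$ on $A$. The absence of an independent $S_{n,n}$ says that for every $n$-set $A'\subseteq A$, fewer than $n$ vertices $b\in B$ have $A'$ independent in $L_b$. By double counting over $A'$, most $b\in B$ have link graphs $L_b$ with only few independent $n$-sets, and hence of polynomially large density on polynomially large subsets. We then apply a dependent-random-choice or greedy-neighbourhood argument, as in \cite[Prop.\ 6.3]{gridramsey}. This should produce a vertex $b$, or a small tuple of vertices, whose common link contains a dense bipartite graph between two sets $X,Y\subseteq A$, each of size $n^{C_{H'}}$, where $H'$ is the smaller doubleable piece.

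The doubling step then applies the induction hypothesis inside $X$ and inside $Y$ to obtain two copies of $H'$. The edges joining them to the new vertex, or to each other, are the pairs supplied by the link structure. To guarantee that the specific attachment pairs required by the doubling operation actually lie in the link, we first pass, by Kővári--Sós--Turán style counting, to subsets where the link is complete bipartite of polynomial size. Each such pass costs only a polynomial factor, so $C_H$ stays bounded in terms of the depth of the construction.

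The main obstacle is the step that turns ``few independent $n$-sets in $L_b$'' into a \emph{complete} bipartite structure of polynomial size on sets that are still large enough to recurse into. A graph with independence number below $n$ guarantees only logarithmic cliques, so we must use the bipartite, one-sided form of the hypothesis rather than cliques. This is exactly where $S_{n,n}$ (as opposed to $K_n$) makes the bound polynomial. We would first try to iterate the $S_{n,n}$ hypothesis with $A$ itself split in two, choosing $X,Y$ as random halves, so that every edge we need is of type ``two vertices in one part, one in the other''.
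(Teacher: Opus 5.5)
There is a genuine gap, and it starts with how the doubling operation works. $H'[u,v]$ contains only one copy of $H'$. It first deletes the edges containing both $u$ and $v$, then adds a single twin $u'$ of $u$ (with the same link as $u$) and the single edge $uu'v$.

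So the inductive step does not ask for two copies of $H'$ inside sets $X,Y$ joined through the link of a new vertex $b$. It asks for one copy $\sigma$ of $H'-u-v$, together with two valid images of $u$ and one valid image of $v$ that span an edge.

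As you describe it, the only cross edges your structure supplies have the form $bxy$ with $x\in X$, $y\in Y$. So everything your recursion can produce lies in an iterated blow-up of an edge. It can never produce a doubleable graph that is not iterated tripartite, such as the Fano plane (Lemma~\ref{lem:fanofwddoubleable}) or the $8$-vertex graph of Theorem~\ref{thm:SnvsKn}.

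Independently, the step you flag as the main obstacle does fail. Take the random $3$-graph of density $1/2$ on $N=n^K$ vertices. With high probability it has no independent $S_{n,n}$: there are at most $N^{2n}$ pairs of $n$-sets, and each spans no $AAB$ edge with probability $2^{-n\binom n2}$. Yet each link is a copy of $G(N-1,1/2)$, whose complete bipartite subgraphs have sides of size $O(\log N)=O(K\log n)$. So no K\H{o}v\'ari--S\'os--Tur\'an or dependent random choice argument can give complete bipartite links of polynomial size, even inside the common link of several vertices.

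The missing idea is supersaturation, with the $S_{n,n}$ hypothesis applied to candidate sets rather than to link graphs; this is the paper's route via Lemma~\ref{lem:doublingblow-up}.

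Let $H$ be the smaller graph with the edges through $\{u,w\}$ deleted, let $t=|V(H)|$, and suppose $r(H,S_{n,n})\le n^C$. Sampling random $n^C$-subsets shows that an $N$-vertex $G$ with no independent $S_{n,n}$ contains at least $N^t/n^{Ct}$ copies of $H$.

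Let $U(\sigma)$ and $W(\sigma)$ be the sets of vertices extending $\sigma$ to a copy of $H-w$ and of $H-u$ respectively. Since $u$ and $w$ share no edge, a copy of $H$ is exactly a copy $\sigma$ of $H-u-w$ plus distinct $x\in U(\sigma)$, $y\in W(\sigma)$.

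Averaging over the at most $N^{t-2}$ choices of $\sigma$ gives one with $|U(\sigma)|,|W(\sigma)|\ge N/n^{Ct}$. So for $N=2n^{Ct+1}$ there are disjoint $A\subseteq U(\sigma)$, $B\subseteq W(\sigma)$ of size at least $n$.

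The hypothesis now gives an edge $a_1a_2b$ with $a_1,a_2\in A$ and $b\in B$. Then $\sigma\cup\{u\mapsto a_1,u'\mapsto a_2,w\mapsto b\}$ is a copy of $H[u,w]$, because $a_2$, like $a_1$, is a valid image of $u$. Each doubling costs only $C\mapsto Ct+1$, so the bound stays polynomial.
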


\begin{theorem}\label{thm:similardoubling}
Suppose $H$ is a graph with two vertices $u,w\in V(H)$, such that some automorphism on $H$ maps $u$ to $w$. Then
\[
r(H[u,w],n)\leq r(H,n)^{O(\log n)}.
\]
\end{theorem}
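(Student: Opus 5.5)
The plan is a density-increment / iterated-restriction argument that loses a factor of roughly $r(H,n)^{O(1)}$ per round over $O(\log n)$ rounds. Write $R=r(H,n)$ and let $G$ be a $3$-graph on $N=R^{C\log n}$ vertices with $\alpha(G)<n$, where $C$ is a large constant. I will read $H[u,w]$ as the $3$-graph obtained by gluing two copies of $H$ so that the vertex $w$ of the first copy is identified with the vertex $u$ of the second copy; this is the sense of ``doubling'' I expect from the surrounding definitions. The goal is to find such a glued configuration in $G$. The hypothesis that some automorphism $\sigma$ of $H$ maps $u$ to $w$ matters because it makes the roles of $u$ and $w$ interchangeable: any copy of $H$ in which a vertex $x$ plays the role of $u$ can be relabelled through $\sigma$ so that $x$ plays the role of $w$. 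Thus we only need to track a single type of ``endpoint'' vertex.

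The first step is supersaturation. Every $R$-subset of $V(G)$ contains a copy of $H$, so averaging over $R$-sets shows that a positive fraction of all $v(H)$-sets, up to a factor $R^{-O(1)}$, span copies of $H$. For each vertex $x$, let $\mathcal C_x$ be the family of copies of $H$ in which $x$ plays the role of $u$ (equivalently, by $\sigma$, the role of $w$). Call $x$ \emph{rich} if $\mathcal C_x$ is large. A rich vertex carries a large fractional family of copies, and the danger is that every pair of copies through $x$ collides somewhere other than at $x$.

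The key step is to iterate. Fix a rich $x$. Let $S_x$ be a random sample of the vertices used by copies in $\mathcal C_x$, and pass to a subset $V_1\subseteq V(G)\setminus\{x\}$ of size at least $N/R^{O(1)}$ in which $x$ still has many copies avoiding the sampled part. Since $\alpha(G[V_1])<n$, as long as $|V_1|\ge R$ we can find inside $V_1$ a second copy of $H$ through $x$ that is vertex-disjoint from a fixed first copy except at $x$. If this fails, then the vertices blocking it form a structured set, and we restrict to a set on which a certain independent-set-like property has doubled in size. Because an independent set in $G$ has size less than $n$, such a doubling can happen only $O(\log n)$ times. Each round costs a factor $R^{O(1)}$, and this accounts for the final bound $R^{O(\log n)}$.

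I expect the main obstacle to be making this doubling step rigorous. One has to identify exactly what quantity doubles when the disjoint second copy cannot be found, and show that it remains certified within an independent set, so that it is genuinely bounded by $n$. The first thing I would try is to track a growing set $I_i$ of candidate endpoints that pairwise fail to extend, and to show that $I_i$ is independent of size $2^i$. The automorphism $\sigma$ is what should allow two failed endpoints to be merged into the next level.
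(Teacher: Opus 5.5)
\paragraph{The proposal targets the wrong object.} In the paper, $H[u,w]$ is not two copies of $H$ glued at a vertex. First delete any edges containing both $u$ and $w$; then add a twin $u'$ of $u$ (a new vertex with the same link as $u$) together with the single new edge $uu'w$.

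Using the automorphism, your glued configuration is two copies of $H$ meeting only in the image of $w$. It is therefore a subgraph of the blow-up of $H$ in which every vertex other than $w$ is doubled. Its Ramsey number is already polynomial in $r(H,n)$ by supersaturation and the standard blow-up bound, so no $\log n$ rounds are needed for it. The actual target contains the edge $uu'w$, which lies in no blow-up of $H$. That edge is exactly where $\alpha(G)<n$ must be used beyond supersaturation.

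Independently of this, the central step of your argument is never specified. Phrases such as ``the vertices blocking it form a structured set'' and ``an independent-set-like property has doubled'' name no actual quantity, and you say yourself that you do not know what doubles. So even for your own reading there is no proof yet.

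\paragraph{The missing idea: an emptiness statement for a pair of large sets.} Let $t=|V(H)|$ and $R=r(H,n)$. Supersaturation gives at least $N^t/R^t$ copies of $H$. Averaging over the at most $N^{t-2}$ copies $\sigma$ of $H-u-w$ yields one $\sigma$ with two large extension sets: $A$, the vertices completing $\sigma$ to a copy of $H-w$, and $B$, the vertices completing it to a copy of $H-u$. These can be taken disjoint, each of size at least $N/2R^t$. Since $u,w$ share no edge, every $a\in A$, $b\in B$ completes a copy of $H$.

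Now look at edges between $A$ and $B$:
\begin{itemize}
\item An $AAB$ edge $a_1a_2b$ yields $H[u,w]$ via $u\mapsto a_1$, $u'\mapsto a_2$, $w\mapsto b$.
\item An $ABB$ edge yields $H[w,u]$.
\end{itemize}
The symmetry hypothesis is used precisely to identify $H[w,u]$ with $H[u,w]$. This is immediate from an automorphism exchanging $u$ and $w$, as for the iterated blow-ups of an edge in the application. Hence $G$ has no edge meeting both $A$ and $B$. The automorphism is not there to merge failed endpoints.

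Recurse inside $A$ and inside $B$ for $\lceil\log_2 n\rceil$ levels, each costing a factor $2R^t$. This produces at least $n$ nonempty sets such that no edge has its vertices in three different sets: at the level where its vertices first separate, such an edge would be a $2$--$1$ crossing edge. Choosing one vertex from each set gives an independent set of size $n$, a contradiction. What doubles is the number of parts of this edge-free family, and $N=(2R^t)^{\lceil\log_2 n\rceil}$ suffices.
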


\begin{corollary}\label{cor:tridoubling}
If $H$ is iterated tripartite and $H^+$ is obtained by one doubling in $H$, then $r(H^+,n)=\exp(O(\log^2n))$.
\end{corollary}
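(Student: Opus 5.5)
We derive \cref{cor:tridoubling} from \cref{thm:similardoubling}, after reducing to the case where the doubling is performed on a pair of vertices that an automorphism swaps. We read ``one doubling'' as in \cite{quasipoly}: $H^+ = H[u,w]$, obtained from $H$ by identifying two vertices $u,w$ in some auxiliary graph, and the hypotheses give such a graph sitting inside an iterated blow-up of an edge. The plan has three steps.

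\textbf{Step 1: find a symmetric host.} Since $H$ is iterated tripartite, $H \subseteq (K_3^{(3)})^d$ for some $d$. Write $B=(K_3^{(3)})^d$. This is vertex-transitive: coordinatewise permutations of $\{1,2,3\}$, applied independently at each level in a way consistent with the prefix tree, are automorphisms. More precisely, for any two vertices $u,w\in V(B)$ there is an automorphism of $B$ mapping $u$ to $w$. This works because at each level $\ell$, the permutation applied to the $\ell$-th coordinate may depend on the first $\ell-1$ coordinates, and such maps preserve $\delta(\cdot,\cdot)$ as well as the edge condition $\{u_\ell,v_\ell,w_\ell\}=\{1,2,3\}$.

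\textbf{Step 2: transfer the doubling to the host.} Let the doubling of $H$ be at $u,w$. We check that $H^+=H[u,w]\subseteq B[u,w]$, where $B[u,w]$ is the same doubling operation applied to the host, for the embedded copies of $u,w$. This needs the doubling operation to be monotone under taking supergraphs that fix $u,w$, which should be immediate from its definition.

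\textbf{Step 3: apply the earlier results.} By the Erd\H{o}s--Hajnal bound (the easy direction, \cref{cor:setramseyblow-up} applied to the single edge), $r(B,n)=n^{O(1)}$. Since an automorphism of $B$ maps $u$ to $w$, \cref{thm:similardoubling} gives
\[
r(H^+,n)\le r(B[u,w],n)\le r(B,n)^{O(\log n)} = n^{O(\log n)}=\exp(O(\log^2 n)).
\]

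The main obstacle is Step 2. The doubling $H[u,w]$ is defined with respect to specific vertices, and it must be compatible with passing from $H$ to the host $B$ while preserving which vertices are doubled. If doubling adds vertices or edges that depend on the neighbourhoods of $u,w$, we will check that neighbourhoods only grow in $B$, so that the doubled graph of $H$ embeds in the doubled graph of $B$. If instead the definition requires $u,w$ to play symmetric roles only up to the relevant coordinates, we will choose the automorphism in Step 1 to fix everything outside the subtree where $u$ and $w$ first differ.
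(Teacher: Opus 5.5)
Your proposal is correct and follows the paper's proof. The paper likewise passes to the host $B=(K_3^{(3)})^d$ using that a doubling of a subgraph is a subgraph of the doubling, uses that any two vertices of $B$ are similar, and applies \cref{thm:similardoubling} with $r(B,n)=n^{O(1)}$. One correction to your framing: doubling does not identify vertices. It deletes the edges through both $u,w$, adds a twin $u'$ of $u$, and adds the edge $uu'w$. With this definition Step~2 is immediate, because every edge of $H[u,w]$ other than $uu'w$ misses one of $u,w$ and is therefore present in $B[u,w]$.
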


Let us define terms.

\begin{definition}Let $H$ be a $3$-graph and $u,v\in V(H)$ be distinct vertices with no common edge. Then $H[u,v]$ is obtained by duplicating the vertex $u$ into $u'$, and adding the edge $uu'v$. We call this \textbf{doubling} the pair $(u,v)$. Formally
\begin{align*}
V(H[u,v])&=V(H)\cup \{u'\},\\
E(H[u,v])&=E(H)\cup\{u'xy\mid uxy\in E(H)\}\cup\{uu'v\}.
\end{align*}
If the vertices $u,v\in V(H)$ are contained in some common edges $uvw$, we define the doubling $H[u,v]$ by first deleting all common edges, then doubling the pair $(u,v)$. We say that $H$ is \textbf{doubleable} if $H$ can be obtained from an independent graph by a sequence of doubling and taking subgraphs. 
\end{definition}

We show that this definition generalizes the class of iterated tripartite graphs. Recall that the growth type of off-diagonal Ramsey numbers is closed under iterated blow-ups (see \cite[Prop.\ 6.2]{FoxHe2021} or \cref{cor:setramseyblow-up}), so the following is a natural property.

\begin{proposition}\label{prop:doubleableiteratedblow-ups}
The class of doubleable $3$-graphs is closed under iterated blow-ups.
\end{proposition}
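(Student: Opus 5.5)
We need to show: if $H$ is doubleable, then every iterated blow-up of $H$ is doubleable. Since doubleable graphs are closed under subgraphs by definition, and iterated blow-ups are built by repeated single-vertex substitutions, the plan is to reduce to one operation. Concretely, it suffices to show the following. If $H$ and $H'$ are doubleable and $v\in V(H)$, then the \emph{substitution} $H[v\to H']$ is doubleable. Here $v$ is replaced by a copy of $V(H')$ carrying the edges of $H'$, and each edge $vxy$ becomes all edges $v'xy$ with $v'\in V(H')$. Every iterated blow-up arises from $H$ by finitely many such substitutions with doubleable pieces (copies of $H$ or of smaller iterated blow-ups), so induction on the number of substitutions finishes the proof.

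The key observation is that the operations commute with "cloning". Let $H^{(v,k)}$ denote $H$ with $v$ replaced by $k$ twins, meaning clones with no edges among them. First I show by induction along the doubling sequence for $H$ that $H^{(v,k)}$ is doubleable.

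Cloning an isolated vertex of the independent graph is trivial. Taking subgraphs commutes with cloning. For a doubling step $G\mapsto G[u,w]$, suppose we want the clone of $v$ in $G[u,w]$, and assume the clone of $v$ in $G$ is doubleable.
\begin{itemize}
\item If $v\notin\{u,w,u'\}$, cloning $v$ commutes with the doubling.
\item If $v=w$, double $(u,w_1)$ in $G^{(w,k)}$ to get $u'$ and edge $uu'w_1$. Then double $(u',w_2)$ to get $u''$ and edge $u'u''w_2$, and so on. This gives a supergraph of something we can trim back. Alternatively, and more cleanly, first clone $u$ with an appropriate structure.
\item If $v=u$ or $v=u'$, note that $u$ and $u'$ are themselves twins except for the edge $uu'w$. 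Cloning $u$ in $G$ first and then doubling each clone gives the desired graph as a subgraph.
\end{itemize}
I expect the case $v=w$, where the cloned vertex is the third vertex of the new edge $uu'w$, to be the main obstacle. There we need edges $uu'w_i$ for all $i$ with a single pair $u,u'$. I would try to get them by building a larger graph, doubling $(u,w_i)$ for each $i$ to create separate $u'_i$, and then passing to a subgraph. This fails to merge the $u'_i$, so instead I would prove a stronger inductive statement. Namely, doubleability is preserved under replacing $v$ by an arbitrary doubleable graph $H'$, which is the full substitution claim, by induction on the doubling sequence of $H'$ \emph{inside} $H$.

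Under that approach, start from $H^{(v,m)}$ with $m=|V(H')|$, obtained from $H$ by substituting the independent graph on $m$ vertices. Then mimic each doubling $(a,b)$ of $H'$'s construction by doubling $(a,b)$ in the big graph. Duplicating $a$ automatically copies all external edges $axy$ with $x,y$ outside the blob, since those are edges through $a$. The new edge $aa'b$ lies inside the blob, and $a,b$ share no external edges, so no deletions occur other than those $H'$ itself requires. Subgraph steps of $H'$ are also subgraph steps here. Thus the remaining obstacle is only the twin case $H^{(v,m)}$. This is exactly substitution of an independent set, which reduces to the claim that cloning a vertex preserves doubleability. That claim I would prove by the induction above, handling $v=w$ via the substitution trick applied recursively on the shorter doubling sequence, with a simultaneous induction on the length of the sequence.
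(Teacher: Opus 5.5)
Your reduction to a single substitution $H[v\to H']$ is correct. So is your argument that, once $v$ has been replaced by enough twins, you can replay the doubling sequence of $H'$ inside the blob: doubling a blob vertex copies its external edges, two blob vertices share no external edge, and subgraph steps remain subgraph steps. This is exactly the second half of the paper's proof. One cosmetic correction: the number of twins needed is the size of the initial independent graph in the construction of $H'$, not $|V(H')|$.

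The gap is the step you yourself isolate: that replacing a vertex by twins preserves doubleability. Your induction along the doubling sequence does not prove it.
\begin{itemize}
\item In the case $v=u$, cloning $u$ in $G$ and then doubling each clone $(u_i,w)$ yields a matching of edges $u_iu_i'w$. Cloning $u$ in $G[u,w]$ instead requires one vertex $u'$ lying in all the edges $u_iu'w$. This is the same merging problem you ran into for $v=w$.
\item Your treatment of $v=w$ is circular as written. The substitution argument starts by cloning, so appealing to it does not tell you how to produce twins of $w$ after the edge $uu'w$ has been created. You give no other construction.
\end{itemize}

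The missing idea is that cloning requires no induction at all. A doubling sequence may start from any independent graph, so you can include one extra vertex $x$ that no later step touches. Hence if $H$ is doubleable, so is $H$ together with an isolated vertex $x$. Now double $(v,x)$:
\begin{itemize}
\item nothing is deleted, since $x$ lies in no edge;
\item the new vertex $v'$ receives an edge $v'yz$ for every $vyz\in E(H)$;
\item the only other new edge is $vv'x$.
\end{itemize}
Deleting $x$ leaves $H$ with $v$ replaced by two twins, and repeating gives any number of twins. More generally, doubling toward any vertex that shares no edge with $v$ and then deleting the new edge does the same job. This is the paper's one-line argument. With it in place of your case analysis, the rest of your proposal goes through and coincides with the paper's proof.
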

\begin{proof}First note that the class is closed under vertex duplication; to duplicate $u$, begin with an auxiliary vertex $x$ and double $(u,x)$. To build an iterated blow-up of $H$, first construct $H$ via doubling; then duplicate each vertex $v$ into an independent set $V_v$, then construct another copy of $H$ within $V_v$ via doubling for each $v$.
\end{proof}

The definition of forward doubling is more restrictive, and is somewhat analogous to the definition of forward laminar in \cref{sec:explower}. 
\begin{definition}\label{defn:fwddoubling} Define a \textbf{blocked} 3-graph $H$ as a $3$-graph $H$ together with some ordered partition $A=(A_1,\ldots, A_t)$ of $V(H)$. A \textbf{forward doubling} is a doubling of a pair $(u,v)$ where $u\in A_i, v\in A_j$ for $i<j$. In the resulting partition, we put $u'\in A_i$.

A 3-graph $H$ is \textbf{forward-doubleable} if there is some partition $(A_i)_{i=1}^t$ such that $H$ can be obtained from an independent graph (with singleton blocks) by a sequence of forward doubling and taking subgraphs.
\end{definition}

\begin{example}See \cref{lem:fanofwddoubleable} and the corresponding Figure \ref{fig:buildingfano}, which shows how to obtain the Fano plane by forward doubling.
\end{example}

\begin{remark}A stronger version of forward-doubleable can be defined on a partially ordered graph $(H,\prec)$ by allowing the additional operation of duplicating a vertex into ordered twins in $\prec$. 

An even stronger version can be defined on a linearly ordered graph $H$ by the combined ``ordered-forward-doubling'' operation $H[u,v]$ for vertices $u<v$ where in the resulting ordering we let $u<u'$. Both of these stronger versions are closed under iterated blow-ups, and it is possible that all of our theorems are true for these versions. This would require building a quasi-dense embedding structure in tandem with $H$ such that all pairs $u\prec v$ are associated with a forward quasi-dense pair $(U_u,U_v)$.
\end{remark}

\subsubsection{Section organization}

This section is organized in forward order of complexity. We begin with the proof of \cref{thm:doublingvsSnn} which uses a familiar supersaturation argument to embed a graph $H[u,v]$ using a blow-up of $H$. We also prove \cref{thm:similardoubling} using a clean iterative bipartitioning argument.

Next, we prove \cref{thm:fwddoublingvsBn} by counting copies of $H[u,v]$ using a similar blow-up embedding strategy, but where $H$ is mapped into a particularly nice ``forward-hitting'' substructure of $G$. Finally, we prove \cref{thm:fwddoubling} by combining the iterative bipartitioning argument with a more complex ``quasi-dense'' embedding structure with more careful quantitative bounds.

\subsection{Proofs of doubling theorems}\label{sec:doublingproofs}
In this section we prove Theorems \ref{thm:doublingvsSnn} and \ref{thm:similardoubling}. These techniques are then refined to prove the forward-doubling theorems.

\begin{definition}Let $H,G$ be $3$-graphs. A (labeled) \textbf{copy} of $H$ in $G$ is an injective function $\sigma:V(H)\to V(G)$, also informally denoted
\[\sigma=\big\{(v\mapsto \sigma(v))\mid v\in V(H)\big\},\] such that for all $xyz\in E(H)$, we have $\sigma(x)\sigma(y)\sigma(z)\in E(G)$. This gives a labeled subgraph of $G$ isomorphic to $H$.

An \textbf{unlabeled} copy of $H$ in $G$ is a set $S\subseteq V(G)$ which is the image of some copy $\sigma$. An \textbf{independent} copy of $H$ in $G$ is a copy of $H$ in $\overline G$. Recall that $r(H,K)$ is the minimum $N$ such that any $N$-vertex $3$-graph contains a copy of $H$ or an independent copy of $K$.
\end{definition}

\begin{proposition}[Supersaturation]\label{prop:supersaturation} Let $H$ be a $t$-vertex $3$-graph, $t\geq 1$, and let $K$ be an arbitrary $3$-graph. Then for all $N\geq r(H,K)$, any $N$-vertex $3$-graph $G$ with no independent copy of $K$ contains at least $N^t/r(H,K)^{t}$ copies of $H$.
\end{proposition}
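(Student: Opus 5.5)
The plan is the standard averaging argument over random $M$-vertex subsets, with $M=r(H,K)$. If $N=M$ the bound is $1$, and the hypothesis $N\geq r(H,K)$ together with the absence of an independent copy of $K$ guarantees at least one copy of $H$. So assume $N>M$.

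Double count pairs $(S,\sigma)$, where $S\subseteq V(G)$ has $|S|=M$ and $\sigma$ is a copy of $H$ in $G$ with image contained in $S$.

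First, every induced subgraph $G[S]$ with $|S|=M$ has no independent copy of $K$, since such a copy would also be independent in $G$. Because $|S|=r(H,K)$, each $G[S]$ therefore contains at least one copy of $H$. This gives at least $\binom NM$ pairs.

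Second, a fixed copy $\sigma$ has an image of exactly $t$ vertices. So it lies in exactly $\binom{N-t}{M-t}$ sets $S$, and there are at most $\#\{\text{copies}\}\cdot\binom{N-t}{M-t}$ pairs. (If $M<t$ then $r(H,K)<t$ would force $K$-freeness to fail in a trivial way. Note also that $r(H,K)\geq t$ whenever $H$ has no isolated structure issues. In any case, take $M\geq t$, which holds since the empty graph on fewer than $t$ vertices contains no copy of $H$, and when $K$ has at least one vertex the empty graph is itself an independent copy problem. To handle the degenerate case cleanly, I would note that if $M<t$ the inequality $N^t/M^t\le$ (number of copies) follows directly from the same counting with $M$ replaced by $t$, using $r(H,K)\le t$.)

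Combining the two counts,
\[
\#\{\text{copies}\}\;\geq\;\frac{\binom NM}{\binom{N-t}{M-t}}\;=\;\prod_{i=0}^{t-1}\frac{N-i}{M-i}\;\geq\;\Big(\frac NM\Big)^t,
\]
where the last step uses $N\geq M$, which gives $\frac{N-i}{M-i}\geq \frac NM$ for each $0\le i<t\le M$. This is exactly $N^t/r(H,K)^t$.

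The only delicate point is this bookkeeping: ensuring $M\geq t$ so that the binomial identity and the factorwise inequality are valid. The case $M<t$ is degenerate: no copy of $H$ fits in fewer than $t$ vertices, so any $3$-graph on $M<t$ vertices must contain an independent copy of $K$. Then $G$ itself would contain one, since $N\geq M$, contradicting the hypothesis unless the statement is vacuous. Otherwise the argument is a routine double count.
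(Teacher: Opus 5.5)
Your proof is correct and is essentially the paper's argument: the paper takes a uniformly random $r(H,K)$-subset $S$ and uses $1\le \mathbb{E}[\#\text{copies inside } S]\le \#\text{copies}\cdot (r(H,K)/N)^t$, which is exactly your double count of pairs $(S,\sigma)$ written probabilistically. One blemish: the middle parenthetical on $M<t$ contains incorrect remarks (for instance, replacing $M$ by $t$ would only yield the weaker bound $N^t/t^t$), so delete it and keep your final paragraph, which correctly shows that the case $M<t$ is vacuous.
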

\begin{proof}
Let $\cG$ be the auxiliary $t$-uniform hypergraph on $V(G)$ where $E(\cG)$ is the set of unlabeled copies of $H$ in $G$. Then $\cG$ contains no independent set of size $r\coloneqq r(H,K)$ by definition of Ramsey numbers. Let $S$ be a uniformly random $r$-set of $V(\cG)$, so
\[
1\leq \Ex[|E(\cG[S])|]\leq |E(\cG)|(r/N)^t.
\]
Rearranging gives the result. 
\end{proof}
\begin{remark}Actually, standard Tur\'an bounds (e.g. sampling with alterations) give lower bounds of $\Omega(N^t/r^{t-1})$. We choose the weaker bound because it is cleaner and does not change the result.
\end{remark}

\begin{definition}Let $H,G$ be $3$-graphs. We say that $\cH=\{U_v\mid v\in V(H)\}$ forms a \textbf{blow-up} of $H$ in $G$ if $U_v\subseteq V(G)$ for all $v$ and the sets $\{U_v\}_v$ are pairwise disjoint, and for all $uvw\in E(H)$, $x\in U_u$, $y\in U_v,$ and $z\in U_w$, we have $xyz\in E(G)$.
\end{definition}

\begin{lemma}\label{lem:doublingblow-up} Let $H$ be a $t$-vertex $3$-graph with distinct vertices $u,w\in V(H)$ contained in no common edge, and let $K$ be an arbitrary $3$-graph. For all $N\geq 2r(H,K)$, any $N$-vertex $3$-graph with no independent copy of $K$ contains a blow-up $\cH=\{U_v\mid v\in V(H)\}$ of $H$ in $G$ such that $|U_u|,|U_w|\geq N/2r(H,K)^t$.
\end{lemma}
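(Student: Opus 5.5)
We will produce the blow-up by combining supersaturation (\cref{prop:supersaturation}) with an averaging argument over all the vertices of $H$ except $u$ and $w$. Write $r=r(H,K)$ and let $G$ be an $N$-vertex $3$-graph with no independent copy of $K$.

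First we get many copies. Since $N\geq 2r\geq r$, \cref{prop:supersaturation} gives at least $N^t/r^t$ labeled copies $\sigma$ of $H$ in $G$.

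Next we fix the rest of the embedding. Each copy restricts to an injective map $\tau$ on $V(H)\setminus\{u,w\}$, and there are at most $N^{t-2}$ such maps. By pigeonhole, some $\tau$ extends to at least $N^2/r^t$ copies. Fix this $\tau$, and let $P\subseteq V(G)^2$ be the set of pairs $(x,y)$ such that $\tau\cup\{u\mapsto x,w\mapsto y\}$ is a copy of $H$.

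The key structural point is that $u$ and $w$ lie in no common edge of $H$. Consequently every edge of $H$ through $u$ has its other two vertices in $V(H)\setminus\{u,w\}$, and likewise for $w$. So whether $(x,y)\in P$ depends on $x$ and $y$ separately. Concretely, let $X$ be the set of vertices $x\notin \tau(V(H)\setminus\{u,w\})$ such that $x\,\tau(a)\,\tau(b)\in E(G)$ for every edge $uab\in E(H)$, and define $Y$ analogously for $w$. Then
\[
P=\{(x,y)\in X\times Y : x\neq y\},
\]
and in particular $|X||Y|\geq |P|\geq N^2/r^t$.

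The remaining issue is that $X$ and $Y$ may overlap, while the blow-up needs disjoint parts $U_u$ and $U_w$. We resolve this as follows:
\begin{itemize}
\item Since $|X|,|Y|\leq N$ and $|X||Y|\geq N^2/r^t$, each of $X$ and $Y$ has size at least $N/r^t$.
\item Split $X\cap Y$ evenly between the two sides: put $U_u=(X\setminus Y)\cup A$ and $U_w=(Y\setminus X)\cup B$, where $A\sqcup B=X\cap Y$ is a partition with $|A|$ and $|B|$ differing by at most one.
\item A short check then gives $|U_u|,|U_w|\geq \lfloor \min(|X|,|Y|)/2\rfloor$, which is at least about $N/2r^t$.
\item The hypothesis $N\geq 2r$ keeps these quantities at least $1$, so the rounding is harmless. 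If exactness matters, we instead assign each element of $X\cap Y$ randomly, or use a slightly larger constant.
\end{itemize}

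Finally, set $U_v=\{\tau(v)\}$ for every $v\neq u,w$. Every edge of $H$ then involves at most one of $u,w$, and the corresponding triples lie in $G$ by the definitions of $X$ and $Y$. So $\{U_v\}$ is a blow-up of $H$ with the required sizes.

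We expect the separation step (the no-common-edge observation) to be the crux, with disjointness handled by the balanced splitting of $X\cap Y$.
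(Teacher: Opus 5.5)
Your proposal is correct and follows essentially the paper's proof: supersaturation, averaging over the at most $N^{t-2}$ copies of $H-u-w$, and the observation that the no-common-edge hypothesis lets the extension sets for $u$ and $w$ be chosen independently. This gives $|X||Y|\geq N^2/r^t$ and hence $|X|,|Y|\geq N/r^t$. Your balanced split of $X\cap Y$ spells out the paper's one-line choice of disjoint subsets of size about $N/2r^t$, with the same rounding slack the paper ignores; one small correction is that $N\geq 2r$ does not give $N/2r^t\geq 1$. In degenerate cases, nonemptiness of both parts should come from placing some pair $(x,y)\in P$ with $x\neq y$ on opposite sides.
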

\begin{proof}
Let $r=r(H,K)$. For a copy $\sigma$ of $H-u-w$, let $U(\sigma)$ be the set of extensions to a copy of $H-w$; that is, $x\in U(\sigma)$ if and only if $\sigma\cup \{(u\mapsto x)\}$ is a copy of $H-w$. Similarly, let $W(\sigma)$ be the set of extensions to a copy of $H-u$. Since $u,w$ are not contained in any common edge, a copy of $H$ is formed from $\sigma$ by choosing any distinct $x\in U(\sigma)$ and $y\in W(\sigma)$, so by \cref{prop:supersaturation}
\[
N^t/r^{t}\leq \sum_{\sigma\in S_H}|U(\sigma)||W(\sigma)|-|U(\sigma)\cap W(\sigma)| 
\]
By averaging over $|S_H|\leq N^{t-2}$ choices of $\sigma$, there is some copy $\sigma$ with $|U(\sigma)||W(\sigma)|\geq N^2/r^{t}$. In particular, we have $|U(\sigma)|,|W(\sigma)|\geq N/r^{t}$. Choose disjoint $U_u\subseteq U(\sigma)$ and $U_w\subseteq W(\sigma)$ of size at least $N/2r^t$, and put $U_v=\{\sigma(v)\}$ for each other $v\in V(H)$. Then $\cH=\{U_v\}_v$ forms the desired blow-up of $H$.
\end{proof}

\begin{proof}[Proof of \cref{thm:doublingvsSnn}]
We proceed by induction on $|V(H)|$. If $H$ is the independent $3$-graph on $t$ vertices then $r(H,n)\leq t=O(1)$. Otherwise, suppose the claim holds for $H'$ and consider $H'[u,w]$ for distinct vertices $u,w\in V(H)$. Let $H\subseteq H'$ be obtained by deleting the edges of $H'$ containing $\{u,w\}$; by definition $H'[u,w]=H[u,w]$ and by assumption $r(H,S_{n,n})\leq r(H',S_{n,n})\leq n^{C}$ for some $C\geq 1$. Let $|H|=t$. 

Let $G$ be an $N$-vertex graph with $N= 2n^{Ct+1}$ and no copy of $K=S_{n,n}$. By \cref{lem:doublingblow-up}, there is a blow-up $\cH=\{U_v\mid v\in V(H)\}$ with $|U_u|,|U_w|\geq N/2n^{Ct}\geq n$. Let $A=U_u$ and $B=U_w$. Let $\sigma$ be a copy of $H-u-w$ formed by $\cH$.

If $G$ contains an $AAB$ edge $a_1a_2b$, then the map $\sigma\cup\{(u\mapsto a_1),(u'\mapsto a_2),(w \mapsto b)\}$ forms a copy of $H[u,w]$ in $G$. Otherwise, the sets $A,B$ induce a copy of $S_{n,n}$ in $\overline G$. Thus $r(H,S_{n,n})\leq N$.
\end{proof}

Next, we prove \cref{thm:similardoubling} using the method of iterative bipartitioning. Say that vertices $u,w\in V(H)$ are \textit{similar} if there is an automorphism of $H$ mapping $u$ to $w$. The following inductive lemma finds a large empty $B_{M,M}$ in any $(H[u,w],n)$-Ramsey graph. We'll iterate this within each side to find an independent set.

\begin{lemma}\label{lem:similarpartitioning} Suppose $H$ is a $t$-vertex 3-graph with distinct similar vertices $u,w\in V(H)$. Let $G$ be an $N$-vertex graph with $N\geq 2r(H,n)$, with no copy of $H[u,w]$ and no independent set of size $n$. Then there exist disjoint $A,B\subseteq V(G)$ of size at least $N/2r(H,n)^{t}$ such that $e(A,B)=\emptyset$.
\end{lemma}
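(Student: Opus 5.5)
We mimic the proof of \cref{lem:doublingblow-up}, exploiting the automorphism to make the two extension sets interchangeable. Let $r=r(H,n)$ and let $\pi$ be an automorphism of $H$ with $\pi(u)=w$. Since $G$ has no independent set of size $n$ and $N\ge 2r\ge r$, \cref{prop:supersaturation} (with $K=K_n$) gives at least $N^t/r^t$ copies of $H$ in $G$. The plan is to locate a single copy $\sigma$ of $H-u$, together with a large set $A$ of vertices each of which completes $\sigma$ to a copy of $H$ via $u$. We then take $B$ to be a large set of vertices playing the role of $w$ for the same configuration, and argue that an edge between them would create $H[u,w]$.

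We first set up the extension sets. The difficulty is that $u,w$ may lie in common edges, and doubling $(u,w)$ deletes those common edges. Let $H_0$ be $H$ with all edges containing $\{u,w\}$ removed; then $H[u,w]=H_0[u,w]$. For a copy $\tau$ of $H_0-u$, let $U(\tau)$ be the set of $x$ such that $\tau\cup\{u\mapsto x\}$ is a copy of $H_0$. Each copy of $H$ gives a copy of $H_0$, so summing $|U(\tau)|$ over the at most $N^{t-1}$ copies $\tau$ yields at least $N^t/r^t$. Averaging, some $\tau$ has $|U(\tau)|\ge N/r^t$.

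Next we relate this set to an edge of $G$. Suppose $x,x'\in U(\tau)$, and that $G$ contains an edge $x x' y$ with $y=\tau(w)$. Then $\tau\cup\{u\mapsto x, u'\mapsto x'\}$ is a copy of $H_0[u,w]=H[u,w]$. This holds because $u'$ copies exactly the neighbourhood of $u$ in $H_0$, and in $H_0$ the vertex $u$ has no edge with $w$. This already forbids edges of the form $U(\tau)U(\tau)\{\tau(w)\}$, but it only constrains a single vertex on the $w$-side, so by itself it is not the claimed bipartite statement.

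The step I expect to be the main obstacle is therefore to obtain a large $B$ on the $w$-side, and this is where similarity is used. I would instead fix a copy $\rho$ of $H_0-u-w$ and define two extension sets: $U(\rho)$, the vertices completing $\rho$ to $H_0-w$ via $u$, and $W(\rho)$, the vertices completing $\rho$ to $H_0-u$ via $w$. By the averaging argument of \cref{lem:doublingblow-up}, some $\rho$ has $|U(\rho)||W(\rho)|\ge N^2/r^t$, so we may take disjoint $A\subseteq U(\rho)$ and $B\subseteq W(\rho)$ with $|A|,|B|\ge N/2r^t$.

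It remains to show that any edge $a a' b$ with $a,a'\in A$ and $b\in B$ yields $H[u,w]$, via $u\mapsto a$, $u'\mapsto a'$, $w\mapsto b$. This is the key check, since the edges of $H_0$ containing both $u$ and $w$ have been removed. The symmetric edge type $a b b'$ is handled by the automorphism $\pi$. Because $\pi$ swaps the roles of $u$ and $w$, the graph $H[w,u]$ is isomorphic to $H[u,w]$, so applying the same construction with $\pi$ shows that an edge $abb'$ also yields a copy of $H[u,w]$.

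Hence $G$ contains no edge meeting both $A$ and $B$, that is, $e(A,B)=\emptyset$, with $|A|,|B|\ge N/2r(H,n)^t$, as required.
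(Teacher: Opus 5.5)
Your argument is the paper's proof step for step. You pass to $H_0$ (deleting the edges through $\{u,w\}$, so that $H[u,w]=H_0[u,w]$) and use the averaging from \cref{lem:doublingblow-up} to get a copy $\rho$ of $H_0-u-w$ with large disjoint $A\subseteq U(\rho)$, $B\subseteq W(\rho)$. You then observe that an $AAB$ edge completes a copy of $H[u,w]$, and dispose of $ABB$ edges by symmetry. The counting and the $AAB$ case are correct.

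The step that does not follow from the hypothesis is ``because $\pi$ swaps the roles of $u$ and $w$, $H[w,u]\cong H[u,w]$.'' Similarity only provides $\pi(u)=w$, and such a $\pi$ carries $H[u,w]$ onto $H[w,\pi(w)]$, which is $H[w,u]$ only if $\pi(w)=u$. What the argument needs is an automorphism of $H$ that exchanges $u$ and $w$. Such a map preserves the set of edges through $\{u,w\}$, hence is an automorphism of $H_0$, and together with $u'\mapsto w'$ it maps $H_0[u,w]$ onto $H_0[w,u]$.

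This is not a formality. Let $V(H)=\mathbb{Z}_{13}$ and $E(H)=\{\{i,i+1,i+3\}: i\in\mathbb{Z}_{13}\}$, a linear $3$-graph. One checks that $\mathrm{Aut}(H)$ consists only of the translations: its $2$-shadow is the circulant graph with connection set $\{\pm1,\pm2,\pm3\}$, whose automorphisms are the maps $x\mapsto \pm x+b$, and $x\mapsto -x$ sends the edge $\{0,1,3\}$ to the non-edge $\{0,10,12\}$. So $0$ and $4$ are similar and lie in no common edge, but no automorphism exchanges them.

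In $H[0,4]$ the only pairs lying in two edges are the link pairs $\{1,3\},\{2,12\},\{10,11\}$ of $0$, each completed by both $0$ and $0'$. Hence any isomorphism $H[0,4]\to H[4,0]$ maps $\{0,0'\}$ to $\{4,4'\}$ and the edge $\{0,0',4\}$ to $\{4,4',0\}$. After possibly composing with the transposition of $4,4'$, it restricts to an automorphism of $H$ exchanging $0$ and $4$. Thus $H[0,4]\not\cong H[4,0]$, and your argument does not handle $ABB$ edges in this case.

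The paper's proof makes exactly the same appeal (``which by similarity is isomorphic to $H[u,w]$''), so your proposal is as complete as the paper's. Both are correct once ``similar'' is strengthened to ``exchanged by some automorphism of $H$''. That stronger hypothesis is all that \cref{cor:tridoubling} uses: in $(K_3^{(3)})^d$ any two vertices $u,w$ can be exchanged by a level-preserving automorphism. To see this, transpose the symbols $u_\ell,w_\ell$ at level $\ell=\delta(u,w)$ and match the two subtrees below so that $u\mapsto w$.
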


\begin{proof} By monotonicity of $r(H,n)$, we may assume $u,w$ are contained in no common edge. By \cref{lem:doublingblow-up}, we have a blow-up $\cH$ of $H$ with $|U_u|,|U_w|\geq N/2r(H,n)^t$. Let $A=U_u$ and $B=U_w$. If $G$ contains an $AAB$ edge $a_1a_2b$, then this completes a copy of $H[u,w]$ in $G$, a contradiction. If $G$ contains an $ABB$ edge $ab_1b_2$, then this completes a copy of $H[w,u]$ in $G$, which by similarity is isomorphic to $H[u,w]$. Thus $e(A,B)=\emptyset$, as desired.
\end{proof}

\begin{proof}[Proof of \cref{thm:similardoubling}]
Let $n\leq 2^\ell$, let $\eps^{-1}=2r(H,n)^t$, and let $N=\eps^{-\ell}$. Let $G$ be an $N$-vertex graph with no copy of $H[u,w]$ and no independent set of size $n$. We recursively construct sets $V_b$ for each binary word $b$ of length at most $\ell$, maintaining that $|V_b|\geq \eps^{|b|}N$. Let $V_\emptyset=V(G)$, so $|V_\emptyset|=N$.

For a binary word $b$ of length less than $\ell$, assuming $|V_b|\geq \eps^{|b|}N \geq 2r(H,n)^t$, \cref{lem:similarpartitioning} implies that there exist $V_{b0},V_{b1}\subseteq V_b$ of size $|V_{b0}|,|V_{b1}|\geq \eps |V_b|\geq \eps^{|b|+1}N$ such that $e(V_{b0},V_{b1})=\emptyset$.

Finally, we have a family of at least $n$ nonempty sets $\{V_b\mid b\in \{0,1\}^\ell\}$ with no edges crossing between them. Choose one vertex from each set to obtain an independent set of size $n$ in $G$, a contradiction. Thus $r(H[u,v],n)\leq N\leq r(H,n)^{O(\log n)}$, as desired.
\end{proof}

From this, we obtain the corollary for iterated tripartite graphs.

\begin{proof}[Proof of \cref{cor:tridoubling}]
By monotonicity (and because a doubling of a subgraph is a subgraph of a doubling), it suffices to prove this when $H=(K_3^{(3)})^d$ is the $d$-times iterated blow-up of an edge. By symmetry, any two vertices of $H$ are similar, so $r(H^+,n)\leq r(H,n)^{O(\log n)}=n^{O(\log n)}$, as desired.
\end{proof}

\subsection{Proof of Theorem \ref{thm:fwddoublingvsBn}}\label{sec:fwddoublingvsBn}

In this section, we show that if a graph $H$ is forward-doubleable, then $r(H,B_{n,n})=n^{O(1)}$ by proving that any graph $G$ with no independent $B_{n,n}$ contains many copies of $H$. Here, we refine the blow-up argument by building a more rigid framework of sets which contains many copies of $H$.

\begin{definition}
For a graph $G$ and disjoint subsets $A,B\subseteq V(G)$, we say that the pair $(A,B)$ is \textit{forward-$n$-hitting} (or just $n$-hitting) if for all $X\subseteq A$ and $Y\subseteq B$ having size $n$, $G$ contains some $XXY$ edge.
\end{definition}

Note that if $G$ had no independent copy of $S_{n,n}$, then every pair of disjoint sets would be $n$-hitting. In the case of $B_{n,n}$, we can still construct such pairs.

\begin{proposition}\label{prop:hitting}Let $n\geq 2$ and $C\geq 1$. If $G$ is an $N$-vertex graph with $N\geq 2n^{C^2}$ and no independent copy of $B_{n,n}$, then there exists $m\in \{n,n^{C}\}$ such that $G$ contains disjoint sets $A,B$ of size at least $m^{C}$ such that $(A,B)$ is $m$-hitting. 
\end{proposition}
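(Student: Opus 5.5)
The plan is a single dichotomy applied to an arbitrary balanced split of $V(G)$. First I split $V(G)$ into two disjoint sets $A_0,B_0$, each of size at least $\lfloor N/2\rfloor\geq n^{C^2}=(n^C)^C$. Then there are two cases.

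\textbf{Case 1: $(A_0,B_0)$ is $n^C$-hitting.} We are done with $m=n^C$, $A=A_0$, $B=B_0$, since both sets have size at least $m^C$.

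\textbf{Case 2: $(A_0,B_0)$ is not $n^C$-hitting.} By definition there exist $X\subseteq A_0$ and $Y\subseteq B_0$ with $|X|=|Y|=n^C$ such that $G$ has no $XXY$ edge. I then take $m=n$ and the reversed pair $A=Y$, $B=X$. Both have size $n^C=m^C$, so it remains to show that $(Y,X)$ is $n$-hitting.

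Suppose it is not. Then there are $Y'\subseteq Y$ and $X'\subseteq X$, each of size $n$, with no $Y'Y'X'$ edge. Since $X'\subseteq X$ and $Y'\subseteq Y$, there is also no $X'X'Y'$ edge. Every edge of $G$ that meets both $X'$ and $Y'$ and lies inside $X'\cup Y'$ has one of these two types. Hence $X'\cup Y'$ spans an independent copy of $B_{n,n}$, contradicting the hypothesis. So $(Y,X)$ is $n$-hitting.

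I do not expect a real obstacle. The one idea that matters is reversing the roles of the two sides in Case 2, so that the absence of $XXY$ edges from the failed large-scale hitting, together with a failure of small-scale hitting in the reversed direction, gives both halves of an empty $B_{n,n}$. Beyond that, only the size bookkeeping needs care: $N/2\geq n^{C^2}$ supplies sets of size $(n^C)^C$ in Case 1, and the witness sets of size $n^C$ supply exactly $m^C$ with $m=n$ in Case 2. Throughout, $C\geq 1$ guarantees $n^C\geq n$, so subsets of size $n$ exist.
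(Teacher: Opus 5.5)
Your proof is correct and is essentially the paper's argument: take two disjoint sets of size at least $n^{C^2}$, stop if they form an $n^C$-hitting pair, and otherwise reverse the witnessing sets $X,Y$, since a failure of $n$-hitting for $(Y,X)$ would combine with $e(X,X,Y)=0$ to give an independent copy of $B_{n,n}$. The only nitpick, which the paper shares, is that $\lfloor N/2\rfloor\geq n^{C^2}$ and the existence of witness sets of size exactly $n^C$ tacitly assume that $n^{C^2}$ and $n^C$ are integers.
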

\begin{proof}Let $A,B\subseteq V(G)$ be subsets of size $n^{C^2}$. If $(A,B)$ is $n^{C}$-hitting then we are done. Otherwise, let $X\subseteq A$ and $Y\subseteq B$ have size $n^C$ such that $e(X,X,Y)=0$. Then $(Y,X)$ must be $n$-hitting, or else there exist subsets $U\subseteq X$, $W\subseteq Y$ of size $n$ such that $e(U,W)=e(U,U,W)+e(W,W,U)=0$, giving an independent copy of $B_{n,n}$.
\end{proof}

Repeating this, we build a sequence of sets which will ultimately house many copies of our forward-doubleable graph $H$.

\begin{lemma}\label{lem:forward-hitting-seq} For $k,t\geq 1$, there exists $C\geq 1$ such that the following holds. If $G$ is an $N$-vertex graph with $N\geq n^{C}$ and no independent copy of $B_{n,n}$, then for some $m\geq n$, there exist disjoint sets $U_1,\ldots, U_t\subseteq V(G)$ of size at least $m^k$ such that for all $i<j$, the pair $(U_i,U_j)$ is $m$-hitting.
\end{lemma}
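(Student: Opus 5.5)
We plan to prove \cref{lem:forward-hitting-seq} by induction on $t$, building the sets $U_1,U_2,\ldots$ one at a time. At each step we apply \cref{prop:hitting} inside suitably chosen subsets, and we shrink all previously built sets so that hitting is preserved. The basic monotonicity fact we use is this: if $(A,B)$ is $m$-hitting, $A'\subseteq A$, $B'\subseteq B$, and $m'\ge m$, then $(A',B')$ is $m'$-hitting. Indeed, any $n$-sized subsets of $A',B'$ with size $m'$ contain subsets of size $m$ that witness an $XXY$ edge. Therefore passing to subsets and increasing the parameter $m$ never destroys hitting. The only thing we must track is that sizes remain at least $m^k$ for the \emph{final} value of $m$.

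The construction is best carried out from the last index backwards, using the asymmetry of \cref{prop:hitting}: it produces a pair $(A,B)$ with $A$ before $B$, but we do not control which of the two original sets plays the role of $A$. Since we are free to relabel, we instead proceed greedily as follows. First partition $V(G)$ into $t$ (or more) large chunks. Then repeatedly apply \cref{prop:hitting} to pairs. Each application yields a new scale $m\in\{m_{\mathrm{old}},m_{\mathrm{old}}^{C}\}$, so after at most $\binom t2$ applications the scale is $m=n^{C^{s}}$ for some $s\le \binom t2$. The sizes needed are of order $m^{k'}$ for an exponent $k'$ that we choose large, so that they absorb later losses. This is why $C$ in the statement must be a tower-type function of $k$ and $t$ (something like $C_0^{O(t^2)}$ with $C_0$ depending on $k$).

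The cleaner way to organise this is a Ramsey-type argument on pairs. Take $M$ disjoint sets $W_1,\ldots,W_M$, each of size $n^{C'}$. For each pair $i<j$, we would like either $(W_i,W_j)$ to be hitting at some scale, or else to pass to subsets where $(W_j,W_i)$ is hitting. However, \cref{prop:hitting} shrinks the sets, so we cannot treat all pairs simultaneously on the same sets. We therefore proceed sequentially instead. Process the pairs $(i,j)$ one at a time. At each step, apply the dichotomy in the proof of \cref{prop:hitting} to the current $W_i,W_j$. Either $(W_i,W_j)$ is $m^{C}$-hitting, in which case we raise the global scale to $m^C$, or there are subsets $X\subseteq W_i$, $Y\subseteq W_j$ of size $m^{C}$ with $(Y,X)$ being $m$-hitting, in which case we replace $W_i,W_j$ by $X,Y$. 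This yields a tournament on $[M]$ recording the direction of hitting for each pair. Every tournament on $M=2^{t}$ vertices contains a transitive subtournament on $t$ vertices. Ordering that subtournament gives $U_1,\ldots,U_t$ such that every pair $(U_a,U_b)$ with $a<b$ is hitting at some scale not exceeding the final scale $m$. By monotonicity, all these pairs are then $m$-hitting.

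The main obstacle is the bookkeeping of scales against sizes. Each pair step may shrink the two sets involved from size $s$ to roughly $m^{C}$, where $m$ is the current scale, while the scale itself may jump from $m$ to $m^{C}$. We need to ensure that, after all $\binom M2$ steps, every set still has size at least $m_{\mathrm{final}}^k$. My plan is to fix exponents in advance. Take the sets to be of size $n^{E}$ with $E=C^{2\binom M2+1}$, and choose $C\ge k$. When a step shrinks sets, it shrinks them to $m_{\mathrm{cur}}^{C}$ with $m_{\mathrm{cur}}=n^{C^{s}}$. The subsequent scale never exceeds $n^{C^{\binom M2}}$, so we can arrange that every size stays at least $n^{C^{s+1}}$ while the final scale is at most $n^{C^{s'}}$. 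The delicate case is a shrink that occurs early while the scale grows later. To handle it, I would modify the step so that it is always run at the maximal anticipated scale: apply the dichotomy with hitting parameter $m^{C}$ and $m$ chosen as a \emph{fixed} large power determined by the number of remaining steps. This ensures that sizes dominate the final scale raised to the $k$-th power, and $N\ge n^{C}$ with $C$ depending on $k$ and $t$ suffices.
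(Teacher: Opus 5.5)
Your tournament route is genuinely different from the paper's and can be made to work. As written, though, the scale-versus-size bookkeeping, which is the whole content of the lemma, does not close. The explicit scheme you give (fixed $C$, a reversed pair shrunk to size $m_{\mathrm{cur}}^{C}$) fails for exactly the reason you identify. Your remedy (``run the dichotomy at a fixed large power determined by the number of remaining steps'') is correct only under a specific reading, which you need to state. Two things are required:
\begin{itemize}
\item In the reversal case, do not raise the scale at all: $e(X,X,Y)=0$ already makes $(Y,X)$ $n$-hitting.
\item The success threshold must be a power of the current scale $m_s$, not of $n$. At step $s$ of $S=\binom M2$, test whether $(W_i,W_j)$ is $m_s^{a_s}$-hitting, where $a_s=k\prod_{r>s}a_r$ (e.g.\ $a_s=k^{2^{S-s}}$).
\end{itemize}
With these choices, the invariant that every live set has size at least $m_s^{k\prod_{r\ge s}a_r}$ is preserved. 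On success, $m_{s+1}=m_s^{a_s}$ and nothing shrinks. On reversal, $m_{s+1}=m_s$ and the two new sets have size $m_s^{a_s}=m_{s+1}^{k\prod_{r>s}a_r}$. At the end, every set has size at least $m_{S+1}^k$ and every recorded pair is hitting at a parameter at most $m_{S+1}$. So $C=k^{2^S}+\log_2 M$ with $M=2^{t-1}$ suffices.

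Note that a schedule of absolute thresholds $n^{b_s}$ fixed in advance cannot work for $k\ge 2$. A success at step $s$ followed by a reversal at step $s'$ forces $b_{s'}\ge k b_s$, the opposite pattern forces $b_s\ge k b_{s'}$, and the graph, not you, chooses the pattern.

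The paper avoids the orientation problem entirely by nesting. It inducts on $t$, applying the hypothesis with the inflated exponent $K=k^2+1$ to get $U_1,\dots,U_t$ of size at least $m^K$, pairwise $m$-hitting. It then applies \cref{prop:hitting} (with exponent $k$) inside the last set $U_t$. This gives $U_t',U_{t+1}'\subseteq U_t$ and $\ell\in\{m,m^k\}$ with $(U_t',U_{t+1}')$ $\ell$-hitting and both sets of size at least $\ell^k$. Because both new sets lie inside $U_t$, they inherit hitting from every earlier $U_i$ (as $\ell\ge m$). The direction between the two new sets is simply whichever way \cref{prop:hitting} outputs, and one labels them accordingly. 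Since $\ell^k\le m^{k^2}\le m^K$, the old sets stay large enough.

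This is your own remark that ``we are free to relabel,'' used where it suffices. Relabeling settles orientation whenever the two new sets are carved from a single set, because they then sit on the same side of every earlier set. The payoff is a three-line proof with $t-1$ applications of \cref{prop:hitting}, no tournament Ramsey step, and $C(k,t)\approx k^{2^{t-1}}$. Your approach instead gives roughly $k^{2^{\binom{2^{t-1}}{2}}}$. It would only be preferable if the sets had to come from a prescribed family of disjoint pieces, which the lemma does not require.
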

\begin{proof}For the base case $t=1$ we take $U_1=V(G)$ and $m=n$. For the inductive case, suppose we have some $m$ and sets $U_1,\ldots, U_{t-1},U_t\subseteq V(G)$ with the desired properties for $K=k^2+1$, and by \cref{prop:hitting} there exists $U_t',U_{t+1}'\subseteq U_t$ and some $\ell\in \{m,m^k\}$ such that $|U_t'|,|U_{t+1}'|\geq \ell^k$ and the pair $(U_t',U_{t+1}')$ is $\ell$-hitting. Since $\ell\geq m$ and $\ell^k\leq m^K$, the sets $U_1,\ldots, U_t',U_{t+1}'$ have the desired properties.
\end{proof}

To find many copies of $H$, we use the following fact, which ensures that an $n$-hitting pair is dense everywhere.

\begin{proposition}\label{prop:hittingdensity} If $(A,B)$ is $n$-hitting, then for all $X\subseteq A$ and $Y\subseteq B$ with $|X|,|Y|\geq n$, we have $e(X,X,Y)> |X|^2|Y|/n^3$.
\end{proposition}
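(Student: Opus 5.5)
We prove this by a greedy deletion argument driven entirely by the $n$-hitting property. Fix $X\subseteq A$ and $Y\subseteq B$ with $|X|,|Y|\geq n$. The plan is to extract from $X\times X\times Y$ many \emph{distinct} edges of the form $XXY$, by repeatedly applying the hitting condition to $n$-subsets and removing the structure already used. The simplest version averages over random $n$-subsets.

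Choose $X'\subseteq X$ and $Y'\subseteq Y$ uniformly at random among subsets of size exactly $n$. By the $n$-hitting property, $G$ contains at least one $X'X'Y'$ edge, so the expected number of $X'X'Y'$ edges is at least $1$.

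On the other hand, a fixed edge $x_1x_2y$ with $x_1,x_2\in X$ and $y\in Y$ survives into $X'X'Y'$ with probability
\[
\frac{\binom{|X|-2}{n-2}}{\binom{|X|}{n}}\cdot\frac{n}{|Y|}=\frac{n(n-1)}{|X|(|X|-1)}\cdot\frac{n}{|Y|}.
\]
By linearity of expectation,
\[
1\leq e(X,X,Y)\cdot \frac{n^2(n-1)}{|X|(|X|-1)|Y|}.
\]
Rearranging gives
\[
e(X,X,Y)\geq \frac{|X|(|X|-1)|Y|}{n^2(n-1)}.
\]

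It remains to check that this beats $|X|^2|Y|/n^3$. The comparison
\[
\frac{|X|-1}{|X|(n-1)}>\frac1n
\]
is equivalent to $n(|X|-1)>|X|(n-1)$, i.e.\ $|X|>n$. So the strict inequality holds whenever $|X|>n$.

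The only potential obstacle is the edge case $|X|=n$. Here $X'=X$ is forced, and the bound above gives exactly $|X|^2|Y|/n^3\cdot\frac{n(n-1)}{n(n-1)}$, that is, equality rather than strict inequality. In this case we instead argue directly. Every $n$-subset $Y'\subseteq Y$ must receive an $XXY'$ edge, so fewer than $n$ vertices $y\in Y$ can lie in no $XXy$ edge. Hence at least $|Y|-n+1$ vertices of $Y$ each contribute at least one edge. This yields
\[
e(X,X,Y)\geq |Y|-n+1\geq |Y|/n>|Y|/n\cdot\frac{n^2}{n^2}\cdot\frac{|X|^2}{n^2}\cdot\frac{1}{1}\quad\text{(up to the required form)}.
\]
Strictly, $|Y|-n+1\geq |Y|/n$ with equality only when $n=1$ or $|Y|=n$, which gives the bound $|X|^2|Y|/n^3=|Y|/n$ with strictness for $n\geq2$ after the $+1$. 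If the normalization is tight in a corner case, we will simply state the bound with $\geq$, which is all that the subsequent counting arguments need.
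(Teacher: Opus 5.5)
Your main argument is exactly the paper's proof: pick uniformly random $n$-subsets $X'\subseteq X$, $Y'\subseteq Y$, then compare $1\le \Ex[e(X',X',Y')]$ with the exact survival probability of a fixed $XXY$ edge. Your computation is right. It gives $e(X,X,Y)\ge |X|(|X|-1)|Y|/(n^2(n-1))$, and this exceeds $|X|^2|Y|/n^3$ precisely when $|X|>n$. You are also right, and more careful than the paper's ``rearranging gives the result,'' that when $|X|=n$ this bound only gives equality.

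Your final paragraph does not hold together, though. The displayed chain asserts $|Y|/n>|Y|/n\cdot |X|^2/n^2$, but this is an equality when $|X|=n$. Your claim of ``strictness for $n\ge 2$ after the $+1$'' also contradicts your own correct observation that $|Y|-n+1=|Y|/n$ when $|Y|=n$.

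The clean picture is as follows. If $|X|=n<|Y|$, your counting bound does give strictness, since $|Y|-n+1-|Y|/n=(n-1)(|Y|-n)/n>0$. If $|X|=|Y|=n$, however, the strict inequality is simply false. Take $A=X$ and $B=Y$, both of size $n\ge 2$, and let $G$ consist of a single edge $x_1x_2y$ with $x_1,x_2\in A$ and $y\in B$. Then $(A,B)$ is $n$-hitting, yet $e(X,X,Y)=1=|X|^2|Y|/n^3$.

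So no argument can give $>$ in general. The correct statement is the non-strict one; equivalently, it is strict unless $|X|=|Y|=n$. The non-strict version is all that \cref{lem:countingagainstBnn} uses. Your fallback to $\ge$ is therefore the right resolution, but state it as a correction to the proposition, backed by this counterexample, rather than as a hedge.
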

\begin{proof}
Let $X'\in \binom Xn$ and $Y'\in \binom Yn$ be uniformly random. Then since $(X,Y)$ is also $n$-hitting,
\[
1\leq \Ex[e(X',X',Y')] = \frac{n(n-1)}{|X|(|X|-1)}\frac{n}{|Y|} e(X,X,Y).
\]
Rearranging gives the result.
\end{proof}

Finally, we prove that $n$-hitting sequences actually contain forward-doubleable graphs. For a forward-doubleable blocked $3$-graph $H$ with blocks $A_1,\ldots, A_t$, and another 3-graph $G$ with disjoint sets $U_1,\ldots, U_t\subseteq V(G)$, say that a \textit{$(U_i)_i$-copy} of $H$ in $G$ is a copy $\sigma:V(H)\to V(G)$ such that $\sigma(A_i)\subseteq U_i$ for all $i$. 

\begin{lemma}
\label{lem:countingagainstBnn} Let $H$ be a forward-doubleable blocked 3-graph with blocks $A_1,\ldots, A_t$. There is a constant $C$ depending on $H$ such that the following holds for all $k\geq C$.

Let $G$ be a 3-graph, let $U_1,\ldots, U_t\subseteq V(G)$ have size $n^k$, and suppose for all $i<j$, the pair $(U_i,U_j)$ is $n$-hitting. Then there are at least $n^{k|V(H)|-C}$ many $(U_i)_i$ copies of $H$ in $G$.
\end{lemma}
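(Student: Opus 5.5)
We argue by induction along the sequence of forward doublings (and subgraph operations) that builds $H$. Passing to a subgraph only increases the number of copies, so it suffices to treat a sequence of forward doublings starting from an independent blocked 3-graph. For the base case, $H$ is independent with blocks $A_1,\dots,A_t$. Then every injective map sending $A_i$ into $U_i$ is a $(U_i)_i$-copy. Since $|U_i|=n^k$, the number of such maps is at least $(n^k-|V(H)|)^{|V(H)|}\geq n^{k|V(H)|-1}$ once $n^k\geq 2|V(H)|$. So the base case holds with a small constant.

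For the inductive step, let $H^+=H[u,v]$ with $u\in A_i$, $v\in A_j$ and $i<j$. We may assume $u,v$ lie in no common edge, since those edges are deleted anyway. By induction there are at least $n^{k|V(H)|-C}$ copies of $H$. Write $s=|V(H)|$. For each copy $\tau$ of $H-u-v$ (there are at most $n^{k(s-2)}$ of them), let $X(\tau)\subseteq U_i$ be the set of valid images of $u$ extending $\tau$ to a copy of $H-v$, and let $Y(\tau)\subseteq U_j$ be the set of valid images of $v$ extending $\tau$ to a copy of $H-u$. As in \cref{lem:doublingblow-up}, any distinct choices $x\in X(\tau)$ and $y\in Y(\tau)$ complete $\tau$ to a copy of $H$, so
\[
\sum_\tau |X(\tau)||Y(\tau)|\geq n^{ks-C}.
\]
A copy of $H^+$ extending $\tau$ is exactly a triple $x_1,x_2\in X(\tau)$, $y\in Y(\tau)$ with $x_1x_2y\in E(G)$. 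This holds because $u'$ has the same link as $u$ in $H$, $u'$ is placed in block $A_i$, and the only new edge is $uu'v$. So the number of $(U_i)_i$-copies of $H^+$ is at least $\sum_\tau e(X(\tau),X(\tau),Y(\tau))$ (up to a factor of 2 for ordering).

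Since $(U_i,U_j)$ is $n$-hitting, so is $(X(\tau),Y(\tau))$. By \cref{prop:hittingdensity}, whenever $|X(\tau)|,|Y(\tau)|\geq n$ we get $e(X,X,Y)>|X|^2|Y|/n^3$. The main task is to show that the "good" $\tau$, those with both sets of size at least $n$, carry most of the mass. The $\tau$ with $|X(\tau)|<n$ or $|Y(\tau)|<n$ contribute at most $n\cdot n^k\cdot n^{k(s-2)}=n^{k(s-1)+1}$ to the sum above. This is at most half of $n^{ks-C}$ provided $k\geq C+2$, which is where the hypothesis $k\geq C$ for a suitable constant enters; the constants must be fixed so that the condition holds at every stage of the induction. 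Hence
\[
\sum_{\tau\text{ good}}|X(\tau)||Y(\tau)|\geq \tfrac12 n^{ks-C}.
\]

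We then use $|X|\geq |X||Y|/n^k$ (since $|Y|\leq n^k$) together with convexity (Cauchy--Schwarz/Jensen) over at most $n^{k(s-2)}$ good $\tau$:
\[
\sum_{\tau\text{ good}}|X|^2|Y|\;\geq\;\sum_{\tau\text{ good}}\frac{(|X||Y|)^2}{n^k}\;\geq\;\frac{\big(\tfrac12 n^{ks-C}\big)^2}{n^{k}\,n^{k(s-2)}}\;=\;\tfrac14 n^{k(s+1)-2C}.
\]
Dividing by $n^3$, the number of copies of $H^+$ is at least $n^{k|V(H^+)|-(2C+5)}$. The new constant is $C'=2C+5$, which is still bounded in terms of $H$. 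Iterating over the finitely many doublings gives the final constant, and we take $k$ at least that constant plus $2$. The main point to check is this bookkeeping: the loss in the exponent at most doubles at each step, so it stays bounded by a constant depending only on the length of the doubling sequence.
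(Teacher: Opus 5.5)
Your proposal is correct and follows the paper's proof essentially step for step. Both argue by induction over the forward doublings, use the extension sets $X(\tau),Y(\tau)$ (the paper's $U(\sigma),W(\sigma)$), discard the $\tau$ with a side smaller than $n$, and then apply \cref{prop:hittingdensity}, the bound $|Y(\tau)|\le n^k$ and Cauchy--Schwarz, arriving at the same recursion $C'=2C+5$. One small slip: in your base case, $(n^k-s)^s\ge n^{ks-1}$ does not follow from $n^k\ge 2s$ alone (it does follow from $n^k\ge 2s^2$ and $n\ge 2$), but this is moot because the definition starts from singleton blocks, where the count is exactly $n^{k|V(H)|}$.
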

\begin{proof}We proceed by induction on $|H|=|V(H)|$. If $H$ is an independent set with $|A_i|=1$ for all $i$, then there are exactly $n^{k|H|}$ many $(U_i)_i$ copies of $H$, so $C=0$ suffices.

For the inductive case, consider $H[u,w]$ for vertices $u\in A_i$ and $w\in A_j$ with $i<j$. By monotonicity, we may assume $u,w$ are contained in no common edge. By induction we have some constant $C=C_H$. Let $C'=2C+5$ and suppose $k\geq C'$. By induction there are at least $n^{k|H|-C}$ many $(U_i)_i$ copies of $H$. For each $(U_i)_i$ copy $\sigma$ of $H-u-w$ define $U(\sigma)\subseteq U_i$ and $W(\sigma)\subseteq U_j$ as in \cref{lem:doublingblow-up}, where for all $x\in U(\sigma)$ and $y\in W(\sigma)$, the map $\sigma\cup \{(u\mapsto x),(w\mapsto y)\}$ forms a copy of $H$. Then we have
\[
n^{k|H|-C}\leq \sum_{\sigma\in \Sigma}|U(\sigma)||W(\sigma)|,
\]
where $\Sigma$ is the set of all $(U_i)_i$ copies of $H-u-w$. Observe that if $M$ is the number of $(U_i)_i$ copies of $H[u,w]$ then 
\[
M=\sum_{\sigma\in \Sigma}e(U(\sigma),U(\sigma),W(\sigma)).
\]
We lower bound $M$ by splitting $\Sigma$ into cases. Let $\Sigma=\Sigma_1\sqcup \Sigma_2$, where $\sigma\in \Sigma_1$ if $|U(\sigma)|,|W(\sigma)|\geq n$, and otherwise $\sigma\in \Sigma_2$. Since $|\Sigma|\leq n^{k(|H|-2)}$, we have
\begin{align*}
\sum_{\sigma\in \Sigma_1}|U(\sigma)||W(\sigma)|
&\geq n^{k|H|-C}-\sum_{\sigma\in \Sigma_2}|U(\sigma)||W(\sigma)|\\
&\geq n^{k|H|-C}-n^{k|V(H)|-2k}\cdot n\cdot n^k\\
&\geq n^{k|H|-C}(1-n^{C-k+1})\\
&\geq n^{k|H|-C-1}.
\end{align*}
Then by \cref{prop:hittingdensity} and convexity,
\begin{align*}
M\geq \sum_{\sigma\in \Sigma_1}e(U(\sigma),U(\sigma),W(\sigma)) 
&\geq n^{-3}\sum_{\sigma\in \Sigma_1}|U(\sigma)|^2|W(\sigma))|\\
&\geq n^{-3-k}\sum_{\sigma\in \Sigma_1}|U(\sigma)|^2|W(\sigma))|^2\\
&\geq n^{-3-k}\frac{(n^{k|H|-C-1})^2}{|\Sigma_1|}\\
&\geq n^{-3-k}n^{k|H|+2k-2C-2}\\
&\geq n^{k|H[u,v]|-C'},
\end{align*}
where $C'=2C+5$, as desired.
\end{proof}

From this, \cref{thm:fwddoublingvsBn} follows immediately.

\begin{proof}[Proof of \cref{thm:fwddoublingvsBn}]
Let $H$ be a forward-doubleable blocked $3$-graph with $t$ blocks and let $k$ be the constant from \cref{lem:countingagainstBnn}. By \cref{lem:forward-hitting-seq}, there exists some $C=C_{k,t}$ such that if $G$ is an $N$-vertex graph with $N\geq n^{C}$ and no independent copy of $B_{n,n}$, there is some $m\geq n$ and disjoint sets $(U_i)_i$ with the given properties (large and forward $m$-hitting). Then by \cref{lem:countingagainstBnn}, $G$ contains many copies of $H$, and thus $r(H,B_{n,n})\leq N$.
\end{proof}

\subsection{Quasi-denseness and Theorem \ref{thm:fwddoubling}}\label{sec:quasidense}

In this section, we prove our main theorem of Section \ref{sec:upperbounds}. The idea is to generalize the construction of \cref{lem:forward-hitting-seq}.\footnote{In fact, this argument came first, and we noticed that the techniques could be simplified to prove \cref{thm:fwddoublingvsBn}.} Instead of requiring that the pairs of sets $(U_i)_i$ are $m$-hitting, we ensure a weaker density property which still allows us to embed many copies of some $H$. The main useful property of $n$-hitting sets is given by \cref{prop:hittingdensity}, which loosely translates to saying that if $(A,B)$ is $n$-hitting then the pair is ``not too sparse at any level''. The following definition captures a weaker version of the same idea.

\begin{definition}
Let $G$ be a $3$-uniform hypergraph ($3$-graph) and let $U\subseteq V(G)$. For $1\leq \alpha_1<\alpha_2$ and $d\geq 1$, we say that $U$ is \textbf{$(\alpha_1,\alpha_2,d)$-quasi-dense} if for all $\beta\in [\alpha_1,\alpha_2]$ and all disjoint $A,B\subseteq U$ with $|A|,|B|\geq 2^{-\beta}|U|$ we have
\[
e(A,B)\geq 2^{-\beta d}|U|^3.
\]
For disjoint subsets $U,W\subseteq V(G)$ we say that the pair $(U,W)$ is \textbf{$(\alpha_1,\alpha_2,d)$-forward-quasi-dense} if for all $\beta\in [\alpha_1,\alpha_2]$ and all $A\subseteq U$ and $B\subseteq W$ with $|A|\geq 2^{-\beta}|U|$ and $|B|\geq 2^{-\beta}|W|$ we have
\[
e(A,A,B)\geq 2^{-\beta d}|U|^2|W|.
\]
The key difference is that we can force edges to be 2-1 across the partition, which can build doublings $H[u,w]$ in the desired direction. 
\end{definition}

\begin{lemma}[Quasi-dense sets exist]\label{lem:quasidenseexists}Let $G$ be a $3$-graph on $N$ vertices with $\alpha(G)<n$. Let $d=100\log^2_2n$ and suppose $2^{-d^{k+1/2}}N\geq 1$. Then there exists $U\subseteq V(G)$ of size $|U|\geq 2^{-d^{k+1/2}}N$ such that $U$ is $(1,d^k,d)$-quasi-dense.
\end{lemma}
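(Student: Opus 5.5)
We will prove the lemma by a density-increment iteration that passes to nested subsets. Start with $U_0=V(G)$. At each step, if the current set $U_i$ is $(1,d^k,d)$-quasi-dense we stop. Otherwise there are some $\beta_i\in[1,d^k]$ and disjoint $A,B\subseteq U_i$ with $|A|,|B|\ge 2^{-\beta_i}|U_i|$ and
\[
e(A,B)<2^{-\beta_i d}|U_i|^3.
\]
Here $e(A,B)$ counts the edges meeting both $A$ and $B$ inside $A\cup B$. The pair $(A,B)$ is then extremely sparse across, and we pass to one side, say $U_{i+1}=A$. The goal is to show the iteration cannot run long. Each step costs a factor $2^{-\beta_i}$ in size, so we must control $\sum_i\beta_i$, and we want $\sum_i \beta_i\le d^{k+1/2}$ before the process is forced to stop.

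The stopping mechanism will use the independence number. Each sparse split gives two sides with few crossing edges. Iterating inside both sides, as in the bipartitioning proof of \cref{thm:similardoubling}, and then deleting the few crossing edges by random sampling should produce an independent set. Concretely, we will show that any sparse pair $(A,B)$ with crossing density below $2^{-\beta d}$ lets us pick sub-pieces, of relative size roughly $2^{-\beta}$, on which the crossing edges are completely absent. Sampling a random subset of size about $2^{\beta(d-3)/3}$ from each side works here: by a first-moment count, the expected number of crossing edges among sampled vertices is at most a constant.

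So a sparse split with parameter $\beta$ yields two sets, each of size $\gtrsim 2^{-2\beta}|U_i|$ say, with no edges between them. Recursing on both sides builds a binary tree of edge-free splits. Once the tree has depth $\log_2 n$, picking one vertex per leaf gives an independent set of size $n$, a contradiction, provided every set in the tree stays nonempty.

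The main accounting issue is the one for which the choice $d=100\log_2^2 n$ and the exponent $k+1/2$ are designed. A naive recursion with $\beta$ up to $d^k$ at every node loses $2^{-d^k\log n}$ along a branch, and $d^k\log n\le d^{k+1/2}/10$ since $\log n\approx d^{1/2}/10$, which fits the budget. However, the recursion must be organized so that at each node we either find a quasi-dense subset of size at least $2^{-d^{k+1/2}}N$, or split it edge-free.

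The plan is therefore a one-sided search that always descends into a non-quasi-dense set. At every node of depth $j\le\log_2 n$, either the current node set contains a quasi-dense subset obtained by a bounded further descent, or it is split. The cleanest route is a potential-function argument. Define $f(U)$ as the largest $s$ such that $U$ contains $s$ sets that are pairwise edge-free, recursively, with each set of size at least $2^{-c\, d^{k}\cdot\text{depth}}|U|$. A failure of quasi-density forces $f$ to double at a size cost of $2^{-O(d^k)}$. Since $f\le\alpha(G)<n$, at most $\log_2 n$ doublings occur along any path. Hence some node reached at total cost $2^{-O(d^k\log n)}\ge 2^{-d^{k+1/2}}$ must be quasi-dense.

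The main obstacle is the cleaning step: converting "few crossing edges" into "edge-free sub-pieces" of relative size $2^{-O(\beta)}$ rather than $2^{-O(\beta d)}$. I would try random sampling with alterations, using that the density is $2^{-\beta d}$ with $d\gg 1$. This makes the sampled pieces polynomially large in $2^{\beta}$ times $|U|2^{-\beta}$, and I would check that the exponents close.
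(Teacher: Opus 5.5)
\textbf{There is a genuine gap: the cleaning step is false in general.} Your overall plan is in the right spirit: bipartition iteratively to depth $\log_2 n$, pick one vertex per leaf to contradict $\alpha(G)<n$, and stay within the budget $d^k\log_2 n\le d^{k+1/2}$. But the step it rests on is turning a sparse pair into edge-free pieces of relative size $2^{-O(\beta)}$, and the exponents do not close. Your own first-moment computation shows this. Sampling $s$ vertices from each of $A,B$ leaves about $2^{-\beta(d-3)}s^3$ crossing edges in expectation, so $s\approx 2^{\beta(d-3)/3}$ is an \emph{absolute} size, independent of $|U_i|$. It exceeds $2^{-2\beta}|U_i|$ only when $|U_i|\lesssim 2^{\beta(d+3)/3}$, while the lemma must hold for arbitrarily large $N$.

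No other cleaning procedure can do better. Suppose the crossing triples between $A$ and $B$ are chosen at random with density $2^{-\beta d}$. Then with high probability any $A'\subseteq A$, $B'\subseteq B$ with $|A'|=|B'|=s$ span a crossing edge as soon as $s^2\ge C\,2^{\beta d}\log|U_i|$. So edge-free sub-pieces are a vanishing fraction of $U_i$ as $|U_i|\to\infty$. Hence your claim that a failure of quasi-density makes $f$ double at size cost $2^{-O(d^k)}$ is unsupported. Descending into cleaned pieces would produce sets whose size is bounded independently of $N$, far below $2^{-d^{k+1/2}}N$.

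\textbf{What the paper does instead.} It never cleans: it keeps the sparse cuts and makes a single random choice of one vertex per leaf at the end. This succeeds provided every cut has at most $\delta N^3$ edges with $n\delta$ much smaller than the cube of the leaves' relative size. Merely deferring the cleaning in your scheme is still not enough, because your splits happen at varying scales $\beta_i$. A cut near the root with $\beta=1$ is only guaranteed to have at most $2^{-d}|U|^3$ crossing edges. Leaves produced by later splits with $\beta\approx d^k$ have relative size about $2^{-d^k\log_2 n}$, whose cube is far smaller than $2^{-d}$.

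The missing idea is to force every split to happen at one common scale. The paper does this as follows.
\begin{enumerate}[(i)]
\item It discretizes with $\eps_i=2^{-(10\log_2 n)^i}$, so $\eps_{i+1}=\eps_i^{10\log_2 n}$. Property $P_i$ says that disjoint $A,B\subseteq U$ of size at least $\eps_i|U|$ have more than $\eps_{i+1}|U|^3$ crossing edges.
\item It inducts on the number $k$ of levels. By induction, every $V'$ with $|V'|\ge(\eps_{k+1}/\eps_k)N$ contains some $U$ with $|U|\ge\eps_k|V'|$ satisfying $P_0,\ldots,P_{k-1}$. So if the statement fails for $k+1$, this $U$ must violate exactly $P_k$.
\item Consequently every split costs a factor $\eps_k^2$ and has at most $\eps_{k+1}N^3$ crossing edges. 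After $\log_2 n$ rounds, the expected number of edges in the one-vertex-per-leaf sample is below $n\eps_{k+1}\eps_k^{-6\log_2 n}\le \eps_k^{3\log_2 n}<1$.
\item The continuous lemma follows by running this with $2k+1$ levels, so that $|U|\ge\eps_{2k+1}N=2^{-d^{k+1/2}}N$. Any $\beta\in[1,d^k]$ has $\eps_i\le 2^{-\beta}\le\eps_{i-1}$ for some $i\le 2k$, whence $\eps_{i+1}\ge 2^{-\beta d}$.
\end{enumerate}
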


\begin{lemma}[Discrete construction of quasi-denseness]\label{lem:discretequasidense} Let $G$ be a $3$-graph on $N$ vertices with $\alpha(G)<n$. For $i\geq 0$ let $\eps_i=2^{-(10\log_2 n)^i}$. If $\eps_k N\geq 1$ then there exists $U\subseteq V(G)$ of size $|U|\geq \eps_kN$ such that for all $i<k$ and disjoint $A,B\subseteq U$ with $|A|,|B|\geq \eps_i|U|$, we have $e(A,B)\geq \eps_{i+1}|U|^3$.
\end{lemma}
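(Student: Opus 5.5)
We prove \cref{lem:discretequasidense} by a density-increment iteration, descending through the scales $\eps_0>\eps_1>\cdots>\eps_k$. Note $\eps_0=1/2$ and $\eps_{i+1}=\eps_i^{\,10\log_2 n}$, so each scale is a large power of the previous one.

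The process maintains a current set $U^{(j)}$ and a \emph{level} $i(j)$, starting from $U^{(0)}=V(G)$ at level $0$. At each step we ask whether $U=U^{(j)}$ already has the desired property for all $i<k$. If not, there is a smallest $i<k$ and disjoint $A,B\subseteq U$ with $|A|,|B|\geq \eps_i|U|$ and $e(A,B)<\eps_{i+1}|U|^3$. In that case we pass to a subset of $A$ or $B$ that is much sparser in a useful sense.

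The key sub-claim is a dichotomy for two disjoint sets $A,B$ of size $m\ge \eps_i|U|$ with $e(A,B)<\eps_{i+1}|U|^3$. Since $\eps_{i+1}=\eps_i^{10\log n}$, the density of $A\times A\times B$ and $A\times B\times B$ triples is at most $\eps_i^{10\log n-3}$. This is tiny compared with the scale needed to run an Erd\H{o}s--Hajnal-type greedy/iterative bipartition argument inside $A\cup B$ for about $\log_2 n$ rounds. Each round selects a vertex class with few crossing edges, and each round costs at most a factor $\eps_i^{O(1)}$.

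So either we find an independent set of size $n$, which is a contradiction, or some step produces a subset $U'\subseteq A$ (or $\subseteq B$) of size $\ge \eps_i^{O(\log n)}|U|$. Taking the exponent $10\log_2 n$ to dominate this cost, we get $|U'|\ge \eps_{i+1}^{1/2}|U|$ or similar.

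We then recurse on $U'$. The natural potential is lexicographic: each failure at scale $i$ shrinks the set by a factor about $\eps_{i+1}$. The recursion at scale $i+1$ can be charged because the sparse pair certifies that $U'$ inherits a sparse bipartition structure.

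The cleanest organization is an induction on $k$. Assume the lemma for $k-1$ with all parameters shifted, i.e.\ with $\eps_i$ replaced by $\eps_{i+1}$. Apply it to get $U$ good for the levels $1,\dots,k-1$. Then handle level $0$ by the bipartitioning argument: if some pair $A,B\subseteq U$ of size $|U|/2$ is very sparse, the iterative bipartition of \cref{thm:similardoubling}'s proof applied $\log_2 n$ times inside $U$ yields $n$ nested classes with few edges. Choosing one vertex from each class and deleting vertices in edges (sampling with alterations) then gives an independent set of size $n$, a contradiction.

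The main obstacle is the interaction between scales. Passing to a subset to repair a lower level may destroy a higher level, and conversely. I expect to handle this by processing levels from the coarsest $i=0$ upward inside a single recursion tree of depth at most $\log_2 n$ per level. The size losses multiply to $\prod_i \eps_{i+1}^{O(1)}\ge \eps_k$, which is exactly why the exponent $10\log_2 n$, rather than a constant, appears in the definition of $\eps_i$.
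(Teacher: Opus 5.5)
Your proposal has a genuine gap at exactly the point you flag as the ``main obstacle.'' The iterative bipartition, as in the proof of \cref{thm:similardoubling}, only works if \emph{every} sufficiently large part produced along the way again contains a sparse pair. In \cref{thm:similardoubling} this is automatic, because excluding $H[u,w]$ is hereditary. Here nothing in your argument provides it.

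In your final organization you take one set $U$ that is good for levels $1,\dots,k-1$. If level $0$ fails, you get a single sparse balanced pair $A,B\subseteq U$. There is no reason $A$ or $B$ should contain a further sparse pair. Re-applying the shifted hypothesis inside $A$ may return a set that also satisfies level $0$, and this is no contradiction: its guaranteed size $\eps_k|A|$ is below your target $\eps_k N$. So the shifted induction that adds the \emph{coarsest} level has no slack in the size bookkeeping. The density-increment sketch does not fill this in either. The set $U'$ in your dichotomy has no stated property, no potential is defined, and nothing bounds the number of failures.

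The missing idea is to argue by contradiction while adding the \emph{finest} level. Assume the statement holds for $k$ but fails for $k+1$. Take any $V'\subseteq V(G)$ with $|V'|\ge(\eps_{k+1}/\eps_k)N$. The case $k$ applied to $G[V']$ gives $U\subseteq V'$ satisfying levels $0,\dots,k-1$ with $|U|\ge\eps_k|V'|\ge\eps_{k+1}N$. By the failure assumption, $U$ must violate level $k$. So there are disjoint $A,B\subseteq V'$ with $|A|,|B|\ge\eps_k^2|V'|$ and $e(A,B)\le\eps_{k+1}N^3$.

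Since this holds for \emph{every} such $V'$, you can bipartition $\ell=\log_2 n$ times:
\begin{itemize}
\item parts at depth $t<\ell$ have size at least $\eps_k^{2t}N\ge(\eps_{k+1}/\eps_k)N$, because $\eps_{k+1}=\eps_k^{10\ell}$;
\item the $n$ leaves have size at least $\eps_k^{2\ell}N$;
\item at most $(n-1)\eps_{k+1}N^3$ edges cross the cuts.
\end{itemize}
A uniformly random transversal then spans fewer than $n\eps_{k+1}\eps_k^{-6\ell}<1$ edges in expectation. Hence some transversal is an independent set of size $n$ outright. Deleting vertices via alterations, as you suggest, would leave fewer than $n$.

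The gap $\eps_{k+1}/\eps_k$ between the inductive output size and the new target size is precisely what absorbs the $\eps_k^{2\ell}$ loss of the bipartition tree.
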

\begin{proof}[Proof (\ref{lem:quasidenseexists} from \ref{lem:discretequasidense})] Choose $U$ as above with $\eps_1,\ldots, \eps_{2k+1}$. Then for any $\beta\in [1,d^k]$, we have $\eps=2^{-\beta}\in [\eps_{2k},\eps_0]$,  there is some index $i$ with $\eps\in [\eps_{i},\eps_{i-1}]$, so for all $|A|,|B|\geq \eps|U|$ we have $e(A,B)\geq \eps_{i+1}|U|^3\geq \eps^{100\log^2n}|U|^3$, as desired.
\end{proof}

Somehow, the quasi-dense property is almost useless if we don't know which direction the edges are going ($AAB$ versus $ABB$). Fortunately, using a surprisingly simple magic trick similar to \cref{prop:hitting} we can black box the quasi-dense lemma to obtain a powerful embedding structure.

\begin{lemma}\label{lem:fwdqdpairexists} For $k,d\geq 3$, let $K=4k^2$. Let $G$ be a 3-graph and $U\subseteq V(G)$ be $(\alpha,\alpha d^K,d)$ quasi-dense. There exist disjoint subsets $A,B\subseteq U$ such that for some $\beta\in [\alpha, \alpha d^K]$ and $D\in \{d^2,d^{3k}\}$ we have $|A|=|B|\geq 2^{-\beta}|U|$ and $(A,B)$ is $(\beta,\beta D^k,D)$ forward-quasi-dense.
\end{lemma}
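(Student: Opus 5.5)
The plan mirrors the ``magic trick'' of \cref{prop:hitting}: try a candidate pair; if it fails to be forward-quasi-dense, the failure produces large sets $X,Y$ with few $XXY$ edges. Quasi-denseness of $U$ then forces many $YYX$ edges. This makes the reversed pair forward-quasi-dense with a weaker exponent. The two options $D\in\{d^2,d^{3k}\}$ correspond to these two outcomes.

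\textbf{Step 1 (first attempt).} Split $U$ arbitrarily into disjoint halves $A_0,B_0$ of equal size. Set $\beta_0=\alpha+1$ and test whether $(A_0,B_0)$ is $(\beta_0,\beta_0 d^{3k^2},d^{3k})$-forward-quasi-dense. Here $\beta_0 D^k$ with $D=d^{3k}$ gives $d^{3k^2}$, within the budget $d^K=d^{4k^2}$. If the test succeeds we stop with $D=d^{3k}$.

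\textbf{Step 2 (witnessing a failure).} If the test fails, there is some $\gamma\in[\beta_0,\beta_0 d^{3k^2}]$ and sets $X\subseteq A_0$, $Y\subseteq B_0$ with $|X|,|Y|\ge 2^{-\gamma}|A_0|$ and $e(X,X,Y)<2^{-3k\gamma\log_2 d}|U|^3$. Trim $X$ and $Y$ to a common size.

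\textbf{Step 3 (reversing the pair).} Consider the reversed pair $(Y,X)$ with $\beta=\gamma+1$, and aim to show it is $(\beta,\beta d^{2k},d^2)$-forward-quasi-dense. Take $Y'\subseteq Y$ and $X'\subseteq X$ with $|Y'|,|X'|\ge 2^{-\beta'}|Y|$ for some $\beta'\in[\beta,\beta d^{2k}]$. These subsets have size at least $2^{-(\beta'+\gamma+1)}|U|$. The exponent $\beta'+\gamma+1\le \alpha d^K$ stays in range, so quasi-denseness of $U$ gives
\[
e(X',Y')\ge 2^{-(\beta'+\gamma+1)d}|U|^3.
\]
We have $e(X',Y')=e(X',X',Y')+e(Y',Y',X')$, and $e(X',X',Y')\le e(X,X,Y)$ is tiny. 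Since $\beta'\le \gamma d^{2k}+O(1)$, we get $(\beta'+\gamma)d\le 2\gamma d^{2k+1}$. This is much smaller than $3k\gamma\log_2 d\cdot d^{\,\cdot}$, provided the failure threshold is written as $2^{-\gamma D}$ with $D=d^{3k}$. Indeed $d^{3k}\gg d^{2k+1}$ for $k\ge 3$, so the failure bound $2^{-\gamma d^{3k}}|U|^3$ is at most half of the quasi-dense lower bound. Hence $e(Y',Y',X')\ge \tfrac12 2^{-(\beta'+\gamma+1)d}|U|^3$. It then remains to check this exceeds $2^{-\beta' d^2}|Y|^2|X|$. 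That holds because $|Y|,|X|\ge 2^{-\gamma-1}|U|$ and $\beta' d^2\ge (\beta'+\gamma+1)d+3(\gamma+1)+1$ once $d\ge 3$ and $\beta'\ge\gamma$.

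\textbf{Main obstacle.} The main obstacle is bookkeeping the parameter ranges so that the three scales nest correctly. The first test must use a density exponent $D=d^{3k}$ large enough that its failure is negligible against the $d$-level quasi-density at all scales the second test probes, namely up to $\beta d^{2k}$. Meanwhile the failure scale $\gamma$ ranges up to $\alpha d^{3k^2}$, and the combined scales must stay within $[\alpha,\alpha d^K]$, which is why $K=4k^2$. I would first fix these exponents symbolically, and only adjust the constant in $K$ or the choices $d^2,d^{3k}$ if the inequality chain in Step 3 fails.
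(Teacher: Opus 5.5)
Your proposal is correct and is essentially the paper's argument: you split $U$ into halves, test forward-quasi-denseness with $D=d^{3k}$, and on failure show the reversed witness pair $(Y,X)$ is forward-quasi-dense with $D=d^2$, because quasi-denseness of $U$ forces many crossing edges while $e(X',X',Y')\le e(X,X,Y)$ is negligible. The paper packages the quasi-denseness step through scope restriction (\cref{prop:quasidensescope}) rather than applying it to $U$ at scale $\beta'+\gamma+1$ as you do, and otherwise differs only in the $+1$ versus $+2$ shifts. There are two small arithmetic slips. First, the failure threshold is $2^{-\gamma d^{3k}}$, not $2^{-3k\gamma\log_2 d}$, which you yourself correct in Step 3. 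Second, in the last check you only need $\beta' d^2\ge(\beta'+\gamma+1)d+1$, since $|Y|^2|X|\le|U|^3$; the extra $3(\gamma+1)$ term you add is unnecessary and makes your stated inequality fail at $d=3$, $\beta'=\gamma+1$.
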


Repeating this as in \cref{lem:forward-hitting-seq}, we have a sequence of forward quasi-dense sets which will house copies of $H$.

\begin{lemma}\label{lem:forwarddensesequence} For $t,k\geq 1$, there exists $C>1$ such that the following holds. For $n\geq 3$, if $G$ is a $3$-graph on $N$ vertices with $\alpha(G)<n$, and $N\geq 2^{\log^Cn}$, then there exists equal-sized disjoint subsets $U_1,\ldots, U_t$ such that for some $d,\alpha\geq 3$,
\begin{itemize}
\item Each of $U_1,\ldots, U_t$ is $(\alpha,\alpha d^k,d)$ quasi-dense
\item For all $i<j$, the pair $(U_i,U_j)$ is $(\alpha,\alpha d^k,d)$ forward-quasi-dense.
\end{itemize}
\end{lemma}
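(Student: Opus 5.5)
We prove \cref{lem:forwarddensesequence} by induction on $t$, in the same way as \cref{lem:forward-hitting-seq}. The inductive hypothesis is strengthened so that the quasi-dense window at the current stage is much wider than what the final statement needs. Concretely, for each $t$ we get equal-sized disjoint sets $U_1,\ldots,U_t$ together with parameters $\alpha_t,d_t$. Each $U_i$ is $(\alpha_t,\alpha_t d_t^{K_t},d_t)$ quasi-dense, and each pair $(U_i,U_j)$ with $i<j$ is $(\alpha_t,\alpha_t d_t^{K_t},d_t)$ forward-quasi-dense. Here $K_t$ is a large exponent, of the form $K_t=4K_{t+1}^2$ or so, chosen backwards from the target $K_{\text{final}}=k$. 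Since $t$ is fixed, the recursion $K_{t}\approx 4K_{t+1}^2$ has bounded depth and all exponents are constants depending on $t,k$.

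For the base case $t=1$, apply \cref{lem:quasidenseexists} with $d=100\log_2^2 n$ and exponent $K_1$. This gives $U_1$ of size at least $2^{-d^{K_1+1/2}}N$ that is $(1,d^{K_1},d)$ quasi-dense. The hypothesis $N\geq 2^{\log^C n}$ with $C$ large makes this size positive and in fact huge.

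For the inductive step from $t$ to $t+1$, we split the last set $U_t$. It is $(\alpha,\alpha d^{K},d)$ quasi-dense with $K=4k'^2$, where $k'$ is the exponent wanted at the next stage. \cref{lem:fwdqdpairexists} then produces $A,B\subseteq U_t$ with $|A|=|B|\geq 2^{-\beta}|U_t|$, for some $\beta\in[\alpha,\alpha d^K]$ and $D\in\{d^2,d^{3k'}\}$, such that $(A,B)$ is $(\beta,\beta D^{k'},D)$ forward-quasi-dense. We set $U_t'=A$, $U_{t+1}'=B$, and trim $U_1,\ldots,U_{t-1}$ to random subsets of size $|A|$ so that all sets are equal-sized. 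The new parameters are $\alpha'=\beta$ and $d'=D$. We must then verify three things.

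First, each new set is $(\beta,\beta D^{k'},D)$ quasi-dense. Take $X,Y\subseteq U_t'$ of size at least $2^{-\gamma}|U_t'|$. Then $|X|,|Y|\geq 2^{-(\gamma+\beta)}|U_t|$, and $\gamma+\beta\leq 2\gamma$ lies in the old window provided $\beta D^{k'}\leq\alpha d^{K}$. Quasi-density of $U_t$ gives $e(X,Y)\geq 2^{-2\gamma d}|U_t|^3$. Since $|U_t|\geq |U_t'|$ and $2\gamma d\leq \gamma D$ because $D\geq d^2\geq 2d$, this is at least $2^{-\gamma D}|U_t'|^3$.

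Second, the old pairs $(U_i,U_t')$ and $(U_i,U_{t+1}')$ with $i<t$ remain forward-quasi-dense with the new parameters. This follows from the same rescaling: a set of relative density $2^{-\gamma}$ in a piece has relative density at least $2^{-\gamma-\beta}\geq 2^{-2\gamma}$ in the parent, and $d\leq D/2$ absorbs the factor of $2$ in the exponent.

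Third, the trimmed sets $U_i$ for $i<t$ must stay quasi-dense, which holds by the same computation. Here the trimming loses a factor of $2^{-\beta}$ in relative size and also reduces $|U_i|^3$ in the edge count.

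The main obstacle is the bookkeeping of the windows. Each step shrinks the usable range from $[\alpha,\alpha d^{K}]$ to $[\beta,\beta D^{k'}]$ and enlarges $d$ to as much as $d^{3k'}$. We must check that $\beta D^{k'}\leq \beta d^{3k'^2}\leq \alpha d^{K}$ whenever $K\geq 4k'^2$ and $\beta\leq\alpha d^{K}\cdot d^{-3k'^2}$. If this fails, I would first shift $\beta$ by shrinking the final interval, or choose $K_t$ slightly larger, for example $K_t=10K_{t+1}^2$. This keeps the "$+\beta$" density loss inside the old window at every stage. Once that holds, the sizes also work out: $|U_i|\geq 2^{-t\alpha d^{K_1}}\cdot 2^{-d^{K_1+1/2}}N$, where $d$ is polylogarithmic in $n$ raised to a constant power. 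So $N\geq 2^{\log^C n}$ suffices, with $C$ depending only on $t,k$.
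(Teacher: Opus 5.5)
Your proposal is correct and follows the paper's proof essentially step for step. Both argue by induction on $t$ with a widened window, get the base case from \cref{lem:quasidenseexists}, split the last set with \cref{lem:fwdqdpairexists}, trim the earlier sets arbitrarily, and transfer (forward-)quasi-denseness by your rescaling argument, which is exactly \cref{prop:quasidensescope}. The paper resolves the window bookkeeping you flag with the fix you suggest: it invokes the induction with exponent $K=8k^2$ but applies \cref{lem:fwdqdpairexists} on the inner window $[\alpha,\alpha d^{4k^2}]$, so $\beta\le\alpha d^{4k^2}$ and $\beta+\beta D^{k}\le\alpha d^{8k^2}$.
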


\subsubsection{Constructing quasi-dense sets.}

Here, we prove that quasi-dense sets, and in turn forward-quasi-dense sequences, exist in graphs with small independence number. The pivotal machine is a density-tuned version of the iterative bipartitioning argument used in \cref{lem:similarpartitioning} and the proof of \cref{thm:similardoubling}.

\begin{proof}[Proof of \cref{lem:discretequasidense}] 
For $i\geq 0$, define the property
\begin{align}
&\textit{For all $A,B\subseteq U$ with $|A|,|B|\geq \eps_i|U|$, we have $e(A,B)> \eps_{i+1}|U|^3$}\tag{$\rm P_i$}
\end{align}
say $U$ is \textit{$k$-dense} if $P_i$ holds for $i\in \{0,\ldots, k-1\}$. We proceed by induction on $k$. For the base case $k=0$, we take $U=V(G)$ which is vacuously $0$-dense, and also $|U|>\eps_0N$.

Suppose towards a contradiction that the claim holds for $k$ and fails for $k+1$. Let $G$ be an $N$-vertex $3$-graph with $\alpha(G)<n$ and $\eps_{k+1}N\geq 1$. Then for all $V'\subseteq V(G)$ of size $N'\geq (\eps_{k+1}/\eps_k)N$, applying the inductive hypothesis to $G[V']$ means there is some $k$-dense set $U\subseteq V'$ with $|U|\geq \eps_kN'\geq  \eps_{k+1}N$. Since we suppose failure for $k+1$, this $U$ must not be $k+1$-dense. That is, 
there exist $A,B\subseteq U$ with $|A|,|B|\geq \eps_k|U|> \eps_k^2|V'|$ and $e(A,B)\leq \eps_{k+1}|U|^3$

We now proceed using iterative bipartitioning to construct a large sparse $n$-partite graph in $G$, from which we can sample an independent set of size $n$. We begin with $V_\emptyset=V(G)$. Then by $(\neg\rm P_k)$, there exist $A,B$ which we re-denote by $V_0,V_1$ of size at least $\eps_k^2N$ with fewer than $\eps_{k+1}N^3$ crossing edges.

Supposing $n=2^\ell$, we iterate this procedure $\ell$ times. For each binary word $w$ of length $t<\ell$, we have a set $V_w$ of size $|V_w|\geq \eps_k^{2t}N$. By choice of $\eps_{k+1}=\eps_k^{10\ell}$, we have $|V_w|\geq (\eps_{k+1}/\eps_k)N$, so there exist disjoint $V_{w0},V_{w1}\subseteq V_w$ of size $|V_{w0}|,|V_{w1}|>\eps_k^2|V_w|\geq \eps_k^{2t+2}N$ with fewer than $\eps_{k+1}N^3$ crossing edges.

After $\ell$ steps, we have a family of $n$ sets $\cA=\{V_w\mid w\in \{0,1\}^\ell\}$, each of size at least $\eps_k^{2\ell}N$ (so they are nonempty). Consider the induced $n$-partite graph $G[\cA]$. Every edge of $G[\cA]$ is contained one of the cuts $V_{w0},V_{w1}$, and there are exactly $\sum_{t<\ell}2^t=n-1$ such cuts. Thus, we have
\[|E(G[\cA])|< n\eps_{k+1}N^3.
\]
Note that $n=\eps_0^{-\ell}$ so the $n$ factor is small. Form a set $S$ by choosing one vertex uniformly at random from each $A_w'\in \cA$. By linearity of expectation, we have
\[
\Ex[|E(G[S])|]< \frac{n\eps_{k+1}N^3}{(\eps_k^{2\ell}N)^3} <  \eps_{k+1}/\eps_k^{7\ell} <1,
\]
using that $\eps_{k+1}<\eps_k^{7\ell}$. Then there is some choice of $S$ which is an independent set of size $n$ in $G$, a contradiction.
\end{proof}

Next, we prove \cref{lem:fwdqdpairexists} by a two-sided restriction argument similar in spirit to \cref{prop:hitting}. For this, we need to show that quasi-denseness is preserved for large subgraphs.

\begin{proposition}[Scope Restriction]\label{prop:quasidensescope} Let  $\alpha_1<\alpha_2$ and $d\geq 3$ and let $\beta\in [\alpha_1,\alpha_2-\alpha_1]$. Let $U,W\subseteq V(G)$ for some $3$-graph $G$.
\begin{itemize}
    \item If $U$ is $(\alpha_1,\alpha_2,d)$-quasi-dense and $U'\subseteq U$ has size $|U'|\geq 2^{-\beta}|U|$, then $U'$ is $(\beta,\alpha_2-\beta,2d)$ quasi-dense.
    \item If $(U,W)$ is $(\alpha_1,\alpha_2,d)$ forward-quasi-dense and $U'\subseteq U$, $W'\subseteq W$ have sizes $|U'|\geq 2^{-\beta}|U|$ and $|W'|\geq 2^{-\beta}|W|$ then  $(U',W')$ is $(\beta, \alpha_2-\beta, 2d)$ forward-quasi-dense.
\end{itemize}
\end{proposition}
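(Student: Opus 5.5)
The plan is to verify the definition directly by rescaling the exponent. I treat the first bullet in detail; the second is the same computation with $e(A,A,B)$ and the product $|U|^2|W|$ in place of $e(A,B)$ and $|U|^3$.

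Fix $\gamma\in[\beta,\alpha_2-\beta]$ and disjoint $A,B\subseteq U'$ with $|A|,|B|\geq 2^{-\gamma}|U'|$. The first step is to pull back to $U$. Since $|U'|\geq 2^{-\beta}|U|$, we have $|A|,|B|\geq 2^{-(\gamma+\beta)}|U|$. Set $\gamma'=\gamma+\beta$. Then
\[
\gamma'\leq \alpha_2 \qquad\text{and}\qquad \gamma'\geq 2\beta\geq \alpha_1,
\]
using $\beta\geq\alpha_1$ and $\alpha_1\geq 1$. So $\gamma'\in[\alpha_1,\alpha_2]$, and quasi-denseness of $U$ gives
\[
e(A,B)\geq 2^{-\gamma' d}|U|^3\geq 2^{-(\gamma+\beta)d}|U'|^3,
\]
where the last step uses $|U|\geq|U'|$.

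The second step is to absorb the extra factor $2^{-\beta d}$ into the doubled exponent. Because $\gamma\geq\beta$, we have $(\gamma+\beta)d\leq 2\gamma d$, so $e(A,B)\geq 2^{-\gamma\cdot 2d}|U'|^3$. This is exactly the $(\beta,\alpha_2-\beta,2d)$ condition at level $\gamma$.

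For the forward case, pull both $A\subseteq U'$ and $B\subseteq W'$ back to level $\gamma+\beta$ relative to $U$ and $W$ in the same way. Then use $|U|^2|W|\geq|U'|^2|W'|$ and absorb $2^{-\beta d}$ as before.

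The only point needing care, and the main obstacle, is the interval bookkeeping: we need $\gamma+\beta$ to stay within $[\alpha_1,\alpha_2]$. This is precisely why the new upper endpoint is $\alpha_2-\beta$ and why $\beta\geq\alpha_1$ is assumed. Nothing else is more than routine arithmetic, and the hypothesis $d\geq 3$ plays no essential role.
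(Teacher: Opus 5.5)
Your proposal is correct and matches the paper's proof: fix $\gamma\in[\beta,\alpha_2-\beta]$, pull $A,B$ back to level $\gamma+\beta$ relative to $U$ (or $U,W$ in the forward case), apply quasi-denseness there, and absorb the extra factor using $\beta\le\gamma$ and $|U'|\le|U|$. You also explicitly check the lower endpoint $\gamma+\beta\ge\alpha_1$, which the paper leaves implicit; the forward case is handled identically in both.
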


\begin{proof}For the first part, let $\gamma\in [\beta,\alpha_2-\beta]$ and let $A,B\subseteq U'$, have size $|A|,|B|\geq 2^{-\gamma} |U'|> 2^{-(\gamma+\beta)}|U|$. Since $\gamma+\beta\leq \alpha_2$, we have  $e(A,B)\geq 2^{-d(\gamma+\beta)}|U|^3\geq 2^{-2d\gamma}|U'|^3$, as desired. The second version is identical.
\end{proof}

Now, we prove that forward pairs exist.

\begin{proof}[Proof of \cref{lem:fwdqdpairexists}]
Suppose $U$ is $(\alpha, \alpha d^K,d)$ quasi-dense, where $K=4k^2$ and $k\geq 3$. By definition of quasi-dense-ness, we certainly have $|U|\geq 2$. Partition $U$ arbitrarily into balanced parts $A,B$. If $(A,B)$ is $(\alpha,\alpha d^{3k^2},d^{3k})$ forward-quasi-dense, then we are done with $D=d^{3k}$. Otherwise, there exist $X\subseteq A$ and $Y\subseteq B$ and $\beta \in [\alpha, \alpha d^{3k^2}]$ such that $|X|=|Y|= 2^{-\beta}|A|\geq 2^{-\beta-2}|U|$ and 
\[
e(X,X,Y)\leq  2^{-\beta d^{3k}}|U|^3= 2^{-\beta(d^{3k}-3)}|X|^2|Y|\leq 2^{-\beta d^{3k-1}}|X|^2|Y|
\]
By scope restriction (\cref{prop:quasidensescope}), with $\beta'=\beta+2$, each of $X,Y,X\cup Y$ are $(\beta', \alpha d^K-\beta', 2d)$ quasi-dense. We claim that $(Y,X)$ is $(\beta', \beta' d^{2k},d^2)$ forward-quasi-dense. Note that the upper limit of this range satisfies $\beta' d^{2k}\leq \alpha d^{3k^2+2k}< \alpha d^K-\beta'$. Now, let $\gamma\in [\beta',\beta' d^{2k}]$ and let $W_1\subseteq X$ and $W_2\subseteq Y$ with $|W_1|,|W_2|\geq 2^{-\gamma}|X|$. We have 
\begin{align*}
e(W_2,W_2,W_1)&\geq 2^{-2d\gamma}|Y|^2|X|-e(X,X,Y)\\
&\geq (2^{-2d\gamma}-2^{-\beta d^{3k-1}})|Y|^2|X|\\
&\geq 2^{-d^2\gamma}|Y|^2|X|,
\end{align*}
because $-2d\gamma> -\beta d^{3k-1}$ and $-d^2\gamma \leq -2d\gamma-1$. Thus $(Y,X)$ is forward-quasi-dense, as desired.
\end{proof}

Finally, since we are able to form a single forward-quasi-dense pair, we repeat this to construct a sequence. 

\begin{proof}[Proof of \cref{lem:forwarddensesequence}]
We proceed by induction on $t$. The case $t=1$ is proven with $\alpha=1$ by \cref{lem:quasidenseexists} (it then also holds for $\alpha=3$). For the inductive case, choose $K=8k^2$ and $C$ large enough that we have $W_1,\ldots, W_t$ satisfying the desired properties for $(\alpha, \alpha d^K,d)$. By \cref{lem:fwdqdpairexists} there exists $\beta\in [\alpha, \alpha d^{4k^2}]$ and $D$ and sets $U_t,U_{t+1}\subseteq W_t$ such that $|U_t|=|U_{t+1}|\geq 2^{-\beta}|W_t|$ and $(U_t,U_{t+1})$ is $(\beta, \beta D^k,D)$ forward-quasi-dense. For $i<t$ choose $U_i\subseteq W_i$ of size $|U_t|$ arbitrarily. By \cref{prop:quasidensescope}, all sets $U_i$ and pairs $(U_i,U_j)$ are $(\beta, \beta D^k,D)$ (forward-)quasi-dense with room to spare, so the sequence $U_1,\ldots,U_{t+1}$ suffices.
\end{proof}

\subsubsection{Counting via quasi-dense-ness.}

The following lemma essentially completes the proof of \cref{thm:fwddoubling}, by showing that once we construct a forward-quasi-dense sequence via \cref{lem:forwarddensesequence}, we have many copies of every forward-doubleable $3$-graph.

Recall that if $H$ is forward-doubleable with blocks $(A_i)_{i=1}^t$ and we have some disjoint $U_1,\ldots, U_t\subseteq V(G)$, we say that a $(U_i)_i$ copy of $H$ is a copy $\sigma$ with $\sigma(A_i)\subseteq U_i$ for all $i\in [t]$. Say that the sequence $(U_i)_i$ is $(\alpha_1,\alpha_2,d)$ forward-quasi-dense if every pair $(U_i,U_j)$ is for $i<j$.

\begin{lemma}\label{lem:qdcounting} Let $H$ be forward doubleable with $t$ blocks. There exists $k$ such that if $G$ is a 3-graph, $U_1,\ldots, U_t\subseteq G_j$ have size $N$, and $(U_i)_i$ is $(\alpha, \alpha d^k,d-2)$ forward-quasi-dense for $\alpha,d\geq 3$, then $G$ contains at least $2^{-\alpha d^k}N^{|H|}$ many $(U_i)_i$ copies of $H$.
\end{lemma}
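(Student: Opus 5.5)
We prove the lemma by induction on the length of a forward-doubling sequence building $H$, following the scheme of \cref{lem:countingagainstBnn}. Forward-quasi-denseness of $(U_i,U_j)$ replaces the $n$-hitting property, and the density parameter of the sequence is allowed to degrade by a constant factor at each doubling step; this is why $k$ depends on $H$. Concretely, we will prove the stronger inductive statement that for each forward-doubleable $H$ there are constants $k_H$ and $c_H$ such that at least $2^{-c_H\alpha d^{k_H}}N^{|H|}$ many $(U_i)_i$ copies exist. Both constants are allowed to grow with each doubling, and the exponent $c_H$ is absorbed at the end by enlarging $k$ (using $d\ge 3$, so $c_H\le d^{O(1)}$). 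The base case is an independent set with singleton blocks, which has $N^{|H|}$ copies.

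For the inductive step, write $H^+=H[u,w]$ with $u\in A_i$, $w\in A_j$, $i<j$, and assume $u,w$ lie in no common edge. Let $\Sigma$ be the set of $(U_i)_i$ copies of $H-u-w$, and define $U(\sigma)\subseteq U_i$ and $W(\sigma)\subseteq U_j$ as in \cref{lem:doublingblow-up}. By induction,
\[
\sum_{\sigma}|U(\sigma)||W(\sigma)|\ge 2^{-\gamma_0}N^{|H|},
\qquad\text{where } \gamma_0=c_H\alpha d^{k_H}.
\]
Since $|\Sigma|\le N^{|H|-2}$, the copies with $|U(\sigma)|\le 2^{-\gamma_0-1}N$ or $|W(\sigma)|\le 2^{-\gamma_0-1}N$ contribute at most half of this sum. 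The remaining copies $\Sigma_1$ have $|U(\sigma)|,|W(\sigma)|\ge 2^{-\beta}N$ with $\beta=\gamma_0+1$. We choose $k$ so that $\beta\le \alpha d^k$. This is possible because $\gamma_0$ involves only $d^{k_H}$, and we take $k\ge k_H+\log_d c_H+1$; it is the step that forces $k$ to grow with $H$. We also need $\beta\ge\alpha$, which we ensure trivially by raising $\beta$ to $\max(\beta,\alpha)$.

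We then apply forward-quasi-denseness to each $\sigma\in\Sigma_1$:
\[
e(U(\sigma),U(\sigma),W(\sigma))\ge 2^{-\beta(d-2)}N^3.
\]
Each such edge $a_1a_2b$ yields a copy of $H^+$. It remains to subtract the degenerate cases, which have lower order. Summing over $\Sigma_1$ gives
\[
M\ge |\Sigma_1|\,2^{-\beta(d-2)}N^3,
\]
and $|\Sigma_1|\ge 2^{-\gamma_0-1}N^{|H|-2}$, because every term $|U||W|\le N^2$. Hence
\[
M\ge 2^{-\gamma_0-1-\beta(d-2)}N^{|H|+1}\ge 2^{-(d+1)\beta}N^{|H^+|}.
\]
This is of the form $2^{-c_{H^+}\alpha d^{k_H+1}}N^{|H^+|}$ with $c_{H^+}=O(c_H)$, so no convexity argument is needed here, unlike in \cref{lem:countingagainstBnn}.

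The main obstacle is the bookkeeping of the exponent range. Each doubling multiplies the required scale $\beta$ by roughly $d$, so after $s$ doublings we need $\beta$ up to about $\alpha d^{s}$ inside the window $[\alpha,\alpha d^k]$. The choice $k=|E\text{-doublings}|+O(1)$ handles this; the final bound $2^{-O(\alpha d^{s+1})}$ is then at least $2^{-\alpha d^{k}}$ after one more increment of $k$. A second point to check is that all the sets $U(\sigma)$ and $W(\sigma)$ live in the full blocks $U_i$ and $U_j$, so the definition of forward-quasi-dense applies to them directly, with no scope restriction needed.
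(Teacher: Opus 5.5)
Your proposal is correct and is essentially the paper's argument: induction along the doubling sequence, discarding the copies $\sigma$ of $H-u-w$ with $|U(\sigma)|$ or $|W(\sigma)|$ below $2^{-\gamma_0-1}N$, and applying forward-quasi-denseness of $(U_i,U_j)$ at scale $\beta=\gamma_0+1$ to the remaining copies, with no convexity step. The differences are only in bookkeeping and do not affect correctness: the paper takes $c_H=1$ throughout, since the density parameter $d-2$ already gives $(\alpha d^k+1)(d-1)\le \alpha d^{k+1}$, so $k$ simply increases by one per doubling (this also avoids your $d$-dependent choice $k\ge k_H+\log_d c_H+1$, which should use $\log_3 c_H$ because $k$ may not depend on $d$); and there are no degenerate cases to subtract, because $U(\sigma)\subseteq U_i$ and $W(\sigma)\subseteq U_j$ are disjoint and already avoid the image of $\sigma$.
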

\begin{proof}By induction on $|H|$. If $H$ is independent with singleton blocks, then there are $N^{|H|}$ many copies. Now, suppose $H$ satisfies the lemma with parameter $k\geq 1$. Suppose $(U_i)_i$ is $(\alpha, \alpha d^{k+1},d-2)$ forward-quasi-dense, so $G$ contains at least $2^{-\alpha d^k}N^{|H|}$ many $(U_i)_i$ copies of $H$. Let $u\in A_i$ and $w\in A_j$ with $i<j$; we may assume $u,w$ are contained in no common edge. For each $(U_i)_i$ copy $\sigma$ of $H-u-w$ define $U(\sigma)\subseteq U_i$ and $W(\sigma)\subseteq U_j$ as in Lemmas~\ref{lem:doublingblow-up} and \ref{lem:countingagainstBnn} where for all $x\in U(\sigma),y\in W(\sigma)$ the map $\sigma\cup \{(u\mapsto x),(w\mapsto y)\}$ forms a copy of $H$, and for every edge $xx'y\in e(U(\sigma),U(\sigma),W(\sigma))$, the map $\sigma\cup \{(u\mapsto x),(u'\mapsto x'),(w\mapsto y)\}$ forms a copy of $H[u,w]$. We have
\[
2^{-\alpha d^k}N^{|H|}\leq \sum_{\sigma\in \Sigma}|U(\sigma)||W(\sigma)|
\]
where $\Sigma$ is the set of $(U_i)_i$ copies of $H-u-w$ and if $M$ is the number of $(U_i)_i$ copies of $H[u,w]$ then
\[
M=\sum_{\sigma\in \Sigma}e(U(\sigma),U(\sigma),W(\sigma))
\]
Split $\Sigma$ into two cases, where $\sigma\in \Sigma_1$ if $|U(\sigma)|,|W(\sigma)|\geq 2^{-\alpha d^{k}-1}N$ and otherwise $\sigma\in \Sigma_2$. Then because $|\Sigma|\leq N^{|H|-2}$ we have
\[
\sum_{\sigma\in \Sigma_1}|U(\sigma)||W(\sigma)|\geq 2^{-\alpha d^k}N^{|H|}-|\Sigma_2|\cdot 2^{-\alpha d^k-1}N^{2}\geq 2^{-\alpha d^k-1}N^{|H|}.
\]
Since $|U(\sigma)||W(\sigma)|\leq N^2$ for all $\sigma\in \Sigma$, we have $|\Sigma_1|\geq 2^{-\alpha d^k-1}N^{|H|-2}$. Finally, because $(U_i,U_j)$ is forward-quasi-dense,
\[
M\geq \sum_{\sigma\in \Sigma_1}e(U(\sigma),U(\sigma),W(\sigma)) \geq (2^{-\alpha d^k-1}N^{|H|-2})\cdot (2^{(-\alpha d^k-1)\cdot (d-2)}N^3)\geq 2^{-\alpha d^{k+1}}N^{|H|+1},
\]
where we use that $\alpha\geq 1$ and $(-d^k-1)+(-d^k-1)(d-2)\geq -d^{k+1}$ for $d\geq 3$ and $k\geq 1$. This completes the claim with parameter $k+1$.
\end{proof}

Putting together these ingredients, we conclude the main theorem of this section.

\begin{proof}[Proof of \cref{thm:fwddoubling}]
Fix forward-doubleable $H$ with $t$ blocks. Let $k$ be given by \cref{lem:qdcounting}, and $C=C(t,2k)$ be given by \cref{lem:forwarddensesequence}. Now, let $G$ be an $N=2^{\log^Cn}$-vertex $3$-graph with $\alpha(G)<n$. Then \cref{lem:forwarddensesequence} gives disjoint subsets $U_i,\ldots, U_t$ of size $M$ which are $(\alpha,\alpha d^{2k},d)$ forward-quasi-dense. Note that $(d+2)^k<d^{2k}$, so then \cref{lem:qdcounting} gives $2^{-\alpha (d+2)^k}M^{|H|}$ copies of $H$ in $G$, and thus $r(H,n)\leq N$.
\end{proof}

\subsection{The Fano plane and other special cases}\label{sec:fanoandspecial}

In this section, we apply the technical theorems of Section \ref{sec:upperbounds} to deduce Theorems \ref{thm:fanominus}, \ref{thm:fanoplane}, \ref{thm:fanovsBn}, and \ref{thm:iteratedbicone}. It suffices to prove that the corresponding 3-graphs are doubleable and forward-doubleable.

\begin{figure}
\centering
\input{tikz_diagrams/fanodoubling2}
\vspace{-15pt}
\caption{ How to obtain the Fano plane via forward-doubling (\cref{lem:fanofwddoubleable}) We have $H^{(1)}=H^{(0)}[a,b]$ and $H^{(3)}=H^{(1)}[a_1,c][a_2,c]$ and $H^{(4)}=H^{(3)}[b,c]$. We interpret vertices $b$ and $c$ as the `sources' of all blue and red edges respectively. Thus we interpret the last figure by extending each blue line to include each of $b_1,b_2$ and extending each red line to include $c$.
}
\label{fig:buildingfano}
\end{figure}

\begin{lemma}\label{lem:fanofwddoubleable} The Fano plane is forward-doubleable.
\end{lemma}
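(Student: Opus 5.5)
We use the doubling sequence suggested by Figure~\ref{fig:buildingfano}. Start from the independent $3$-graph $H^{(0)}$ on three vertices $a,b,c$, with ordered singleton blocks $A_1=\{a\}$, $A_2=\{b\}$, $A_3=\{c\}$. Every doubling below is of a pair $(u,v)$ with $u$ in an earlier block than $v$, so each step is a forward doubling in the sense of Definition~\ref{defn:fwddoubling}. The new vertex stays in the block of $u$, so throughout the sequence the blocks are the $a$-descendants, then $b$ and its descendants, then $c$. At each step one also checks that the doubled pair lies in no common edge, so no edges are deleted.

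The steps are as follows.
\begin{enumerate}
\item Double $(a,b)$. This splits $a$ into $a_1,a_2$ and produces the single edge $a_1a_2b$; call the result $H^{(1)}$.
\item Double $(a_1,c)$. This splits $a_1$ into $a_{11},a_{12}$, adds $a_{11}a_{12}c$, and replaces $a_1a_2b$ by the two edges $a_{11}a_2b$ and $a_{12}a_2b$.
\item Double $(a_2,c)$. This splits $a_2$ into $a_{21},a_{22}$ and adds $a_{21}a_{22}c$. The two edges through $a_2$ become the four edges $a_{1i}a_{2j}b$ for $i,j\in\{1,2\}$. Call the result $H^{(3)}$.
\item Double $(b,c)$. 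This splits $b$ into $b_1,b_2$ and adds $b_1b_2c$. Each of the four $b$-edges is duplicated, giving the eight edges $a_{1i}a_{2j}b_k$. Call the result $H^{(4)}$.
\end{enumerate}
In none of these steps did the doubled pair share an edge; for instance, $c$ lies only in edges $a_{11}a_{12}c$ and $a_{21}a_{22}c$, neither of which contains $b$.

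$H^{(4)}$ has seven vertices. Inside it we select the seven edges
\[
\{a_{11}a_{12}c,\ a_{21}a_{22}c,\ b_1b_2c,\ a_{11}a_{21}b_1,\ a_{12}a_{22}b_1,\ a_{11}a_{22}b_2,\ a_{12}a_{21}b_2\}.
\]
It remains to check that this subgraph is the Fano plane. The quickest route is to verify that every pair of the seven points lies in exactly one of these seven triples. That makes the triples a Steiner triple system on $7$ points, which is unique up to isomorphism. One can also map explicitly onto the labelling in Equation~\eqref{eq:fano}: send $c\mapsto 1$, $\{a_{11},a_{12}\}\mapsto\{2,3\}$, $\{a_{21},a_{22}\}\mapsto\{4,5\}$ and $\{b_1,b_2\}\mapsto\{6,7\}$, matched appropriately.

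The only real work is this final bookkeeping. One must confirm that the four chosen $b$-edges form two perfect matchings of $K_{2,2}$ with one assigned to each $b_k$, which is exactly what Fano requires. Since forward-doubleable graphs are closed under taking subgraphs, this shows the Fano plane is forward-doubleable.
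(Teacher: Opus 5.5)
Your proof is correct and essentially identical to the paper's: the same four forward doublings $(a,b)$, $(a_1,c)$, $(a_2,c)$, $(b,c)$ with blocks ordered $a$-descendants $<$ $b$-descendants $<$ $c$, followed by deleting four of the eight edges $a_{1i}a_{2j}b_k$. Your assignment of perfect matchings to $b_1,b_2$ is the paper's with $b_1\leftrightarrow b_2$ swapped, and your check that the seven triples form a Steiner triple system on seven points explicitly verifies what the paper only asserts via its figure.
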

Using this lemma, \cref{thm:fanoplane} follows from \cref{thm:fwddoubling} and \cref{thm:fanovsBn} follows from \cref{thm:fwddoublingvsBn}.

\begin{proof}We construct a 3-graph $H^{(n)}$ with blocks $A<B<C$. Let $H^{(0)}$ be the independent graph with blocks $\{a\}<\{b\}<\{c\}$. Let $H^{(1)}=H^{(0)}[a,b]$, where we double $a$ into $a_1,a_2$. Then we have
\begin{align*}
V(H^{(1)}) &= \{a_1,a_2\}\cup \{b\}\cup \{c\}\\
E(H^{(1)}) &= \{a_1a_2b\}.
\end{align*}
Then let $H^{(2)}=H^{(1)}[a_1,c]$, where we double $a_1$ into $a_{11},a_{12}$, and let $H^{(3)}=H^{(2)}[a_2,c]$, where we double $a_2$ into $a_{21},a_{22}$. This blows up the link of $b$ into a $K_{2,2}$ while adding two new edges incident to $c$, so we have
\begin{align*}
V(H^{(3)}) &= \{a_{11},a_{12},a_{21},a_{22}\}\cup \{b\}\cup \{c\}\\
E(H^{(3)}) &= \{a_{1i}a_{2j}b\}_{i,j} \cup\{a_{11}a_{12}c,a_{21}a_{22}c\}.
\end{align*}
Finally, let $H^{(4)}=H^{(3)}[b,c]$, where we double $b$ into $b_1,b_2$. This duplicates all edges incident to $b$ and adds the edge $b_1b_2c$, so we have
\begin{align*}
V(H^{(4)})&=\{a_{11},a_{12},a_{21},a_{22}\}\cup \{b_1,b_2\}\cup \{c\}\\
E(H^{(4)})&=\{a_{1i}a_{2j}b_k\}_{i,j,k} \cup\{a_{11}a_{12}c,a_{21}a_{22}c\}\cup \{b_1b_2c\}.
\end{align*}
This forms a copy of the Fano plane when we delete the edges $a_{12}a_{21}b_2,a_{11}a_{22}b_2,a_{12}a_{22}b_1,a_{11}a_{21}b_1$; see Figures~\ref{fig:buildingfano} and \ref{fig:fano}.
\end{proof}

\begin{figure}
    \centering
\input{tikz_diagrams/fano2}
\caption{The Fano plane, with labels and colored vertices and edges according to Figure~\ref{fig:buildingfano} and \cref{lem:fanofwddoubleable}.}
\label{fig:fano}
\end{figure}

\begin{lemma}\label{lem:fanominusdoubleable} The Fano plane minus an edge can be obtained from an iterated tripartite graph by one doubling.
\end{lemma}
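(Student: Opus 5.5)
The plan is to exhibit an explicit iterated tripartite $3$-graph $H$ and a pair $(u,v)$ such that $F^-\subseteq H[u,v]$. By the definition of doubling, the new vertex $u'$ is a twin of $u$ (its link in $H[u,v]$ equals the link of $u$ in $H$) and the only added edge is $uu'v$. We are allowed to pass to subgraphs, so it suffices to find two vertices $p,q\in V(F^-)$ and a third vertex $v$ with the following properties:
\begin{itemize}
\item[(a)] $p$ and $q$ lie in no common edge of $F^-$, except possibly the single edge $pqv$;
\item[(b)] identifying $q$ with $p$, i.e.\ replacing every edge $qxy$ with $x,y\neq p$ by $pxy$, gives a $3$-graph $H$ on $V(F^-)\setminus\{q\}$ that is iterated tripartite.
\end{itemize}
Then every edge of $F^-$ containing $q$ but not $p$ is the twin copy $u'xy$ of an edge $uxy$ of $H$, with $u=p$ and $u'=q$. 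If $pqv\in E(F^-)$, it is the added edge $uu'v$. Hence $F^-\subseteq H[p,v]$.

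The key step is choosing $(p,q,v)$, using $E(F^-)=\{123,145,167,247,346,357\}$. The degree-$2$ vertices are $2,5,6$, the vertices of the deleted edge. Each edge of $F^-$ contains exactly one of them.

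The first natural try is to merge along an edge of $F^-$, say $p,q$ with $pqv\in E(F^-)$. Checking top-level colorings on a few such choices suggests this fails: for $(p,q)=(3,2)$, $(4,5)$, $(6,7)$, $(5,4)$ and $(7,6)$, the merged vertex's link quickly forces a monochromatic pair inside a rainbow edge.

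The second natural try is to merge two of $2,5,6$, which share no edge and need no added edge. For $5,6$ it also fails. After merging, the link of $5$ is the $K_{2,2}$ between $\{1,3\}$ and $\{4,7\}$. This forces $[1]=[3]$ and $[4]=[7]$, hence both $123$ and $247$ monochromatic, which is a contradiction. Merging $2,6$ puts a triangle $\{1,3,7\}$ in a link and fails too.

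So the real obstacle is a finite but nontrivial search. I would organize it through top-level colorings. For each candidate triple, compute the merged graph $H$ (with five or six edges). Then find a tripartition $\chi$ with every edge rainbow or monochromatic, and recurse on the monochromatic parts, each of which has at most two edges and is therefore tripartite. Up to the automorphisms of $F^-$ (which act transitively on $\{2,5,6\}$ and on $\{1,3,4,7\}$ compatibly), there are only a handful of cases: the ordered pairs of types (degree $2$, degree $2$), (degree $2$, degree $3$), (degree $3$, degree $2$) and (degree $3$, degree $3$), each with or without a common edge.

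I would also allow the relaxation that the vertex playing the role of $u'$ need not use its entire twin link. Concretely, I would search for $H$ containing the merged graph as a subgraph, where some edges through $q$ are simply dropped before merging, as long as $F^-$ is still covered. The degree-$3$/degree-$3$ non-adjacent pairs, such as $(3,4)$ or $(1,?)$, are what I would test first.

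Once a valid $H$ is found, I would record its iterated coloring explicitly, as in the proofs of \cref{prop:5edgetripartite} and \cref{prop:4edgetripartite}. I would then conclude \cref{thm:fanominus} from \cref{cor:tridoubling}.
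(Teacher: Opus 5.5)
Your reduction is sound, and in fact exhaustive. Any $H$ that works must contain the graph obtained by merging $q$ into $p$, and iterated tripartiteness is closed under subgraphs. So conditions (a)--(b) are exactly what you need, and for the same reason your ``relaxation'' gains nothing.

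\textbf{The gap:} you never finish the search. The lemma is an existence statement whose whole content is the witness, so a search plan is not a proof. Worse, the case you propose to test first does not exist. In $F^-$ every two of $1,3,4,7$ lie in a common edge: $13\subset 123$, $14\subset 145$, $17\subset 167$, $47\subset 247$, $34\subset 346$, $37\subset 357$. The third vertex of each such edge is one of $2,5,6$. Your first try only looked at pairs of mixed degree along an edge. So the case you actually missed is merging two degree-$3$ vertices along their common edge, with $v$ the degree-$2$ vertex on that edge.

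A side remark: merging $2$ and $6$ gives the merged vertex the link $\{13,34,47,71\}$, which is a $4$-cycle, not a triangle. It fails for the same reason as merging $5,6$.

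That missing case works. Take $(p,q,v)=(3,4,6)$. Merging $4$ into $3$ gives
\[
H=\{123,135,237,357,167\}.
\]
Use the top-level coloring $3\mapsto 1$, $\{1,6,7\}\mapsto 2$, $\{2,5\}\mapsto 3$. The four edges through $3$ are rainbow. The edge $167$ is monochromatic, inside a part that induces a single edge. Hence $H$ is iterated tripartite.

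Since $3$ and $6$ share no edge of $H$, the doubling $H[3,6]$ contains the twin edges $124,145,247,457$ (with $4$ as the copy of $3$) together with the new edge $346$. This contains $F^-=\{123,145,167,247,346,357\}$.

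This is exactly the paper's construction. There, $H^{(3-)}$ is what you get by merging the two degree-$3$ vertices $b_1,b_2$ of the Fano plane minus $a_{11}a_{12}c$ into $b$, and the pair $(b,c)$ is doubled, where $c$ has degree $2$. With this witness written down explicitly, your argument becomes a complete proof.
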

Using this lemma, \cref{thm:fanominus} follows from \cref{cor:tridoubling}.
\begin{proof} Let $H^{(3-)}=H^{(3)}-\{a_{11}a_{12}c\}$, where $H^{(3)}$ is as in \cref{lem:fanofwddoubleable}, illustrated in Figure~\ref{fig:buildingfano}. This $3$-graph is iterated tripartite, by the coloring 
\begin{align*}
\chi(a_{11})=\chi(a_{12})&=1\\
\chi(a_{21})=\chi(a_{22})=\chi(c)&=2\\
\chi(b)&=3.
\end{align*}
Then every edge of $H^{(3-)}$ is $\chi$-rainbow except for $a_{21}a_{22}c$, which is color $2$. Now let $H^{(4-)}=H^{(3-)}[b,c]$, so also $H^{(4-)}=H^{(4)}-\{a_{11}a_{12}c\}$. Since $H^{(4)}$ contains the Fano plane, surely $H^{(4-)}$ contains the Fano plane minus one edge.
\end{proof}
It remains to prove \cref{thm:iteratedbicone}, for which we define the bicone of a 3-graph as in \cite[Defn.\ 2.1]{quasipoly}.
\begin{definition}\label{defn:bicone} Let $H$ be a $3$-graph and $X,Y$ be a bipartition of $V(H)$. The bicone, denoted $\rm{bicone}(H;X,Y)$, is a $3$-graph with vertex set $\{x,y,z\}\sqcup V(H)$ and edge set
\[
E(\rm{bicone}(H;X,Y))=\{xyz\}\cup \{zxv\mid v\in Y\}\cup\{zyw \mid w\in X\} \cup E(H).
\]
When the bipartition $X,Y$ is clear, we simply write $\rm{bicone}(H)$, having the implicit bipartition $X'=X\cup \{x,z\}$ and $Y'=Y\cup \{y\}$.
\end{definition}
\begin{definition}\label{defn:iteratedbicone} For $t\geq 0$, define the \emph{iterated bicone} $H_t$ by letting $H_0=\emptyset$ and $H_{t+1}=\rm{bicone}(H_t)$.
\end{definition}
For example, $H_1$ is a single edge $xyz$ with implicit bipartition $\{x,z\},\{y\}$, and $H_2$ can be defined with vertex set $\{x,y,z,a,b,c\}$ with edge set
\[
E(H_2)=\{xyz\}\cup \{zxb\}\cup \{zya,zyc\}\cup \{abc\},
\]
and bipartition $\{x,z,a,c\},\{y,b\}$.

\begin{observation}
By \cref{defn:fwddoubling}, the class of forward-doubleable graphs is closed under vertex duplication, similar to \cref{prop:doubleableiteratedblow-ups}. To duplicate vertex $u$, begin with an auxiliary vertex $x$ in its own block which is greater than all others, then double the pair $(u,x)$
\end{observation}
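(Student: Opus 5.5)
To prove the observation, I would track a forward-doubling construction of $H$ and modify it by adding one new vertex in a new last block. Here \emph{duplicating} $u$ means adding a twin $u'$, placed in the same block as $u$, with $u'yz\in E$ exactly when $uyz\in E$. Such a twin lies in no common edge with $u$, since any such edge $uu'z$ would need $uuz\in E$.

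First, I would reduce to the case where $H$ itself is produced by forward doublings. By definition, $H\subseteq H''$ for some blocked $3$-graph $H''$ obtained from an independent graph with singleton blocks by forward doublings; we may postpone all subgraph-taking steps to the end, since doubling a subgraph yields a subgraph of the doubling. If $u''$ denotes the duplicate of $u$ in $H''$, then the duplicate of $u$ in $H$ is the subgraph of $H''$ induced by $V(H)\cup\{u''\}$. So it suffices to duplicate vertices of $H''$.

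Second, I would add the auxiliary vertex. Start the construction of $H''$ from the independent graph together with one extra vertex $x$ in a singleton block $A_{t+1}$ placed after all other blocks, and run the same sequence of doublings. Each doubling only involves pairs $(u,v)$ with $u\in A_i$, $v\in A_j$, $i<j\le t$. The relative order of the old blocks is unchanged and no edge ever touches $x$, so every step is still a forward doubling. The result is $H''$ plus an isolated vertex $x$ in the last block.

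Third, I would perform the duplication and clean up. Since $u\in A_i$ with $i\le t<t+1$, and $u,x$ share no edge, the doubling $(u,x)$ is a forward doubling. It produces $u'\in A_i$ with $u'yz\in E$ for every $uyz\in E(H'')$, together with the single extra edge $uu'x$. Deleting $x$, by passing to a subgraph, removes exactly that extra edge and leaves $H''$ with $u$ duplicated. This graph is therefore forward-doubleable.

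I expect no real obstacle here. The only point needing care is the first step: justifying that an extra last block can be inserted at the start of the construction without breaking the forward condition, and that subgraph steps commute past doublings. Both are immediate from the definitions.
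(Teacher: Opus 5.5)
Your proposal is correct and is the paper's argument: add an isolated auxiliary vertex $x$ in a new last block at the start of the construction, forward-double $(u,x)$, then delete $x$ to remove the extra edge $uu'x$; you simply spell out the details the paper leaves implicit. One small wording fix is that when $H$ is a non-induced subgraph of $H''$, the duplicate of $u$ in $H$ is \emph{a subgraph of} the duplicated $H''$ restricted to $V(H)\cup\{u''\}$, not the induced subgraph, which is all you need. Also, postponing the subgraph steps is unnecessary, since $x$ can be carried untouched through the original sequence of operations.
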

\begin{lemma}\label{lem:iterbiconefwd} For $t\geq 0$, the iterated bicone $H_t$ is forward-doubleable.
\end{lemma}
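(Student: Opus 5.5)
The plan is to prove a strengthened statement by induction on $t$, and then read off $\rm{bicone}(H_t)$ from forward doublings that all point into a new last block containing $z$. To handle the bipartition I will carry the implicit bipartition $X_t,Y_t$ of $H_t$ from \cref{defn:bicone} through the induction. The invariant I aim for is the following: $H_t$ has a forward-doubling construction with blocks $A_1<\cdots<A_s$ in which every vertex of $X_t$ lies in a block strictly before every block meeting $Y_t$. I expect to need this because the cone edges through $z$ have the form $zxv$ with $v\in Y$ and $zyw$ with $w\in X$, so the two sides of the bipartition play asymmetric roles. The base cases $H_0=\emptyset$ and $H_1=\{xyz\}$ are immediate. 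For $H_1$, start from singleton blocks $\{x\}<\{y\}$ and take the doubling $H^{(0)}[x,y]$, renaming the twin $x'$ as $z$; this gives blocks $\{x,z\}<\{y\}$, matching the bipartition $\{x,z\},\{y\}$.

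For the inductive step, take the construction of $H_t$ and append a new singleton block $\{z\}$ after all existing blocks. This is legitimate by the Observation preceding \cref{lem:iterbiconefwd}: an isolated vertex in a new maximal block can always be introduced. Every pair $(u,z)$ with $u\in V(H_t)$ is then a forward pair, so we may double it. The idea is to create the link of $z$ through such doublings. A doubling $(u,z)$ adds the pair $uu'$ to the link of $z$, and it copies every edge at $u$ to the twin $u'$. So the link of $z$ that we need, namely $x$ joined to $Y\cup\{y\}$ and $y$ joined to $X$, should be produced in the following order. First, introduce $x$ and $y$ as singleton vertices in the appropriate blocks, one in the $X$-region and one in the $Y$-region. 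Next, duplicate them along the $H_t$-structure so that $x$ acts as a twin partner for each $v\in Y$ and $y$ for each $w\in X$, using the vertex duplication of the Observation together with the block-order invariant. Finally, perform the doublings into $z$ and take a subgraph to delete all surplus copied edges. As a sanity check I will first carry this out by hand for $H_2=\rm{bicone}(H_1)$, mimicking the explicit sequence used for the Fano plane in \cref{lem:fanofwddoubleable}.

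The main obstacle is the middle step. A single doubling $(u,z)$ only adds one pair $uu'$ to the link of $z$, and $u'$ is a fresh twin of $u$. The link of $z$ in the bicone, by contrast, is a double star centred at the old-style vertices $x$ and $y$, so we cannot simply double each edge separately. My first attempt will be to reverse the order of operations. I would build the cone vertices before rebuilding $H_t$: start from the blocked seed $\{x,z_0\}<\{y\}<\{z\}$, and realise each vertex $v\in Y_t$ as a twin of $y$ and each $w\in X_t$ as a twin of $x$ or $z_0$. The twins would be created by forward doublings into $z$, so that they inherit the required cone edges automatically. I would then show that the remaining doublings that build $H_t$ can be replayed inside these twin classes while respecting the block order. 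The strengthened invariant on where $X_t$ and $Y_t$ sit is exactly what should make every replayed doubling forward. Once that is done, deleting unwanted edges yields $\rm{bicone}(H_t)=H_{t+1}$, and the new bipartition $X_t\cup\{x,z\}$, $Y_t\cup\{y\}$ again satisfies the invariant. This closes the induction.
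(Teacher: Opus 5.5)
There is a genuine gap, and it lies in where you put the new apex. In any forward-doubling construction, every edge has two vertices in a common block and the third in a strictly later block. The edge $uu'v$ created by doubling $(u,v)$ has this shape. Copying an edge to a twin $u'$ preserves it, because $u'$ goes into $u$'s block. Deleting edges creates nothing. So if $z$ sits in a fresh last block, a doubling $(u,z)$ only ever adds a same-block pair $uu'$ to the link of $z$. But every pair in the link of the apex of $\mathrm{bicone}(H_t)$ crosses the bipartition: these pairs are $xy$, $xv$ for $v\in Y_t$, and $yw$ for $w\in X_t$. Under your invariant the two ends of each such pair lie in different blocks. Hence $xyz$, $zxv$ and $zyw$ would each meet three distinct blocks and can never be produced. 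In particular, the seed $\{x,z_0\}<\{y\}<\{z\}$ cannot yield $xyz$. The closing step breaks for the same reason: with $z$ after all $Y$-blocks, $X_t\cup\{x,z\}$ does not precede $Y_t\cup\{y\}$. In your own base case you correctly made $z$ the twin of $x$, hence in $x$'s block, and the inductive step must do the same.

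The step you flag as the main obstacle is also left as ``I would then show\ldots''. It cannot be done by first realising every vertex of $X_t\cup Y_t$ as a twin of $x$ or $y$ and then replaying $H_t$. Every edge added by a doubling contains the newly created vertex, so no edge of $H_t$ can ever be added among pre-existing vertices. Your instinct to use twins of $x$ and $y$ is right, but the paper needs only two seeds:
\begin{itemize}
\item From $\{x\}<\{y\}$, double $(x,y)$. This gives $xyz$ with $z$ in $x$'s block.
\item Duplicate $x$ into $a$ and $y$ into $b$, then delete $zab$. This leaves the edges $zxy$, $zya$, $zxb$.
\item Run the given two-block construction of $H_t$ from the seed $\{a\}<\{b\}$, placed in the blocks of $x$ and $y$.
\end{itemize}
Every vertex of $X_t$ is then a twin-descendant of $a$, and every vertex of $Y_t$ a twin-descendant of $b$, so all cone edges are inherited automatically. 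The doublings stay forward. No cone edge contains both an $X_t$-vertex and a $Y_t$-vertex, so the common-edge deletions never remove a cone edge. The invariant needed is therefore just ``forward-doubleable with two blocks $X_t<Y_t$'', and it is restored with blocks $X_t\cup\{x,z\}<Y_t\cup\{y\}$.
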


Using this lemma, \cref{thm:iteratedbicone} follows from \cref{thm:fwddoublingvsBn}. To prove \cref{lem:iterbiconefwd}, the base case $H_0$ is trivial so it suffices to show the following.

\begin{claim}\label{claimbiconepreservesfwd} If a $3$-graph $H$ is forward-doubleable with two blocks $X<Y$, then $\rm{bicone}(H;X,Y)$ is forward-doubleable with blocks $X\cup \{x,z\}<Y\cup \{y\}$.
\end{claim}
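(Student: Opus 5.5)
The plan is to build $\mathrm{bicone}(H;X,Y)$ by a short explicit sequence of forward doublings on top of the construction of $H$, with new blocks placed so that every doubling goes forward. First I need to fix what ``forward-doubleable with two blocks $X<Y$'' means. Since doubling keeps the new vertex in the block of $u$, the block structure of the final graph is a coarsening of the singleton blocks we started from. So I will read the hypothesis as follows: there is an ordered partition into finitely many blocks, and a construction sequence, such that every block lies inside $X$ or inside $Y$, and every $X$-block precedes every $Y$-block. The claim is then to produce a similar sequence for the bicone in which $x,z$ lie in blocks preceding all $Y$-blocks, and $y$ lies in a block after all $X$-blocks.

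The key observation is about what a doubling does to the link. If $u,v$ share no edge, then $H[u,v]$ adds the edge $uu'v$, and $u'$ is a twin of $u$. Taking a subgraph, we may later delete edges at $u'$. For the bicone, the vertex $z$ has link consisting of the pair $xy$, the pairs $xv$ for $v\in Y$, and the pairs $yw$ for $w\in X$. This looks like ``$z$ is the source of a star'', much like $c$ in \cref{lem:fanofwddoubleable}.

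So I would try to realize the cone edges by doubling pairs $(u,z)$, which gives the edges $uu'z$. This has the wrong shape, since it needs two $H$-vertices per edge. Instead I will generate the edges $zxv$ by doubling $z$ (or $x$) against each $v\in Y$, and then the edges $zyw$ by doubling $w$ against $y$. The concrete ordering I will commit to is the following.

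\textbf{Start.} Take the independent set $\{x,y\}\cup V(H)$ with singleton blocks. Put $x$ in its own block before all blocks of $H$, and put $y$ in its own block after all blocks of $H$.

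\textbf{Step 1.} Double $(x,y)$, which is forward, producing $x,x'=z$ and the edge $xzy$. Now $x$ and $z$ are twins in the first block.

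\textbf{Step 2.} For each $v\in Y$, double the pair $(x,v)$, which is forward because $x$ precedes $Y$. This creates a twin of $x$ together with an edge involving $v$. The twins proliferate, however, and what I actually want is the single edge $zxv$.

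To fix this, I will try a different approach: at each step double $z$ against $v$ but discard the new twin. Doubling $(z,v)$ gives $z z' v$. If I then identify roles by renaming the twin $z'$ as ``$x$'', I only get one such edge for one $v$.

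A better version is to double first and blow up later. Double $(x,y)$ to obtain the edge $xzy$, and then embed everything into an iterated structure using vertex duplication, which is forward-doubleable by the Observation. The intended argument is to show that forward-doubleable graphs are closed under adding a new vertex $z$ whose link is a bipartite graph between an earlier block and a later block, with the link edges going forward. The route is to realize $z$ as the output $u'$ of doubling some $u$ against each required partner. Every such step is forward precisely because the link pairs $(x,v)$, $(w,y)$, $(x,y)$ always consist of an $X'$-vertex and a $Y'$-vertex.

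\textbf{Step 3.} Finally, interleave the construction of $H$ itself. The doublings of $H$ are forward within $X<Y$ and remain forward after the new blocks are inserted, and the new edges of the bicone only involve $x,y,z$.

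The main obstacle is the one that surfaced in Step 2. Doubling only ever creates edges of the form $uu'v$ together with twins, so getting a single vertex $z$ adjacent to many different pairs $xv$ and $yw$ requires careful sequencing. The extra edges created by twins have to be deletable, and the twins must end up as the right vertices.

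I would first test the sequence on $H_2$, the base nontrivial case, to find the correct order of doublings. Then I would generalize it: double $y$-type vertices against $z$-type vertices before $H$ is expanded, so that the expansion of $H$ by forward doublings automatically copies the cone edges.
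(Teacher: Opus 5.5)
Your proposal stops at the real obstacle without getting past it. You correctly note that a doubling $(u,v)$ creates only the single new edge $uu'v$, so $z$ cannot be joined to all the pairs $xv$ ($v\in Y$) and $yw$ ($w\in X$) by doubling against each partner. But the remedies you sketch are never carried out: realizing $z$ as the output $u'$ of several doublings, or a general closure of forward-doubleable graphs under adding a vertex with a bipartite link. The first cannot work, since $z$ is created by exactly one doubling.

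Worse, your starting configuration makes the target unreachable. A forward doubling creates an edge with two vertices in one block and the third in a later block. Each copied edge $u'pq$ has the same block pattern as $upq$. Hence every edge of a forward-doubleable graph meets exactly two blocks. If $x$, $y$ and the vertices of $H$ start in distinct singleton blocks, then $z$, a twin of $x$, lies in the block of $x$. A cone edge $zyw$ with $w\in X$ would then meet three blocks, so it can never appear.

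The same invariant forces every $w\in X$ into the block of $z$: $zyw$ meets only two blocks, and the block of $y$ comes after those of $X$. So $X$ must end up in a single block, and your refined reading with several $X$-blocks does not fit the statement. The hypothesis must be read literally, with $H$ grown from two seeds, one in each block.

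The missing idea is to make the vertices of $H$ descendants of twins of $x$ and $y$, so the cone edges are inherited rather than built one at a time. After doubling $(x,y)$ to get $xzy$, use the Observation to duplicate $x$ into $a$ (adding $zya$) and $y$ into $b$ (adding $zxb$ and $zab$). Then delete $zab$. This leaves $\{zxy,zya,zxb\}$, with $a$ in the block of $x,z$ and $b$ in the block of $y$.

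Now run the construction of $H$ with $a<b$ as its two initial vertices. Three facts make this work:
\begin{enumerate}
\item Doubling or duplicating a vertex copies its entire link, so every descendant of $a$ keeps $zy$ in its link and every descendant of $b$ keeps $zx$.
\item Each doubling $(u,v)$ with $u\in X$, $v\in Y$ is forward for the blocks $X\cup\{x,z\}<Y\cup\{y\}$.
\item The common edges of $u$ and $v$ are exactly those they have in $H$, since $uzy$ avoids $v$ and $vzx$ avoids $u$. So every step acts on the $H$-part exactly as in the original sequence.
\end{enumerate}
The result is $\mathrm{bicone}(H;X,Y)$. Your closing sentence gestures at this inheritance. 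However, ``doubling $y$-type vertices against $z$-type vertices'' is not forward, and it is the seeds $a,b$ together with the deletion of $zab$ that make the argument work.
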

\begin{proof}
Begin with the independent graph $H^{(0)}$ with blocks $\{x\}<\{y\}$. Let $H^{(1)}=H^{(0)}[x,y]$, so 
\[
V(H^{(1)})=\{x,z\}\cup \{y\},
\]
and $E(H^{(1)})= \{xyz\}$. Next, duplicate vertex $x$ into $a$ and vertex $y$ into $b$, and delete the edge $zab$ to obtain graph $H^{(2)}$, so
\begin{align*}
V(H^{(2)})&=\{x,z,a\}\cup \{y,b\}\\
E(H^{(2)})&=\{zxy,zya,zxb\}.
\end{align*}
Since we have $\{a\}<\{b\}$ and there is no pair incident to $a,b$, by assumption there is some sequence of forward doubling, vertex duplication, and taking subgraphs which replaces $H^{(2)}[a,b]$ with a copy of $H$ with blocks $X,Y$. Then this graph, together with the vertices $x,y,z$ and edges $\{xyz\}\cup zxY\cup zyX$ forms a copy of $\rm{bicone}(H)$.
\end{proof}

\section{Exponential lower bounds and the chain}\label{sec:explower}

In this section, we will prove the following and use it to deduce the results of Section \ref{sec:intro_explower}.

\begin{theorem}\label{thm:forwardlaminar} If a $3$-graph $H$ is not forward laminar, then $r(H,n)=\exp(\Omega(n^{1/5}))$.
\end{theorem}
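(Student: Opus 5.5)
The plan is an explicit random Ramsey construction. I will build a $3$-graph $G$ on an ordered vertex set $[N]$ with $N=\exp(cn^{1/5})$, and then show two things: every copy of any $3$-graph in $G$ is forced to be forward laminar, and $\alpha(G)<n$ with positive probability. The template is the construction behind \cref{thm:tightlyconnected} from \cite{CFGHMSVY}, adapted so that it sees the ordered, ``forward'' structure in the definition of forward laminarity rather than just tight connectivity.

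\textbf{Construction.} I will take a uniformly random auxiliary structure on pairs of $[N]$, namely a random tournament or a random colouring $\psi:\binom{[N]}2\to[s]$ with a palette size $s$ to be tuned as a power of $n$. A triple $x<y<z$ is an edge of $G$ exactly when the three pair-labels $\psi(xy),\psi(xz),\psi(yz)$ satisfy a fixed local ``forward'' relation. As in the proof of \cref{lem:G(R)tightminor}, the purpose of this relation is that two edges sharing a pair must agree on how that pair is oriented or labelled. The relation propagates along tight paths, so every tight component of a copy of $H$ inherits one consistent structure from the order of $[N]$ and the labels.

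\textbf{Key steps, in order.} First, the structural lemma: if $H\subseteq G$, then the ordering that $[N]$ induces on $V(H)$, together with the label data, witnesses that $H$ is forward laminar. Tight components will be nested or ordered in the manner the definition demands. I expect to prove this by following tight paths edge by edge and by induction on the number of tight components, in the same style as \cref{lem:G(R)tightminor}; the contrapositive then says that $G$ contains no non-forward-laminar $H$. Second, the independence bound: fix an $n$-set $S$, find a large linear (pair-disjoint) family of candidate triples inside $S$, and note that each becomes an edge independently with probability at least some $p=n^{-O(1)}$, exactly as in \cref{lem:G(r)indset}. Third, optimise $s$, $p$ and $N$ so that $\binom Nn(1-p)^{\Omega(n^2)}<1$, that is $n\log N\ll pn^2$.

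\textbf{Main obstacle.} The difficulty is the tension between the structural lemma and the density argument. Making the edge relation rigid enough to force forward laminarity tends to make $p$ small, and it may also stop the candidate triples inside an arbitrary $n$-set from being pair-disjoint and independent. To handle this I will use a multi-scale (dyadic) decomposition of the order inside $S$: for each scale, extract a linear family of $\Omega(n^2/\mathrm{poly}(\text{scale}))$ triples whose events are independent. Balancing the number of scales against $p$ and the palette size is where I expect the exponent $1/5$ to arise, from a loss like $p\approx n^{-4/5}$ after this balancing. I will first try the simplest choice, a random tournament combined with the order, and reach for a larger palette only if the structural lemma fails for it.
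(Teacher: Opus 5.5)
\textbf{The gap: a label-only construction cannot work.} Your construction decides whether $x<y<z$ is an edge using only the labels $\psi(xy),\psi(xz),\psi(yz)$ and the relative order inside the triple. No construction of that form can prove the theorem, whatever palette or relation you choose. Two edges sharing a pair lie in the same tight component, so distinct tight components of $H$ use disjoint sets of pairs and impose independent constraints on the labels. The only global data linking different components is then the linear order of $[N]$, which carries no laminar structure, so your planned induction over tight components has nothing to work with.

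Here is a concrete failure. Take $\mathrm{CH}_3$, which is not forward laminar by part (I) of \cref{lem:chainclassification}, and order its vertices $a_0<b_0<a_2<b_2<a_1<b_1$. In each component $\{a_ib_ia_{i+1},a_ib_ib_{i+1}\}$, both tips lie in the same gap relative to the shared pair $a_ib_i$. So that pair plays the same role in both edges: the role of $xy$ for $i=0$, and of $yz$ for $i=1,2$. As soon as your relation is nonempty, give $a_ib_i$ a label in the corresponding projection of the relation, then complete each edge using its two private pairs. The $15$ pairs involved are all distinct. Hence a fixed $6$-set realizes $\mathrm{CH}_3$ with probability at least $s^{-15}$. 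Since you need $N=\exp(\Omega(n^{1/5}))\gg s^{15}$, your $G$ contains $\mathrm{CH}_3$ with high probability. So the structural lemma is false for every such relation, and moving from a tournament to a larger palette does not help.

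\textbf{The missing idea: a deterministic hierarchy the labels must respect.} The paper identifies $[N]$ with $\{0,1\}^d$, where $d\approx 0.1n^{1/5}$, and lets $\delta(xy)$ be the length of the longest common prefix. It allows only forward triples $x<y<z$ with $\delta(xy)>\delta(yz)$. It samples $\phi:\binom{V}{2}\to\{0,\dots,d-1\}$ and requires $\phi(xy)=\delta(yz)$ and $\phi(xz)=\phi(yz)=\delta(xy)$. Because each label must equal a genuine branching level of the tree, any pair of an edge determines the subtree containing the third vertex. Consequently every tight component lies in a forward triple $V_{v0w0}<V_{v0w1}<V_{v1}$ of the prefix family $\{V_w\}$ (\cref{lem:Glaminar}). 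That prefix family is the laminar witness your plan lacks.

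The price is that only forward triples are candidate edges, and this is where a multi-scale count is genuinely needed. \cref{claim:forwardtriples} shows that any $n$-set with $n>5d$ contains $\Omega(n^3/d)$ forward triples, via a recursion on the first bit. This yields a linear family of size $\Omega(n^2/d)$, each member an edge independently with probability $d^{-3}$. The union bound then requires $nd\ll n^2/d^4$, which is exactly where $d\approx n^{1/5}$ comes from.
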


Let us define terms. For a set $V$, recall that a \textit{laminar family} on $V$ is a set family $\cF\subseteq 2^V$ such that any two sets of $\cF$ are disjoint or nested. That is, for all non-disjoint $A,B\in \cF$, either $A\subseteq B$ or $B\subseteq A$. Then we have the following.

\begin{definition}
A $3$-graph $H$ with vertex set $V$ is \textbf{laminar} if there exists a laminar family $\cS\subseteq 2^V$ such that for each tight component $H'\subseteq H$, there are disjoint sets $S_1,S_2,S_3\in \cS$ with $E(H')\subseteq K_{S_1,S_2,S_3}^{(3)}$.
\end{definition}

\noindent This is slightly more general than the property that tight component is tripartite and additionally the family of `parts of tight components of $H$' is laminar. Next, we define the stronger property which \cref{thm:forwardlaminar} concerns. The following definition is also fairly standard.

\begin{definition}For a laminar family $\cF\subseteq 2^V$, the \textbf{containment tree} $T_\cF$ is the rooted tree obtained by the Hasse diagram of $\cF\cup\{V\}$ ordered by containment. Specifically, $T_{\cF}$ has vertex set $\cF\cup \{V\}$ with root $V$ and for each $A\in \cF\setminus\{V\}$, the parent of $A$ is the smallest set in $V(T_{\cF})$ properly containing $A$.
\end{definition}

We define a \textbf{binary} laminar family $\cF$ as one where the containment tree $T_{\cF}$ is binary. Note that by adding internal vertices, any laminar $3$-graph has some corresponding laminar family which is binary. Similarly, the following definitions can be extended to an equivalent non-binary version. Let $\cF$ be a binary laminar family. For sets $S_1,S_2\in \cF$, let $\Delta(S_1,S_2)$ be the smallest set in $\cF$ containing both $S_1$ and $S_2$. For elements $x,y\in V$ we define $\Delta_\cF(x,y)$ similarly.

\begin{definition}
Suppose that the tree $T_{\cF}$ is an ordered tree with ordering $<$. We say that disjoint sets $S_1<S_2<S_3\in \cF$ is a \textbf{forward triple} if $\Delta(S_1,S_2)\subseteq \Delta(S_2,S_3)$. Finally, we say that a $3$-graph $H$ with vertex set $V$ is \textbf{forward laminar} if there exists a binary laminar family $\cS\subseteq 2^V$ and some ordered containment tree $T_{\cS}$ such that for every tight component $H'\subseteq H$, there is a forward triple $S_1,S_2,S_3\in \cS$ with $E(H')\subseteq K_{S_1,S_2,S_3}^{(3)}$.
\end{definition}

Since forward laminar is a stronger condition than just laminar, \cref{thm:forwardlaminar} also applies to the simpler class of non-laminar $3$-graphs. It is logically necessary by Theorems~\ref{thm:fwddoubling},\ref{thm:forwardlaminar} that any iterated tripartite or forward-doubleable 3-graph $H$ must be forward laminar, (e.g.\ Figure~\ref{fig:H2laminar}). For intuition, observe that $H$ is iterated tripartite if $H$ is forward laminar with some family $\cF$ such that any two tight components $H_1,H_2$ with corresponding triples $C_1,C_2,C_3\in \cF$ and $S_1,S_2,S_3\in \cF$
respectively, we have either $\{S_1,S_2,S_3\}=\{C_1,C_2,C_3\}$ or else for some $i$ we have $C_1\cup C_2\cup C_3\subseteq S_i$ or $S_1\cup S_2\cup S_3\subseteq C_i$.  A similar description can be given for forward-doubleable 3-graphs, where the triples are partially nested in some order.

\begin{figure}
    \centering
\input{tikz_diagrams/H2laminar}
\caption{A binary laminar tree $T_{\cF}$ such that the triples $\{z\},\{x,a,c\},\{y,b\}$ and $\{a\},\{c\},\{b\}$ are forward. This tree demonstrates that the iterated bicone $H_2$ (\cref{defn:iteratedbicone}) with components $\{zxy,zxb,zay,zcy\}$ and $\{abc\}$ is forward laminar. See also \cite[Fig. 1,3]{quasipoly}
}
\label{fig:H2laminar}
\end{figure}

\subsection{Proof of Theorem \ref{thm:forwardlaminar}}

We begin with the construction. Let $N=2^d$ where $d\leq 0.1n^{1/5}$, and let $V=[N]$, we identify $x\in V$ with its binary representation in $\{0,1\}^d$. For distinct $x,y\in V$ let $\Delta(xy)$ be the longest common prefix of the binary representations of $x,y$ and $\delta(xy)=|\Delta(xy)|$. Sample a function $\phi:\binom N2\to \{0,\ldots, d-1\}$ uniformly at random. For vertices $x<y<z$ we put an edge $xyz\in E(G)$ if and only if:
\begin{itemize}
\item We have $\delta(xy)>\delta(yz)$; we say $xyz$ is \textit{forward}. Note that for any forward triple we have $\delta(xz)=\delta(yz)$.
\item We have $\phi(xy)=\delta(yz)$ and $\phi(xz)=\phi(yz)=\delta(xy)$.
\end{itemize}
Equivalently, we put $xyz\in E(G)$ if and only if $xyz$ is forward and
\begin{equation}\label{eq:temp2}
(\delta(xy),\delta(yz))
=(\delta(xy),\phi(xy))
=(\phi(xz),\delta(xz))
=(\phi(yz),\delta(yz)).
\end{equation}

\begin{lemma}\label{lem:explowerbound_indset} As defined above, $G$ has no independent set of size $n$ with positive probability.
\end{lemma}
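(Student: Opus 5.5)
The plan is a first-moment argument over all $n$-sets. Fix $S\subseteq V$ with $|S|=n$. I will find many forward triples in $S$ whose pairs are pairwise disjoint. Each such triple $x<y<z$ is an edge exactly when three conditions hold: $\phi(xy)=\delta(yz)$, $\phi(xz)=\delta(xy)$ and $\phi(yz)=\delta(xy)$. These involve three distinct pairs, so each triple is an edge with probability exactly $d^{-3}$. For a pair-disjoint family these events are mutually independent, since they depend on disjoint coordinates of $\phi$. A union bound over the at most $N^n=2^{dn}$ choices of $S$ then gives
\[
\PP[\alpha(G)\geq n]\leq 2^{dn}(1-d^{-3})^{T}\leq \exp\big(dn-T/d^3\big),
\]
where $T$ is the guaranteed number of pair-disjoint forward triples in every $n$-set. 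So it suffices to show $T\gg d^4 n$.

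\emph{Step 1: structure of forward triples in $S$.} View $S$ as the leaves of the binary trie of depth $d$ given by binary representations. At a branching node, let $a$ be the number of leaves of $S$ in the left (smaller) subtree and $b$ the number in the right subtree. Triples with $x,y$ on the left and $z$ on the right are forward. Triples split the other way are not forward, because then $\delta(xy)<\delta(yz)$. Hence the number of forward triples in $S$ is exactly $\sum_{\text{nodes}}\binom a2 b$.

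\emph{Step 2: finding a balanced node.} Follow the heavy path from the root, always descending into the larger child. Along it, the subtree size drops from $n$ to $1$ within $d$ steps. So some node on the path has lighter side at least $n/(2d)$; take the first such node. Both sides there have size at least $n/(2d)$, so that node alone contributes at least $\binom a2 b\gtrsim n^3/d^3$ forward triples. Moreover, each of these triples has its pair $xy$ on the left and its vertex $z$ on the right. Every pair lies in at most $n$ of them, so a greedy choice yields a pair-disjoint family of size $T\gtrsim n^2/d^3$.

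\emph{Step 3 (main obstacle): the quantitative accounting.} The crude bound of Step 2 gives $T/d^3\gtrsim n^2/d^6$. This beats $dn$ only when $d\lesssim n^{1/7}$, short of the target $0.1n^{1/5}$. To close the gap I would first sum the contributions of many nodes rather than one. Applying Step 2 recursively inside both subtrees, and more generally summing $\binom a2 b$ over all nodes, should give forward-triple counts of order $n^3/d^2$ or better in the worst case. The extremal configurations are unbalanced caterpillar-like tries, which are forced to be short because the depth is at most $d$. I would also save in the union bound by counting $n$-sets together with their trie shape, and exposing $\phi$ only on the triples used.

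If the sum over nodes still falls short, the fallback is to restrict the union bound to sets of a fixed shape, with one balanced node per level. This reduces the entropy term below $dn$ and should recover the exponent $1/5$. With either refinement, I expect $T/d^3\geq c\,n^2/d^{4}$ to beat $dn$ once $d^5\leq 10^{-5}n$, which is consistent with the stated choice $d\leq 0.1n^{1/5}$.
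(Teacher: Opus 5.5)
Your probabilistic framework is correct: each forward triple is an edge with probability $d^{-3}$, events over a pair-disjoint family are independent, and there are at most $2^{dn}$ sets. Steps 1 and 2 are also correct. But the proof does not reach the regime $d\le 0.1n^{1/5}$ that the lemma requires, and the missing content is exactly your Step 3, which is only a hope.

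A single balanced node gives $\gtrsim n^3/d^3$ forward triples, hence $T\gtrsim n^2/d^3$ and only $d\lesssim n^{1/7}$. Using the sharper codegree $\max(a,b)$ at that node improves this to $T\gtrsim n^2/d^2$, i.e.\ $d\lesssim n^{1/6}$. No one-node argument does better: in a caterpillar-like set whose spine sheds roughly $n/d$ points to the left at each level, every node carries only $O(n^2/d^2)$ pair-disjoint forward triples. Your intermediate target of ``order $n^3/d^2$'' total forward triples would likewise give only $n^{1/6}$. Your final inequality $T/d^3\ge cn^2/d^4$ actually needs $T=\Omega(n^2/d)$. That means every $n$-set with $n\ge Cd$ must contain $\Omega(n^3/d)$ forward triples, after which greedy selection with codegree at most $n$ finishes.

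The fallback cannot replace this. Every $n$-set must be shown non-independent, and $\log_2\binom{2^d}{n}\ge n(d-\log_2 n)$ is still $\Theta(dn)$ for $n=d^{O(1)}$. So grouping sets by trie shape saves nothing in the union bound.

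The paper proves the $\Omega(n^3/d)$ count as a separate claim, by induction on $d$ through a recurrence obtained by splitting on the first bit. A short direct proof also fits your Step 1 picture. Classify forward triples $x<y<z$ by their maximum $z$, and let $z$ have rank $r$ in $S$. For each $s<z$, the node $\Delta(sz)$ is an ancestor $v$ of $z$ at which $z$ goes right and $s$ lies in the left subtree $V_{v0}$. So the sets $S\cap V_{v0}$, over the at most $d$ such ancestors $v$, partition the $r-1$ elements of $S$ below $z$. The forward triples with maximum $z$ are exactly the pairs $\{x,y\}$ lying inside one of these sets. By convexity there are at least $\frac12\big((r-1)^2/d-(r-1)\big)$ of them. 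Summing over $r=1,\dots,n$ gives at least $n^3/(6d)-O(n^2)=\Omega(n^3/d)$ once $n\ge Cd$. Then $T=\Omega(n^2/d)$, and your union bound becomes $\exp\big(dn-\Omega(n^2/d^4)\big)<1$ for $n\ge C'd^5$, as required.
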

To prove this, we first need to show that the forward triples do not admit large sparse sets. It is easy to see that the largest set with no forward triples is of the form $\{0000,1000,1100,1110,1111\}$, so the auxiliary 3-graph $\cI_d$ of forward triples in $\{0,1\}^d$ satisfies $\alpha(\cI_d)=d+1$. Then by standard Tur\'an bounds, every subset of order $n> 2d$ contains at least $\Omega(n^3/d^2)$ forward triples. We improve this bound below, which improves our final hyperexponent.

\begin{claim}\label{claim:forwardtriples}
If $V=\{0,1\}^d$, and $n>5d$ then every $n$-element subset of $V$ contains at least $0.01n^3/d$ forward triples.
\end{claim}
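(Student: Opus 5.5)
The plan is to count forward triples according to their largest element $z$, using the binary-trie structure of $\{0,1\}^d$. Order $V$ numerically, which is the same as lexicographic order on binary words. Fix $z\in S$, and let $L(z)=\{x\in S: x<z\}$ with $m=|L(z)|$. Every $x\in L(z)$ branches off the root-to-$z$ path to the left at a well-defined depth $j=\delta(xz)\in\{0,\ldots,d-1\}$. This partitions $L(z)$ into groups $G_0(z),\ldots,G_{d-1}(z)$ with $G_j(z)=\{x<z:\delta(xz)=j\}$.

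\textbf{Step 1: forward triples with top element $z$ are exactly pairs inside one group.} Take $x<y<z$.
\begin{itemize}
\item If $x,y\in G_j(z)$, then $x$ and $y$ both agree with $z$ on the first $j$ bits and both have bit $j$ different from $z$. Hence they agree on the first $j+1$ bits, so $\delta(xy)>j=\delta(yz)$ and the triple is forward.
\item If $x\in G_i(z)$ and $y\in G_j(z)$ with $i\neq j$, then $\delta(xy)=\min(i,j)\le j=\delta(yz)$, so the triple is not forward.
\end{itemize}
Since every triple has a unique maximum, the number of forward triples in $S$ equals
\[
\sum_{z\in S}\sum_{j=0}^{d-1}\binom{|G_j(z)|}{2}.
\]

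\textbf{Step 2: convexity.} For fixed $z$ we have $\sum_j|G_j(z)|=m$. By convexity of $\binom{x}{2}$,
\[
\sum_j\binom{|G_j(z)|}{2}\ge d\binom{m/d}{2}=\frac{m^2}{2d}-\frac m2 .
\]
As $z$ runs over $S$, $m$ runs over $0,1,\ldots,n-1$. Summing, the total number of forward triples is at least
\[
\frac{(n-1)n(2n-1)}{12d}-\frac{n(n-1)}{4}.
\]

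\textbf{Step 3: absorb the error term using $n>5d$.} The hypothesis gives $\frac{n^2}{4}<\frac{n^3}{20d}$. The main term is at least $\frac{n^3}{6d}(1-O(1/n))$. Since $n>5d\ge 5$, the crude estimate $(n-1)(2n-1)\ge 2n^2\cdot\tfrac{4}{5}\cdot\tfrac{9}{10}$ holds, so the main term is at least roughly $0.12\,n^3/d$. Subtracting $0.05\,n^3/d$ leaves well above $0.01\,n^3/d$.

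The only step that needs care is the characterization in Step 1, that forward triples with top element $z$ are exactly same-group pairs. In particular one must check that the definition $\delta(xy)>\delta(yz)$ with $x<y<z$ matches "$x,y$ leave the path to $z$ at the same depth". The rest is a routine convexity computation.
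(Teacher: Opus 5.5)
Your proof is correct, and it takes a different route from the paper.

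\textbf{The paper's route.} The paper argues recursively on the first bit. It defines $g(k,d)$ as the minimum number of forward triples in a $(k+d+1)$-subset of $\{0,1\}^d$. It counts the $\binom{x}{2}(|S|-x)$ forward triples with two vertices in the $0$-half and one in the $1$-half, recurses into one half, and closes an induction $g(k,d)\ge ck^3/d$ via an AM--GM-type inequality.

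\textbf{Your route.} You group triples by their maximum $z$ and by the depth $\delta(xz)$ at which the smaller vertices leave the path to $z$. This gives an exact identity: the number of forward triples in $S$ equals $\sum_{z\in S}\sum_{j}\binom{|G_j(z)|}{2}$. Your Step 1 verification is right, and in fact with $x<y<z$ the case $\delta(xz)>\delta(yz)$ cannot occur. After that, one application of Jensen and the sum $\sum_{m<n}m^2$ finish the proof.

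\textbf{What your route buys.} It needs no induction and no bookkeeping about which half to recurse into. It also gives a noticeably better constant: more than $0.07\,n^3/d$ under $n>5d$, and asymptotically $n^3/(6d)$ when $n\gg d$. As a byproduct, the identity immediately shows that a set with no forward triples has at most $d+1$ elements, since all elements below the maximum must lie in distinct groups. The paper only asserts this bound ($\alpha(\mathcal{I}_d)=d+1$) as ``easy to see.'' The paper's recursive bipartitioning is closer in spirit to its other arguments, but for this claim it is only heavier.
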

\begin{proof}
For $d\geq 1$, $k\in \Z$ let $g(k,d)$ be the minimum number of forward triples in a subset $S$ of $\{0,1\}^d$ with size $|S|=k+d+1$. If $k+d+1> 2^d$ we set $g(k,d)=\infty$, as no subset can exist. Set $c=0.02$ and we claim that $g(k,d)\geq ck^3/d$. The base case $d=1$ is vacuous and $d=2$ satisfies $g(1,2)=2$. For $d\geq 3$, by choosing $x$ elements of $S$ to have first bit $0$, we have exactly
\begin{align*}
g(k,d)&\geq \min_{1\leq x\leq (k+d+1)/2}\binom x2(k+d+1-x)+g(k-x+1,d-1)\\
&\geq \min_{y\geq 0}\frac{ky^2}{4}+g(k-y,d-1),
\end{align*}

where we change variables from $x\mapsto y+1$, and we use that 
\[\binom{y+1}2(k+d+1-x)> \frac 12y^2\cdot \max(y,k-y)\geq \begin{cases}
\frac 12y(k/2)\cdot (y)&\text{ if }y\geq k/2\\
\frac 12y^2\cdot (k/2)&\text{ if }y\leq k/2.
\end{cases}
\]
Now we use this recurrence relation to prove that for some $c>0$, we have $g(k,d)\geq ck^3/d$ for all $k,d$. When $k\leq 0$ we have $g(k,d)=0$ and when $k=1$ it is easy to verify $g(1,d)=2$ for all $d\geq 2$. Finally for all $k,d\geq 2$ and $y\in \R$, we have
\[
\begin{aligned}
g(k,d)\ge \frac{k y^2}{4} + \frac{c (k-y)^{3}}{d-1} 
&\geq \frac{ky^2}{4}+\frac{c k^3 -3c k^2 y}{d-1}\\
&\ge \frac{c k^{3}}{d}
  + \left(
      \frac{k y^2}{4}
      + \frac{c k^3}{d^{2}}
      - \frac{3 c k^2 y}{d-1}
    \right) \\
&\geq \frac{c k^3}{d}+k\Big(\Big(\frac y2\Big)^2+\Big(\frac{\sqrt c k}{d}\Big)^2-12\sqrt c \Big(\frac y2\Big)\Big(\frac{\sqrt c ky}{d}\Big)\bigg)\\
&\ge \frac{c k^3}{d},
\end{aligned}
\]
as long as $c\leq  1/36$. Then as long as $|S|=n>5d$, we have proven that $S$ contains at least $c(n-d-1)^3/d\geq cn^3/2d$ forward triples, as desired.
\end{proof}

\begin{proof}[Proof of \cref{lem:explowerbound_indset}]
Let $G$ be given by the construction above. We have $n\geq 100d^5$; let $S\subseteq V$ have size $n$. By Claim \ref{claim:forwardtriples}, $S$ (deterministically) has at least $cn^3/d$ forward triples, where $c=0.01$. By greedily picking out edges, we obtain a set $T$ of $cn^2/3d$ forward triples which are pair-disjoint; i.e. $T$ is a linear hypergraph. For each triple $x<y<z\in T$, recall that $xyz\in E(G)$ if and only if $\phi(xz)=\phi(yz)=\delta(xy)$ and $\phi(xy)=\delta(yz)$, occurring with probability $d^{-3}$. Moreover, each triangle in $T$ is independent, so
\[
\PP[S\text{ is independent in $G$}]\leq (1-d^{-3})^{cn^2/3d}\leq \exp(-cn^2/3d^4).
\]
Then taking a union bound and recalling that $\log_2N=d$, we have
\[
\PP[\alpha(G)\geq n]\leq \binom Nne^{-cn^2/3d^4}\leq \exp(nd -cn^2/3d^4)<1,
\]
using that $n\geq (10d)^5$. 
\end{proof}

Next, we prove that $G$ avoids any graph which is not forward laminar.

\begin{lemma}\label{lem:Glaminar}As defined above, every subgraph of $G$ is forward laminar.
\end{lemma}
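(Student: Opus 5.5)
Since forward laminarity is preserved under taking subgraphs (a tight component of a subgraph lies inside a tight component of the original), it suffices to show that $G$ itself is forward laminar, with one laminar family serving for all of its subgraphs. The natural choice is the binary prefix tree. Let $\cS$ consist of the sets $S_p=\{x\in V : p \text{ is a prefix of } x\}$ for all binary words $p$ of length at most $d$. This family is laminar and binary, and $T_\cS$ is ordered by putting the $0$-child before the $1$-child; this ordering agrees with the integer order on $V=[N]$. For $x<y$ we then have $\Delta_\cS(x,y)=S_{\Delta(xy)}$.

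The core step is to show that each tight component of $G$ lies inside a single forward triple. Take an edge $xyz$ with $x<y<z$, and set $a=\delta(xy)>b=\delta(yz)=\delta(xz)$. Define $S_1=S_{\Delta(xy)0}$, $S_2=S_{\Delta(xy)1}$ and $S_3=S_{\Delta(yz)1}$. Then $x\in S_1$, $y\in S_2$ and $z\in S_3$, and since $\Delta(S_1,S_2)=S_{\Delta(xy)}\subseteq S_{\Delta(yz)}=\Delta(S_2,S_3)$, the triple is forward. I will show that every edge tightly adjacent to $xyz$ has exactly the same $(S_1,S_2,S_3)$, with the shared pair playing the same roles; induction along tight paths then finishes the argument. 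Equation \eqref{eq:temp2} is the tool: for any edge, every one of its pairs $uv$ has $\phi(uv)$ equal to the level of the \emph{other} type, meaning that the first pair has $\phi$-value $b$ while the two other pairs have $\phi$-value $a$ and $\delta$-value $b$.

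The case analysis runs on which pair is shared with a new edge $e'$ through the third vertex $w$.

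\textbf{Shared pair $xy$.} Since $\delta(xy)=a$ and $\phi(xy)=b$, the pair $xy$ must be the close pair of $e'$. Otherwise its $\delta$ would equal its $\phi$, i.e.\ $a=b$, a contradiction. So $w>y$ and $\delta(yw)=\phi(xy)=b$. Because $e'$ is forward, $w$ also lies above $x,y$ in the order. This puts $w$ in $S_{\Delta(yz)1}=S_3$; the prefix $\Delta(yw)$ has length $b$ and is a prefix of $y$, so it equals $\Delta(yz)$.

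\textbf{Shared pair $xz$ or $yz$.} Here $\delta=b$ and $\phi=a$, so the shared pair must be a far pair of $e'$, with the close pair of $e'$ at level $a$. Take the pair $yz$ as an example. Either $w$ pairs with $y$ at level $a$, or it pairs with $z$ at level $a$ while $y$ is the third vertex. In the second case, forwardness together with the fact that $y$ is the third vertex requires $y>z$, which is false. So $\delta(wy)=a$ and $w<z$, and then $\{w,y\}$ splits as $S_1,S_2$ under prefix $\Delta(xy)$. The ordering forces $w$ into the same child $S_1$ that $x$ occupies, because $y\in S_2$ and the two vertices differ at bit $a$.

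I expect the main obstacle to be this bookkeeping: keeping careful track of the order constraints, and using that $\delta(xy)\neq\delta(yz)$ strictly, so that the roles of the pairs cannot swap. One also has to check that the sets are disjoint and that the triple is preserved exactly, not merely permuted. Once each case is verified, every tight component maps into a single forward triple $(S_1,S_2,S_3)$ of the one ordered binary laminar family $\cS$, so $G$, and hence each of its subgraphs, is forward laminar.
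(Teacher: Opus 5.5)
Your proposal is correct and follows essentially the same route as the paper. It uses the same prefix family $\{V_w\}$ with the lexicographic ordering, the same forward triple $V_{v0w0}<V_{v0w1}<V_{v1}$ (your $S_1<S_2<S_3$), and the same three-case check on the shared pair, propagated along tight paths. One small slip: in the shared-pair-$xy$ case, if $xy$ were a far pair of $e'$ you would get $\phi(xy)>\delta(xy)$, not $\phi(xy)=\delta(xy)$. That inequality is what contradicts $\phi(xy)=b<a=\delta(xy)$, so the conclusion stands.
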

\begin{proof}It suffices to show that $G$ is forward laminar, as this property is inherited by subgraphs. Define the laminar family $\cF=\{V_w\mid w\in \{0,1\}^{\leq d}\}$ where
\[V_w=\{x\in V\mid w\text{ is a prefix of the binary representation of $x$}\}\]
In particular, for a word $w$ with $|w|<d$ the set $V_w$ partitions into $V_{w0},V_{w1}$. We show that this is a valid family for $G$, using the ordering on $[N]$ where $V_{w_1}<V_{w_2}$ if neither $w_1,w_2$ is a subword of the other and $w_1<w_2$ in lexicographical order. Note that $\cF$ is a binary laminar family with $\Delta_\cF(x,y)=V_{\Delta(xy)}$.

\begin{claim}\label{claim:Glaminar} If $H\subseteq G$ is tightly connected, then there exist binary words $v,w\in \{0,1\}^*$ such that $H\subseteq K[V_{v0w0},V_{v0w1},V_{v1}]$. In particular, $H$ is tripartite, and the triple $V_{v0w0}<V_{v0w1}<V_{v1}$ is forward.
\end{claim}
\begin{subproof}Let $xyz \in E(H)$ with $x<y<z$, and let $v=\Delta(xz)$ be the longest common prefix of $x,y,z$, so $|v|=\delta(xz)=\delta(yz)$. Note that because $x,y<z$, we have $x,y\in V_{v0}$ and $z\in V_{v1}$. Then because $\delta(xy)>\delta(yz)$, we define $w\in \{0,1\}^*$ such that $v0w=\Delta(xy)$ is the longest common prefix of $x,y$, having $|v0w|=\delta(xy)$. Because $x<y$, we have $x\in V_{v0w0}$ and $y\in V_{v0w1}$. Now, we wish to show that every other edge in $H$ has this property. Since $H$ is tightly connected, it suffices to show this holds for some other edge $abc\in E(H)$ with $|xyz\cap abc|=2$. Intuitively, this is guaranteed by Equation \ref{eq:temp2} since for a single pair of vertices among $xyz$, we have enough information using $\phi$ to recover the branch points $\Delta(xy)$ and $\Delta(xz)$. Formally, we split into cases.

Suppose $a<b<c$ and $xyz\cap abc = \{x,y\}$. Then because $\phi(xy) = \delta(yz)<\delta(xy)$ and also $abc$ is forward so $\delta(ab)>\delta(bc)$, we must have $x=a$ and $y=b$, so $a\in V_{v0w0}$ and $b\in V_{v0w1}$. Then since $\phi(ab)=\delta(bc)$ and $a,b\in V_{v0}$, we must have $c\in V_{v1}$, as desired.

Suppose that instead $xyz\cap abc=\{y,z\}$. Then because $\phi(yz)=\delta(xy)>\delta(yz)$ and also $abc$ is forward we must have $c=z$. Moreover observe that $\delta(ab)=\phi(yz)=\delta(xy)$, so using that $a<b$, the $\delta(xy)+1\nth$ bit of $a$ must be $0$, and the $\delta(xy)+1\nth$ bit of $b$ is $1$. Since this bit of $y$ is $1$, we must have $y=b$. Then similarly since $b\in V_{v0w1}$ we have $a\in V_{v0w0}$, as desired.

Finally, suppose $xyz\cap abc=\{x,z\}$. Once again we have $c=z$, and since $\delta(xy)+1\nth$ bit of $x$ is $0$ we have $x=a$. Once again since $a\in V_{v0w0}$ we have $b\in V_{v0w1}$, as desired.

Since any two edges sharing two vertices must embed into the same forward triple, by propagating along tight paths we conclude that the entire tight component embeds into the same forward triple, as desired.
\end{subproof}
By \cref{claim:Glaminar}, every tight component $G$ embeds into a forward triple of $\cF$, so $G$ is forward laminar, and indeed every subgraph of $G$ is forward laminar.
\end{proof}

For completeness, we conclude our main theorem.
\begin{proof}[Proof of \cref{thm:forwardlaminar}]
Let $H$ be a 3-graph which is not forward laminar and $n$ sufficiently large. Then put $d=\floor{0.1n^{1/5}}$ and construct $G$ as above with size $N=\exp(\Omega(n^{1/5}))$. By \cref{lem:explowerbound_indset}, $G$ has no independent set of size $n$, and by \cref{lem:Glaminar}, $G$ contains no copy of $H$. Thus $r(H,n)>N$.
\end{proof}

\subsection{Properties of forward laminar graphs.}\label{sec:propertiesofforwardlaminar}

In this section, we deduce \cref{thm:spanningcomponents} and a similar result (\cref{cor:chromaticnumber}) from \cref{thm:forwardlaminar}. To prove \cref{thm:spanningcomponents}, it suffices to show the following. 

\begin{lemma}\label{lem:forwardlaminarspanning} If $H$ is forward laminar and every tight component is spanning, then $H$ is tripartite.
\end{lemma}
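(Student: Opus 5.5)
Fix a binary laminar family $\cS$ with an ordered containment tree $T_\cS$ witnessing that $H$ is forward laminar. For each tight component $H'$, choose a forward triple $S_1^{H'}<S_2^{H'}<S_3^{H'}$ with $E(H')\subseteq K^{(3)}_{S_1,S_2,S_3}$. Since $H'$ spans $V(H)$, every vertex lies in some edge of $H'$, so $V(H)\subseteq S_1\cup S_2\cup S_3$. The plan is to show that all these triples induce the \emph{same} tripartition of $V(H)$. Coloring $v$ by the index $i$ with $v\in S_i^{H'}$ is then independent of $H'$ and properly tripartitions every edge. (We may discard isolated vertices, or note that every vertex lies in an edge once there is a spanning component.)

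First I would pin down the shape of a single covering triple. Let $D=\Delta(S_2,S_3)$. Forwardness says $\Delta(S_1,S_2)\subseteq D$, so all three sets lie in $D$. Since $V(H)\subseteq S_1\cup S_2\cup S_3\subseteq D$, we get $D=V(H)$, the root. The tree is binary, so $D$ has exactly two children $L<R$ in the ordering. The sets $S_2,S_3$ lie in different children, because $D$ is their least common ancestor. Because $S_2<S_3$ in the tree order and the triple is disjoint, we get $S_2\subseteq L$ and $S_3\subseteq R$. Now $S_1$ is disjoint from $S_2$, and $\Delta(S_1,S_2)\subsetneq D$ would force $S_1\subseteq L$. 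Here I should check the convention $\Delta(S_1,S_2)\subseteq\Delta(S_2,S_3)$ with possible equality. If equality holds, then $S_1$ is in the other child from $S_2$, namely $R$, and $S_1<S_2$ conflicts with $L<R$. So in all cases $S_1,S_2\subseteq L$ and $S_3\subseteq R$.

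Covering then forces $S_3=R$ and $S_1\cup S_2=L$. Applying the same argument inside $L$, the set $\Delta(S_1,S_2)$ contains $S_1\cup S_2=L$, so it equals $L$. Hence $S_1,S_2$ are the two children of $L$ in order: $S_1=L_{\text{left}}$ and $S_2=L_{\text{right}}$.

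The conclusion is that every tight component's triple is the canonical triple (left child of the left child of the root, right child of the left child, right child of the root), which is determined by $T_\cS$ alone. So all components use the same tripartition $(L_{\text{left}},L_{\text{right}},R)$, and $H$ is a subgraph of $K^{(3)}_{L_{\text{left}},L_{\text{right}},R}$, i.e.\ tripartite. This is the claim.

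The main obstacle is being careful about the order conventions: that ``$S_2<S_3$'' for disjoint sets means the branch containing $S_2$ precedes the branch containing $S_3$ at their meeting point, and that the edge case $\Delta(S_1,S_2)=\Delta(S_2,S_3)$ is excluded by this order. I would also handle degenerate cases, such as an empty $H$ or vertices not covered, by noting that a spanning component forces all vertices into the covering triple.
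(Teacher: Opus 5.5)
Your proof is correct, but it is organized differently from the paper's. You show that the ordered tree $T_\cS$ alone determines the only forward triple that can cover $V(H)$. Forwardness puts $S_1$ under $D=\Delta(S_2,S_3)$, so covering forces $D=V$. The order $S_1<S_2$ rules out $S_1$ lying in the right child $R$ of the root. Covering plus disjointness of siblings then forces $(S_1,S_2,S_3)=(L_\ell,L_r,R)$.

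The paper instead takes two tight components with covering forward triples $S_1<S_2<S_3$ and $C_1<C_2<C_3$, and uses laminarity of the six sets to reduce to $S_1\in\{C_1,\,C_1\cup C_2,\,C_1\cup C_2\cup C_3\}$. It discards the last case as degenerate. In the middle case $C_3=S_2\cup S_3$ (the paper writes $S_1\cup S_2$, a typo), so $\Delta(S_1,S_2)=V\supsetneq\Delta(S_2,S_3)$, contradicting forwardness. It concludes that the two triples coincide.

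Your canonical-triple route gives a more explicit, self-contained argument. It makes clear exactly where forwardness enters (to get $D=V$) and where the ordering enters (to exclude $S_1\subseteq R$). It avoids the paper's trichotomy, which tacitly relies on a symmetry reduction; configurations such as $C_1=S_1\cup S_2$ are not listed. It also yields the stronger conclusion that the tripartition is read off from the top two levels of $T_\cS$. The paper's pairwise version is shorter and phrased purely in terms of how two covering laminar tripartitions can interleave, but it uses the same tree facts implicitly.
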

The contrapositive of this lemma together with \cref{thm:forwardlaminar} gives \cref{thm:spanningcomponents}.

\begin{proof}
Let $\cF$ be a binary laminar family for $H$ and $T_{\cF}$ be an ordered containment tree. Let $H_1$, $H_2$ be two (nonempty) tight components of $H$. We have forward triples $S_1<S_2<S_3$ and $C_1<C_2<C_3$ for $H_1,H_2$ respectively. By assumption, $\bigcup_iS_i=\bigcup_iC_i=V(H)$. In order for $\{S_1,\ldots, C_3\}$ to be laminar, we must have either $S_1=C_1$ or $S_1=C_1\cup C_2$ or $S_1=C_1\cup C_2\cup C_3$. The last case implies $S_2=S_3=\emptyset$, which is impossible. In the second case, we must have $C_3=S_1\cup S_2$. But then since $C_3$ is an ancestor of $S_2$, we have $\Delta(S_1,S_2)\supseteq \Delta(S_1,C_3)=V$, and also $\Delta(S_2,S_3)\subseteq C_3$. Thus $\Delta(S_1,S_2)\supsetneq \Delta(S_2,S_3)$, a contradiction.

We conclude that $S_1=C_1$, and since $S_2\cup S_3=C_2\cup C_3$ we must also have $S_2=C_2$ and $S_3=C_3$. In conclusion, since every component has the same tripartition, $H$ is tripartite with tripartition $S_1,S_2,S_3$.
\end{proof}

Here, we prove an additional result not advertised in the introduction. For a $3$-graph $H$, define the \textit{strong chromatic number} $\chi_s(H)$ as the minimum number of colors in a \textit{strong vertex coloring}, a coloring where no edge contains two vertices of the same color. We have $\chi_s(H)=3$ if and only if $H$ is tripartite.

\begin{lemma}\label{lem:strongcoloring} If $H$ is NIT and laminar and has $t$ tight components then $\chi_s(H)\leq 2t$.
\end{lemma}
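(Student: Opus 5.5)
Since $H$ is laminar, we fix a laminar family $\cS$ together with, for each tight component $H_i$ ($i\in[t]$), disjoint sets $S^i_1,S^i_2,S^i_3\in\cS$ with $E(H_i)\subseteq K^{(3)}_{S^i_1,S^i_2,S^i_3}$. We may assume every vertex lies in some edge, since isolated vertices get an arbitrary color. The plan is to build a strong coloring by induction on $t$, at each step refining the coloring along the laminar structure, and to use the NIT hypothesis only to save colors.

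First we argue that $\chi_s(H)\le 3t$ trivially fails to be tight and that one can do $2t$. We process the tight components in an order compatible with the containment tree. Take a component $H_1$ whose triple is maximal, meaning that $U_1=S^1_1\cup S^1_2\cup S^1_3$ is not contained in any single part $S^j_k$ of another component. By laminarity, every other component $H_j$ has its vertex set either inside a single part $S^1_k$, or disjoint from $U_1$, or with $U_j\supseteq U_1$. In the last case $U_1$ lies inside one part of $H_j$, by laminarity and maximality.

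Next we set up the induction. We delete $H_1$ and strongly color the remaining $t-1$ components with at most $2(t-1)$ colors if that hypergraph is NIT. If instead it is iterated tripartite, we color it with $3$ colors; this is valid because an iterated tripartite graph with the right coloring is a rainbow top-level coloring restricted to edges, and in the laminar setting such a graph admits a strong $3$-coloring whenever its components all use compatible triples. This is the point where care is needed. We then recolor to handle $H_1$. Edges of $H_1$ need their three vertices in distinct classes, and $H_1$ is tripartite with parts $S^1_1,S^1_2,S^1_3$. We introduce two new colors, say $a$ and $b$, and recolor within $S^1_2$ and $S^1_3$ by pairing each old color $c$ with the new label $(c,a)$ or $(c,b)$. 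To keep the count at $+2$, we instead take the product of the old coloring with the $3$-coloring of $U_1$ given by the parts, and then argue that within $U_1$ only colors appearing there matter.

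The main obstacle is the counting: a naive product gives multiplicative rather than additive growth. What I would try first is the following. Since each part $S^1_k$ is a set of the laminar family, components living inside different parts of $H_1$ never share edges, so their colorings can reuse the same palette across parts. Concretely, we define $c(v)=(k,c_k(v))$ where $k$ is the index of the part of $H_1$ containing $v$ and $c_k$ strongly colors $H[S^1_k]$. Letting $t_k$ be the number of components inside $S^1_k$, this gives a recursion of the shape $f(t)\le \max$ over splits of $\sum_k f(t_k)$ plus overhead, with $f$ of a single edge equal to $3$. Bounding $\sum_k \max(f(t_k),1)$ by $2t$ needs a base case of $f(1)\le 2$ when components are non-edge, which fails; so we use NIT to guarantee that some nesting is non-iterated. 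That forces a component crossing $U_1$, namely the case $U_j\supseteq U_1$ above, which lets us share colors between a part and the outer structure and absorbs the extra $+1$. I expect this bookkeeping to be the delicate step.
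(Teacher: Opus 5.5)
There is a genuine gap. The step you defer as ``delicate bookkeeping'' is where the whole content of the lemma lies, and the structure around it does not hold up.

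First, your trichotomy is false, because laminarity constrains the individual parts $S^j_k$, not the unions $U_j$. Take $\mathrm{CH}_3$ with the family of \cref{fig:placeholder}, whose parts are $\{a_0\},\{b_0\},\{a_1,b_1\}$; $\{a_1\},\{b_1\},\{a_2,b_2\}$; $\{a_2\},\{b_2\},\{a_0,b_0\}$. Every component is maximal in your sense. Yet $U_2=\{a_1,b_1,a_2,b_2\}$ meets $U_1=\{a_0,b_0,a_1,b_1\}$ without lying in a part of $H_1$ and without containing $U_1$. This is an NIT laminar $3$-graph, i.e.\ exactly the case at issue. So your coloring $c(v)=(k,c_k(v))$ simply ignores the components that straddle or avoid $U_1$.

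Second, your induction does not close. After deleting $H_1$ the remainder may be iterated tripartite, and your fallback that it is then strongly $3$-colorable is wrong. Having $\chi_s=3$ is equivalent to being tripartite, and $(K_3^{(3)})^2$ is iterated tripartite but not tripartite.

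Third, even granting your decomposition, the recursion $f(t)\le\sum_k f(t_k)$ with $f(0)=1$ only gives $2t+1$. You give no mechanism by which NIT saves the last color.

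The missing idea is to prove a bound for \emph{all} laminar $H$ and use NIT only to exclude equality. The paper normalizes the family by taking the parts inclusion-maximal (\cref{lem:laminarcharacterization}). Then in the containment tree, whose $3t+1$ nodes are the $3t$ parts plus the root, every internal node is partitioned by at most three children, and sibling leaves belong to the same component.

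Coloring each vertex by the leaf containing it is a strong coloring, since a leaf meets at most one part of each component. Counting children shows there are at most $2t+1$ leaves.

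Equality forces every internal node to have exactly three children. A deepest internal node then has three leaf children, which by the sibling condition are the three parts of a single component $H_i$. Inducting on $H-H_i$ gives an iterated coloring constant on the new leaf. Appending one coordinate that separates the three parts shows $H$ is iterated tripartite, contradicting NIT.
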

The contrapositive gives a Ramsey lower bound, also relating to NIT girth.

\begin{corollary}\label{cor:chromaticnumber} If $H$ is NIT, has $t$ tight components, and $\chi_s(H)\geq 2t+1$, then $r(H,n)\geq \exp(\Omega(n^{1/5}))$.
\end{corollary}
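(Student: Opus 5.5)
The plan is to derive \cref{cor:chromaticnumber} from \cref{thm:forwardlaminar} by contrapositive, using \cref{lem:strongcoloring} as the bridge. Suppose $H$ is NIT, has $t$ tight components, and $\chi_s(H)\geq 2t+1$. It suffices to show that $H$ is not forward laminar, because \cref{thm:forwardlaminar} then gives $r(H,n)=\exp(\Omega(n^{1/5}))$ directly.

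To show $H$ is not forward laminar, suppose for contradiction that it is. Forward laminar is by definition a strengthening of laminar: the binary laminar family $\cS$ that witnesses forward laminarity also places every tight component inside $K^{(3)}_{S_1,S_2,S_3}$ for disjoint $S_1,S_2,S_3\in\cS$, and we simply forget the ordering and forwardness conditions. So $H$ is laminar. Since $H$ is also NIT with $t$ tight components, \cref{lem:strongcoloring} gives $\chi_s(H)\leq 2t$. This contradicts $\chi_s(H)\geq 2t+1$.

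The corollary is therefore immediate once \cref{lem:strongcoloring} is available. The real content, and the main obstacle, is \cref{lem:strongcoloring} itself, assuming it is not yet proved at this point. My plan for it is as follows. Take the laminar family $\cS$ and the triples $(S^j_1,S^j_2,S^j_3)$ assigned to the components $H_1,\dots,H_t$. These triples form at most $3t$ sets in a laminar family. Color each vertex $v$ by an identifier of the set among the minimal relevant sets containing $v$, refining along the containment tree.

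An edge $e$ of component $H_j$ has its three vertices in the three disjoint sets $S^j_1,S^j_2,S^j_3$. We therefore need a coloring that separates, for each $j$, the three parts of triple $j$. The idea is to process the containment tree and, for each triple, reuse the color classes of its parent region. Sets that are disjoint and whose separation is never required can share colors. A counting argument should show that each component contributes at most two new colors beyond an initial one, giving $2t+1$ colors. The NIT assumption must then be used to save one more color, since a graph that is already tripartite would need only $3$ colors. Pinning down exactly this count is the delicate step, and I would first try induction on $t$, removing a component whose triple is innermost in the laminar tree.
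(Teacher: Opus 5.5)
Your proof is correct and is the paper's deduction: by the contrapositive of \cref{lem:strongcoloring}, $H$ is not laminar, hence not forward laminar (forward laminar trivially implies laminar), and \cref{thm:forwardlaminar} gives the bound. The lemma is already stated before the corollary, so your extra sketch is unnecessary; its outline (at most $2t+1$ leaf colour classes, with the NIT hypothesis ruling out exactly $2t+1$ by removing an innermost component) is how the paper proves it.
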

Before we prove \cref{lem:strongcoloring}, we need a structural claim about laminar $3$-graphs.

\begin{lemma}\label{lem:laminarcharacterization} If $H$ is a laminar $3$-graph with $t$ tight components $\{H_i\mid i\in [t]\}$, then there exists a laminar family $\cS=(S_{ij}\mid i\in [t],\ j\in [3])$ such that:
\begin{enumerate}[(1)]
\item For each $i\in [t]$, we have $E(H_i)\subseteq E(K_{S_{i1},S_{i2},S_{i3}}^{(3)})$; this requires $S_{i1},S_{i2},S_{i3}$ are pairwise disjoint.
\item For each non-leaf $S\in V(T_{\cS})$, $S$ has at most $3$ children, and $S$ is partitioned by its children.
\item If $S_{ij}$ and $S_{i'j'}$ are distinct leaves with the same parent, then $i=i'$.
\end{enumerate}
\end{lemma}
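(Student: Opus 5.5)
We would start from an arbitrary laminar family $\cS_0$ witnessing laminarity of $H$: for each tight component $H_i$ it contains disjoint sets $S_{i1},S_{i2},S_{i3}\in\cS_0$ with $E(H_i)\subseteq K^{(3)}_{S_{i1},S_{i2},S_{i3}}$. We then normalize it in stages, checking (1)--(3) at each stage.

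\emph{Pruning and shrinking.} First discard every set of $\cS_0$ not among the chosen $S_{ij}$; a subfamily of a laminar family is laminar, so (1) survives. Next replace each $S_{ij}$ by $S_{ij}\cap V(H_i)$, or better by the set of vertices of $H_i$ it actually uses. Shrinking can break laminarity, so the main case is the one where the shrunken sets stay laminar. We would keep this step only if it is needed for (3), and otherwise work with the pruned family directly.

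\emph{Making children partition the parent.} For (2), work in the containment tree $T_\cS$ with root $V$. For a node $S$ whose children $C_1,\dots,C_m$ do not cover $S$, add the leftover $S\setminus\bigcup C_k$ as an extra set, or absorb its vertices into existing leaves in a way that keeps (1). The leftover vertices lie in no chosen set below $S$, so they belong to no edge of any component whose triple sits inside $S$. Enlarging a leaf child to include them keeps the sets disjoint, and since the edge sets are unchanged, (1) still holds.

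To cap the number of children at three, we use that each component's three parts meet the children of their common ancestor $\Delta$ in only a few places. A node with more than three children is handled in one of two ways: group children that are never separated by a single triple into a new intermediate node, or merge children into one set when no component uses two of them. In a merge, a triple $S_{i1},S_{i2},S_{i3}$ whose parts were children of $S$ must stay pairwise disjoint. We therefore only merge children that never appear together in one component's triple. By inserting intermediate nodes, we arrange that at each node the children are exactly the parts of one component's triple (the ``top-level'' split), plus possibly a single extra block.

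\emph{Leaves with a common parent.} Property (3) comes out of the same normalization. If two leaves $S_{ij}$ and $S_{i'j'}$ with $i\neq i'$ share a parent, we insert an intermediate node containing one of them. Alternatively, since one of the two triples is not the top-level split at that parent, its parts should be nested deeper. We expect this rearrangement to be the main obstacle. Inserting nodes must not push a node's child count above three or break the partition property. The count we want is: after grouping, a node's children are the three parts of a single component $H_i$, and any other component whose triple lies under that node has all its parts inside one child or spanning several children as unions. A part that is a union is then an internal node, not a leaf.

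The first thing we would try is induction on $|V|$ or on $t$. Pick a component whose triple is maximal in the containment order, make its three parts together with the complement the children of the root, and recurse inside each child. Within a child, the components lying entirely inside it form a smaller laminar hypergraph. Components straddling children must have parts that are unions of children, and those parts become internal nodes, not leaves.
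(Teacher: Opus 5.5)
Your proposal does not reach a proof, for two reasons.

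\textbf{Extra sets are not allowed.} Your main moves add sets that are not among the $S_{ij}$: the leftover $S\setminus\bigcup_k C_k$ as an extra set, new intermediate ``grouping'' nodes, and the complement of a top-level triple as a child of the root. The lemma asks for a laminar family consisting of exactly the $3t$ sets $S_{ij}$. This is not cosmetic. In the proof of \cref{lem:strongcoloring}, the bound $\ell\le 2t+1$ on the number of leaves comes from $T_{\cS}$ having exactly $3t+1$ nodes, each with at most three children, and inserted nodes destroy that count. So the only admissible moves replace some $S_{ij}$ by a different set.

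\textbf{The three-child cap and (3) are not established.} You never show that a suitable pair of siblings to merge exists when a node has four or more children. The target shape (children equal to one component's triple plus one extra block) is asserted rather than derived, and you yourself flag (3) as the main obstacle. The induction also fails as stated. Take a maximal triple $A_1,A_2,A_3$ with complement $D$, and let $E(H_1)=\{abc\}$, $E(H_2)=\{a'd_1d_2\}$, with $S_{11}=A_1=\{a,a'\}$, $S_{21}=\{a'\}$ and all other parts singletons. Then $H_2$ has one part inside $A_1$ and two inside $D$. It neither lies inside one child nor has parts that are unions of children, so neither your recursion nor the root handles it. Shrinking the sets, as in your first step, also goes in the wrong direction.

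The missing idea is an extremal choice, which is how the paper argues. Take $(S_{ij})$ satisfying (1) with $\sum_{i,j}|S_{ij}|$ maximal, i.e.\ with the $S_{ij}$ inclusion-maximal. Then show that any violation of (2) or (3) lets you strictly enlarge a single $S_{ij}$ while keeping laminarity and (1).

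(a) If a non-leaf $S$ is not covered by its children, add the leftover to any child $S_{ij}$ (it need not be a leaf). The leftover meets no set strictly below $S$. The triple-mates of $S_{ij}$ cannot contain $S$, since they are disjoint from $S_{ij}\subseteq S$, so they lie inside other children or outside $S$.

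(b) If $S$ has $m\ge 4$ children, fix a child $S_{ij}$. Its two triple-mates lie inside at most two of the other $m-1\ge 3$ children, so some sibling $C'$ contains neither. Then $C'$ is disjoint from both, because a mate strictly containing $C'$ would contain $S$. You may therefore replace $S_{ij}$ by $S_{ij}\cup C'$.

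(c) If sibling leaves $S_{ij},S_{i'j'}$ have $i\ne i'$, replace $S_{ij}$ by $S_{ij}\cup S_{i'j'}$. A triple-mate of $S_{ij}$ meeting the leaf $S_{i'j'}$ would have to contain it, hence its parent, hence $S_{ij}$.

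Each move contradicts maximality, and none requires restructuring the tree or adding sets.
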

Note that $\cS$ may have repeated sets. By convention, if $S_{i_1j_1}=S_{i_2j_2}=\cdots = S_{i_kj_k}$ with $i_1<i_2<\cdots<i_k$, we define $T_{\cS}$ to have $k$ distinct vertices, where $S_{i_2j_2}$ is the only child of $S_{i_1j_1}$, and so on.

\begin{proof}
Choose $\cS$ satisfying (1) such that each $S_{ij}$ is inclusion maximal. By definition, there is some $\cS$ satisfying (1). Now we check that this satisfies properties (2) and (3). Suppose that $S\in \cS$ is a non-leaf with children $S_{i_1j_1},\ldots,S_{i_mj_m}$; without loss of generality let $i_1=j_1=1$. First, note that if $S$ is not partitioned by its children, then we could replace $S_{11}$ with $S_{11}\cup S\setminus \bigcup_{k=1}^mS_{i_kj_k}$, which maintains (1) and contradicts maximality. Next, suppose $m\geq 4$. By pigeonhole, there is some $k\in [2,m]$ such that $S_{12},S_{13}\not\subseteq S_{i_kj_k}$. Thus, we can replace $S_{11}$ with $S_{11}\cup S_{i_kj_k}$, which maintains (1) and contradicts maximality.

Finally, suppose that $S_{ij},S_{i'j'}$ are distinct leaves with the same parent, but $i\neq i'$. Then we can replace $S_{ij}$ with $S_{ij}\cup S_{i'j'}$, which maintains (1) and contradicts maximality.
\end{proof}

\begin{proof}[Proof of \cref{lem:strongcoloring}]
Let $H$ be a laminar 3-graph. Let $\cS$ be the family from \cref{lem:laminarcharacterization}. Say that $H$ is $\cS$-IT if there is an iterated $3$-coloring of $H$ which is constant on each leaf of $\cS$; otherwise $H$ is $\cS$-NIT. We prove the claim for any $H$ which is $\cS$-NIT, which is more general.

Now, $T_{\cS}$ is a rooted tree with $3t+1$ vertices such that every vertex has at most $3$ children. From this, it is easy to check that $T_\cS$ has $\ell\leq 2t+1$ leaves; label these $L_1,\ldots,L_\ell$. These leaves partition $V$; let $\chi$ be the $\ell$-coloring with classes $L_1,\ldots, L_\ell$.

We check that $\chi$ is a proper coloring of $H$. Suppose $xyz\in E(H_i)$ with $x\in S_{i1},y\in S_{i2},z\in S_{i3}$, where $\{S_{ij}\}_j$ are pairwise disjoint. Since no leaf set intersects multiple $S_{ij}$'s, the vertices $x,y,z$ receive distinct colors.

Finally, we show that $\ell\neq 2t+1$. Let $H$ be a minimal counterexample with $\ell=2t+1$, and observe that every non-leaf of $T_{\cS}$ must have $3$ children. In particular, there is some maximum depth non-leaf $S\in \cS$ which has three children, all of which are leaves. Then by condition (3), these children are $S_{i1},S_{i2},S_{i3}$ for some $i\in [t]$. Let $H'=H-H_i$, and $\cS'=\cS-\{S_{i1},S_{i2},S_{i3}\}$, so $\cS'$ has $2(t-1)+1$ leaves. If $H'$ were $\cS'$-IT, then there is an iterated coloring which is constant on $S$. Then extending this coloring to $S_{i1},S_{i2},S_{i3}$, also $H$ is $\cS$-IT, a contradiction. Then $H'$ is $\cS'$-NIT, contradicting minimality.
\end{proof}

\subsection{The chain}\label{sec:chain}

In this section, we deduce Theorems~\ref{thm:girth} and \ref{thm:chainclassification} from \cref{thm:forwardlaminar}. The following lemma reduces the full classification of chains to our previous technical theorems.

\begin{lemma}\label{lem:chainclassification}For $t\geq 3$, let $\rm{CH}_t$ be the chain of length $t$.
\begin{enumerate}[(I)]
\item\label{itm:chainnotlaminar} The chain $\rm{CH}_t$ is not forward laminar.
\item\label{itm:chainnoquotient} The chain minus an edge $\rm{CH}_t^-$ does not admit a nontrivial tight quotient.
\item\label{itm:chaindoubling} The chain minus an edge $\rm{CH}_t^-$ can be obtained from doubling a pair in some iterated tripartite 3-graph.
\item\label{itm:chainitertrip} The chain minus any two edges $\rm{CH}_t^{--}$ is iterated tripartite.
\end{enumerate}
\end{lemma}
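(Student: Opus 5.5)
The four parts are essentially independent, so I would prove them one at a time. Throughout, the $i$-th tight component of $\rm{CH}_t$ is $\{a_ib_ia_{i+1},a_ib_ib_{i+1}\}$, and any tripartition supporting it must put $a_i$ and $b_i$ in two different parts and both $a_{i+1},b_{i+1}$ in the third part.

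For \ref{itm:chainnotlaminar}, suppose $\cS$ with an ordered binary containment tree witnesses forward laminarity. Let $S_1<S_2<S_3$ be the forward triple of component $i$, and put $D_i=\Delta_\cS(a_i,b_i)$.
\begin{itemize}
\item If $\{a_{i+1},b_{i+1}\}\subseteq S_3$, then $D_i\subseteq \Delta(S_1,S_2)$. In a binary tree, $\Delta(S_1,S_2)$ and $S_3$ lie in different subtrees of $\Delta(S_2,S_3)$, so $D_i$ and $D_{i+1}\subseteq S_3$ are disjoint, and $D_i$ precedes $D_{i+1}$ in the tree order.
\item If instead $a_{i+1},b_{i+1}$ lie in $S_1$ or $S_2$, then $D_{i+1}\subseteq \Delta(S_1,S_2)\subsetneq \Delta(S_2,S_3)=D_i$.
\end{itemize}
In both cases $D_{i+1}$ comes strictly after $D_i$ in the preorder of the ordered tree, since a proper descendant or a set to the right comes later. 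Going once around the cycle $i\mapsto i+1 \pmod t$ then gives a contradiction. The delicate point is to make precise that ``forward'' places $\Delta(S_1,S_2)$ to the left of $S_3$ in the ordered tree; I expect this to follow from $S_1<S_2<S_3$ together with the definition of the ordering.

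For \ref{itm:chainnoquotient}, delete $a_0b_0b_1$ without loss of generality, and let $H/\cP$ be a tight proper quotient.
\begin{enumerate}[(a)]
\item Component $t-1$ has two edges on the pair $a_{t-1}b_{t-1}$, so tightness gives $[a_0]=[b_0]$.
\item Properness of $a_0b_0a_1$ then forces $[a_0]=[b_0]=[a_1]$.
\item If component $1$ is not collapsed, then $[a_1],[b_1],[a_2]=[b_2]$ are distinct. Each later component $2,\dots,t-1$ then contains an edge with two vertices in one class, so it collapses. Propagating (this uses $t\ge 3$) gives $[a_0]=[a_2]$, contradicting $[a_0]=[a_1]\ne[a_2]$.
\item Hence component $1$ collapses, and the same propagation collapses everything, so the quotient is trivial.
\end{enumerate}

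For \ref{itm:chaindoubling}, note that $a_0$ and $b_0$ have identical links apart from the edge $a_0b_0a_1$. So $\rm{CH}_t^-=H[a_0,a_1]$, where $H$ is obtained from $\rm{CH}_t^-$ by deleting $b_0$ and the edge $a_0b_0a_1$; in $H$ the vertices $a_0,a_1$ share no edge. This $H$ is a ``tight path of components'' $1,2,\dots,t-2$ followed by the single edge $a_{t-1}b_{t-1}a_0$. It is iterated tripartite by peeling: use the top-level coloring $a_1\mapsto1$, $b_1\mapsto2$, all other vertices $\mapsto3$, which makes every edge outside component $1$ monochromatic, and then recurse on class $3$.

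For \ref{itm:chainitertrip}, suppose first that both deleted edges lie in one component. Then the remaining graph is a tight path of components, and the peeling argument of \ref{itm:chaindoubling} applies directly. Otherwise components $i\ne j$ each keep a single edge. Here I would look for a nonconstant top-level coloring that is rainbow on one full component $k$, namely $a_k\mapsto1$, $b_k\mapsto2$, $\{a_{k+1},b_{k+1}\}\mapsto3$. Monochromatic classes propagate forward from component $k+1$ and backward from $a_k$ or $b_k$. The two single-edge components are exactly where a rainbow edge absorbs a colour change, because such an edge $a_mb_mx$ contains only one of $a_{m+1},b_{m+1}$. A concrete check for $t=3$ with the edges $a_0b_0b_1$ and $a_1b_1b_2$ deleted: the coloring $a_2\mapsto1$, $b_2\mapsto2$, $a_0,b_0,a_1\mapsto3$, $b_1\mapsto2$ works.

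After the top level, each colour class induces a tight path of components, which is iterated tripartite by the peeling argument. The main obstacle is the case analysis choosing $k$ and the colours for general positions $i,j$ and the choice of which edge survives in each. I would organize it by taking $k=j+1$, letting $b_{j+1}$'s side absorb the break at $j$ and the single edge at $i$ absorb the other, and then checking the few subcases up to the symmetry $a_m\leftrightarrow b_m$.
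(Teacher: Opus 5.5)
The only genuine gap is the second case of (IV), where the two deleted edges lie in different components. You verify one instance with $t=3$ and sketch a strategy. The coloring for general positions is the whole content of this case, and it is never written down or checked, so (IV) is not yet proved.

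The missing step is to cut the cycle at the two defective components.
\begin{itemize}
\item By rotation and the swaps $a_m\leftrightarrow b_m$, assume the deleted edges are $a_{t-1}b_{t-1}b_0$ and $a_{j-1}b_{j-1}b_j$ with $1\le j\le t-1$.
\item Then $\mathrm{CH}_t^{--}$ is the union of two linear chains, on $\{a_0,b_0,\dots,a_{j-1},b_{j-1},a_j\}$ and on $\{a_j,b_j,\dots,a_{t-1},b_{t-1},a_0\}$. They meet only in $a_0$ and $a_j$.
\item Your peeling argument shows each chain is iterated tripartite with its two endpoints in different top-level classes. So the two colorings can be aligned on $a_0,a_j$ and glued; this is how the paper argues.
\end{itemize}
Explicitly, take the top-level coloring $a_0\mapsto 1$, $b_0,b_j\mapsto 2$, $\{a_1,b_1,\dots,a_{j-1},b_{j-1},a_j\}\mapsto 3$, $\{a_{j+1},b_{j+1},\dots,a_{t-1},b_{t-1}\}\mapsto 1$. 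It makes every remaining edge rainbow or monochromatic, including when $j=1$ or $j=t-1$. Its classes induce two (possibly trivial) linear chains and the edgeless pair $\{b_0,b_j\}$. Your $k=j+1$ plan is compatible with this coloring. It still has to be carried out, including the choice of which defective component plays the role of $j$ so that the next component is complete.

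Parts (II), (III) and the one-component case of (IV) are correct and follow the paper. In (II) the split on whether component $1$ collapses is unnecessary: $[a_2]=[b_2]$ alone collapses components $2,\dots,t-1$. This gives $[a_0]=[a_2]$ unconditionally, and then $a_1b_1a_2$ is monochromatic.

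Your proof of (I) is also correct but takes a different route. The paper fixes an index with $A_0<B_0<S_1$ and inducts to keep every $S_i$ in the right subtree of $\Delta(B_0,S_1)$, reaching a contradiction at $S_t=S_0$. You instead show that the preorder position of $D_i=\Delta(a_i,b_i)$ strictly increases with $i$, which is impossible around a cycle. Your monovariant needs no choice of starting index and handles every relative position of consecutive components uniformly. The paper's inductive step, by contrast, does not explicitly treat $S_{i+1}$ lying entirely outside $\Delta(B_0,S_1)$.

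The point you call delicate is immediate. If $\Delta(S_1,S_2)=\Delta(S_2,S_3)$, then $S_1$ and $S_3$ lie in the same child of this binary node, so $S_2$ cannot lie between them in the order. Hence forwardness forces $\Delta(S_1,S_2)\subsetneq\Delta(S_2,S_3)$. This places $\Delta(S_1,S_2)$ inside the child containing $S_2$, which precedes the child containing $S_3$ because $S_2<S_3$.
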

Before proving these, we use this lemma to conclude the trichotomy by appealing to several results of this paper.

\begin{proof}[Proof of \cref{thm:chainclassification}]
Equation \eqref{eq:chainthm} follows from (I) and \cref{thm:forwardlaminar}. The lower bound for Equation \eqref{eq:chain-thm} follows from (II) and \cref{lem:quotient}. The upper bound for Equation \eqref{eq:chain-thm} follows from (III) and \cref{cor:tridoubling}. Finally, Equation \eqref{eq:chain--thm} follows from (IV) and the easy direction of \cref{conj:iteratedtripartite}.
\end{proof}

\begin{figure}
    \centering
\input{tikz_diagrams/chainlaminar}
\caption{This figure demonstrates that the chain $\rm{CH}_3$ is \textit{laminar}, even though it is not forward laminar. The components of the chain are tripartite, with partitions $\{a_0\},\{b_0\},\{a_1,b_1\}$ in blue, $\{a_1\},\{b_1\},\{a_2,b_2\}$ in red, and $\{a_2\},\{b_2\},\{a_0,b_0\}$ in green.
}
\label{fig:placeholder}
\end{figure}

Now we prove \cref{lem:chainclassification} one piece at a time.

\begin{proof}[Proof of (I)]
Suppose $\cF$ is a laminar family for $\rm{CH}_t$ with some ordered tree $T_{\cF}$. Then $\cF$ must contain sets $\{A_i,B_i,S_i\mid i\in [t]\}$ such that $A_iB_iS_{i+1}$ is a forward triple for all $i$ (in some order) and $A_i,B_i\subseteq S_i$. Up to symmetry, we can label the sets such that $A_i<B_i$ for each $i$. It cannot hold that $B_{i+1}<B_i$ for all $i$ modulo $t$, so there must be some $i$ (say $i=0$) where $A_0<B_0<S_{1}$. We claim by induction that for all $i$, $S_i$ is contained in the right (greater) subtree of $\Delta(B_0,S_1)$. For the base case we take $i=1$. Suppose that $S_i$ is contained in the right subtree of $\Delta(B_0,S_1)$. If $S_{i+1}$ is not, then we have $S_{i+1}<A_i<B_i$ and 
\[\Delta(S_{i+1},A_i)\supseteq \Delta(B_0,S_1)\supseteq \Delta(A_i,B_i),\]
contradicting that $S_{i+1}<A_i,B_i$ is forward. Finally, this means that $S_t=S_0$ is contained in the right subtree of $\Delta(B_0,S_1)$, a contradiction.
\end{proof}

\begin{proof}[Proof of (II)] Up to symmetry, let $\rm{CH}^-_t=\rm{CH}_t-a_{t-2}b_{t-2}b_{t-1}$. Let $\rm{CH}/\cP$ be a tight quotient (refer to \cref{defn:quotient}). For all $0\leq i\leq t-2$, since $\cP$ is tight $a_i,b_i$ must lie in the same part of $\cP$. Since $\cP$ is proper, for all $0\leq i\leq t-3$, the edges $a_ib_ia_{i+1},a_ib_ib_{i+1}$ imply that $a_i,b_i,a_{i+1},b_{i+1}$ must lie in the same part $P$ of $\cP$. Furthermore, the edge $a_{t-2}b_{t-2}a_{t-1}$ implies that also $a_{t-1}\in P$. Finally, the edge $a_{t-1}b_{t-1}a_0$ implies that $b_{t-1}\in P$, so $\cP=\{P\}$ is trivial.
\end{proof}

\begin{proof}[Proof of (III)]
Up to symmetry, let $\rm{CH}_t^-=\rm{CH}_t-a_{t-1}b_{t-1}b_0$. Define the linear chain 
\[\rm{LCh}_t=\rm{CH}_{t}-a_{t-1}b_{t-1}a_{0}-a_{t-1}b_{t-1}b_0-b_{t-1}.\]
This 3-graph forms a path-like shape instead of a cyclic one. We observe that the doubling $H[a_{t-1},a_0]$ duplicates the vertex $a_{t-1}$ into $a_{t-1},b_{t-1}$, and adds the edge $a_{t-1}b_{t-1}a_0$, giving exactly $\rm{CH}_t^-$. The fact that $H$ is iterated tripartite follows from the first case of (IV).
\end{proof}

\begin{proof}[Proof of (IV)]
There are two main cases. Suppose we delete a tight component, having $\rm{CH}_t^{--}=\rm{CH}_t-a_{t-1}b_{t-1}a_0-a_{t-1}b_{t-1}b_0$. In this case, we show $\rm{CH}_t^{--}$ is iterated tripartite by induction on $t$, for all $t\geq 2$. If $t=2$ then $\rm{CH}_t^{--}$ is tripartite. For the inductive case, define coloring $\chi$ by $\chi(a_0)=1$, $\chi(b_0)=2$, and $\chi(V-a_0-b_0)=3$. The induced subgraph on $S_3$ is precisely $\rm{CH}_{t-1}^{--}$, which is iterated tripartite by induction. Thus $\rm{CH}_{t}^{--}$ is iterated tripartite. Since $\rm{LCh}_t$ is a subgraph, this is also iterated tripartite. Note that the ends $a_0$,$a_1$ and $a_{t-1}$ have different colors.

In the other case, up to symmetry we have $\rm{CH}_t^{--}=\rm{CH}_t-a_{t-1}b_{t-1}b_0-a_{j-1}b_{j-1}b_j$ for some $j\in [t-1]$. Then $\rm{CH}_t^{--}$ is composed of two subgraphs:
\begin{align*}
H^{(1)} \coloneqq \rm{CH}_t^{--}[\{a_0,b_0,\ldots, a_{j-1}b_{j-1}a_j\}]\\
H^{(2)} \coloneqq \rm{CH}_t^{--}[\{a_j,b_j,\ldots, a_{t-1}b_{t-1}a_0\}].
\end{align*}
Observe that $H^{(1)}$ and $H^{(2)}$ are isomorphic to $\rm{LCh}_{j+1}$ and $\rm{LCh}_{t-j-1}$, respectively and $V(H^{(1)})\cap V(H^{(2)})=\{a_0,a_j\}$. By the first case, each of these is iterated tripartite where the ends have distinct colors. Form a proper iterated coloring of $\rm{CH}^{--}$ by coloring each of $H^{(1)},H^{(2)}$ with $\chi_1,\chi_2$ ensuring that $\chi_1(a_0)=\chi_2(a_0)$ and $\chi_1(a_j)=\chi_2(a_j)$.
\end{proof}

\subsection{The two-sided bicone}\label{sec:twobicone}

In this section we prove \cref{thm:SnvsKn}. Let $X=\{a,c\}$ and $Y=\{b,d,e\}$ and let $H'$ be a $3$-graph with $V(H')=X\cup Y$ and $E(H')=\{abc,cde\}$. Then let
\[
H=\rm{Bicone}(H';X,Y),
\]
as given by \cref{defn:bicone}. Then $H$ has $8$ vertices $\{x,y,z,a,b,c,d,e\}$ and $8$ edges. We prove the following:
\begin{lemma}\label{lem:twobicone} Let $H$ be defined as above.
\begin{enumerate}[(I)]
\item $H$ is doubleable.
\item $H$ is not forward laminar.
\end{enumerate}
\end{lemma}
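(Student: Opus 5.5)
For part (I) we exhibit an explicit sequence of doublings, vertex duplications and subgraph deletions, modelled on the proof of \cref{claimbiconepreservesfwd}. Part (II) is a laminar-family argument in the style of the proof of \cref{lem:chainclassification}(I).

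\textbf{Part (I).} Recall that duplicating a vertex preserves doubleability (proof of \cref{prop:doubleableiteratedblow-ups}).
\begin{enumerate}
\item Start from the independent $3$-graph on $\{x,y\}$ and double $(x,y)$. The new copy of $x$ is called $z$, so we obtain the single edge $xyz$.
\item Duplicate $x$ into $c$, and duplicate $y$ into $b$ and $d$. This gives every edge $z\,s\,t$ with $s\in\{x,c\}$ and $t\in\{y,b,d\}$.
\item Double the pair $(c,b)$. This first deletes the common edge $zcb$, then creates a twin $a$ of $c$ carrying all remaining edges of $c$, and adds the edge $acb$. We now have $abc$, $zya$ and $zyc$.
\item Double the pair $(d,c)$. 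This deletes $zcd$, creates a twin $e$ of $d$ (in particular with edge $zxe$), and adds the edge $cde$.
\item Every edge of $H$ is now present, namely $xyz$, $zxb$, $zxd$, $zxe$, $zya$, $zyc$, $abc$ and $cde$. Passing to the subgraph with exactly these edges gives $H$.
\end{enumerate}
The only thing to check is that no required edge is destroyed by the ``delete common edges'' rule. The edges deleted in steps 3 and 4 are $zcb$ and $zcd$, and neither is an edge of $H$.

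\textbf{Part (II).} The six edges $xyz,zxb,zxd,zxe,zya,zyc$ form one tight component $H_0$: consecutive edges share the pair $zx$ or the pair $zy$. Suppose $\cS$ with an ordered tree $T_\cS$ witnesses that $H$ is forward laminar. Then $\cS$ contains pairwise disjoint sets
\[
S_z\ni z,\qquad S_X\supseteq\{x,a,c\},\qquad S_Y\supseteq\{y,b,d,e\}
\]
with $E(H_0)\subseteq K^{(3)}_{S_z,S_X,S_Y}$. In particular $\Delta(S_X,S_Y)$ contains both $S_X$ and $S_Y$.

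Let $T_a,T_b,T_c$ be the triple assigned to the component $\{abc\}$. The sets $T_a$ and $T_c$ are disjoint and both meet $S_X$. By laminarity each is either contained in $S_X$ or contains $S_X$. Neither can contain $S_X$, since then it would meet the other, so $T_a,T_c\subseteq S_X$. Similarly $T_b$ meets $S_Y$ and is disjoint from $T_a$, so it cannot contain $S_X$. Hence $T_b\cap S_X=\emptyset$.

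It follows that $\Delta(T_a,T_c)\subseteq S_X$, while both $\Delta(T_a,T_b)$ and $\Delta(T_c,T_b)$ contain $\Delta(S_X,S_Y)\supsetneq S_X$. A triple $S_1<S_2<S_3$ is forward only if $\Delta(S_1,S_2)\subseteq\Delta(S_2,S_3)$. So the first two sets must be the pair with the deeper meet, namely $T_a$ and $T_c$, and $T_b$ must be the largest in the order. Since the order on disjoint sets in an ordered tree is inherited from the branch at their meet, this forces $S_X<S_Y$.

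The symmetric argument applies to the component $\{cde\}$. It gives $T_d,T_e\subseteq S_Y$ and $T_c'\cap S_Y=\emptyset$, so $T_c'$ must come last and $S_Y<S_X$. This contradicts $S_X<S_Y$.

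The main obstacle is the bookkeeping in this step: making rigorous that $T_b$ cannot straddle $S_X$, and that ordered-tree comparisons of disjoint sets agree with those of their ancestors. We handle these with the laminarity case split above, where each set is either nested inside or disjoint from each of $S_X$ and $S_Y$.
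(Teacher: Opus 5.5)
Your proposal is correct and takes essentially the paper's route. For (I) you give a self-contained doubling sequence, whereas the paper starts from the forward-doubleable iterated bicone $H_2=H-d-e$, duplicates $b$ into $d$, deletes $adc$ and doubles $(d,c)$; for (II) your argument matches the paper's, showing that whichever of $S_X,S_Y$ comes first, the component $abc$ or $cde$ cannot sit on a forward triple. One minor imprecision: the forward condition alone only excludes $T_b$ from being first, since with $T_b$ in the middle the two meets $\Delta(T_a,T_b)=\Delta(T_c,T_b)$ coincide; that is all you need, because then some set inside $S_X$ precedes $T_b$ and order inheritance already gives $S_X<S_Y$.
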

Then \cref{thm:SnvsKn} is immediate, as \cref{thm:doublingvsSnn} and (I) imply the upper bound and \cref{thm:forwardlaminar} and (II) imply the lower bound.

\begin{proof} For (I), note that the iterated bicone $H_2=H-d-e$ (\cref{defn:iteratedbicone}) is forward-doubleable. By \cref{prop:doubleableiteratedblow-ups}, we may duplicate vertex $b$ into $b,d$ and delete the edge $adc$ to obtain $H-e$. Finally, double the pair $(d,c)$ into the edge $cde$, which forms a copy of $H$.

For (II), suppose $H$ is forward laminar with a family $\cS$ and some ordered binary containment tree $T_\cS$. Note that $H$ has three tight components. The largest has parts $\{z\},X\cup\{x\},$ and $Y\cup \{y\}$. Since these parts partition $V(H)$, these must appear exactly in $\cS$. Let $S\in \cS$ be the lowest common ancestor of $X'=X\cup \{x\}$ and $Y'=Y\cup \{y\}$ in $\cS$. In the order of $T_\cS$, assume that $Y'<X'$. For the tight component $\{abc\}$, there is a forward triple of disjoint sets $S_a,S_b,S_c\in \cS$ in some order such that $i\in S_i$. We must have $S_a,S_c\subseteq X'$, and thus $X'\not\subseteq S_b$. Thus $S_a,S_c$ are in the right subtree of $S$ and $S_b$ is in the left subtree of $S$. This contradicts that $S_a,S_b,S_c$ is forward.

If instead $X'<Y'$ in the order of $T_\cS$, the same argument applies to the component $\{cde\}$.
\end{proof}

\section{Concluding remarks}

Our results still leave many gaps in the landscape. In particular, we highlight the following.

\begin{problem}For the Fano plane $F$, is $r(F,n)$ polynomial or quasipolynomial?
\end{problem}

If the Fano plane minus one edge is quasipolynomial, then so are all NIT 3-graphs with six tight components. We would be interested in the following generalization of \cref{thm:fanoplane}.

\begin{problem}\label{prob:linearquasi} Is it true that if $H$ is a linear $3$-graph, then $r(H,n)$ is at most quasipolynomial?
\end{problem}

If true, the quasipolynomials can be arbitrarily large, as shown in \cite{linearlowerbound}. Observe that every linear $3$-graph is forward laminar, using the trivial construction of an increasing binary tree. That is, if $V(H)=[k]$ in order then define the family $\cF=\{\{1\},\{2\},\{3\},\ldots,\{k\}\}\cup \{[2],[3],\ldots,[k]\}$. Then every triple is forward, and every component is a single edge so is covered by some triple of singletons. Thus, our lower bound tools from \cref{sec:explower} are not able to disprove \cref{prob:linearquasi}.

A more general problem, analogous to the polynomial-to-exponential threshold proposed in \cite{erdoshajnal} and proven in \cite{polyexp}, is the following.
\begin{problem}
Which of the following two alternatives holds?
\begin{itemize}
\item There is a quasipolynomial-to-exponential transition for off diagonal $3$-graph Ramsey numbers.
\item There is a $3$-graph $H$ which \emph{is} forward laminar, \emph{is not} forward doubleable, and such that
\[
\log^{\omega(1)}n\leq \log r(H,n)\leq n^{o(1)}.
\]
\end{itemize}
\end{problem}

The gap between the forward doubleable class and forward laminar class is significant, but our remark after \cref{defn:fwddoubling} suggests some room for improvement. Both classes capture the asymmetry of the apparent threshold; \cref{thm:SnvsKn} (\cref{lem:twobicone}) shows that doubleable is not enough. 

The following proposition simplifies a generalization of \cref{conj:iteratedtripartite} which is naturally suggested by \cref{thm:fivetightcomponents}.

\begin{proposition}\label{conj:girth} Let $\cH_t$ be the set of NIT 3-graphs with at most $t$ tight components. The following conjectures are equivalent.
\begin{itemize}
\item For all finite $t$, $r(\cH_t,n)=n^{\omega(1)}$.
\item For any finite family $\cF$ of NIT $3$-graphs, $r(\cF,n)=n^{\omega(1)}$.
\end{itemize}
\end{proposition}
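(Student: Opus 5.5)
The plan is to prove the two implications separately; one is trivial and the other follows from the disjoint-union trick.

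For the second statement implying the first, fix $t$. The family $\cH_t$ is infinite, because isolated vertices can be added freely. However, $r(\cH_t,n)=r(\cH_t^\minn,n)$, and every subgraph-minimal NIT $3$-graph has no isolated vertices. Each tight component of such a $3$-graph is a tightly connected $3$-graph, and $t$ of them have bounded size only if each component is bounded. That is not automatic, since a single tight component can be arbitrarily large. So I would not reduce to a finite subfamily. Instead I would use monotonicity: any finite subfamily $\cF\subseteq\cH_t$ satisfies $r(\cH_t,n)\le r(\cF,n)$, which is the wrong direction for a lower bound.

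The real content is therefore the other direction, and I would argue it as follows. Suppose the first statement holds and let $\cF=\{F_1,\dots,F_m\}$ be a finite family of NIT $3$-graphs. Let $t$ be the maximum number of tight components of the $F_i$. Then $\cF\subseteq\cH_t$, so every graph avoiding $\cH_t$ avoids $\cF$. Hence
\[
r(\cF,n)\ge r(\cH_t,n)=n^{\omega(1)}.
\]

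For the converse, assume the second statement and fix $t$. Suppose for contradiction that $r(\cH_t,n)\le n^{C}$ for infinitely many $n$. I would first replace $\cH_t$ by a finite family whose set Ramsey number is no larger. The natural tool is \cref{thm:reductiontolinear} in its restated form. Its proof shows that if $r(\cL_t,n)=n^{\omega(1)}$ then $r(\cH_t,n)=n^{\omega(1)}$, where $\cL_t$ is the family of linear NIT $3$-graphs with at most $t$ edges. The family $\cL_t$ is finite up to isomorphism once isolated vertices are removed, which is harmless since $r(\cL_t,n)=r(\cL_t^\minn,n)$ and minimal members have at most $3t$ vertices. Applying the second statement to the finite family $\cL_t^\minn$ gives $r(\cL_t,n)=n^{\omega(1)}$.

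The one step needing care is quantifiers. The second statement gives superpolynomial growth for all large $n$, whereas \cref{thm:reductiontolinear} as restated speaks of ``infinitely many $n$''. I would check that the recursion through \cref{lem:quotient} transfers $n^{\omega(1)}$ for all large $n$ at each of the $3t$ stages, since every stage is a monotone explicit bound. Combined with the reverse inclusion $r(\cF,n)\ge r(\cH_t,n)$ above, this establishes the equivalence. I expect this quantifier bookkeeping to be the only real obstacle.
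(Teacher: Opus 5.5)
Your argument for (1)$\Rightarrow$(2) is correct, though this is the trivial direction rather than the ``real content''. The trouble is (2)$\Rightarrow$(1). There everything rests on rerunning the $3t$-stage recursion from the proof of \cref{thm:reductiontolinear}.

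Beyond the quantifier mismatch you flagged, that recursion does not close as written: the containment $\cF_{3t}\subseteq\cL_t$ fails. A linear minimal NIT member survives every stage through the identity quotient. Every proper quotient of a linear $3$-graph is automatically tight, so at the last stage it can be collapsed to a non-linear graph. For example, $F^-\in\cH_6$ lies in $\cF_{17}'$. Identify the vertices $2,5$, which share no edge, into a class $X$. This gives a member of $\cF_{18}$ with edges $1X3,14X,167,X47,346,3X7$. Its linear subgraphs have at most four edges, so by \cref{prop:5edgetripartite} it contains no member of $\cL_6$. Hence the comparison $r(\cF_{3t},n)\geq r(\cL_t,n)$ is unsupported, and ``checking the recursion'' would hit a real hole, not just bookkeeping. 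It can be patched, e.g.\ by replacing quotients of linear members with the members themselves at polynomial cost via \cref{lem:quotientpreservesNIT} and \cref{cor:setramseyblow-up}. Your plan does not do this.

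More importantly, none of this is needed. The idea you are missing is the one your first paragraph circles around: $\cH_t^{\minn}$ is infinite, but its family $\cF$ of nontrivial tight quotients is not.
\begin{itemize}
\item Each such quotient has at most $t$ edges, one per tight component.
\item It has no isolated vertices, since minimal NIT graphs are connected, and so it has at most $3t$ vertices. Thus $\cF$ is finite up to isomorphism.
\item Each member is NIT by \cref{lem:quotientpreservesNIT}.
\end{itemize}
Hypothesis (2) therefore applies to $\cF$ directly. One application of \cref{lem:quotient} gives $r(\cH_t,m)=r(\cH_t^{\minn},m)=m^{\omega(1)}$, which is essentially the paper's proof.

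In this one-step form the quantifiers are harmless. For fixed $K$ and large $m$, take $n=\lceil m^{1/K}\rceil$, so that $r(\cF,n)>n^K\geq m$. Then \cref{lem:quotient} yields $r(\cH_t,m)\geq m^{K/36-o(1)}$.
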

Note that $\cH_t$ (even just $\cH_1$) is an infinite family of $3$-graphs of unbounded size. The latter form seems plausible.
\begin{proof}
The first item trivially implies the second, since every finite family is contained in some $\cH_t$. For the opposite direction, recall that $r(\cH_t,n)=r(\cH_t^\minn,n)$. If $\cF$ is the set of nontrivial proper quotients of $\min \cH_t$, then $\cF$ is a family of $3$-graphs with at most $t$ edges and no isolated vertices. In particular, $\cF$ is finite. By \cref{lem:quotientpreservesNIT} each $F\in \cF$ is NIT. By \cref{lem:quotient} if $r(\cF,n)=n^{\omega(1)}$, then $r(\cH_t^\minn,n)=n^{\omega(1)}$, so the second item implies the first. 
\end{proof}

For higher uniformities, there is an analogous statement to \cref{conj:iteratedtripartite}, noted in \cite{CFGHMSVY}. Say a $k$-graph is iterated $k$-partite if it lies in the iterated blow-up of a $k$-edge. If $H$ is iterated $k$-partite then it is simple to show $r(H,K_n^{(k)})=n^{O(1)}$, and the converse of this is open. We present evidence for this generalization by showing it is true for graphs with fewer than $f(k)$ many 2-tight components, where $f(k)$ is the minimum number of edges in a linear $k$-graph which is not iterated $k$-partite. The following result relates to Theorems~\ref{thm:4tightcomponents} and \ref{thm:fivetightcomponents}.
\begin{theorem}\label{thm:f(k)components} For $1\leq t<f(k)$, let $\cH_t^{(k)}$ be the family of non-iterated $k$-partite $k$-graphs with at most $t$ many $2$-tight components. We have
\begin{align}
r(\cH_1^{(k)},n)&\geq \exp(\Omega(n^{2/k}))\tag{\cite[Thm.\ 4.1]{CFGHMSVY}}\\
r(\cH_t^{(k)},n)&\geq \exp(\Omega(\log^{1+\frac 1{t-1}}n))\quad\text{ for all $2\leq t<f(k)$.}\notag
\end{align}
\end{theorem}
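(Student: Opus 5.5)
We mirror the 3-uniform argument of \cref{thm:fivetightcomponents}, via a $k$-uniform version of the quotient lemma and induction on $t$. The $t=1$ bound is quoted from \cite{CFGHMSVY}; in fact we only use it as a superpolynomial base case.

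\textbf{Step 1: the $k$-uniform quotient lemma.} Define quotients $H/\cP$ as in \cref{defn:quotient}. A quotient is proper if every edge is either inside one part or rainbow. It is tight if edges sharing $k-1$ vertices have the same multiset of parts, so each $2$-tight component collapses to a single edge. We then prove the analogue of \cref{lem:quotient} using the sparsified iterated blow-up $\cG_d(R)$ of a $k$-graph $R$ on $[r]$. Here $\phi:\binom V{k-1}\to[r]$ is uniformly random, and an $\ell$-level $k$-set $e$ is kept iff it is an edge of $R^d$ and $\phi(e\setminus\{v\})=v_\ell$ for every $v\in e$. The embedding argument of \cref{lem:G(R)tightminor} goes through verbatim: edges sharing a $(k-1)$-set lie at the same level, and $\phi$ determines the last coordinate. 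So any copy of $H$ yields a nontrivial tight quotient of $H$ inside $R$.

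For independence we follow \cref{lem:G(r)indset}. If $\alpha(R)<n$, a set of size $m\gg n^d$ contains $\gtrsim m^k/n^{(k-1)d}$ edges of $R^d$. From these we extract about $m^{k-1}/n^{(k-1)d}$ edges that pairwise share fewer than $k-1$ vertices. Each is kept with probability $r^{-k}$, independently across such edges. A union bound then gives $\alpha(\cG_d(R))\le n^{O(d)}r^{O(1)}$. Choosing $d=\lceil\log r/\log n\rceil$ yields $r(\cH,r^{C_k})\ge\exp(\log^2 r/\log n)$, i.e.\ $r(\cH,r)\ge \exp(c_k\log^2 r/\log n)$ whenever the family $\cF$ of nontrivial tight quotients satisfies $r(\cF,n)>r$. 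The lemma that minimal non-iterated-$k$-partite graphs have non-iterated-$k$-partite proper quotients (\cref{lem:quotientpreservesNIT}) also transfers verbatim.

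\textbf{Step 2: the recursion.} Pass to $\cH_t^{(k),\minn}$. Every nontrivial tight quotient $F$ is non-iterated $k$-partite and has at most $t<f(k)$ edges, so by definition of $f(k)$ it is not linear. Hence it has two edges sharing at least two vertices, and those edges lie in one $2$-tight component. (This presumes $2$-tight means consecutive edges share at least two vertices.) Therefore $F$ has at most $t-1$ $2$-tight components, so $F\in\cH_{t-1}^{(k)}$ and $r(\cF,n)\ge r(\cH_{t-1}^{(k)},n)$.

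For $t=2$, this gives $r(\cF,n)\ge r(\cH_1^{(k)},n)\ge \exp(n^{\Omega(1)})$. Then $\log n=O(\log\log r)$, which yields $\exp(\Omega(\log^2 r/\log\log r))$, already stronger than $\log^2$ up to the $\log\log$ factor. A cleaner route for $t=2$ is the analogue of \cref{cor:noquotient}: any nontrivial tight quotient would have at most one $2$-tight component, hence be a single $2$-tight component. Alternatively, since $f(k)\ge2$ we can use the empty family directly: a quotient with at most $2$ edges is linear or is iterated $k$-partite, and we should check that two edges sharing vertices are iterated $k$-partite. Either way we get $\exp(\Omega(\log^2n))=\exp(\Omega(\log^{1+1/(t-1)}n))$.

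For $t\ge3$, induction gives $r(\cF,n)\ge r=\exp(\Omega(\log^{1+1/(t-2)}n))$. Inverting, $\log n=O(\log^{(t-2)/(t-1)}r)$, and the quotient lemma yields $\exp(\Omega(\log^{1+1/(t-1)}r))$, exactly as in \cref{cor:notquotients}.

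\textbf{Main obstacle.} The expected hardest step is the independence bound for $\cG_d(R)$ in uniformity $k$. It needs a supersaturation count for $R^d$, which follows from $\alpha(R^d)\le\alpha(R)^d$ plus random sampling. It also needs the greedy extraction of edges that are independent under $\phi$, losing only $m^{k-1}$ factors. The exponents must still give $n^{O(d)}r^{O_k(1)}$, so that the constant $c_k$ changes but not the shape of the bound.
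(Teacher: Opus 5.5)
There is a genuine gap in Step~1: your random map $\phi:\binom V{k-1}\to[r]$ works at the wrong granularity for $k\ge 4$. With it, two kept edges that share a $(k-1)$-set are forced to have the same image in $R$. So the embedding argument only shows that each $(k-1)$-tight component (consecutive edges sharing $k-1$ vertices) collapses to a single edge. Two edges of $H$ that share only a pair $uv$ do lie at the common level $\ell=\delta(u,v)$. But nothing constrains the level-$\ell$ coordinates of their other $k-2$ vertices, so they can land on different edges of $R$. Your phrase ``so each $2$-tight component collapses to a single edge'' is therefore a non sequitur. The key claim of Step~2, that every nontrivial tight quotient of a member of $\cH_t^{(k),\minn}$ has at most $t$ edges, is false under your definitions. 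For instance, suppose the edges of $H$ pairwise meet in at most $k-2$ vertices. Then the partition into singletons is a nontrivial tight quotient in your sense, so $H$ itself lies in $\cF$ and the recursion becomes circular. Your lemma is blind exactly to non-linear graphs whose edges meet in between $2$ and $k-2$ vertices, while $f(k)$ only gives information about linear ones.

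The fix, which is what the paper does, is to key $\phi$ on pairs. Take $\phi:\binom V2\to[r]^{k-2}$, and keep an $\ell$-level edge $e$ of $R^d$ only if $\phi(uv)=(w_\ell)_{w\in e\setminus\{u,v\}}$ (in a fixed vertex order) for every pair $u<v$ in $e$. Then a single shared pair determines the entire level-$\ell$ image of an edge. Hence all edges in a $2$-tight component have the same image, and a nontrivial tight quotient really has at most $t$ edges.

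The independence count changes accordingly. You must now extract pair-disjoint (linear) subfamilies of $R^d$-edges inside an $m$-set; there are about $m^2/n^{(k-1)d}$ of these, not $m^{k-1}/n^{(k-1)d}$. Each is kept with probability $r^{-(k-2)\binom k2}$. This still gives $\alpha(\cG_d(R))\le n^{O_k(d)}r^{O_k(1)}$, so you get the analogue of \cref{lem:quotient} with a worse constant.

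With that replacement, your Step~2 recursion through $\cH_{t-1}^{(k)}$ goes through, and so does the exponent arithmetic. For $t=2$, use the no-quotient route: any $k$-graph with at most two edges is $k$-partite, so no nontrivial quotient exists. This matches the paper's appeal to \cref{cor:notquotients}.
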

When $t=2$ this cleanly generalizes \cref{thm:2tightcomponents}. To prove this, we define proper $2$-tight quotients as in \cref{defn:quotient}, and then construct the sparsified blow-up as in \cref{sec:proofofquotientlemma}. The random map $\phi$ is replaced with $\phi:\binom V2\to [r]^{k-2}$, where we put $e\in E(\cG_d(R))$ if $e$ is $\ell$-level and $e\in E(R^d)$ and for all pairs $u<v\in e$, the map $\phi(uv)=(w_\ell)_{w\in e-u-v}$. Using this, a single pair $uv\subseteq e$ determines the embedding of the whole edge into the blow-up. Then the analogous quotient lemma to \ref{lem:quotient} applies with a weaker constant and then \cref{cor:notquotients} can be used to conclude the $t\geq 2$ cases of \cref{thm:f(k)components}.

The Erd\H{o}s-Faber-Lov\'asz conjecture \cite{EFLconjecture}, proven for sufficiently large $k$ \cite{EFLproof}, would imply that $f(k)\geq k+1$. In the other direction, we note that $f(k)\leq k+1$ for odd $k\geq 7$. To see this, let $H'$ be the (not uniform) hypergraph obtained from $K_{k+1}^{(2)}$ by replacing one triangle $\{\{xy\},\{yz\},\{xz\}\}$ with the edges $\{\{xyz\},\{x\},\{y\},\{z\}\}$. Then letting $H$ be the hypergraph dual of $H'$, we see that $H$ has $k+1$ edges, is $k$-uniform, is linear, and is not iterated $k$-partite. 

The analogous quotient lemma to \ref{lem:quotient} for $k$-graphs also gives a reduction analogous to \cref{thm:reductiontolinear}. We are not able to obtain a result analogous to \cref{lem:quotientonegraph}, because \cref{lem:maxtightquotient} does not generalize to $k\geq 4$, since for the tight component $\{1234,1256\}$, there are multiple ways to collapse this into a single edge: we could identify $3$ with $5$ and $4$ with $6$, or vice versa. Finally, the proof of \cref{thm:singlelinear} generalizes to higher uniformities by substituting a $k$-uniform `edge simulating' gadget like $F^{\threevee}$ which satisfies the $k$-partite analogue of \cref{claim:splitfano}. To construct such a gadget, choose any $k$-partite linear $k$-graph with a unique non-constant top-level coloring up to permutation, having parts $V_1,\ldots, V_k$ of size at least $k-1$. Then add edges $e_1,\ldots, e_k$ containing leaves $x_1,\ldots, x_k$ such that $e_i\subseteq \{x_i\}\cup V_i$. In the resulting gadget, the only non-constant top-level coloring is rainbow on $\{x_1,\ldots, x_k\}$.

The methods from Sections \ref{sec:upperbounds} and \ref{sec:explower} can also be generalized to higher uniformities with caveats. For example, it is straightforward to construct quasi-dense sets in any uniformity, but the notion of forward-quasi-denseness is not easily generalizable or achievable for $k\geq 4$.

\subsection{AI Disclosure}
During the research process, ChatGPT 5.5 claimed a proof of \cref{thm:fanovsBn} using a counting argument, which motivated further study. Our proof of \cref{thm:fanovsBn} is deduced from \cref{thm:fwddoublingvsBn} which is structurally distinct but uses similar methods. The free version of ChatGPT was used to generate Figure \ref{fig:chain}.

During preparation, ChatGPT 6 Astra suggested a proof of \cref{thm:singlelinear}. We added this result to the paper, using a slightly different proof with a similar gadget-replacement idea. AI was used to typo-check the final manuscript.

\input{Compilation_Files/biblio}

\end{document}

%% file: Compilation_Files/preamble.tex
\usepackage{amsmath,amsthm,amsfonts,graphicx,float}
\usepackage{amssymb,upgreek,stmaryrd,framed,bbm}
\usepackage{wrapfig,mathrsfs,xfrac,mathtools,epsdice}
\usepackage{qtree,setspace,accents,verbatim}
\usepackage[x11names]{xcolor}
\usepackage[ruled,vlined]{algorithm2e}
\usepackage[normalem]{ulem}
\usepackage[english]{babel}
\usepackage{framed}
\usepackage[shortlabels]{enumitem}

\usepackage{thmtools,thm-restate}

\usepackage{tikz}
\usetikzlibrary{shapes}

\makeatletter\let\@@citation@@=\citation\renewcommand{\citation}[1]{\@@citation@@{#1}\@for\@tempa:=#1\do{\@ifundefined{cit@\@tempa}{\global\@namedef{cit@\@tempa}{}}{}}}\makeatother
\usepackage[linktocpage=true]{hyperref}
\usepackage[capitalize]{cleveref}
\makeatletter\def\@lbibitem[#1]#2#3\par{\@ifundefined{cit@#2}{}{\@skiphyperreftrue\H@item[\ifx\Hy@raisedlink\@empty\hyper@anchorstart{cite.#2\@extra@b@citeb}\@BIBLABEL{#1}\hyper@anchorend\else\Hy@raisedlink{\hyper@anchorstart{cite.#2\@extra@b@citeb}\hyper@anchorend}\@BIBLABEL{#1}\fi\hfill]\@skiphyperreffalse}\if@filesw\begingroup\let\protect\noexpand\immediate\write\@auxout{\string\bibcite{#2}{#1}}\endgroup\fi\ignorespaces\@ifundefined{cit@#2}{}{#3}} \def\@bibitem#1#2\par{\@ifundefined{cit@#1}{}{\@skiphyperreftrue\H@item\@skiphyperreffalse\Hy@raisedlink{\hyper@anchorstart{cite.#1\@extra@b@citeb}\relax\hyper@anchorend}}\if@filesw\begingroup\let\protect\noexpand\immediate\write\@auxout{\string\bibcite{#1}{\the\value{\@listctr}}}\endgroup\fi\ignorespaces\@ifundefined{cit@#1}{}{#2}}\makeatother
\newcommand{\Ex}{\mathbb{E}}

\newcommand{\N}{\mathbb{N}}

\newcommand{\PP}{\mathbb{P}}

\newcommand{\R}{\mathbb{R}}

\newcommand{\Z}{\mathbb{Z}}

\newcommand{\cA}{\mathcal{A}}

\newcommand{\cF}{\mathcal{F}}
\newcommand{\cG}{\mathcal{G}}\newcommand{\cH}{\mathcal{H}}
\newcommand{\cI}{\mathcal{I}}
\newcommand{\cL}{\mathcal{L}}

\newcommand{\cP}{\mathcal{P}}
\newcommand{\cQ}{\mathcal{Q}}
\newcommand{\cS}{\mathcal{S}}

\newcommand{\nth}{^{\text{th}}} 
\newcommand{\eps}{\upvarepsilon}
\renewcommand{\rm}[1]{\mathrm{#1}}

\newcommand{\ceil}[1]{\left\lceil #1 \right\rceil}

\newcommand{\floor}[1]{\left\lfloor #1 \right\rfloor}

\def\multichoose#1#2{\ensuremath{\left(\kern-.3em\left(\genfrac{}{}{0pt}{}{#1}{#2}\right)\kern-.3em\right)}}

\newtheorem{theorem}{Theorem}[section]
\newtheorem{lemma}[theorem]{Lemma}
\newtheorem{problem}[theorem]{Problem}
\newtheorem{proposition}[theorem]{Proposition}
\newtheorem{corollary}{Corollary}[theorem]
\newtheorem{claim}{Claim}[theorem]
\newtheorem{conjecture}[theorem]{Conjecture}

\newtheorem{innercustomtheorem}{Theorem}
\newenvironment{numtheorem}[1]
{\renewcommand\theinnercustomtheorem{#1}\innercustomtheorem}
  {\endinnercustomtheorem}

\theoremstyle{definition}
\newtheorem{definition}[theorem]{Definition}
\newtheorem*{definition*}{Definition}
\newtheorem*{construction}{Construction}
\newtheorem*{observation}{Observation}

\theoremstyle{remark}
\newtheorem*{remark}{Remark}
\newtheorem*{example}{Example}

\newenvironment{subproof}[1][\proofname]{
\begin{proof}[#1]}{\end{proof}}



\makeatother

\makeatother
\newcommand{\minn}{\rm{min}}

%% file: tikz_diagrams/temp.tex
\begin{tikzpicture}[
    scale=1.4,
    vertex/.style={circle,fill=black,inner sep=1.7pt},
    hedge/.style={
        draw=blue!70!black,
        fill=blue!50,
        fill opacity=.25,
        line width=.7pt
    }
]

%--------------------------------------------------
% Coordinates
%--------------------------------------------------
\def\R{2.4}
\def\r{1.35}

\foreach \i in {0,...,4}{
    \coordinate (A\i) at ({90-72*\i}:\R);
    \coordinate (B\i) at ({90-72*\i}:\r);
}

%--------------------------------------------------
% Hyperedges
%--------------------------------------------------
\foreach \i [evaluate=\i as \j using {mod(\i+1,5)}] in {0,...,4}{

    % a_i b_i a_{i+1}
    \filldraw[hedge]
        (A\i) -- (B\i) -- (A\j) -- cycle;

    % a_i b_i b_{i+1}
    \filldraw[hedge]
        (A\i) -- (B\i) -- (B\j) -- cycle;
}

%--------------------------------------------------
% Vertices
%--------------------------------------------------
\foreach \i in {0,...,4}{
    \node[vertex] (a\i) at (A\i) {};
    \node[vertex] (b\i) at (B\i) {};
}

%--------------------------------------------------
% Labels
%--------------------------------------------------
\node[above=2pt] at (A0) {$a_0$};
\node[below=2pt] at (B0) {$b_0$};

\node[right=2pt] at (A1) {$a_1$};
\node[right=2pt] at (B1) {$b_1$};

\node[left=2pt] at (A4) {$a_4$};
\node[left=2pt] at (B4) {$b_4$};
\end{tikzpicture}

%% file: tikz_diagrams/splitfano.tex
\tikzset{every picture/.style={line width=0.75pt}} %set default line width to 0.75pt        

\begin{tikzpicture}[x=0.75pt,y=0.75pt,yscale=-1,xscale=1]
%uncomment if require: \path (0,719); %set diagram left start at 0, and has height of 719

%Straight Lines [id:da5280552392997866] 
\draw [color={rgb, 255:red, 0; green, 0; blue, 0 }  ,draw opacity=1 ]   (160,200) -- (240,200.5) ;
%Straight Lines [id:da4622657961090464] 
\draw [color={rgb, 255:red, 0; green, 0; blue, 0 }  ,draw opacity=1 ]   (240,200.5) -- (320,201) ;
%Straight Lines [id:da6900225772939664] 
\draw    (230,60) -- (200,130) -- (160,200) ;
%Straight Lines [id:da9508294520901769] 
\draw    (251,60) -- (280,130) -- (320,201) ;
%Shape: Circle [id:dp6869995539239468] 
\draw  [color={rgb, 255:red, 0; green, 0; blue, 0 }  ,draw opacity=1 ] (193.75,154.25) .. controls (193.75,128.71) and (214.46,108) .. (240,108) .. controls (265.54,108) and (286.25,128.71) .. (286.25,154.25) .. controls (286.25,179.79) and (265.54,200.5) .. (240,200.5) .. controls (214.46,200.5) and (193.75,179.79) .. (193.75,154.25) -- cycle ;
%Straight Lines [id:da6429476964532436] 
\draw    (240,60.5) -- (240,200.5) ;
%Straight Lines [id:da8618504691881106] 
\draw    (280,130) -- (160,200) ;
%Straight Lines [id:da35455643942951987] 
\draw    (200,130) -- (320,201) ;
%Straight Lines [id:da4270749924471825] 
\draw [color={rgb, 255:red, 0; green, 0; blue, 0 }  ,draw opacity=1 ][line width=1.5]    (230,60) ;
\draw [shift={(230,60)}, rotate = 0] [color={rgb, 255:red, 0; green, 0; blue, 0 }  ,draw opacity=1 ][fill={rgb, 255:red, 0; green, 0; blue, 0 }  ,fill opacity=1 ][line width=1.5]      (0, 0) circle [x radius= 3.05, y radius= 3.05]   ;
\draw [shift={(230,60)}, rotate = 0] [color={rgb, 255:red, 0; green, 0; blue, 0 }  ,draw opacity=1 ][fill={rgb, 255:red, 0; green, 0; blue, 0 }  ,fill opacity=1 ][line width=1.5]      (0, 0) circle [x radius= 3.05, y radius= 3.05]   ;
%Straight Lines [id:da24721951105091378] 
\draw [color={rgb, 255:red, 0; green, 0; blue, 0 }  ,draw opacity=1 ][line width=1.5]    (240,200.5) ;
\draw [shift={(240,200.5)}, rotate = 0] [color={rgb, 255:red, 0; green, 0; blue, 0 }  ,draw opacity=1 ][fill={rgb, 255:red, 0; green, 0; blue, 0 }  ,fill opacity=1 ][line width=1.5]      (0, 0) circle [x radius= 3.05, y radius= 3.05]   ;
\draw [shift={(240,200.5)}, rotate = 0] [color={rgb, 255:red, 0; green, 0; blue, 0 }  ,draw opacity=1 ][fill={rgb, 255:red, 0; green, 0; blue, 0 }  ,fill opacity=1 ][line width=1.5]      (0, 0) circle [x radius= 3.05, y radius= 3.05]   ;
%Straight Lines [id:da7832739715812458] 
\draw [color={rgb, 255:red, 0; green, 0; blue, 0 }  ,draw opacity=1 ][line width=1.5]    (240,154.25) ;
\draw [shift={(240,154.25)}, rotate = 0] [color={rgb, 255:red, 0; green, 0; blue, 0 }  ,draw opacity=1 ][fill={rgb, 255:red, 0; green, 0; blue, 0 }  ,fill opacity=1 ][line width=1.5]      (0, 0) circle [x radius= 3.05, y radius= 3.05]   ;
\draw [shift={(240,154.25)}, rotate = 0] [color={rgb, 255:red, 0; green, 0; blue, 0 }  ,draw opacity=1 ][fill={rgb, 255:red, 0; green, 0; blue, 0 }  ,fill opacity=1 ][line width=1.5]      (0, 0) circle [x radius= 3.05, y radius= 3.05]   ;
%Straight Lines [id:da22365745375510349] 
\draw [color={rgb, 255:red, 0; green, 0; blue, 0 }  ,draw opacity=1 ][line width=1.5]    (160,200) ;
\draw [shift={(160,200)}, rotate = 0] [color={rgb, 255:red, 0; green, 0; blue, 0 }  ,draw opacity=1 ][fill={rgb, 255:red, 0; green, 0; blue, 0 }  ,fill opacity=1 ][line width=1.5]      (0, 0) circle [x radius= 3.05, y radius= 3.05]   ;
\draw [shift={(160,200)}, rotate = 0] [color={rgb, 255:red, 0; green, 0; blue, 0 }  ,draw opacity=1 ][fill={rgb, 255:red, 0; green, 0; blue, 0 }  ,fill opacity=1 ][line width=1.5]      (0, 0) circle [x radius= 3.05, y radius= 3.05]   ;
%Straight Lines [id:da5605042107102415] 
\draw [color={rgb, 255:red, 0; green, 0; blue, 0 }  ,draw opacity=1 ][line width=1.5]    (200,130) ;
\draw [shift={(200,130)}, rotate = 0] [color={rgb, 255:red, 0; green, 0; blue, 0 }  ,draw opacity=1 ][fill={rgb, 255:red, 0; green, 0; blue, 0 }  ,fill opacity=1 ][line width=1.5]      (0, 0) circle [x radius= 3.05, y radius= 3.05]   ;
\draw [shift={(200,130)}, rotate = 0] [color={rgb, 255:red, 0; green, 0; blue, 0 }  ,draw opacity=1 ][fill={rgb, 255:red, 0; green, 0; blue, 0 }  ,fill opacity=1 ][line width=1.5]      (0, 0) circle [x radius= 3.05, y radius= 3.05]   ;
%Straight Lines [id:da36567644450168135] 
\draw [color={rgb, 255:red, 0; green, 0; blue, 0 }  ,draw opacity=1 ][line width=1.5]    (251,60) ;
\draw [shift={(251,60)}, rotate = 0] [color={rgb, 255:red, 0; green, 0; blue, 0 }  ,draw opacity=1 ][fill={rgb, 255:red, 0; green, 0; blue, 0 }  ,fill opacity=1 ][line width=1.5]      (0, 0) circle [x radius= 3.05, y radius= 3.05]   ;
\draw [shift={(251,60)}, rotate = 0] [color={rgb, 255:red, 0; green, 0; blue, 0 }  ,draw opacity=1 ][fill={rgb, 255:red, 0; green, 0; blue, 0 }  ,fill opacity=1 ][line width=1.5]      (0, 0) circle [x radius= 3.05, y radius= 3.05]   ;
%Straight Lines [id:da9476636638462362] 
\draw [color={rgb, 255:red, 0; green, 0; blue, 0 }  ,draw opacity=1 ][line width=1.5]    (240,60.5) ;
\draw [shift={(240,60.5)}, rotate = 0] [color={rgb, 255:red, 0; green, 0; blue, 0 }  ,draw opacity=1 ][fill={rgb, 255:red, 0; green, 0; blue, 0 }  ,fill opacity=1 ][line width=1.5]      (0, 0) circle [x radius= 3.05, y radius= 3.05]   ;
\draw [shift={(240,60.5)}, rotate = 0] [color={rgb, 255:red, 0; green, 0; blue, 0 }  ,draw opacity=1 ][fill={rgb, 255:red, 0; green, 0; blue, 0 }  ,fill opacity=1 ][line width=1.5]      (0, 0) circle [x radius= 3.05, y radius= 3.05]   ;
%Straight Lines [id:da6664649808848905] 
\draw [color={rgb, 255:red, 0; green, 0; blue, 0 }  ,draw opacity=1 ][line width=1.5]    (320,201) ;
\draw [shift={(320,201)}, rotate = 0] [color={rgb, 255:red, 0; green, 0; blue, 0 }  ,draw opacity=1 ][fill={rgb, 255:red, 0; green, 0; blue, 0 }  ,fill opacity=1 ][line width=1.5]      (0, 0) circle [x radius= 3.05, y radius= 3.05]   ;
\draw [shift={(320,201)}, rotate = 0] [color={rgb, 255:red, 0; green, 0; blue, 0 }  ,draw opacity=1 ][fill={rgb, 255:red, 0; green, 0; blue, 0 }  ,fill opacity=1 ][line width=1.5]      (0, 0) circle [x radius= 3.05, y radius= 3.05]   ;
%Straight Lines [id:da5811332207568883] 
\draw [color={rgb, 255:red, 0; green, 0; blue, 0 }  ,draw opacity=1 ][line width=1.5]    (280,130) ;
\draw [shift={(280,130)}, rotate = 0] [color={rgb, 255:red, 0; green, 0; blue, 0 }  ,draw opacity=1 ][fill={rgb, 255:red, 0; green, 0; blue, 0 }  ,fill opacity=1 ][line width=1.5]      (0, 0) circle [x radius= 3.05, y radius= 3.05]   ;
\draw [shift={(280,130)}, rotate = 0] [color={rgb, 255:red, 0; green, 0; blue, 0 }  ,draw opacity=1 ][fill={rgb, 255:red, 0; green, 0; blue, 0 }  ,fill opacity=1 ][line width=1.5]      (0, 0) circle [x radius= 3.05, y radius= 3.05]   ;

% Text Node
\draw (219,44) node [anchor=north west][inner sep=0.75pt]  [font=\normalsize] [align=left] {$\displaystyle x$};
% Text Node
\draw (235,44) node [anchor=north west][inner sep=0.75pt]  [font=\normalsize] [align=left] {$\displaystyle y$};
% Text Node
\draw (251,44) node [anchor=north west][inner sep=0.75pt]  [font=\normalsize] [align=left] {$\displaystyle z$};
% Text Node
\draw (184,117) node [anchor=north west][inner sep=0.75pt]  [font=\normalsize] [align=left] {$\displaystyle 2$};
% Text Node
\draw (248,147) node [anchor=north west][inner sep=0.75pt]  [font=\normalsize] [align=left] {$\displaystyle 4$};
% Text Node
\draw (284,119) node [anchor=north west][inner sep=0.75pt]  [font=\normalsize] [align=left] {$\displaystyle 6$};
% Text Node
\draw (324,191) node [anchor=north west][inner sep=0.75pt]  [font=\normalsize] [align=left] {$\displaystyle 7$};
% Text Node
\draw (243,202) node [anchor=north west][inner sep=0.75pt]  [font=\normalsize] [align=left] {$\displaystyle 5$};
% Text Node
\draw (145,192) node [anchor=north west][inner sep=0.75pt]  [font=\normalsize] [align=left] {$\displaystyle 3$};

\end{tikzpicture}

%% file: tikz_diagrams/fanodoubling2.tex
\tikzset{every picture/.style={line width=0.975pt}} %set default line width to 0.975pt        

\definecolor{myblue}{RGB}{70,120,220}
\definecolor{myred}{RGB}{240,0,0}
\definecolor{myblack}{RGB}{0,0,0}

\begin{tikzpicture}[x=0.975pt,y=0.975pt,yscale=-1,xscale=1]
%uncomment if require: \path (0,433);
%set diagram left start at 0, and has height of 433

\newlength{\ssr}
\setlength{\ssr}{1pt}

%Straight Lines [id:da8805437764760149] 
\draw    (156,206.5) ;
%Straight Lines [id:da7227463204624529] 
\draw    (171,271.5) ;
%Straight Lines [id:da5607116737304921] 
\draw [color=myred  ,draw opacity=1 ]   (130,240) -- (130,280) ;
%Straight Lines [id:da2763388876719297] 
\draw [color=myblue  ,draw opacity=1 ]   (130,240) -- (170,240) ;
%Straight Lines [id:da33898041957309966] 
\draw [color=myblue  ,draw opacity=1 ]   (130,280) -- (170,280) ;
%Straight Lines [id:da7323932579304792] 
\draw [color=myred  ,draw opacity=1 ]   (170,240) -- (170,280) ;
%Straight Lines [id:da5341776501246013] 
\draw [color=myblue  ,draw opacity=1 ]   (130,240) -- (170,280) ;
%Straight Lines [id:da6158918639598254] 
\draw [color=myblue  ,draw opacity=1 ]   (170,240) -- (130,280) ;
%Straight Lines [id:da04514813256314132] 
\draw [color=myblue  ,draw opacity=1 ]   (129,199) -- (228,199) ;
%Curve Lines [id:da3841083334506835] 
\draw [line width=0.75pt][color=myblue  ,draw opacity=1 ] [dash pattern={on 2.925pt off 0.975pt}]   (170,240) .. controls (190,240) and (230,260) .. (230,260) ;
%Curve Lines [id:da7952380513154768] 
\draw [line width=0.75pt][color=myblue  ,draw opacity=1 ] [dash pattern={on 2.925pt off 0.975pt}]  (170,240) .. controls (190,220) and (230,260) .. (230,260) ;
%Curve Lines [id:da11976676716069412] 
\draw [line width=0.75pt][color=myblue  ,draw opacity=1 ] [dash pattern={on 2.925pt off 0.975pt}]  (170,280) .. controls (190,280) and (230,260) .. (230,260) ;
%Curve Lines [id:da9545070629832995] 
\draw [line width=0.75pt][color=myblue  ,draw opacity=1 ] [dash pattern={on 2.925pt off 0.975pt}]  (170,280) .. controls (190,300) and (230,260) .. (230,260) ;
%Curve Lines [id:da6442677241177867] 
\draw [line width=0.75pt][color=myred  ,draw opacity=1 ] [dash pattern={on 2.925pt off 0.975pt}]  (170,280) .. controls (170,310) and (290,260) .. (290,260) ;
%Curve Lines [id:da14989063563056648] 
\draw [line width=0.75pt][color=myred  ,draw opacity=1 ] [dash pattern={on 2.925pt off 0.975pt}]  (130,280) .. controls (130,320) and (220,291) .. (290,260) ;
%Straight Lines [id:da8870980175955763] 
\draw    (171,271.5) ;
%Straight Lines [id:da004138432147504423] 
\draw [line width=1.95]    (130,240) ;
\draw [shift={(130,240)}, rotate = 0] [color=myblack  ][fill=myblack  ][line width=1.95]      (0, 0) circle [x radius= 3.05, y radius= 3.05]   ;
\draw [shift={(130,240)}, rotate = 0] [color=myblack  ][fill=myblack  ][line width=1.95]      (0, 0) circle [x radius= 3.05, y radius= 3.05]   ;
%Straight Lines [id:da5971827705673634] 
\draw [line width=1.95]    (130,280) ;
\draw [shift={(130,280)}, rotate = 0] [color=myblack  ][fill=myblack  ][line width=1.95]      (0, 0) circle [x radius= 3.05, y radius= 3.05]   ;
\draw [shift={(130,280)}, rotate = 0] [color=myblack  ][fill=myblack  ][line width=1.95]      (0, 0) circle [x radius= 3.05, y radius= 3.05]   ;
%Straight Lines [id:da9157802466893139] 
\draw [line width=1.95]    (170,280) ;
\draw [shift={(170,280)}, rotate = 0] [color=myblack  ][fill=myblack  ][line width=1.95]      (0, 0) circle [x radius= 3.05, y radius= 3.05]   ;
\draw [shift={(170,280)}, rotate = 0] [color=myblack  ][fill=myblack  ][line width=1.95]      (0, 0) circle [x radius= 3.05, y radius= 3.05]   ;
%Straight Lines [id:da48396571891592033] 
\draw [color=myblack  ,draw opacity=1 ][line width=1.95]    (170,240) ;
\draw [shift={(170,240)}, rotate = 0] [color=myblack  ,draw opacity=1 ][fill=myblack  ,fill opacity=1 ][line width=1.95]      (0, 0) circle [x radius= 3.05, y radius= 3.05]   ;
\draw [shift={(170,240)}, rotate = 0] [color=myblack  ,draw opacity=1 ][fill=myblack  ,fill opacity=1 ][line width=1.95]      (0, 0) circle [x radius= 3.05, y radius= 3.05]   ;
%Straight Lines [id:da0654191351550234] 
\draw [line width=1.95]    (230,260) ;
\draw [shift={(230,260)}, rotate = 0] [color=myblack  ][fill=myblack  ][line width=1.95]      (0, 0) circle [x radius= 3.05, y radius= 3.05]   ;
\draw [shift={(230,260)}, rotate = 0] [color=myblack  ][fill=myblack  ][line width=1.95]      (0, 0) circle [x radius= 3.05, y radius= 3.05]   ;
%Straight Lines [id:da2472475317365055] 
\draw [line width=1.95]    (290,260) ;
\draw [shift={(290,260)}, rotate = 0] [color=myblack  ][fill=myblack  ][line width=1.95]      (0, 0) circle [x radius= 3.05, y radius= 3.05]   ;
\draw [shift={(290,260)}, rotate = 0] [color=myblack  ][fill=myblack  ][line width=1.95]      (0, 0) circle [x radius= 3.05, y radius= 3.05]   ;
%Straight Lines [id:da7014079693041428] 
\draw [color=myblue  ,draw opacity=1 ][line width=1.95]    (230,260) ;
\draw [shift={(230,260)}, rotate = 0] [color=myblue  ,draw opacity=1 ][fill=myblue  ,fill opacity=1 ][line width=1.95]      (0, 0) circle [x radius= \ssr, y radius= \ssr]   ;
\draw [shift={(230,260)}, rotate = 0] [color=myblue  ,draw opacity=1 ][fill=myblue  ,fill opacity=1 ][line width=1.95]      (0, 0) circle [x radius= \ssr, y radius= \ssr]   ;
%Straight Lines [id:da5771103918387307] 
\draw [color=myred  ,draw opacity=1 ][line width=1.95]    (290,260) ;
\draw [shift={(290,260)}, rotate = 0] [color=myred  ,draw opacity=1 ][fill=myred  ,fill opacity=1 ][line width=1.95]      (0, 0) circle [x radius= \ssr, y radius= \ssr]   ;
\draw [shift={(290,260)}, rotate = 0] [color=myred  ,draw opacity=1 ][fill=myred  ,fill opacity=1 ][line width=1.95]      (0, 0) circle [x radius= \ssr, y radius= \ssr]   ;
%Straight Lines [id:da10553764305319502] 
\draw [line width=1.95]    (129,199) ;
\draw [shift={(129,199)}, rotate = 0] [color=myblack  ][fill=myblack  ][line width=1.95]      (0, 0) circle [x radius= 3.05, y radius= 3.05]   ;
\draw [shift={(129,199)}, rotate = 0] [color=myblack  ][fill=myblack  ][line width=1.95]      (0, 0) circle [x radius= 3.05, y radius= 3.05]   ;
%Straight Lines [id:da23768904047700368] 
\draw [line width=1.95]    (169,199) ;
\draw [shift={(169,199)}, rotate = 0] [color=myblack  ][fill=myblack  ][line width=1.95]      (0, 0) circle [x radius= 3.05, y radius= 3.05]   ;
\draw [shift={(169,199)}, rotate = 0] [color=myblack  ][fill=myblack  ][line width=1.95]      (0, 0) circle [x radius= 3.05, y radius= 3.05]   ;
%Straight Lines [id:da8592378396009179] 
\draw [line width=1.95]    (228,199) ;
\draw [shift={(228,199)}, rotate = 0] [color=myblack  ][fill=myblack  ][line width=1.95]      (0, 0) circle [x radius= 3.05, y radius= 3.05]   ;
\draw [shift={(228,199)}, rotate = 0] [color=myblack  ][fill=myblack  ][line width=1.95]      (0, 0) circle [x radius= 3.05, y radius= 3.05]   ;
%Straight Lines [id:da4612889159696697] 
\draw [line width=1.95]    (288,199) ;
\draw [shift={(288,199)}, rotate = 0] [color=myblack  ][fill=myblack  ][line width=1.95]      (0, 0) circle [x radius= 3.05, y radius= 3.05]   ;
\draw [shift={(288,199)}, rotate = 0] [color=myblack  ][fill=myblack  ][line width=1.95]      (0, 0) circle [x radius= 3.05, y radius= 3.05]   ;
%Straight Lines [id:da7430094700476751] 
\draw [color=myblue  ,draw opacity=1 ][line width=1.95]    (228,199) ;
\draw [shift={(228,199)}, rotate = 0] [color=myblue  ,draw opacity=1 ][fill=myblue  ,fill opacity=1 ][line width=1.95]      (0, 0) circle [x radius= \ssr, y radius= \ssr]   ;
\draw [shift={(228,199)}, rotate = 0] [color=myblue  ,draw opacity=1 ][fill=myblue  ,fill opacity=1 ][line width=1.95]      (0, 0) circle [x radius= \ssr, y radius= \ssr]   ;
%Straight Lines [id:da9931021686526179] 
\draw [color=myred  ,draw opacity=1 ][line width=1.95]    (288,199) ;
\draw [shift={(288,199)}, rotate = 0] [color=myred  ,draw opacity=1 ][fill=myred  ,fill opacity=1 ][line width=1.95]      (0, 0) circle [x radius= \ssr, y radius= \ssr]   ;
\draw [shift={(288,199)}, rotate = 0] [color=myred  ,draw opacity=1 ][fill=myred  ,fill opacity=1 ][line width=1.95]      (0, 0) circle [x radius= \ssr, y radius= \ssr]   ;
%Straight Lines [id:da9131297789792832] 
\draw    (171,361.5) ;
%Straight Lines [id:da9478868993754114] 
\draw [color=myred  ,draw opacity=1 ]   (130,330) -- (130,370) ;
%Straight Lines [id:da30483500655576545] 
\draw [color=myblue  ,draw opacity=1 ]   (130,330) -- (170,330) ;
%Straight Lines [id:da728287865949761] 
\draw [color=myblue  ,draw opacity=1 ]   (130,370) -- (170,370) ;
%Straight Lines [id:da5488701596852875] 
\draw [color=myred  ,draw opacity=1 ]   (170,330) -- (170,370) ;
%Straight Lines [id:da2457811281156067] 
\draw [color=myblue  ,draw opacity=1 ]   (130,330) -- (170,370) ;
%Straight Lines [id:da09456208270326061] 
\draw [color=myblue  ,draw opacity=1 ]   (170,330) -- (130,370) ;
%Straight Lines [id:da7212457824402909] 
\draw    (171,361.5) ;
%Straight Lines [id:da8712895161309218] 
\draw [color=myred  ,draw opacity=1 ]   (230,330) -- (230,370) ;
%Straight Lines [id:da023778228584427796] 
\draw [line width=1.95]    (130,330) ;
\draw [shift={(130,330)}, rotate = 0] [color=myblack  ][fill=myblack  ][line width=1.95]      (0, 0) circle [x radius= 3.05, y radius= 3.05]   ;
\draw [shift={(130,330)}, rotate = 0] [color=myblack  ][fill=myblack  ][line width=1.95]      (0, 0) circle [x radius= 3.05, y radius= 3.05]   ;
%Straight Lines [id:da3293571958913827] 
\draw [line width=1.95]    (130,370) ;
\draw [shift={(130,370)}, rotate = 0] [color=myblack  ][fill=myblack  ][line width=1.95]      (0, 0) circle [x radius= 3.05, y radius= 3.05]   ;
\draw [shift={(130,370)}, rotate = 0] [color=myblack  ][fill=myblack  ][line width=1.95]      (0, 0) circle [x radius= 3.05, y radius= 3.05]   ;
%Straight Lines [id:da47379366112699484] 
\draw [line width=1.95]    (170,370) ;
\draw [shift={(170,370)}, rotate = 0] [color=myblack  ][fill=myblack  ][line width=1.95]      (0, 0) circle [x radius= 3.05, y radius= 3.05]   ;
\draw [shift={(170,370)}, rotate = 0] [color=myblack  ][fill=myblack  ][line width=1.95]      (0, 0) circle [x radius= 3.05, y radius= 3.05]   ;
%Straight Lines [id:da7988064846187624] 
\draw [color=myblack  ,draw opacity=1 ][line width=1.95]    (170,330) ;
\draw [shift={(170,330)}, rotate = 0] [color=myblack  ,draw opacity=1 ][fill=myblack  ,fill opacity=1 ][line width=1.95]      (0, 0) circle [x radius= 3.05, y radius= 3.05]   ;
\draw [shift={(170,330)}, rotate = 0] [color=myblack  ,draw opacity=1 ][fill=myblack  ,fill opacity=1 ][line width=1.95]      (0, 0) circle [x radius= 3.05, y radius= 3.05]   ;
%Straight Lines [id:da15392818457208723] 
\draw [line width=1.95]    (230,330) ;
\draw [shift={(230,330)}, rotate = 0] [color=myblack  ][fill=myblack  ][line width=1.95]      (0, 0) circle [x radius= 3.05, y radius= 3.05]   ;
\draw [shift={(230,330)}, rotate = 0] [color=myblack  ][fill=myblack  ][line width=1.95]      (0, 0) circle [x radius= 3.05, y radius= 3.05]   ;
%Straight Lines [id:da8221148512521259] 
\draw [line width=1.95]    (290,350) ;
\draw [shift={(290,350)}, rotate = 0] [color=myblack  ][fill=myblack  ][line width=1.95]      (0, 0) circle [x radius= 3.05, y radius= 3.05]   ;
\draw [shift={(290,350)}, rotate = 0] [color=myblack  ][fill=myblack  ][line width=1.95]      (0, 0) circle [x radius= 3.05, y radius= 3.05]   ;
%Straight Lines [id:da834785734187802] 
\draw [color=myblue  ,draw opacity=1 ][line width=1.95]    (230,330) ;
\draw [shift={(230,330)}, rotate = 0] [color=myblue  ,draw opacity=1 ][fill=myblue  ,fill opacity=1 ][line width=1.95]      (0, 0) circle [x radius= \ssr, y radius= \ssr]   ;
\draw [shift={(230,330)}, rotate = 0] [color=myblue  ,draw opacity=1 ][fill=myblue  ,fill opacity=1 ][line width=1.95]      (0, 0) circle [x radius= \ssr, y radius= \ssr]   ;
%Straight Lines [id:da35897296071975526] 
\draw [color=myred  ,draw opacity=1 ][line width=1.95]    (290,350) ;
\draw [shift={(290,350)}, rotate = 0] [color=myred  ,draw opacity=1 ][fill=myred  ,fill opacity=1 ][line width=1.95]      (0, 0) circle [x radius= \ssr, y radius= \ssr]   ;
\draw [shift={(290,350)}, rotate = 0] [color=myred  ,draw opacity=1 ][fill=myred  ,fill opacity=1 ][line width=1.95]      (0, 0) circle [x radius= \ssr, y radius= \ssr]   ;
%Curve Lines [id:da22074391430807283] 
\draw [line width=0.75pt][color=myred  ,draw opacity=1 ] [dash pattern={on 2.925pt off 0.975pt}]  (230,370) .. controls (230,400) and (290,349.5) .. (290,350) ;
%Straight Lines [id:da38567690161369017] 
\draw [line width=1.95]    (230,370) ;
\draw [shift={(230,370)}, rotate = 0] [color=myblack  ][fill=myblack  ][line width=1.95]      (0, 0) circle [x radius= 3.05, y radius= 3.05]   ;
\draw [shift={(230,370)}, rotate = 0] [color=myblack  ][fill=myblack  ][line width=1.95]      (0, 0) circle [x radius= 3.05, y radius= 3.05]   ;
%Straight Lines [id:da6244593558117005] 
\draw [color=myblue  ,draw opacity=1 ][line width=1.95]    (230,370) ;
\draw [shift={(230,370)}, rotate = 0] [color=myblue  ,draw opacity=1 ][fill=myblue  ,fill opacity=1 ][line width=1.95]      (0, 0) circle [x radius= \ssr, y radius= \ssr]   ;
\draw [shift={(230,370)}, rotate = 0] [color=myblue  ,draw opacity=1 ][fill=myblue  ,fill opacity=1 ][line width=1.95]      (0, 0) circle [x radius= \ssr, y radius= \ssr]   ;
%Straight Lines [id:da24369890842331887] 
\draw[line width=0.5pt]    (109,219.3) -- (320,219.3) ;
%Straight Lines [id:da7278594086731974] 
\draw[line width=0.5pt]    (109,220.7) -- (320,220.7) ;
%Straight Lines [id:da7686133682169158] 
\draw[line width=0.5pt]    (109,309.3) -- (320,309.3) ;
%Straight Lines [id:da5783882407913247] 
\draw[line width=0.5pt]    (109,310.7) -- (320,310.7) ;
%Straight Lines [id:da2555530132097408] 
\draw    (155,162.5) ;
%Straight Lines [id:da8796155011016652] 
\draw [line width=1.95]    (168,155) ;
\draw [shift={(168,155)}, rotate = 0] [color=myblack  ][fill=myblack  ][line width=1.95]      (0, 0) circle [x radius= 3.05, y radius= 3.05]   ;
\draw [shift={(168,155)}, rotate = 0] [color=myblack  ][fill=myblack  ][line width=1.95]      (0, 0) circle [x radius= 3.05, y radius= 3.05]   ;
%Straight Lines [id:da6631004760619986] 
\draw [line width=1.95]    (227,155) ;
\draw [shift={(227,155)}, rotate = 0] [color=myblack  ][fill=myblack  ][line width=1.95]      (0, 0) circle [x radius= 3.05, y radius= 3.05]   ;
\draw [shift={(227,155)}, rotate = 0] [color=myblack  ][fill=myblack  ][line width=1.95]      (0, 0) circle [x radius= 3.05, y radius= 3.05]   ;
%Straight Lines [id:da41184634364402106] 
\draw [line width=1.95]    (287,155) ;
\draw [shift={(287,155)}, rotate = 0] [color=myblack  ][fill=myblack  ][line width=1.95]      (0, 0) circle [x radius= 3.05, y radius= 3.05]   ;
\draw [shift={(287,155)}, rotate = 0] [color=myblack  ][fill=myblack  ][line width=1.95]      (0, 0) circle [x radius= 3.05, y radius= 3.05]   ;
%Straight Lines [id:da5839684745203167] 
\draw [color=myblue  ,draw opacity=1 ][line width=1.95]    (227,155) ;
\draw [shift={(227,155)}, rotate = 0] [color=myblue  ,draw opacity=1 ][fill=myblue  ,fill opacity=1 ][line width=1.95]      (0, 0) circle [x radius= \ssr, y radius= \ssr]   ;
\draw [shift={(227,155)}, rotate = 0] [color=myblue  ,draw opacity=1 ][fill=myblue  ,fill opacity=1 ][line width=1.95]      (0, 0) circle [x radius= \ssr, y radius= \ssr]   ;
%Straight Lines [id:da705496871656988] 
\draw [color=myred  ,draw opacity=1 ][line width=1.95]    (287,155) ;
\draw [shift={(287,155)}, rotate = 0] [color=myred  ,draw opacity=1 ][fill=myred  ,fill opacity=1 ][line width=1.95]      (0, 0) circle [x radius= \ssr, y radius= \ssr]   ;
\draw [shift={(287,155)}, rotate = 0] [color=myred  ,draw opacity=1 ][fill=myred  ,fill opacity=1 ][line width=1.95]      (0, 0) circle [x radius= \ssr, y radius= \ssr]   ;
%Straight Lines [id:da4723132111038634] 
\draw[line width=0.5pt]    (109,174.3) -- (319,174.3) ;
%Straight Lines [id:da9409133003834744] 
\draw[line width=0.5pt]    (109,175.7) -- (319,175.7) ;

% Text Node
\draw (125,187) node [anchor=north west][inner sep=0.975pt]  [font=\footnotesize] [align=left] {$\displaystyle a_{1}$};
% Text Node
\draw (166,187) node [anchor=north west][inner sep=0.975pt]  [font=\footnotesize] [align=left] {$\displaystyle a_{2}$};
% Text Node
\draw (225,186) node [anchor=north west][inner sep=0.975pt]  [font=\footnotesize] [align=left] {$\displaystyle b$};
% Text Node
\draw (285,188) node [anchor=north west][inner sep=0.975pt]  [font=\footnotesize] [align=left] {$\displaystyle c$};
% Text Node
\draw (123,227) node [anchor=north west][inner sep=0.975pt]  [font=\footnotesize] [align=left] {$\displaystyle a_{11}{}$$ $};
% Text Node
\draw (163,227) node [anchor=north west][inner sep=0.975pt]  [font=\footnotesize] [align=left] {$\displaystyle a_{21}{}$$ $};
% Text Node
\draw (228,247) node [anchor=north west][inner sep=0.975pt]  [font=\footnotesize] [align=left] {$\displaystyle b$$ $};
% Text Node
\draw (288,249) node [anchor=north west][inner sep=0.975pt]  [font=\footnotesize] [align=left] {$\displaystyle c$};
% Text Node
\draw (116,283) node [anchor=north west][inner sep=0.975pt]  [font=\footnotesize] [align=left] {$\displaystyle a_{12}{}$$ $};
% Text Node
\draw (158,284) node [anchor=north west][inner sep=0.975pt]  [font=\footnotesize] [align=left] {$\displaystyle a_{22}{}$$ $};
% Text Node
\draw (125,317) node [anchor=north west][inner sep=0.975pt]  [font=\footnotesize] [align=left] {$\displaystyle a_{11}{}$$ $};
% Text Node
\draw (165,317) node [anchor=north west][inner sep=0.975pt]  [font=\footnotesize] [align=left] {$\displaystyle a_{21}{}$$ $};
% Text Node
\draw (126,374) node [anchor=north west][inner sep=0.975pt]  [font=\footnotesize] [align=left] {$\displaystyle a_{12}{}$$ $};
% Text Node
\draw (166,374) node [anchor=north west][inner sep=0.975pt]  [font=\footnotesize] [align=left] {$\displaystyle a_{22}{}$$ $};
% Text Node
\draw (232,319) node [anchor=north west][inner sep=0.975pt]  [font=\footnotesize] [align=left] {$\displaystyle b_{1}$$ $};
% Text Node
\draw (234,365) node [anchor=north west][inner sep=0.975pt]  [font=\footnotesize] [align=left] {$\displaystyle b_{2}$$ $};
% Text Node
\draw (289,339) node [anchor=north west][inner sep=0.975pt]  [font=\footnotesize] [align=left] {$\displaystyle c$$ $};
% Text Node
\draw (165,143) node [anchor=north west][inner sep=0.975pt]  [font=\footnotesize] [align=left] {$\displaystyle a$$ $};
% Text Node
\draw (224,142) node [anchor=north west][inner sep=0.975pt]  [font=\footnotesize] [align=left] {$\displaystyle b$$ $};
% Text Node
\draw (285,143) node [anchor=north west][inner sep=0.975pt]  [font=\footnotesize] [align=left] {$\displaystyle c$$ $};
% Text Node
\draw (83,150) node [anchor=north west][inner sep=0.975pt]  [font=\normalsize] [align=left] {$\displaystyle H^{( 0)}$};
% Text Node
\draw (83,190) node [anchor=north west][inner sep=0.975pt]  [font=\normalsize] [align=left] {$\displaystyle H^{( 1)}$};
% Text Node
\draw (83,252) node [anchor=north west][inner sep=0.975pt]  [font=\normalsize] [align=left] {$\displaystyle H^{( 3)}$};
% Text Node
\draw (83,333) node [anchor=north west][inner sep=0.975pt]  [font=\normalsize] [align=left] {$\displaystyle H^{( 4)}$};
\end{tikzpicture}

%% file: tikz_diagrams/fano2.tex
% Global rescaling factor -- change only this
\def\grs{2}

%\definecolor{myblue}{RGB}{0,119,255}
\definecolor{myblue}{RGB}{70,120,220}
%\definecolor{myred}{RGB}{250,50,50}
\definecolor{myred}{RGB}{240,0,0}
\definecolor{mydarkblue}{RGB}{0,107,255}

% Physical dimensions derived from the global rescaling factor
\pgfmathsetlengthmacro{\grsUnit}{0.75pt*\grs}
\pgfmathsetlengthmacro{\grsThick}{1.5pt*\grs}

\tikzset{every picture/.style={line width=\grsUnit}}

\begin{tikzpicture}[x=\grsUnit,y=\grsUnit,yscale=-1,xscale=1]
%uncomment if require: \path (0,719); %set diagram left start at 0, and has height of 719

%Straight Lines [id:da37727379361328306] 
\draw [draw=myblue]   (400,280) -- (440,280) ;
%Straight Lines [id:da9793080206604855] 
\draw [draw=myblue]   (440,280) -- (480,280) ;
%Shape: Circle [id:dp5643915601536441] 
\draw  [color=myblue  ,draw opacity=1 ] (417,257) .. controls (417,244.3) and (427.3,234) .. (440,234) .. controls (452.7,234) and (463,244.3) .. (463,257) .. controls (463,269.7) and (452.7,280) .. (440,280) .. controls (427.3,280) and (417,269.7) .. (417,257) -- cycle ;
%Straight Lines [id:da48838434819216536] 
\draw [color=myblue  ,draw opacity=1 ]   (460,245) -- (400,280) ;
%Straight Lines [id:da35952209764669496] 
\draw [color=myred  ,draw opacity=1 ]   (460,245) -- (440,210) ;
%Straight Lines [id:da7398856750002603] 
\draw [color=myred  ,draw opacity=1 ]   (480,280) -- (460,245) ;
%Straight Lines [id:da38929664809336584] 
\draw [color=myred  ,draw opacity=1 ]   (440,210) -- (440,280) ;
%Straight Lines [id:da6669272949435606] 
\draw [draw=myblue]   (420,245) -- (480,280) ;
%Straight Lines [id:da31668181370191706] 
\draw [color=myred  ,draw opacity=1 ]   (420,245) -- (400,280) ;
%Straight Lines [id:da00026957468630350867] 
\draw [color=myred  ,draw opacity=1 ]   (440,210) -- (420,245) ;

%Straight Lines [id:da08783652994698543] 
\draw [color={rgb, 255:red, 0; green, 0; blue, 0 }  ,draw opacity=1 ][line width=\grsThick]    (400,280) ;
\draw [shift={(400,280)}, rotate = 0] [color={rgb, 255:red, 0; green, 0; blue, 0 }  ,draw opacity=1 ][fill={rgb, 255:red, 0; green, 0; blue, 0 }  ,fill opacity=1 ][line width=\grsThick]      (0, 0) circle [x radius= 3.05, y radius= 3.05]   ;
\draw [shift={(400,280)}, rotate = 0] [color={rgb, 255:red, 0; green, 0; blue, 0 }  ,draw opacity=1 ][fill={rgb, 255:red, 0; green, 0; blue, 0 }  ,fill opacity=1 ][line width=\grsThick]      (0, 0) circle [x radius= 3.05, y radius= 3.05]   ;

%Straight Lines [id:da48152975740448334] 
\draw [color={rgb, 255:red, 0; green, 0; blue, 0 }  ,draw opacity=1 ][line width=\grsThick]    (420,245) ;
\draw [shift={(420,245)}, rotate = 0] [color={rgb, 255:red, 0; green, 0; blue, 0 }  ,draw opacity=1 ][fill={rgb, 255:red, 0; green, 0; blue, 0 }  ,fill opacity=1 ][line width=\grsThick]      (0, 0) circle [x radius= 3.05, y radius= 3.05]   ;
\draw [shift={(420,245)}, rotate = 0] [color={rgb, 255:red, 0; green, 0; blue, 0 }  ,draw opacity=1 ][fill={rgb, 255:red, 0; green, 0; blue, 0 }  ,fill opacity=1 ][line width=\grsThick]      (0, 0) circle [x radius= 3.05, y radius= 3.05]   ;

%Straight Lines [id:da11149692940932887] 
\draw [color={rgb, 255:red, 0; green, 0; blue, 0 }  ,draw opacity=1 ][line width=\grsThick]    (440,280) ;
\draw [shift={(440,280)}, rotate = 0] [color={rgb, 255:red, 0; green, 0; blue, 0 }  ,draw opacity=1 ][fill={rgb, 255:red, 0; green, 0; blue, 0 }  ,fill opacity=1 ][line width=\grsThick]      (0, 0) circle [x radius= 3.05, y radius= 3.05]   ;
\draw [shift={(440,280)}, rotate = 0] [color={rgb, 255:red, 0; green, 0; blue, 0 }  ,draw opacity=1 ][fill={rgb, 255:red, 0; green, 0; blue, 0 }  ,fill opacity=1 ][line width=\grsThick]      (0, 0) circle [x radius= 3.05, y radius= 3.05]   ;

%Straight Lines [id:da004679382003895616] 
\draw [color={rgb, 255:red, 0; green, 0; blue, 0 }  ,draw opacity=1 ][line width=\grsThick]    (440,257) ;
\draw [shift={(440,257)}, rotate = 0] [color={rgb, 255:red, 0; green, 0; blue, 0 }  ,draw opacity=1 ][fill={rgb, 255:red, 0; green, 0; blue, 0 }  ,fill opacity=1 ][line width=\grsThick]      (0, 0) circle [x radius= 3.05, y radius= 3.05]   ;
\draw [shift={(440,257)}, rotate = 0] [color={rgb, 255:red, 0; green, 0; blue, 0 }  ,draw opacity=1 ][fill={rgb, 255:red, 0; green, 0; blue, 0 }  ,fill opacity=1 ][line width=\grsThick]      (0, 0) circle [x radius= 3.05, y radius= 3.05]   ;

%Straight Lines [id:da8647037135356397] 
\draw [line width=\grsThick]    (460,245) ;
\draw [shift={(460,245)}, rotate = 0] [color={rgb, 255:red, 0; green, 0; blue, 0 }  ][fill={rgb, 255:red, 0; green, 0; blue, 0 }  ][line width=\grsThick]      (0, 0) circle [x radius= 3.05, y radius= 3.05]   ;
\draw [shift={(460,245)}, rotate = 0] [color={rgb, 255:red, 0; green, 0; blue, 0 }  ][fill={rgb, 255:red, 0; green, 0; blue, 0 }  ][line width=\grsThick]      (0, 0) circle [x radius= 3.05, y radius= 3.05]   ;

%Straight Lines [id:da361768479505412] 
\draw [draw=myblue][line width=\grsThick]    (460,245) ;
\draw [shift={(460,245)}, rotate = 0] [draw=myblue][fill=myblue  ,fill opacity=1 ][line width=\grsThick]      (0, 0) circle [x radius= 1.00, y radius= 1.00]   ;
\draw [shift={(460,245)}, rotate = 0] [draw=myblue][fill=myblue  ,fill opacity=1 ][line width=\grsThick]      (0, 0) circle [x radius= 1.00, y radius= 1.00]   ;

%Straight Lines [id:da002107579112746616] 
\draw [line width=\grsThick]    (480,280) ;
\draw [shift={(480,280)}, rotate = 0] [color={rgb, 255:red, 0; green, 0; blue, 0 }  ][fill={rgb, 255:red, 0; green, 0; blue, 0 }  ][line width=\grsThick]      (0, 0) circle [x radius= 3.05, y radius= 3.05]   ;
\draw [shift={(480,280)}, rotate = 0] [color={rgb, 255:red, 0; green, 0; blue, 0 }  ][fill={rgb, 255:red, 0; green, 0; blue, 0 }  ][line width=\grsThick]      (0, 0) circle [x radius= 3.05, y radius= 3.05]   ;

%Straight Lines [id:da6788320840145875] 
\draw [draw=myblue][line width=\grsThick]    (480,280) ;
\draw [shift={(480,280)}, rotate = 0] [draw=myblue][fill=myblue  ,fill opacity=1 ][line width=\grsThick]      (0, 0) circle [x radius= 1.00, y radius= 1.00]   ;
\draw [shift={(480,280)}, rotate = 0] [draw=myblue][fill=myblue  ,fill opacity=1 ][line width=\grsThick]      (0, 0) circle [x radius= 1.00, y radius= 1.00]   ;

%Straight Lines [id:da8308032347389241] 
\draw [line width=\grsThick]    (440,210) ;
\draw [shift={(440,210)}, rotate = 0] [color={rgb, 255:red, 0; green, 0; blue, 0 }  ][fill={rgb, 255:red, 0; green, 0; blue, 0 }  ][line width=\grsThick]      (0, 0) circle [x radius= 3.05, y radius= 3.05]   ;
\draw [shift={(440,210)}, rotate = 0] [color={rgb, 255:red, 0; green, 0; blue, 0 }  ][fill={rgb, 255:red, 0; green, 0; blue, 0 }  ][line width=\grsThick]      (0, 0) circle [x radius= 3.05, y radius= 3.05]   ;

%Straight Lines [id:da40231974794109193] 
\draw [color=myred  ,draw opacity=1 ][line width=\grsThick]    (440,210) ;
\draw [shift={(440,210)}, rotate = 0] [color=myred  ,draw opacity=1 ][fill=myred  ,fill opacity=1 ][line width=\grsThick]      (0, 0) circle [x radius= 1.00, y radius= 1.00]   ;
\draw [shift={(440,210)}, rotate = 0] [color=myred  ,draw opacity=1 ][fill=myred  ,fill opacity=1 ][line width=\grsThick]      (0, 0) circle [x radius= 1.00, y radius= 1.00]   ;

% Text Node
\draw (385,281) node [anchor=north west][inner sep=\grsUnit] [font=\normalsize] [align=left] {$\displaystyle a_{12}{}$$ $};
% Text Node
\draw (440,261) node [anchor=north west][inner sep=\grsUnit] [font=\normalsize] [align=left] {$\displaystyle a_{21}{}$$ $};
% Text Node
\draw (442,283) node [anchor=north west][inner sep=\grsUnit] [font=\normalsize] [align=left] {$\displaystyle a_{22}{}$$ $};
% Text Node
\draw (406,234) node [anchor=north west][inner sep=\grsUnit] [font=\normalsize] [align=left] {$\displaystyle a_{11}{}$$ $};
% Text Node
\draw (462,235) node [anchor=north west][inner sep=\grsUnit] [font=\normalsize] [align=left] {$\displaystyle b_{1}$$ $};
% Text Node
\draw (484,274) node [anchor=north west][inner sep=\grsUnit] [font=\normalsize] [align=left] {$\displaystyle b_{2}$$ $};
% Text Node
\draw (444,201) node [anchor=north west][inner sep=\grsUnit] [font=\normalsize] [align=left] {$\displaystyle c$$ $};

\end{tikzpicture}

%% file: tikz_diagrams/H2laminar.tex
\tikzset{every picture/.style={line width=0.75pt}} %set default line width to 0.75pt        

\definecolor{myblue}{RGB}{70,120,220}
\definecolor{myred}{RGB}{230,0,0}
\definecolor{myblack}{RGB}{0,0,0}

\begin{tikzpicture}[x=0.75pt,y=0.75pt,yscale=-1,xscale=1]
%uncomment if require: \path (0,719); %set diagram left start at 0, and has height of 719

%Straight Lines [id:da08783652994698543] 
\draw [color=myblack  ,draw opacity=1 ][line width=1.5]    (270,80) ;
\draw [shift={(270,80)}, rotate = 0] [color=myblack  ,draw opacity=1 ][fill=myblack  ,fill opacity=1 ][line width=1.5]      (0, 0) circle [x radius= 5.23, y radius= 5.23]   ;
\draw [shift={(270,80)}, rotate = 0] [color=myblack  ,draw opacity=1 ][fill=myblack  ,fill opacity=1 ][line width=1.5]      (0, 0) circle [x radius= 5.23, y radius= 5.23]   ;
%Straight Lines [id:da25344639433607785] 
\draw [color=myblack  ,draw opacity=1 ][line width=1.5]    (230,140) ;
\draw [shift={(230,140)}, rotate = 0] [color=myblack  ,draw opacity=1 ][fill=myblack  ,fill opacity=1 ][line width=1.5]      (0, 0) circle [x radius= 5.23, y radius= 5.23]   ;
\draw [shift={(230,140)}, rotate = 0] [color=myblack  ,draw opacity=1 ][fill=myblack  ,fill opacity=1 ][line width=1.5]      (0, 0) circle [x radius= 5.23, y radius= 5.23]   ;
%Straight Lines [id:da00891814429355342] 
\draw [color=myred  ,draw opacity=1 ][line width=2.25]    (350,200) -- (238,200) -- (190,200) ;
%Straight Lines [id:da8274955956068977] 
\draw [color=myblack  ,draw opacity=1 ]   (270,80) -- (350,200) ;
%Straight Lines [id:da5147725172810116] 
\draw [color=myblack  ,draw opacity=1 ]   (230,140) -- (270,200) ;
%Straight Lines [id:da11921487854512736] 
\draw [color=myblack  ,draw opacity=1 ]   (270,80) -- (230,140) ;
%Straight Lines [id:da049632407718946725] 
\draw [color=myblack  ,draw opacity=1 ]   (230,140) -- (190,200) ;
%Straight Lines [id:da4189770445541666] 
\draw [color=myblack  ,draw opacity=1 ]   (270,200) -- (300,261) ;
%Straight Lines [id:da4931963698777705] 
\draw [color=myblack  ,draw opacity=1 ]   (270,200) -- (240,261) ;
%Curve Lines [id:da45460062398565404] 
\draw [color=myblue  ,draw opacity=1 ][line width=2.25]    (240,261) .. controls (295.89,278.01) and (324.47,251.84) .. (338.3,228.68) .. controls (347.23,213.71) and (350,200) .. (350,200) ;
%Straight Lines [id:da1815297505536715] 
\draw [color=myblack  ,draw opacity=1 ][line width=1.5]    (190,200) ;
\draw [shift={(190,200)}, rotate = 0] [color=myblack  ,draw opacity=1 ][fill=myblack  ,fill opacity=1 ][line width=1.5]      (0, 0) circle [x radius= 5.23, y radius= 5.23]   ;
\draw [shift={(190,200)}, rotate = 0] [color=myblack  ,draw opacity=1 ][fill=myblack  ,fill opacity=1 ][line width=1.5]      (0, 0) circle [x radius= 5.23, y radius= 5.23]   ;
%Straight Lines [id:da6789230863057674] 
\draw [color=myblack  ,draw opacity=1 ][line width=1.5]    (350,200) ;
\draw [shift={(350,200)}, rotate = 0] [color=myblack  ,draw opacity=1 ][fill=myblack  ,fill opacity=1 ][line width=1.5]      (0, 0) circle [x radius= 5.23, y radius= 5.23]   ;
\draw [shift={(350,200)}, rotate = 0] [color=myblack  ,draw opacity=1 ][fill=myblack  ,fill opacity=1 ][line width=1.5]      (0, 0) circle [x radius= 5.23, y radius= 5.23]   ;
%Straight Lines [id:da04143807907959607] 
\draw [color=myblack  ,draw opacity=1 ][line width=1.5]    (300,261) ;
\draw [shift={(300,261)}, rotate = 0] [color=myblack  ,draw opacity=1 ][fill=myblack  ,fill opacity=1 ][line width=1.5]      (0, 0) circle [x radius= 5.23, y radius= 5.23]   ;
\draw [shift={(300,261)}, rotate = 0] [color=myblack  ,draw opacity=1 ][fill=myblack  ,fill opacity=1 ][line width=1.5]      (0, 0) circle [x radius= 5.23, y radius= 5.23]   ;
%Straight Lines [id:da34112719769339894] 
\draw [color=myblack  ,draw opacity=1 ][line width=1.5]    (240,261) ;
\draw [shift={(240,261)}, rotate = 0] [color=myblack  ,draw opacity=1 ][fill=myblack  ,fill opacity=1 ][line width=1.5]      (0, 0) circle [x radius= 5.23, y radius= 5.23]   ;
\draw [shift={(240,261)}, rotate = 0] [color=myblack  ,draw opacity=1 ][fill=myblack  ,fill opacity=1 ][line width=1.5]      (0, 0) circle [x radius= 5.23, y radius= 5.23]   ;
%Straight Lines [id:da9671969493607274] 
\draw [color=myblack  ,draw opacity=1 ][line width=1.5]    (270,200) ;
\draw [shift={(270,200)}, rotate = 0] [color=myblack  ,draw opacity=1 ][fill=myblack  ,fill opacity=1 ][line width=1.5]      (0, 0) circle [x radius= 5.23, y radius= 5.23]   ;
\draw [shift={(270,200)}, rotate = 0] [color=myblack  ,draw opacity=1 ][fill=myblack  ,fill opacity=1 ][line width=1.5]      (0, 0) circle [x radius= 5.23, y radius= 5.23]   ;

% Text Node
\draw (174,175) node [anchor=north west][inner sep=0.75pt]   [align=left] {$\displaystyle \{z\}$};
% Text Node
\draw (264,174.5) node [anchor=north west][inner sep=0.75pt]   [align=left] {$\displaystyle \{x,a,c\}$};
% Text Node
\draw (345,174.5) node [anchor=north west][inner sep=0.75pt]   [align=left] {$\displaystyle \{y,b\}$};
% Text Node
\draw (279,69.5) node [anchor=north west][inner sep=0.75pt]  [font=\footnotesize] [align=left] {$\displaystyle V=\{x,y,z,a,b,c\}$};
% Text Node
\draw (223,236.5) node [anchor=north west][inner sep=0.75pt]   [align=left] {$\displaystyle \{a\}$};
% Text Node
\draw (294,234.5) node [anchor=north west][inner sep=0.75pt]   [align=left] {$\displaystyle \{c\}$};
% Text Node
\draw (175,118.5) node [anchor=north west][inner sep=0.75pt]  [font=\footnotesize] [align=left] {$\displaystyle \{x,z,a,c\}$};

\end{tikzpicture}

%% file: tikz_diagrams/chainlaminar.tex
\tikzset{every picture/.style={line width=0.75pt}} %set default line width to 0.75pt        

\definecolor{myblue}{RGB}{70,120,220}
\definecolor{myred}{RGB}{240,0,0}
\definecolor{mygreen}{RGB}{50,150,40}
\definecolor{myblack}{RGB}{0,0,0}

\begin{tikzpicture}[x=0.75pt,y=0.75pt,yscale=-1,xscale=1]
%uncomment if require: \path (0,719); %set diagram left start at 0, and has height of 719

%Curve Lines [id:da13260009433393305] 
\draw [color=myblue  ,draw opacity=1 ][line width=2.25]    (310,310) .. controls (310,310) and (261,392) .. (210,370.5) ;
%Curve Lines [id:da19752793494845344] 
\draw [color=myred  ,draw opacity=1 ][line width=2.25]    (390,310) .. controls (390,310) and (341,392) .. (290,370.5) ;
%Curve Lines [id:da33608854252458054] 
\draw [color=mygreen  ,draw opacity=1 ][line width=2.25]    (440,347.5) .. controls (438,350.5) and (413,382.5) .. (370,370.5) ;
\draw [shift={(440,347.5)}, rotate = 303.69] [color=mygreen  ,draw opacity=1 ][line width=2.25]    (0,7.83) -- (0,-7.83)   ;
%Curve Lines [id:da13506006986727515] 
\draw [color=mygreen  ,draw opacity=1 ][line width=2.25]    (230,310) .. controls (230,311.5) and (213,336.5) .. (209,342.5) ;
\draw [shift={(209,342.5)}, rotate = 303.69] [color=mygreen  ,draw opacity=1 ][line width=2.25]    (0,7.83) -- (0,-7.83)   ;
%Straight Lines [id:da5806423982200006] 
\draw [color=myblack  ,draw opacity=1 ][line width=1.5]    (310,270) ;
\draw [shift={(310,270)}, rotate = 0] [color=myblack  ,draw opacity=1 ][fill=myblack  ,fill opacity=1 ][line width=1.5]      (0, 0) circle [x radius= 5.23, y radius= 5.23]   ;
\draw [shift={(310,270)}, rotate = 0] [color=myblack  ,draw opacity=1 ][fill=myblack  ,fill opacity=1 ][line width=1.5]      (0, 0) circle [x radius= 5.23, y radius= 5.23]   ;
%Straight Lines [id:da1383469865163247] 
\draw [color=myblack  ,draw opacity=1 ][line width=1.5]    (230,310) ;
\draw [shift={(230,310)}, rotate = 0] [color=myblack  ,draw opacity=1 ][fill=myblack  ,fill opacity=1 ][line width=1.5]      (0, 0) circle [x radius= 5.23, y radius= 5.23]   ;
\draw [shift={(230,310)}, rotate = 0] [color=myblack  ,draw opacity=1 ][fill=myblack  ,fill opacity=1 ][line width=1.5]      (0, 0) circle [x radius= 5.23, y radius= 5.23]   ;
%Straight Lines [id:da024543640357075125] 
\draw [color=myblack  ,draw opacity=1 ]   (310,270) -- (310,310) ;
%Straight Lines [id:da2578875368104304] 
\draw [color=myblack  ,draw opacity=1 ]   (310,270) -- (230,310) ;
%Straight Lines [id:da2710077682598493] 
\draw [color=myblack  ,draw opacity=1 ]   (230,310) -- (250,370) ;
%Straight Lines [id:da5909476060704838] 
\draw [color=myblack  ,draw opacity=1 ]   (230,310) -- (210,370.5) ;
%Straight Lines [id:da2522931279228178] 
\draw [color=myblack  ,draw opacity=1 ][line width=1.5]    (330,370) ;
\draw [shift={(330,370)}, rotate = 0] [color=myblack  ,draw opacity=1 ][fill=myblack  ,fill opacity=1 ][line width=1.5]      (0, 0) circle [x radius= 5.23, y radius= 5.23]   ;
\draw [shift={(330,370)}, rotate = 0] [color=myblack  ,draw opacity=1 ][fill=myblack  ,fill opacity=1 ][line width=1.5]      (0, 0) circle [x radius= 5.23, y radius= 5.23]   ;
%Straight Lines [id:da34914426703210355] 
\draw [color=myblack  ,draw opacity=1 ][line width=1.5]    (250,370) ;
\draw [shift={(250,370)}, rotate = 0] [color=myblack  ,draw opacity=1 ][fill=myblack  ,fill opacity=1 ][line width=1.5]      (0, 0) circle [x radius= 5.23, y radius= 5.23]   ;
\draw [shift={(250,370)}, rotate = 0] [color=myblack  ,draw opacity=1 ][fill=myblack  ,fill opacity=1 ][line width=1.5]      (0, 0) circle [x radius= 5.23, y radius= 5.23]   ;
%Straight Lines [id:da24662962124710008] 
\draw [color=myblack  ,draw opacity=1 ][line width=1.5]    (210,370.5) ;
\draw [shift={(210,370.5)}, rotate = 0] [color=myblack  ,draw opacity=1 ][fill=myblack  ,fill opacity=1 ][line width=1.5]      (0, 0) circle [x radius= 5.23, y radius= 5.23]   ;
\draw [shift={(210,370.5)}, rotate = 0] [color=myblack  ,draw opacity=1 ][fill=myblack  ,fill opacity=1 ][line width=1.5]      (0, 0) circle [x radius= 5.23, y radius= 5.23]   ;
%Straight Lines [id:da007828214668548106] 
\draw [color=myblack  ,draw opacity=1 ][line width=1.5]    (290,370.5) ;
\draw [shift={(290,370.5)}, rotate = 0] [color=myblack  ,draw opacity=1 ][fill=myblack  ,fill opacity=1 ][line width=1.5]      (0, 0) circle [x radius= 5.23, y radius= 5.23]   ;
\draw [shift={(290,370.5)}, rotate = 0] [color=myblack  ,draw opacity=1 ][fill=myblack  ,fill opacity=1 ][line width=1.5]      (0, 0) circle [x radius= 5.23, y radius= 5.23]   ;
%Straight Lines [id:da5316818530479762] 
\draw [color=myblack  ,draw opacity=1 ]   (310,310) -- (330,370) ;
%Straight Lines [id:da6371417777885994] 
\draw [color=myblack  ,draw opacity=1 ]   (310,310) -- (290,370.5) ;
%Straight Lines [id:da4369413341822853] 
\draw [color=myblack  ,draw opacity=1 ][line width=1.5]    (310,310) ;
\draw [shift={(310,310)}, rotate = 0] [color=myblack  ,draw opacity=1 ][fill=myblack  ,fill opacity=1 ][line width=1.5]      (0, 0) circle [x radius= 5.23, y radius= 5.23]   ;
\draw [shift={(310,310)}, rotate = 0] [color=myblack  ,draw opacity=1 ][fill=myblack  ,fill opacity=1 ][line width=1.5]      (0, 0) circle [x radius= 5.23, y radius= 5.23]   ;
%Straight Lines [id:da4374638784283368] 
\draw [color=myblack  ,draw opacity=1 ]   (390,310) -- (310,270) ;
%Straight Lines [id:da5541149840003711] 
\draw [color=myblack  ,draw opacity=1 ]   (390,310) -- (410,370) ;
%Straight Lines [id:da1108576514264612] 
\draw [color=myblack  ,draw opacity=1 ]   (390,310) -- (370,370.5) ;
%Straight Lines [id:da9423764595002118] 
\draw [color=myblack  ,draw opacity=1 ][line width=1.5]    (390,310) ;
\draw [shift={(390,310)}, rotate = 0] [color=myblack  ,draw opacity=1 ][fill=myblack  ,fill opacity=1 ][line width=1.5]      (0, 0) circle [x radius= 5.23, y radius= 5.23]   ;
\draw [shift={(390,310)}, rotate = 0] [color=myblack  ,draw opacity=1 ][fill=myblack  ,fill opacity=1 ][line width=1.5]      (0, 0) circle [x radius= 5.23, y radius= 5.23]   ;
%Straight Lines [id:da46356962925050593] 
\draw [color=myblack  ,draw opacity=1 ][line width=1.5]    (410,370) ;
\draw [shift={(410,370)}, rotate = 0] [color=myblack  ,draw opacity=1 ][fill=myblack  ,fill opacity=1 ][line width=1.5]      (0, 0) circle [x radius= 5.23, y radius= 5.23]   ;
\draw [shift={(410,370)}, rotate = 0] [color=myblack  ,draw opacity=1 ][fill=myblack  ,fill opacity=1 ][line width=1.5]      (0, 0) circle [x radius= 5.23, y radius= 5.23]   ;
%Straight Lines [id:da7578814443473619] 
\draw [color=myblack  ,draw opacity=1 ][line width=1.5]    (370,370.5) ;
\draw [shift={(370,370.5)}, rotate = 0] [color=myblack  ,draw opacity=1 ][fill=myblack  ,fill opacity=1 ][line width=1.5]      (0, 0) circle [x radius= 5.23, y radius= 5.23]   ;
\draw [shift={(370,370.5)}, rotate = 0] [color=myblack  ,draw opacity=1 ][fill=myblack  ,fill opacity=1 ][line width=1.5]      (0, 0) circle [x radius= 5.23, y radius= 5.23]   ;

% Text Node
\draw (318,257.5) node [anchor=north west][inner sep=0.75pt]  [font=\footnotesize] [align=left] {$\displaystyle V$};
% Text Node
\draw (194,378.5) node [anchor=north west][inner sep=0.75pt]   [align=left] {$\displaystyle \{a_{0}\}$};
% Text Node
\draw (235,378.5) node [anchor=north west][inner sep=0.75pt]   [align=left] {$\displaystyle \{b_{0}\}$};
% Text Node
\draw (276,378.5) node [anchor=north west][inner sep=0.75pt]   [align=left] {$\displaystyle \{a_{1}\}$};
% Text Node
\draw (316,378.5) node [anchor=north west][inner sep=0.75pt]   [align=left] {$\displaystyle \{b_{1}\}$};
% Text Node
\draw (356,377.5) node [anchor=north west][inner sep=0.75pt]   [align=left] {$\displaystyle \{a_{2}\}$};
% Text Node
\draw (396,377.5) node [anchor=north west][inner sep=0.75pt]   [align=left] {$\displaystyle \{b_{2}\}$};
% Text Node
\draw (182,305.5) node [anchor=north west][inner sep=0.75pt]  [font=\footnotesize] [align=left] {$\displaystyle \{a_{0} ,b_{0}\}$};
% Text Node
\draw (262,305.5) node [anchor=north west][inner sep=0.75pt]  [font=\footnotesize] [align=left] {$\displaystyle \{a_{1} ,b_{1}\}$};
% Text Node
\draw (342,305.5) node [anchor=north west][inner sep=0.75pt]  [font=\footnotesize] [align=left] {$\displaystyle \{a_{2} ,b_{2}\}$};

\end{tikzpicture}

%% file: Compilation_Files/biblio.tex
% This bibliography was generated by Logan's biblio_maker.py

% PLEASE DO NOT MODIFY this text directly until the final draft. All changes will be REVERTED when the program is refreshed.

% Please modify reference entries in references.bib. To add a citation directly, use @directbib{INSERT_TAG, biboutput={INPUT_HERE} }